\newcommand{\gen}{\mathrm{gen}}
\newcommand{\rig}{\mathrm{rig}}
\newtheorem{prop}{Proposition}[section]
\newtheorem{defi}[prop]{Definition}
\newtheorem{lem}[prop]{Lemma}
\newtheorem{cor}[prop]{Corollary}
\newtheorem{thm}[prop]{Theorem}
\theoremstyle{remark}
\newtheorem{remar}[prop]{Remark}
\DeclareMathOperator{\Mod}{Mod}
\DeclareMathOperator{\Ban}{Ban}
\newcommand{\op}{\mathrm{op}}
\DeclareMathOperator{\Hom}{Hom}
\DeclareMathOperator{\End}{End}
\DeclareMathOperator{\Ker}{Ker}
\DeclareMathOperator{\id}{id}
\DeclareMathOperator{\Ext}{Ext}
\DeclareMathOperator{\wtimes}{\widehat{\otimes}}
\newcommand{\cHom}[1]{\Hom^{\cont}_{#1}}
\DeclareMathOperator{\mSpec}{m-Spec}
\DeclareMathOperator{\Frac}{Frac}
\def\rank{\mathop{\mathrm{rank}}\nolimits}
\newcommand{\tr}{\operatorname{tr}}
\DeclareMathOperator{\Spec}{Spec}
\DeclareMathOperator{\supp}{Supp}
\DeclareMathOperator{\ann}{ann}
\DeclareMathOperator{\cosoc}{cosoc}
\DeclareMathOperator{\Ind}{Ind}
\DeclareMathOperator{\cInd}{c-Ind}
\DeclareMathOperator{\Sym}{Sym}
\DeclareMathOperator{\Irr}{Irr}
\DeclareMathOperator{\Spf}{Spf}
\newcommand{\inv}{\mathrm{inv}}
\newcommand{\invt}{\mathrm{inv.t}}
\newcommand{\PC}{\mathrm{PC}}
\newcommand{\rE}{\mathfrak r(E)}
\newcommand{\rM}{\mathfrak r(M)}
\newcommand{\EB}{E_{\BB}}
\newcommand{\PB}{P_{\BB}}
\newcommand{\ZB}{Z_{\BB}}
\newcommand{\MB}{M_{\BB}}
\newcommand{\msm}{\Mod^{\sm}}
\newcommand{\mlf}{\Mod^{\lfin}}
\newcommand{\mpro}{\Mod^{\pro}}
\newcommand{\br}[1]{\llbracket #1\rrbracket}
\newcommand{\sm}{\mathrm{sm}}
\newcommand{\lfin}{\mathrm{l.fin}}
\newcommand{\adm}{\mathrm{adm}}
\newcommand{\pro}{\mathrm{pro}}
\newcommand{\aBan}{\Ban^{\adm}}
\newcommand{\cont}{\mathrm{cont}}
\DeclareMathOperator{\GL}{GL}
\DeclareMathOperator{\SL}{SL}
\DeclareMathOperator{\Sp}{Sp}
\DeclareMathOperator{\Ord}{Ord}
\DeclareMathOperator{\Gal}{Gal}
\DeclareMathOperator{\Art}{Art}
\newcommand{\gal}{\mathcal G_{\Qp}}
\newcommand{\rhobar}{\bar{\rho}}
\newcommand{\ps}{\mathrm{ps}}
\newcommand{\cV}{\check{\mathbf{V}}}
\newcommand{\tf}{\mathrm{tf}}
\newcommand{\Qtbar}{\overline{\mathbb{Q}}_3}
\newcommand{\Qp}{\mathbb{Q}_p}
\newcommand{\Zp}{\mathbb{Z}_p}
\newcommand{\Qpbar}{\overline{\mathbb{Q}}_p}
\newcommand{\Fp}{\mathbb F_p}
\newcommand{\unif}{\varpi}
\newcommand{\unifmatrix}{\left(\smallmatrix \unif & 0 \\ 0 & \unif \endsmallmatrix\right)}
\DeclareMathOperator{\dualcat}{\mathfrak C}
\newcommand{\BB}{\mathfrak B}
\newcommand{\CC}{\mathfrak C}
\newcommand{\TT}{\mathfrak T}
\newcommand{\mm}{\mathfrak m}
\newcommand{\rr}{\mathfrak r}
\newcommand{\pp}{\mathfrak p}
\newcommand{\qq}{\mathfrak q}
\newcommand{\GG}{\mathcal G}
\newcommand{\ab}{\mathrm{ab}}
\newcommand{\OO}{\mathcal O}
\newcommand{\cT}{\mathcal T}
\newcommand{\QQ}{\mathbb Q}
\newcommand{\RR}{\mathbb R}
\newcommand{\ZZ}{\mathbb Z}
\newcommand{\NN}{\mathbb{N}}
\newcommand{\VV}{\mathbf V}
\newcommand{\Eins}{\mathbf 1}
\newcommand{\md}{\mathrm m}
\newcommand{\fg}{\mathrm{fg}}
\newcommand{\fl}{\mathrm{fl}}
\newcommand{\CH}{\mathrm{CH}}
\title[Finiteness properties]{Finiteness properties of the category of mod $p$ representations of $\GL_2(\Qp)$}
\author{Vytautas Pa\v{s}k\={u}nas}
\author{Shen-Ning Tung}
\date{\today.}
\begin{document} 
\maketitle

\begin{abstract} We establish Bernstein-centre type of results for the category of mod $p$ 
representations of $\GL_2(\Qp)$. We treat all the remaining open cases, which occur  when
$p$ is $2$ or $3$. Our arguments carry over for all primes $p$.
 This allows 
us to remove the restrictions on the residual representation at $p$ in Lue Pan's recent proof of the Fontaine--Mazur conjecture for 
Hodge--Tate representations of $\Gal(\overline{\mathbb Q}/\mathbb{Q})$ with equal Hodge--Tate weights. 
\end{abstract}
\tableofcontents
\section{Introduction}

Recently,  Lue Pan has given a new proof of the Fontaine--Mazur conjecture for Hodge--Tate 
representations of $\Gal(\overline{\mathbb Q}/\mathbb{Q})$ with equal Hodge--Tate weights, 
\cite{luepan}. One of the ingredients in his proof are the Bernstein-centre type of results for the
 category of mod $p$ representations of $\GL_2(\Qp)$ proved in \cite{image}, \cite{2adicANT}. 
 This caused him to impose some restrictions on the Galois representations at $p$, when $p=2$
  and $p=3$. In this paper we remove these restrictions by proving the required 
  finiteness results in these remaining cases, see Theorem \ref{application}.   

Let $L$ be a finite extension of $\Qp$ with ring of integers $\OO$ and residue field $k$. Let $G=\GL_2(\Qp)$ and let $\Mod^{\sm}_{G}(\OO)$ be the category of smooth $G$-representations on $\OO$-torsion modules. Let $\Mod^{\lfin}_{G}(\OO)$ be the full subcategory of $\Mod^{\sm}_{G}(\OO)$ consisting of representations, which are equal to the union of  their subrepresentations of finite length. This is equivalent to the requirement that every finitely generated subrepresentation is of finite length and we call such representations \textit{locally finite}. We fix  a character $\zeta:Z\rightarrow \OO^{\times}$ of the centre of $G$, and let $\Mod^{\lfin}_{G, \zeta}(\OO)$ be the full subcategory of $\Mod^{\lfin}_{G}(\OO)$ consisting of representations with central character $\zeta$. The category $\Mod^{\lfin}_{G, \zeta}(\OO)$ is locally finite and by general results of Gabriel \cite{MR232821} decomposes as a product $$\Mod^{\lfin}_{G, \zeta}(\OO)\cong \prod_{\BB} \Mod^{\lfin}_{G, \zeta}(\OO)_{\BB}$$ 
of indecomposable subcategories, called blocks. Moreover, each block is anti-equivalent to the category of pseudo-compact modules over a pseudo-compact ring $E_{\BB}$. The centre of the ring $E_{\BB}$, which we denote by $Z_{\BB}$, is naturally isomorphic to the centre of the category  $\Mod^{\lfin}_{G, \zeta}(\OO)_{\BB}$, which by definition is the ring of natural 
transformations of the identity functor. This means that $Z_{\BB}$ acts functorially on every object in $\Mod^{\lfin}_{G, \zeta}(\OO)$. The finiteness result in the title is an analogue of a result 
of Bernstein in the theory of smooth representations of $p$-adic groups on $\mathbb{C}$-vector spaces, 
\cite[Proposition 3.3]{bernstein}: 

\begin{thm}\label{intro_finite} The ring $\ZB$ is noetherian and $\EB$ is a finitely generated $\ZB$-module. 
\end{thm}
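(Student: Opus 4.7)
My plan is to follow the strategy used in \cite{image} for primes $p\geq 5$, adapting each step to accommodate the extra complications that appear at $p\in\{2,3\}$. For every block $\BB$ I would first construct a projective generator $\wP_\BB$ of the dual category $\dualcat(\OO)_\BB$, taking it to be the projective envelope of $\bigoplus_\pi \pi^\vee$ with $\pi$ ranging over a set of representatives of the isomorphism classes of irreducible objects in $\BB$. Under the anti-equivalence, computing $E_\BB$ and $Z_\BB$ amounts to computing $\End(\wP_\BB)$ and its centre, so both halves of the theorem become questions about a single pro-finite $G$-representation.

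Next, apply Colmez's functor $\cV$ to $\wP_\BB$. The result is a pseudo-compact continuous $\gal$-representation over $E_\BB^{\op}$ deforming a semisimple residual representation $\bar\rho_\BB$ associated to $\BB$. Composing with the trace yields a canonical homomorphism $\varphi\colon R^{\ps}_{\bar\rho_\BB}\to Z_\BB$ from the universal deformation ring of $2$-dimensional pseudo-characters of $\gal$ lifting $\tr\bar\rho_\BB$ with the determinant prescribed by $\zeta$; by the standard theory of pseudo-character deformation rings this $\OO$-algebra is complete local noetherian. The theorem then reduces to showing that $\varphi$ is surjective and that $E_\BB$ is finitely generated over $R^{\ps}_{\bar\rho_\BB}$ via $\varphi$, since noetherianity descends to the quotient $Z_\BB$ and finite generation over $Z_\BB$ follows a fortiori.

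The main obstacle in the exceptional cases is that $\cV$ is no longer fully faithful on all irreducibles in $\dualcat(\OO)$: it annihilates the one-dimensional representations and, for $p=3$, identifies certain principal series with their twists, so the $\Ext^1$-groups controlling the block structure become strictly larger than in the generic setting. For the supersingular blocks the argument is essentially formal, since $\cV$ identifies $E_\BB$ with the universal deformation ring of the corresponding absolutely irreducible two-dimensional $\gal$-representation, which is already commutative local noetherian. For the non-supersingular blocks I would proceed by (i) computing the irreducible constituents and the structure of $\Ext^1$ between them using Iwasawa-theoretic input from the Borel subgroup together with smooth duality, (ii) matching these $\Ext$-groups with extensions of $\bar\rho_\BB$ via Tate local duality and the Euler characteristic formula, and (iii) exhibiting enough elements in $\Image\varphi$ to hit every central endomorphism of $\wP_\BB$. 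I expect (i)--(ii) to be the technical heart of the proof: several of the $\Ext$-dimensions that were clean for $p\geq 5$ jump by one in these cases and must be accounted for by explicit non-generic extensions on both sides of Colmez's functor.
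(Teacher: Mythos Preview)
Your proposal has the right overarching shape---produce a map from $R^{\ps,\zeta\varepsilon}_{\tr\rhobar_\BB}$ and show finiteness over it---but there are two genuine gaps.

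First, your construction of $\varphi$ does not work as stated in the non-supersingular blocks. You write that $\cV(\wP_\BB)$ is a $\gal$-representation over $E_\BB^{\op}$ deforming $\rhobar_\BB$, and that composing with the trace gives $\varphi$. But $\cV$ annihilates the characters in $\BB$, so $\cV(\wP_\BB)$ is not free of rank $2$ over $E_\BB^{\op}$ and does not carry a $2$-dimensional pseudorepresentation valued in $Z_\BB$ in any direct way. The paper handles this by passing to the quotient category $\mathfrak Q(\OO)_\BB$ obtained by killing the $\SL_2(\Qp)$-trivial objects: there $\cV$ becomes an equivalence onto its image, the relevant projective generator is $\PB'$ (built from the non-character irreducibles only), and $\cV(\PB')$ is free of rank $1$ over $\EB'$. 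The map from $R^{\ps}$ is then obtained not by ``taking a trace'' but by showing that the natural map $\OO\br{\gal}\twoheadrightarrow (\EB')^{\op}$ factors through the Cayley--Hamilton quotient $R^{\ps,\zeta\varepsilon}_{\tr\rhobar_\BB}\br{\gal}/J$, via a density argument with crystabelline Banach representations. Finiteness then comes for free from Wang--Erickson's result that $R^{\ps}\br{\gal}/J$ is module-finite over $R^{\ps}$. None of this appears in your outline.

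Second, your steps (i)--(iii) amount to computing $E_\BB$ explicitly by determining all the relevant $\Ext^1$-groups and matching them with Galois cohomology. This is exactly the route taken in \cite{image} for $p\ge 5$, and the paper explicitly avoids it: see Section~1.2, where computing $E_\BB$ in the exceptional cases is left open and described as ``harder than \cite[Section 10.5]{image}, which is already quite involved.'' The paper's whole point is that one can prove finiteness \emph{without} computing $E_\BB$. The transfer from $(\EB',\ZB')$ back to $(\EB,\ZB)$ is done instead by an abstract argument with $M_\BB=\ker(\PB\to(\PB)_{\SL_2(\Qp)})$, showing $\End(M_\BB)\cong \EB$ and that $M_\BB$ is a finite quotient of copies of $\PB'$.

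A minor point: you reduce the theorem to surjectivity of $\varphi$, but in fact the paper does not establish surjectivity in the exceptional cases (for $p=2$ the cokernel is only shown to be $2$-torsion). Finiteness of $E_\BB$ over $R^{\ps}$ is what is needed, and noetherianity of $Z_\BB$ follows since $Z_\BB$ is a submodule of $E_\BB$ over the noetherian ring $R^{\ps}$.
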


To prove the theorem we use in an essential way the direct connection between the  $\GL_2(\Qp)$ representations and the representations of the absolute Galois group of $\Qp$, which we denote by $\gal$, discovered by Colmez in \cite{MR2642409}, via his celebrated Montreal functor $\cV$, which we review in section \ref{CMF}.

For each fixed block $\Mod^{\lfin}_{G, \zeta}(\OO)_{\BB}$ there is a finite extension $L'$ of $L$ with ring of integers $\OO'$, such that $\Mod^{\lfin}_{G, \zeta}(\OO)_{\BB}\otimes_\OO \OO'$ decomposes into a finite product of indecomposable subcategories, each of which remain indecomposable after a further extension of scalars. Such absolutely indecomposable blocks have been classified in \cite{MR3444235} and they correspond to semi-simple representations $\rhobar: \gal\rightarrow \GL_2(k)$, which are either absolutely irreducible or a direct sum of two characters. This bijection realizes the semi-simple mod $p$ local Langlands correspondence established in a visionary paper of Breuil, \cite{MR2018825}. 

If $\rhobar$ is absolutely irreducible then $\Mod^{\lfin}_{G, \zeta}(\OO)_{\BB_{\rhobar}}$ contains only one irreducible object $\pi$, satisfying $\cV(\pi^{\vee})\cong \rhobar$, where $\vee$ denotes Pontryagin dual. 
Moreover, $\pi$ is not a subquotient of any parabolically induced representation; such representations are called \textit{supersingular}.

If $\rhobar=\chi_1 \oplus \chi_2$, where $\chi_1, \chi_2: \gal\rightarrow k^{\times}$ are characters, then the irreducible objects in $\Mod^{\lfin}_{G, \zeta}(\OO)_{\BB_{\rhobar}}$ are the irreducible subquotients of the representation 
$$ (\Ind^G_B \chi_1\otimes \chi_2 \omega^{-1})_{\sm}\oplus (\Ind^G_B \chi_2\otimes \chi_1 \omega^{-1})_{\sm},$$ 
where we consider $\chi_1$, $\chi_2$ as characters of $\Qp^{\times}$ via the Artin map $\Art_{\Qp}: \Qp^{\times} \rightarrow \gal^{\ab}$ and $\omega(x)= x |x|\pmod{p}$ for all $x\in \Qp^{\times}$ corresponds to the cyclotomic character modulo $p$, see section \ref{section:blocks} for an explicit list. 

All the blocks, except when $p$ is either $2$ or $3$ and $\rhobar=\chi\oplus \chi\omega$, have been well understood in \cite{image}, \cite{2adicANT}. These \textit{exceptional blocks} are the main focus of this paper, however our arguments work for all $p$ and all blocks.

   The action of  $Z_{\BB_{\rhobar}}$ induces 
   a functorial ring homomorphism 
$$c_\tau: Z_{\BB_{\rhobar}} \rightarrow \End_{G}(\tau),$$ 
for every object $\tau$ in $\Mod^{\lfin}_{G, \zeta}(\OO)_{\BB_{\rhobar}}$. Since $\cV$ is a functor, it induces a ring homomorphism 
$$\End_{G}(\tau)\rightarrow \End^{\cont}_{\gal}(\cV(\tau^{\vee}))^{\op},\quad  \varphi\mapsto \cV(\varphi).$$ 
We denote the action of $\gal$ on $\cV(\tau^{\vee})$ by $\rho_{\cV(\tau^{\vee})}$.

Let $R^{\ps, \zeta\varepsilon}_{\tr \rhobar}$ be the universal deformation ring parameterising pseudorepresentations lifting $\tr \rhobar$ with determinant $\zeta\varepsilon$, where $\varepsilon$ is the $p$-adic cyclotomic character. We may evaluate the universal pseudorepresentation 
$T: \gal \rightarrow R^{\ps, \zeta\varepsilon}_{\tr\rhobar}$ at $g\in \gal$  to obtain an element $T(g)\in R^{\ps, \zeta\varepsilon}_{\tr\rhobar}$.

\begin{thm}\label{intro_explain} There is an $\OO$-algebra homomorphism 
\begin{equation}\label{main_point}
\upsilon:R^{\ps, \zeta\varepsilon}_{\tr \rhobar}\rightarrow Z_{\BB_{\rhobar}},
\end{equation}
 satisfying the following compatibility property with the Colmez's functor.
For all $\tau\in\Mod^{\lfin}_{G, \zeta}(\OO)_{\BB_{\rhobar}}$  and all $g\in \gal$ we have
$$\cV(c_{\tau}(\upsilon(T(g))))= \rho_{\cV(\tau^{\vee})}(g) + \rho_{\cV(\tau^{\vee})}(g^{-1}) \zeta\varepsilon(g),$$
in $\End_{\gal}^{\cont}(\cV(\tau^{\vee}))$.
\end{thm}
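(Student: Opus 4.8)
The plan is to obtain $\upsilon$ from a universal two-dimensional pseudocharacter carried by the endomorphism ring of a projective generator of the block. Concretely, let $\wP$ be a projective generator of the pseudo-compact category anti-equivalent to $\BB_{\rhobar}$ and set $\wE:=\End(\wP)$, so that, as recalled in the introduction, $Z_{\BB_{\rhobar}}$ is the centre $Z(\wE)$; one may choose $\wP$ so that $\wE$ is the ring $E_{\BB_{\rhobar}}$. Since after a finite extension $\OO'/\OO$ the block decomposes into absolutely indecomposable blocks indexed by semisimple $\rhobar$ (as stated above), and both $R^{\ps,\zeta\varepsilon}_{\tr\rhobar}$ and $Z_{\BB_{\rhobar}}$ behave well under such base change, it suffices to construct $\upsilon$ for a single absolutely indecomposable block — $\rhobar$ absolutely irreducible, or $\rhobar=\chi_1\oplus\chi_2$ — and then descend along $\Gal(L'/L)$.

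The first step is to feed $\wP$ into Colmez's functor. I would recall, and in the exceptional cases $p\in\{2,3\}$, $\rhobar=\chi\oplus\chi\omega$ establish using the explicit description of these blocks obtained elsewhere in the paper, that $\mathbf M:=\cV(\wP)$ is a pseudo-compact $\OO\br{\gal}\wtimes\wE$-module on which the $\gal$- and $\wE$-actions commute, that $\mathbf M$ lifts $\rhobar$ modulo the maximal ideal of $\wE$, and that it satisfies the determinant-$\zeta\varepsilon$ constraint coming from the central character $\zeta$ via Colmez's computation of determinants. In the generic blocks $\mathbf M$ is free of rank $2$ over $\wE$ and the resulting $\rho_{\mathbf M}\colon\gal\to\GL_2(\wE^{\op})$ is the universal (framed) deformation of $\rhobar$; in the exceptional blocks one must work harder, and this is the main new input.

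The second step is to form $t\colon\gal\to\wE$, $t(g):=\tr\rho_{\mathbf M}(g)$, to show that $t$ lands in $Z(\wE)=Z_{\BB_{\rhobar}}$ and is a continuous two-dimensional pseudocharacter lifting $\tr\rhobar$ with determinant $\zeta\varepsilon$, and then to let $\upsilon$ be the $\OO$-algebra map furnished by the universal property of $R^{\ps,\zeta\varepsilon}_{\tr\rhobar}$, so that $\upsilon(T(g))=t(g)$. Centrality is the crux: because $\cV$ is a functor, every $\phi\in\wE$ acts $\gal$-equivariantly on $\mathbf M$, so $\phi$ commutes with the subalgebra generated by $\rho_{\mathbf M}(\gal)$, and one must leverage this — together with the two-dimensionality, where $\tr$ becomes the reduced trace — to conclude that $t(g)$ commutes with all of $\wE$; equivalently one invokes the theory of pseudocharacters/determinants, whose formation over a possibly non-commutative ring naturally takes values in its centre. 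One should also note that $\upsilon$ is independent of the choice of $\wP$, since two projective generators are Morita-equivalent and the identification of $Z_{\BB_{\rhobar}}$ with $Z(\wE)$ is canonical.

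For the compatibility with Colmez's functor, fix $\tau\in\Mod^{\lfin}_{G,\zeta}(\OO)_{\BB_{\rhobar}}$. Choosing a presentation of $\tau^{\vee}$ by copies of $\wP$ and using the functoriality of both $c_{(-)}$ and $\cV$, the endomorphism $c_\tau(\upsilon(T(g)))$ of $\tau$ is carried by $\cV$ to multiplication by $t(g)=\tr\rho_{\cV(\tau^\vee)}(g)$ on $\cV(\tau^\vee)$. It then remains to check the purely Galois-side identity $\tr\rho(g)\cdot\id=\rho(g)+\rho(g^{-1})\det\rho(g)$ for a two-dimensional $\rho$ — this is Cayley--Hamilton, as $\rho(g^{-1})\det\rho(g)=\det\rho(g)\,\rho(g)^{-1}$ is the adjugate of $\rho(g)$, namely $\tr\rho(g)-\rho(g)$ — applied to $\rho_{\cV(\tau^\vee)}$, whose determinant is $\zeta\varepsilon$; when $\cV(\tau^\vee)$ has dimension less than $2$ one checks the relevant identity directly, matching $t(g)$ with the value $\chi(g)+(\zeta\varepsilon\,\chi^{-1})(g)$ of the corresponding reducible pseudocharacter. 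The hardest part of the whole argument will be the second step for the exceptional blocks: verifying that Colmez's functor applied to their (much more complicated) projective generators still yields a two-dimensional pseudocharacter valued in the centre with determinant $\zeta\varepsilon$. This requires the fine structural results about these blocks — their projective generators, the rings $\wE$, and the relevant $\Ext$-computations — proved earlier in the paper, and careful bookkeeping of the normalisations and twists built into Colmez's functor.
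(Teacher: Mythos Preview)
Your approach has a genuine gap that breaks outside the supersingular case. You assume that $\mathbf M=\cV(\wP)$ is free of rank $2$ over $\wE$, so that you can form $t(g)=\tr\rho_{\mathbf M}(g)$ and invoke the universal property. In fact this rank is \emph{not} $2$ in the principal series blocks: the paper shows (Proposition~\ref{prop:surjgalE}) that $\cV(\PB')$ is free of rank $1$ over $\EB'$ in cases (ii)--(vi), and the action of $\gal$ is then by a single unit $\gal\to(\EB')^{\times}$, not a $2\times 2$ matrix. There is no trace to take, and no direct way to produce a $2$-dimensional pseudorepresentation from this data alone. (For blocks containing characters there is the further problem that $\cV(\PB)$, over the full ring $\EB$, is not known to be projective at all; the paper avoids this by passing to $\PB'$ and the quotient category $\mathfrak Q(\OO)_{\BB}$.)

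The paper's route is conceptually different and does not attempt to exhibit a trace directly. It first shows that $\OO\br{\gal}\twoheadrightarrow\End_{\EB'}^{\cont}(\cV(\PB'))$ is surjective (Proposition~\ref{prop:surjgalE}) and then, by a density argument using crystabelline Banach space representations (\S\ref{density}, Corollary~\ref{cor:density}, Lemma~\ref{wrong_lemma}), proves that the Cayley--Hamilton ideal $J\subset R^{\ps,\zeta\varepsilon}_{\tr\rhobar}\br{\gal}$ dies in this image. This yields a surjection $R^{\ps,\zeta\varepsilon}_{\tr\rhobar}\br{\gal}/J\twoheadrightarrow\End_{\EB'}^{\cont}(\cV(\PB'))$ (Theorem~\ref{thm:main}), and the image of $R^{\ps,\zeta\varepsilon}_{\tr\rhobar}$ automatically lands in the centre $\ZB'$. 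A separate argument (Corollary~\ref{surjectiveZB}, using the auxiliary module $\MB=\ker(\PB\to(\PB)_{\SL_2(\Qp)})$) then transports this to $\ZB$. The compatibility statement (Proposition~\ref{explain}) is essentially your Cayley--Hamilton observation, but now valid by construction: since $g^2-T(g)g+\zeta\varepsilon(g)=0$ already holds in $R^{\ps,\zeta\varepsilon}_{\tr\rhobar}\br{\gal}/J$, the identity $T(g)=g+g^{-1}\zeta\varepsilon(g)$ passes to every object in the block via $\cV$.
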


The construction of the map \eqref{main_point} is the main point of the paper. Outside the exceptional cases \eqref{main_point}
has been established in \cite{image}, \cite{2adicANT} using  a different argument to the present paper.  Our main result is 

\begin{thm}\label{main_intro} 
The map \eqref{main_point} makes $Z_{\BB_{\rhobar}}$ and $E_{\BB_{\rhobar}}$ into finitely generated $R^{\ps, \zeta\varepsilon}_{\tr\rhobar}$-modules.
\end{thm}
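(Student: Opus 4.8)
The strategy is to reduce the statement to a module-finiteness assertion over the pseudodeformation ring that can be checked on a single well-chosen object, and then to propagate it through the block using the structure theory. First I would fix the absolutely indecomposable block $\BB_{\rhobar}$ and recall from \cite{MR3444235}, \cite{image} the description of the anti-equivalence $\Mod^{\lfin}_{G,\zeta}(\OO)_{\BB_{\rhobar}}\cong \mathrm{Mod}^{\mathrm{pro}}_{\EB_{\rhobar}}$, where $\EB_{\rhobar}=\End_G(\wP)^{\op}$ for $\wP$ a projective envelope of the (finite set of) irreducible objects; concretely $\wP=\bigoplus_i \wP_i$ with $\wP_i$ projective envelope of $\pi_i$. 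The center $\ZB_{\rhobar}$ is then the center of $\EB_{\rhobar}$, and it suffices to show $\EB_{\rhobar}$ is module-finite over $R:=R^{\ps,\zeta\varepsilon}_{\tr\rhobar}$ via $\upsilon$, since $\ZB_{\rhobar}$ is then a sub-$R$-algebra of a noetherian module-finite $R$-algebra, hence itself module-finite (as $R$ is noetherian — $R$ is a quotient of a power series ring over $\OO$ — and submodules of finitely generated modules over noetherian rings are finitely generated). So the whole theorem collapses to: $\EB_{\rhobar}$ is a finitely generated $R$-module.

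The key input is Colmez's functor $\cV$ together with Theorem \ref{intro_explain}. Applying $\cV$ to $\wP^{\vee}$ produces a continuous $\gal$-representation $\rho:=\rho_{\cV(\wP^{\vee})}$ which (by the known description of $\cV$ on projective objects, e.g.\ from \cite{image}) is a deformation of $\rhobar$ to the pseudocompact ring $\EB_{\rhobar}$, and whose associated pseudocharacter factors through $\upsilon:R\to\ZB_{\rhobar}$ into the center. The functoriality in Theorem \ref{intro_explain} says precisely that for $g\in\gal$ the element $\upsilon(T(g))$ acts on $\cV(\wP^{\vee})$ as $\rho(g)+\zeta\varepsilon(g)\rho(g^{-1})$. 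The plan is to exploit this to show that $\EB_{\rhobar}$ is generated as an $R$-module by the image of the $\gal$-action, i.e.\ by finitely many matrix entries of $\rho$. The mechanism is: $\cV$ is faithful on this block (or at least, its effect on endomorphism rings is injective enough — one uses that $\cV(\wP^{\vee})$ is faithful as an $\EB_{\rhobar}$-module, which holds because $\wP$ is a faithful projective generator and $\cV$ is exact and does not kill it), so $\EB_{\rhobar}\hookrightarrow \End^{\cont}_{\gal}(\cV(\wP^{\vee}))^{\op}$, and the target, being the continuous endomorphisms of a rank-$2$-ish deformation, is generated over the image of $R$ by the entries of $\rho(g)$ for $g$ ranging over a topological generating set of $\gal$ — here one invokes that $R$ topologically generates the "pseudo" invariants and a Cayley–Hamilton / Carayol–Serre type argument upgrades the pseudocharacter-generation to generation of the full (generalized matrix) algebra. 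Combined with topological finite generation of $\gal$ (as a profinite group it is topologically finitely generated for local fields), this exhibits $\EB_{\rhobar}$ as topologically finitely generated over $R$; a Nakayama/complete-noetherian argument (both rings are pseudocompact, $R$ noetherian, and one reduces modulo $\mm_R$ to the residual block which is finite-dimensional over $k$ by \cite{MR3444235}) then promotes "topologically finitely generated" to "finitely generated".

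The main obstacle, and the genuinely new content in the exceptional cases $p\in\{2,3\}$, $\rhobar=\chi\oplus\chi\omega$, is the very first nontrivial step: \emph{constructing} the map $\upsilon$ in \eqref{main_point} and verifying the compatibility of Theorem \ref{intro_explain} — equivalently, showing that the elements $\rho(g)+\zeta\varepsilon(g)\rho(g^{-1})\in\End^{\cont}_{\gal}(\cV(\wP^{\vee}))$ actually lie in (the image of) the center $\ZB_{\rhobar}$ rather than merely in $\EB_{\rhobar}$. In the non-exceptional blocks this was done in \cite{image}, \cite{2adicANT} by different methods; here one must argue directly, presumably by analyzing $\End_G(\wP)$ through its action on the universal deformation of $\rhobar$ — i.e.\ showing that $\cV(\wP^{\vee})$ carries an action of $R$ commuting with $\gal$ and with $\EB_{\rhobar}$, using that a pseudodeformation with scalar-commuting properties and the specific structure of the block forces the relevant trace elements to be central. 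Once $\upsilon$ and its compatibility are in hand, the reduction to residual finite-dimensionality and the Nakayama argument are essentially formal, as is the passage from $\EB_{\rhobar}$ to $\ZB_{\rhobar}$ and to Theorem \ref{intro_finite} (which follows by letting $\rhobar$ vary over the finitely many blocks relevant to a fixed $\zeta$, after a finite extension of scalars as in the excerpt, and descending).
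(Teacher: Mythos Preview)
Your proposal has two genuine gaps, one conceptual and one circular.

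\textbf{The direction confusion.} You embed $\EB$ into $\End^{\cont}_{\gal}(\cV(\wP^{\vee}))^{\op}$, the ring of \emph{Galois-equivariant} endomorphisms, and then assert this target is ``generated over the image of $R$ by the entries of $\rho(g)$''. But the entries of $\rho(g)$ generate the \emph{image of} $\OO\br{\gal}$ inside $\End_{\EB'}(\cV(\PB'))$, which is the ring of $\EB'$-\emph{linear} endomorphisms --- the opposite side of the double centralizer picture. There is no reason for the Galois-commutant to be finite over $R$ without further input. The paper runs the argument in the other direction: it shows that the Galois action gives a \emph{surjection} $\OO\br{\gal}\twoheadrightarrow \End_{\EB'}^{\cont}(\cV(\PB'))\cong (\EB')^{\op}$ (Proposition~\ref{prop:surjgalE}), then proves this factors through the Cayley--Hamilton quotient $R^{\ps,\zeta\varepsilon}_{\tr\rhobar}\br{\gal}/J$, and finally invokes Wang--Erickson's result that $R^{\ps,\zeta\varepsilon}_{\tr\rhobar}\br{\gal}/J$ is finite over $R^{\ps,\zeta\varepsilon}_{\tr\rhobar}$. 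The factorization step (Theorem~\ref{thm:main}) is the real content: it needs the density argument of Section~\ref{density}, showing that $\cV(\PB')$ injects into a product of crystabelline (and their infinitesimal thickenings) Galois representations on which the Cayley--Hamilton relation visibly holds. Your ``Cayley--Hamilton / Carayol--Serre'' hand-wave does not supply this.

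\textbf{Circularity in the Nakayama step.} You claim $\EB/\mm_R\EB$ is finite-dimensional ``by \cite{MR3444235}''. That reference shows each block has finitely many irreducibles, which gives $\dim_k \EB/\rr(\EB)<\infty$, not $\dim_k \EB/\mm_R\EB<\infty$. The latter is equivalent (Corollary~\ref{new_fibre}) to $k\wtimes_{R}\PB$ having finite length, and the paper deduces this \emph{from} the theorem you are trying to prove, not as input to it. You also do not distinguish $\PB$ from $\PB'$: since $\cV$ kills characters, one first proves finiteness of $\EB'=\End(\PB')$ over $R$ and then transfers to $\EB$ via $\MB=\ker(\PB\to (\PB)_{\SL_2(\Qp)})$, showing $\End(\MB)\cong\EB$ and that $\MB$ is a quotient of $(\PB')^{\oplus n}$ (Section~\ref{centre_dash}). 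This transfer step is not formal and is absent from your outline.
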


Since $R^{\ps, \zeta\varepsilon}_{\tr\rhobar}$ is known to be noetherian by the work of Chenevier \cite{che_durham}, Theorem \ref{main_intro} implies Theorem \ref{intro_finite}. Moreover, Theorems \ref{main_intro},  \ref{intro_explain} are sufficient to remove the restrictions in Lue Pan's paper. Further, we can reprove most of the results concerning Banach space representations in \cite{image}, see section \ref{main_banach} and Corollary \ref{irr_irr}.

To give a flavour of the results on Banach space representations, we will explain the following special case. Let $\Ban^{\adm}_{G, \zeta}(L)$ be the category of admissible unitary $L$-Banach space representations of $G$ with central character $\zeta$. This category is abelian, \cite{st_iw}.  By \cite{CDP}, Colmez's functor induces a bijection between  the equivalence classes of absolutely irreducible non-ordinary $\Pi \in \Ban^{\adm}_{G, \zeta}(L)$ and absolutely irreducible Galois representations $\rho: \gal\rightarrow \GL_2(L)$. We show that there are no extensions between $\Pi$ and other irreducible representations in $\Ban^{\adm}_{G, \zeta}(L)$. Hence $\Ban^{\adm}_{G, \zeta}(L)^{\fl}_{\Pi}$ is a direct summand of $\Ban^{\adm}_{G, \zeta}(L)^{\fl}$, where the superscript $\fl$ indicates finite length and subscript $\Pi$ indicates that all the irreducible subquotients are isomorphic to $\Pi$. We show in Corollary \ref{irr_irr} that $\cV$ induces an anti-equivalence of categories between $\Ban^{\adm}_{G, \zeta}(L)^{\fl}_{\Pi}$ and the category of modules of finite length over $R^{\zeta\varepsilon}_{\rho}$, the universal deformation ring of $\rho$ parameterizing deformations of $\rho$ with determinant 
equal to $\zeta \varepsilon$ to local artinian $L$-algebras. Such results were known before for $p\ge 5$, \cite{image} and under assumptions on the reduction modulo $p$ of a $G$-invariant lattice in $\Pi$ if $p=2$ or $p=3$, \cite{2adicANT}.

Let $R^{\ps, \zeta\varepsilon}_{\tr \rhobar}\br{\gal}/J$ be the largest quotient of $R^{\ps, \zeta\varepsilon}_{\tr \rhobar}\br{\gal}$ such that the Cayley--Ha\-mil\-ton 
theorem holds for the universal pseudorepresentation with determinant $\zeta\varepsilon$ lifting $\tr \rhobar$. Such  algebras have been studied by Bella\"{i}che--Chenevier \cite{bel_che}, Chenevier \cite{che_durham}; they play a key role in this paper. The subscript $\tf$ will indicate the maximal $\OO$-torsion free quotient.
Our second main result asserts that 

\begin{thm}\label{intro_main2} 
The essential image of $\Mod^{\lfin}_{G, \zeta}(\OO)_{\BB_{\rhobar}}$ under $\cV$ is anti-equivalent to the category of pseudo-compact 
$(R^{\ps, \zeta\varepsilon}_{\tr \rhobar}\br{\gal}/J)_{\tf}$-modules. The map \eqref{main_point} induces an isomorphism  
\begin{equation}\label{intro_invertp}
R^{\ps, \zeta\varepsilon}_{\tr \rhobar}[1/p]\overset{\cong}{\longrightarrow}Z_{\BB_{\rhobar}}[1/p].
\end{equation}
Moreover, if $p\neq 2$ then $Z_{\BB_{\rhobar}}= (R^{\ps, \zeta\varepsilon}_{\tr\rhobar_{\BB}})_{\tf}$ and if $p=2$ then the cokernel of \eqref{main_point} is killed by $2$.
\end{thm}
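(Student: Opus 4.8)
The plan is to deduce all three assertions from Theorem \ref{intro_explain} together with the structure theory of blocks recalled in the introduction. First I would establish the anti-equivalence between the essential image of $\Mod^{\lfin}_{G, \zeta}(\OO)_{\BB_{\rhobar}}$ under $\cV$ and the category of pseudo-compact $(R^{\ps, \zeta\varepsilon}_{\tr \rhobar}\br{\gal}/J)_{\tf}$-modules. Since each block is anti-equivalent to the category of pseudo-compact $\EB$-modules, it suffices to identify $\EB$, or rather a Morita-equivalent completed group-algebra-type object, with $(R^{\ps, \zeta\varepsilon}_{\tr \rhobar}\br{\gal}/J)_{\tf}$. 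The compatibility in Theorem \ref{intro_explain} says exactly that the tautological action of $\gal$ on $\cV(\tau^\vee)$ is Cayley--Hamilton over the image of $R^{\ps, \zeta\varepsilon}_{\tr \rhobar}$, with the prescribed trace and determinant; by the universal property of $R^{\ps, \zeta\varepsilon}_{\tr \rhobar}\br{\gal}/J$ this produces a map from that algebra to $\End$ of a suitable projective generator, and the $\OO$-torsion-freeness of the relevant modules forces it to factor through the $\tf$-quotient. The reverse arrow comes from Colmez's functor being fully faithful on the relevant subcategory, which is where I would invoke the results of \cite{image}, \cite{2adicANT}, \cite{CDP} and the explicit knowledge of the blocks; the exceptional blocks require the new input of this paper, but the formal shape of the argument is the same for all $p$.

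Next, for \eqref{intro_invertp}, I would show $\upsilon[1/p]$ is both injective and surjective. Surjectivity: any element of $Z_{\BB_{\rhobar}}[1/p]$ acts on every $\tau$, in particular on finite-length Banach-space representations, hence via $\cV$ on the associated Galois representations as a continuous endomorphism commuting with $\gal$; by Schur's lemma on the absolutely irreducible pieces and a density/Zariski-closure argument over $R^{\ps, \zeta\varepsilon}_{\tr \rhobar}[1/p]$ this endomorphism is a scalar that is a continuous function of the pseudorepresentation, i.e. lies in the image of $\upsilon[1/p]$. Here the point is that after inverting $p$ the pseudo-deformation ring is reduced (in fact $R^{\ps, \zeta\varepsilon}_{\tr \rhobar}[1/p]$ is Jacobson with dense absolutely irreducible locus), so an element vanishing at all absolutely irreducible points is zero. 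Injectivity: if $\upsilon(r)$ acts as zero on all $\tau$, then in particular it kills all the finite-length Banach representations, so $r$ maps to zero in the completed local rings at all absolutely irreducible non-ordinary $\rho$, and again by reducedness after inverting $p$ we get $r=0$ in $R^{\ps, \zeta\varepsilon}_{\tr \rhobar}[1/p]$. One has to be slightly careful at the reducible (ordinary) points, where $\cV$ behaves differently, but these form a closed subset of smaller dimension and can be handled either by a direct computation with principal series or by noting they are still detected by parabolic induction; this bookkeeping is the kind of routine case analysis I would not spell out in full.

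Finally, for the integral statement, I would first treat $p\neq 2$. The target $Z_{\BB_{\rhobar}}$ is $\OO$-torsion free (it embeds into $\prod \End_G(\tau)$ over torsion-free $\tau$'s, or one argues directly), so $\upsilon$ factors through $(R^{\ps, \zeta\varepsilon}_{\tr \rhobar})_{\tf}$; combined with \eqref{intro_invertp} this exhibits $(R^{\ps, \zeta\varepsilon}_{\tr \rhobar})_{\tf}\hookrightarrow Z_{\BB_{\rhobar}}$ as a map which is an isomorphism after inverting $p$, and it remains to check it is surjective mod $p$, equivalently that the cokernel has no $p$-torsion, equivalently that $\upsilon$ is surjective. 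For this I would use a deformation-theoretic computation on tangent spaces: $Z_{\BB_{\rhobar}}/\mathfrak m_\OO$ is computed from endomorphisms of the universal object in the block over $k$, and by the explicit description of blocks this is controlled by the same Ext-groups of Galois representations that compute the pseudo-deformation ring, with no discrepancy when $p\neq 2$. When $p=2$ there is a genuine obstruction: the relevant Ext or trace computation picks up a factor of $2$ (this is the familiar phenomenon that $\rho$ and its twist can coincide, or that $\SL_2$-versus-$\GL_2$ issues appear), so the cokernel is $2$-torsion but need not vanish; one checks the discrepancy is killed by $2$ by the same tangent-space computation, now keeping track of the $2$-part. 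I expect the main obstacle to be precisely this last point — pinning down that in the $p=2$ exceptional blocks the cokernel is exactly $2$-torsion and no worse — since it requires a careful comparison of the block structure (which is the delicate new computation of the paper for $p=2,3$) with the Cayley--Hamilton algebra, rather than a soft formal argument.
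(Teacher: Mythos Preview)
Your proposal has the right overall shape but misses two key concrete inputs that the paper relies on.

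\textbf{The anti-equivalence.} You say the ``reverse arrow comes from Colmez's functor being fully faithful on the relevant subcategory.'' That gives you a surjection $(R^{\ps, \zeta\varepsilon}_{\tr \rhobar}\br{\gal}/J)_{\tf} \twoheadrightarrow \End_{\EB'}^{\cont}(\cV(\PB'))$, but not injectivity. The paper's argument for injectivity is specific: one exhibits a particular object in the essential image of $\cV$ on which $(R^{\ps, \zeta\varepsilon}_{\tr \rhobar}\br{\gal}/J)_{\tf}$ acts faithfully. This object is the universal framed deformation $V_S$ with determinant $\zeta\varepsilon$ (Proposition \ref{VS}, using \cite{CDP}, \cite{CDP2} to show it lies in the image of $\cV$), and the faithfulness is proved in the appendix (Corollary \ref{faithful}) via Procesi's theory of Cayley--Hamilton algebras over $\QQ$-algebras: one embeds $E[1/p]$ into $M_d(A^{\gen}[1/p])$ and compares with completions of $A^{\gen}$ at closed points. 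Full faithfulness of $\cV$ on the quotient category alone does not obviously produce this.

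\textbf{The integral statement.} Your proposed approach via tangent spaces and Ext-groups is not what the paper does and is unlikely to yield the clean statement that the cokernel is killed by exactly $2$. The paper's argument (Proposition \ref{Ztf}) is purely algebraic and much simpler: once $E_{\tf}:=(R^{\ps, \zeta\varepsilon}_{\tr \rhobar}\br{\gal}/J)_{\tf}$ is identified with $\End_{\EB'}^{\cont}(\cV(\PB'))$, any central element $z\in Z(E_{\tf})$ maps, via the faithful action on $V_S$, into the centralizer of $\rho^{\square}(\gal)$ in $M_2(R^{\square,\zeta\varepsilon}_{\rhobar}[1/p])$. A density-of-irreducibles argument (Lemma \ref{centralizer}) shows this centralizer consists of scalar matrices. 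Since the characteristic polynomial of $j(z)$ has coefficients in $(R^{\ps,\zeta\varepsilon}_{\tr\rhobar})_{\tf}$, taking the trace gives $2z = d\cdot z \in (R^{\ps,\zeta\varepsilon}_{\tr\rhobar})_{\tf}$. So the factor of $2$ is literally $d=\dim\rhobar$, not an Ext computation. This simultaneously gives the isomorphism after inverting $p$ and the $p=2$ cokernel bound; for $p\neq 2$ one just divides by $2$.

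Finally, note that the paper works throughout with $\ZB'$ (the centre coming from the quotient category $\mathfrak Q(\OO)_{\BB}$) and proves $\ZB'=\ZB$ separately (Corollary \ref{Zequal}) via a density argument. Your proposal conflates these, which would cause trouble in the blocks containing characters.
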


\begin{cor}\label{intro_tf_red} 
$Z_{\BB_{\rhobar}}$ is a complete local noetherian $\OO$-algebra  with residue field 
$k$. It is $\OO$-torsion free and $Z_{\BB_{\rhobar}}[1/p]$ is normal. 
\end{cor}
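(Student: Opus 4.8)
The plan is to deduce Corollary~\ref{intro_tf_red} from Theorem~\ref{intro_main2} together with the known structure of $R^{\ps, \zeta\varepsilon}_{\tr\rhobar}$. First I would recall from the work of Chenevier \cite{che_durham} (and the general theory of pseudodeformation rings) that $R^{\ps, \zeta\varepsilon}_{\tr\rhobar}$ is a complete local noetherian $\OO$-algebra with residue field $k$: it pro-represents a deformation functor on complete local noetherian $\OO$-algebras with residue field $k$, so it is a quotient of a power series ring $\OO\br{x_1,\dots,x_n}$, hence complete, local, noetherian, with residue field $k$. By Theorem~\ref{intro_main2}, the map \eqref{main_point} identifies $Z_{\BB_{\rhobar}}$ with $(R^{\ps, \zeta\varepsilon}_{\tr\rhobar})_{\tf}$ when $p\neq 2$, and when $p=2$ it exhibits $Z_{\BB_{\rhobar}}$ as an extension of $(R^{\ps, \zeta\varepsilon}_{\tr\rhobar})_{\tf}$ by a module killed by $2$, hence in particular a finitely generated module over the noetherian ring $R^{\ps, \zeta\varepsilon}_{\tr\rhobar}$ which is moreover a quotient of it (the image of \eqref{main_point} has cokernel killed by $2$, but one still needs to see $Z_{\BB_{\rhobar}}$ is generated as an algebra, not merely as a module, by the image; this should follow since $Z_{\BB_{\rhobar}}$ is commutative and \eqref{main_point} together with \eqref{intro_invertp} forces the image to be a subring of finite index, and any intermediate ring between $R/(\text{finite})$ and its normalisation-type overring is itself a quotient of a noetherian ring — more directly, $Z_{\BB_{\rhobar}}$ is a quotient of $R^{\ps,\zeta\varepsilon}_{\tr\rhobar}\br{\gal}/J$ by Theorem~\ref{intro_main2}, so noetherian). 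So in all cases $Z_{\BB_{\rhobar}}$ is a complete local noetherian $\OO$-algebra with residue field $k$, the completeness and locality being inherited from being a module-finite algebra over the complete local ring $R^{\ps, \zeta\varepsilon}_{\tr\rhobar}$.

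Next I would verify the torsion-freeness. The cleanest route is to use the identification with $(R^{\ps, \zeta\varepsilon}_{\tr\rhobar})_{\tf}$ (for $p\neq 2$) or with the $\tf$-quotient of the Cayley--Hamilton algebra: by definition the subscript $\tf$ denotes the maximal $\OO$-torsion free quotient, so $Z_{\BB_{\rhobar}}$ is $\OO$-torsion free essentially by construction in the $p\neq 2$ case. For $p=2$ one argues that $Z_{\BB_{\rhobar}}$ injects into $Z_{\BB_{\rhobar}}[1/p]\cong R^{\ps, \zeta\varepsilon}_{\tr\rhobar}[1/p]$: indeed the kernel of $Z_{\BB_{\rhobar}}\to Z_{\BB_{\rhobar}}[1/p]$ is exactly the $\OO$-torsion (equivalently $p$-power torsion) submodule, so it suffices to show \eqref{intro_invertp} combined with the fact that $Z_{\BB_{\rhobar}}$ acts faithfully (via $c_\tau$) on the category, hence on objects where $p$ is invertible, forces no nonzero $p$-power-torsion element of $Z_{\BB_{\rhobar}}$ to act by zero — alternatively, and more cleanly, $Z_{\BB_{\rhobar}}$ is identified in Theorem~\ref{intro_main2} with a $\tf$-quotient, which is $\OO$-torsion free by definition. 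Finally, $Z_{\BB_{\rhobar}}[1/p]$ is normal because it is isomorphic to $R^{\ps, \zeta\varepsilon}_{\tr\rhobar}[1/p]$ by \eqref{intro_invertp}, and the normality of $R^{\ps, \zeta\varepsilon}_{\tr\rhobar}[1/p]$ is a theorem of Chenevier \cite{che_durham} (pseudodeformation rings, after inverting $p$, are normal — in dimension $2$ one has an explicit description in terms of trace and determinant making this transparent).

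The main obstacle I anticipate is not any single step but rather pinning down the $p=2$ case with full rigour: one must be careful that "the cokernel of \eqref{main_point} is killed by $2$" really does give that $Z_{\BB_{\rhobar}}$ is noetherian and $\OO$-torsion free, since a priori the cokernel statement alone controls only the additive structure. The resolution is to invoke the other assertion of Theorem~\ref{intro_main2}, namely that the essential image of $\Mod^{\lfin}_{G,\zeta}(\OO)_{\BB_{\rhobar}}$ under $\cV$ is the category of pseudo-compact $(R^{\ps,\zeta\varepsilon}_{\tr\rhobar}\br{\gal}/J)_{\tf}$-modules; tracing through the equivalence, $Z_{\BB_{\rhobar}}$ is the centre of this category, hence a quotient (the centre) of the noetherian ring $(R^{\ps,\zeta\varepsilon}_{\tr\rhobar}\br{\gal}/J)_{\tf}$, which is noetherian because it is module-finite over $R^{\ps,\zeta\varepsilon}_{\tr\rhobar}$ by Theorem~\ref{main_intro}; and being a quotient of an $\OO$-torsion free ring that embeds into its $p$-inversion, combined with \eqref{intro_invertp}, gives $\OO$-torsion freeness. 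Everything else is then a formal consequence of results already in place.
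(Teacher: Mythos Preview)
Your overall plan—deduce everything from Theorem~\ref{intro_main2} together with structural facts about $R^{\ps,\zeta\varepsilon}_{\tr\rhobar}$—is the right idea, and it is essentially how the paper proceeds (see Corollary~\ref{tf_red} and Corollary~\ref{Zequal}). But two points need correction.

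First, the normality of $R^{\ps,\zeta\varepsilon}_{\tr\rhobar}[1/p]$ is \emph{not} a theorem of Chenevier. Chenevier shows this ring is noetherian, but normality of the generic fibre is precisely one of the new contributions of the paper, proved in the appendix (Corollary~\ref{normal}). The argument there reduces to normality of $R^{\square}_{\rhobar}[1/p]$, which for the exceptional primes $p=2,3$ requires nontrivial input (\cite{CDP2} for $p=2$, and an explicit computation using B\"ockle's presentation for $p=3$). So you cannot simply cite this; it is part of what must be established.

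Second, your handling of $\OO$-torsion freeness in the $p=2$ case is tangled, and the alternative you offer (``identified with a $\tf$-quotient'') is not what Theorem~\ref{intro_main2} says for $p=2$. Moreover, the centre of a ring is a subring, not a quotient, so your ``quotient of an $\OO$-torsion free ring'' line does not parse. The paper's argument is much cleaner and uniform in $p$: $\ZB$ is by definition the centre of $\EB=\End_{\dualcat(\OO)}(\PB)$, hence acts faithfully on $\PB$; and $\PB$ is $\OO$-torsion free because projectives in $\dualcat(\OO)$ are projective over $\OO\br{K'}$ for a suitable pro-$p$ group $K'$ (see the proofs of Theorem~\ref{main2} and Corollary~\ref{Zequal}). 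Faithful action on a torsion-free module immediately gives that $\ZB$ is $\OO$-torsion free, with no case distinction needed.

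A smaller point: locality with residue field $k$ does not follow formally from being module-finite over a complete local ring (a finite algebra over a local ring can be semi-local). The paper proves locality directly in Lemma~\ref{ZB_local}, using that all irreducibles in the block share the same central character.
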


If we are not in  the exceptional cases then Theorem \ref{main_intro} is proved in \cite{image}, \cite{2adicANT} essentially by first computing $\EB$ and then its centre $\ZB$. Moreover, it is proved there that  \eqref{main_point} is an isomorphism and  $R^{\ps, \zeta\varepsilon}_{\tr \rhobar}\br{\gal}/J$ is $\OO$-torsion free.  The argument in this paper, sketched in section \ref{sketch} below,  is different: we do not compute $\EB$. 

Our original strategy for proving \eqref{intro_invertp} in this paper was to use  an argument of Gabber in Appendix to \cite{gabber}. We showed that  $R^{\ps, \zeta\varepsilon}_{\tr \rhobar}[1/p]$ is normal, $\ZB[1/p]$ is reduced, and \eqref{main_point} induces a bijection on maximal spectra $\mSpec \ZB[1/p]\rightarrow \mSpec R^{\ps, \zeta\varepsilon}_{\tr \rhobar}[1/p]$ and 
isomorphism of the residue fields. However, this is replaced by a different argument in the final version, which also proves the first part of Theorem \ref{intro_main2}. One important ingredient in the proof are results of Colmez, Dospinescu and VP \cite{CDP}, \cite{CDP2}, which imply that the universal framed deformation of $\rhobar$ with determinant $\zeta \varepsilon$ lies in the image of $\cV$.  We show in the appendix \ref{appendix} that  $(R^{\ps, \zeta\varepsilon}_{\tr\rhobar}\br{\gal}/J)_{\tf}$ acts faithfully on this representation using the theory of Cayley--Hamilton algebras, \cite{Pro87}.  

The normality of the ring $R^{\ps, \zeta\varepsilon}_{\tr \rhobar}[1/p]$ is proved in the appendix, where we show if the generic fibre of the framed deformation ring $R^{\square}_{\rhobar}[1/p]$ is normal then $R^{\ps, \zeta\varepsilon}_{\tr \rhobar}[1/p]$ and the corresponding rigid analytic space $(\Spf R^{\ps, \zeta\varepsilon}_{\tr \rhobar})^{\rig}$ are normal. The same applies to the deformation rings without the fixed determinant condition. In fact, we prove this statement not only for $\gal$,  but for any pro-finite group satisfying Mazur's
finiteness condition at $p$. We then show that $R^{\square}_{\rhobar}[1/p]$ is normal for all $2$-dimensional semi-simple representations of $\gal$; the hard cases are precisely those corresponding to the exceptional blocks, if $p=2$ then the assertion has been shown in \cite{CDP2}, if $p=3$ then we give a proof in the appendix based on B\"ockle's explicit description of the framed deformation ring in \cite{boeckle}. We note that the argument of \cite{CDP2} has been generalized by Iyengar \cite{Iy} showing that when $\rhobar$ is the trivial $d$-dimensional 
representation of the absolute Galois group of a $p$-adic field, containing $4$-th root of unity if $p=2$, then $R^{\square}_{\rhobar}[1/p]$ is normal, so our result also apply in this setting. We expect\footnote{This has now been proved in \cite[Corollary 4.22]{BIP}.} the rings $R^{\square}_{\rhobar}[1/p]$ to be normal for any $d$-dimensional representation $\rhobar$ of $\GG_F$, where $F$ is a finite extension of $\Qp$.

\subsection{A sketch of the proof}\label{sketch} 
We will now explain the construction of the map \eqref{main_point}. To fix ideas we will discuss a special case $p=2$,  $\rhobar=\Eins \oplus \omega$ and $\zeta=\Eins$. Since the cyclotomic character is trivial modulo $2$, $\rhobar=\Eins \oplus \Eins$. The corresponding block has $2$ irreducible representations: the trivial  $\Eins$ and the smooth Steinberg representation $\Sp$. Instead of working with representations on $\OO$-torsion modules it is more convenient to use Pontryagin duality and work with representations of $G$ on compact $\OO$-modules. We denote by $\dualcat(\OO)_{\BB}$ the category  anti-equivalent to $\Mod^{\lfin}_{G, \zeta}(\OO)_{\BB}$ under the Pontryagin duality.

Let $P_{\Eins^{\vee}}$ and $P_{\Sp^{\vee}}$ be projective envelopes of $\Eins^{\vee}$ and $\Sp^{\vee}$ in $\dualcat(\OO)_{\BB}$, respectively. Then $P_{\BB}:= P_{\Eins^{\vee}}\oplus P_{\Sp^{\vee}}$ is a projective generator of $\dualcat(\OO)_{\BB}$ and by results of Gabriel \cite{MR232821} the category $\dualcat(\OO)_{\BB}$ is equivalent to the category of pseudo-compact modules of $E_{\BB}:=\End_{\dualcat(\OO)}(\PB)$. The equivalence is induced by the functors
$$N\mapsto \Hom_{\dualcat(\OO)}(\PB, N),\quad  \md\mapsto \md \wtimes_{\EB} \PB.$$
The centre $\ZB$ of the category $\dualcat(\OO)_{\BB}$ is naturally isomorphic to the centre of the ring $\EB$. 

Colmez's functor kills all objects on which $\SL_2(\Qp)$ acts trivially, and these form a thick subcategory. Thus $\cV$ factors through the quotient category, which we denote by  $\mathfrak Q(\OO)_{\BB}$ and let $\mathcal T: \dualcat(\OO)_{\BB}\rightarrow \mathfrak Q(\OO)_{\BB}$ be the quotient functor. Moreover, $\cV$ induces an equivalence of categories between $\mathfrak Q(\OO)_{\BB}$ and its essential image under $\cV$. To prove this one needs to show that $\cV$ sends non-split extensions to non-split extensions, such arguments are originally due to Colmez, in the case $p=2$ this has been shown by the second author in his thesis, \cite{Tung2020}.

Since $\mathcal T(\Eins^{\vee})=0$, $\mathcal T(\Sp^{\vee})$ is the only irreducible object in $\mathfrak Q(\OO)_{\BB}$ up to isomorphism. Moreover, it is shown in \cite{image} that $\mathcal T(P_{\Sp^{\vee}})$ is a projective envelope of $\mathcal T(\Sp^{\vee})$, and $\mathcal T$ induces an isomorphism
$$ \EB':=\End_{\dualcat(\OO)}(P_{\Sp^{\vee}})\cong\End_{\mathfrak Q(\OO)}(\mathcal TP_{\Sp^{\vee}}).$$ 
Since $\mathcal T(\Sp^{\vee})$ is the only irreducible in $\mathfrak Q(\OO)_{\BB}$, $\mathcal T(P_{\Sp^{\vee}})$ is a projective generator of $\mathfrak Q(\OO)_{\BB}$, and thus $\mathfrak Q(\OO)_{\BB}$ is equivalent to the category of pseudo-compact $\EB'$-modules. This implies that $\mathcal T(P_{\Sp^{\vee}})$ and hence, by equivalence of categories, $\cV(P_{\Sp^{\vee}})$ are flat over $\EB'$. Since $\cV(\Sp^{\vee})$ is a one dimensional representation of $\gal$, in fact the trivial representation with our normalizations, an application of Nakayama's lemma shows that $\cV(P_{\Sp^{\vee}})$ is a free $\EB'$-module of rank $1$. The action of $\gal$ on $\cV(P_{\Sp^{\vee}})$ commutes with the action of $\EB'$ and so induces a homomorphism 
$$\alpha: \OO\br{\gal} \rightarrow \End_{\EB'}(\cV(P_{\Sp^{\vee}}))\cong (\EB')^{\op}.$$ 
We show that this map is surjective. In general the argument is carried out in section \ref{endo} in an abstract setting and then in Proposition \ref{prop:surjgalE} we verify that the conditions of the abstract setting are satisfied. However, in the special case under the consideration  the argument is easier: since $\cV$ induces an equivalence of categories between $\mathfrak Q(\OO)_{\BB}$
and its essential image, the $\EB'$-cosocle and $\OO\br{\gal}$-cosocle of $\cV(P_{\Sp^{\vee}})$ coincide. This implies that there is $v\in\cV(P_{\Sp^{\vee}})$, which is a generator of $\cV(P_{\Sp^{\vee}})$ both as $\EB'$- and as $\OO\br{\gal}$-module. This implies that $\alpha$ is surjective.

We also show in section \ref{section:pseudorepresentations}  that the natural map
$$\beta: \OO\br{\gal}\rightarrow R^{\ps, \zeta\varepsilon}_{\tr \rhobar}\br{\gal}/J$$ 
is surjective, where $R^{\ps, \zeta\varepsilon}_{\tr \rhobar}\br{\gal}/J$ is the largest quotient of $R^{\ps, \zeta\varepsilon}_{\tr \rhobar}\br{\gal}$ such that the Cayley--Hamilton theorem holds for the universal pseudorepresentation with determinant $\zeta\varepsilon$ lifting $\tr \rhobar$. 

The idea is to show that $\Ker \alpha$ contains $\Ker \beta$, since this implies that the action of $\gal$ on $\cV(P_{\Sp^{\vee}})$ induces surjections: 
\begin{equation}\label{intro_surj}
\OO\br{\gal}\twoheadrightarrow R^{\ps, \zeta\varepsilon}_{\tr \rhobar}\br{\gal}/J\twoheadrightarrow \EB'.
\end{equation}
This is proved using the results of Berger--Breuil \cite{MR2642406} on universal unitary completions of locally algebraic principal series and density arguments already used in \cite{CDP} and also in \cite{2018arXiv180307451T}. Morally, the argument should be that $\cV(P_{\Sp^{\vee}})$ injects into the product of all $2$-dimensional crystabelline representations of $\gal$ with mod $p$ reduction isomorphic to $\rhobar$ and determinant $\zeta \varepsilon$, then an element in $\Ker \beta$ would kill this  product and hence $\cV(P_{\Sp^{\vee}})$. In reality the argument is technically a bit more complicated: we also have to consider deformations of such representations to local artinian $L$-algebras, see section \ref{density}.

Wang-Erickson has proved in \cite{WE_alg} that $R^{\ps, \zeta\varepsilon}_{\tr \rhobar}\br{\gal}/J$ is a finitely generated $R^{\ps, \zeta\varepsilon}_{\tr \rhobar}$-module. The surjection \eqref{intro_surj} implies that the image of $R^{\ps, \zeta\varepsilon}_{\tr \rhobar}$ is contained in the centre of $\EB'$, which we denote by $\ZB'$. Moreover, both $\EB'$ and $\ZB'$ are finite $R^{\ps, \zeta\varepsilon}_{\tr \rhobar}$-modules. We show that \eqref{intro_surj} induces an isomorphism 
$$ (R^{\ps, \zeta\varepsilon}_{\tr \rhobar}\br{\gal}/J)_{\tf}\overset{\cong}{\longrightarrow} \EB'$$ 
by showing that  the universal framed deformation of $\rhobar$ with determinant $\zeta \varepsilon$ lies in the image of $\cV$ using \cite{CDP}, \cite{CDP2} and $(R^{\ps, \zeta\varepsilon}_{\tr \rhobar}\br{\gal}/J)_{\tf}$ acts faithfully on it, see Proposition \ref{Ztf}. The assertions about the centre in Theorem \ref{intro_main2} with $\ZB'$ instead of $\ZB$ are proved by studying the centre of $(R^{\ps, \zeta\varepsilon}_{\tr \rhobar}\br{\gal}/J)_{\tf}$. This argument is carried out in the appendix for $d$-dimensional representations of any profinite group, satisfying Mazur's finiteness condition at $p$.

We then transfer this result from $\EB'$ and $\ZB'$ to $\EB$ and $\ZB$. Let $M_{\BB}$ be the kernel of $P_{\BB}\rightarrow (P_{\BB})_{\SL_2(\Qp)}$. We show that 
$$\End_{\dualcat(\OO)}(M_{\BB})\cong \End_{\dualcat(\OO)}(P_{\BB})=E_{\BB}$$ 
by examining various exact sequences and showing that certain $\Ext$-groups vanish. We also show that the cosocle of $M_{\BB}$ is a direct sum of finitely many copies of $\Sp^{\vee}$. Thus $M_{\BB}$ is a quotient of $(\PB')^{\oplus n}$ for some $n\ge 1$. This allows to conclude that $\End_{\dualcat(\OO)}(M_{\BB})$ and its centre are finitely generated $\ZB'$-modules, which finishes the proof of Theorem \ref{main_intro}. The arguments showing finiteness of $\EB$ and $\ZB$ over $\ZB'$ are carried out in section \ref{centre_dash}. Then with some more effort we are able to show that $\ZB'=\ZB$, see Corollary \ref{Zequal}.

\subsection{What is left to do?} Although we believe that our results will suffice for most number theoretic applications, for example \cite{luepan}, to complete the programme started in \cite{image} one would have to compute the ring $\EB$ in the exceptional cases. This will be harder than \cite[Section 10.5]{image}, which is already quite involved. We expect that the map in \eqref{intro_surj} induces  isomorphisms 
$$R^{\ps, \zeta\varepsilon}_{\tr \rhobar}\br{\gal}/J\overset{\cong}{\longrightarrow} \EB', \quad R^{\ps, \zeta\varepsilon}_{\tr \rhobar}\overset{\cong}{\longrightarrow} \ZB$$ 
This is known to hold for all blocks except for the exceptional ones.  Theorem \ref{intro_main2} implies that to prove this result it would be enough to show that $R^{\ps, \zeta\varepsilon}_{\tr \rhobar}\br{\gal}/J$ is $\OO$-torsion free and for the second isomorphism in the case $p=3$ it would be enough to show\footnote{This follows from \cite[Corollary 5.11]{BIP}.} that $R^{\ps, \zeta\varepsilon}_{\tr \rhobar}$ is $\OO$-torsion free. 

It seems likely that using the results of this paper one can remove the restriction on the prime $p$ in Lue Pan's  work \cite{luepan2} on the Fontaine--Mazur conjecture in the residually reducible case, which generalizes the work of Skinner--Wiles. We hope to return to these questions in future work. 

\subsection{Acknowledgements.} VP would like to thank Toby Gee for organizing a seminar on Lue Pan's paper \cite{luepan} and for the invitation to participate. Lue Pan's paper 
was  a major source of motivation to deal with the exceptional cases.  VP would like to thank Patrick Allen and Gebhard B\"ockle for stimulating correspondence. Parts of the paper were written, when SNT was visiting Academia Sinica. SNT would like to thank Carl Wang-Erickson for stimulating correspondence. 
The authors would like to thank Gabriel Dospinescu and Toby Gee for their detailed comments on an
earlier draft. 

\section{Endomorphism rings}\label{endo}

Let $E$ be a pseudo-compact $\OO$-algebra and let $\PC(E)$ be the category of left pseudo-compact $E$-modules, see \cite{brumer}, \cite[\S IV.3]{MR232821}. Let $\Irr(E)$ be the set of equivalence classes of irreducible objects in $\PC(E)$.
 
Let $M$ be in $\PC(E)$. We assume that we are given a continuous $E$-linear action of a profinite group  $\GG$ on $M$, which makes $M$ into a pseudo-compact module over the completed group algebra $\OO\br{\GG}$. The action induces a homomorphism of $\OO$-algebras $\OO\br{\GG}\rightarrow \End_E^{\cont}(M)$. In this section we will study, when this map is surjective and its kernel. 

If $N$ is a pseudo-compact $E$-module, which is finitely generated as an $E$-module, then we may present $N$ as 
$$ \prod_{i\in I} E \rightarrow E^{\oplus n} \rightarrow N \rightarrow 0.$$ 
By applying $\Hom^{\cont}_E(\ast, M)$ we obtain an exact sequence 
$$ 0\rightarrow \Hom^{\cont}_E(N, M)\rightarrow M^{\oplus n} \rightarrow \oplus_{i\in I} M.$$ 
We thus may identify $\Hom^{\cont}_E(N, M)$  with a  closed submodule of $M^{\oplus n}$, which makes $\Hom^{\cont}_E(N, M)$
into a pseudo-compact left $\OO\br{\GG}$-module. 

\begin{lem}\label{confused} Let $N$ be a finitely generated projective  $E$-module and let $\md$ be a right pseudo-compact $\OO\br{\GG}$-module. Then the natural 
map 
\begin{equation}\label{one}
\md \wtimes_{\OO\br{\GG}} \Hom^{\cont}_E(N, M) \rightarrow \Hom^{\cont}_E( N, \md \wtimes_{\OO\br{\GG}} M)
\end{equation}
is an isomorphism. 
\end{lem}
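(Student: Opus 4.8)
The plan is to reduce the general statement to the case $N = E$ and then to finite direct sums, using that both sides of \eqref{one} are additive functors in $N$ and that a finitely generated projective module is a direct summand of $E^{\oplus n}$. First I would observe that for $N = E$ one has a canonical identification $\Hom^{\cont}_E(E, M) \cong M$ as pseudo-compact $\OO\br{\GG}$-modules, and under this identification the map \eqref{one} becomes the identity on $\md \wtimes_{\OO\br{\GG}} M$; so the claim is trivially true there. Next, both functors $N \mapsto \md \wtimes_{\OO\br{\GG}} \Hom^{\cont}_E(N, M)$ and $N \mapsto \Hom^{\cont}_E(N, \md \wtimes_{\OO\br{\GG}} M)$ send finite direct sums of copies of $E$ to the corresponding finite direct sums, compatibly with \eqref{one}; hence \eqref{one} is an isomorphism for $N = E^{\oplus n}$.

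Then I would use that $N$ finitely generated projective means there is $N'$ with $N \oplus N' \cong E^{\oplus n}$. Applying the previous step to $E^{\oplus n}$ and using naturality of \eqref{one} with respect to the idempotent (or the split inclusion and projection) $E^{\oplus n} \to N \to E^{\oplus n}$, the isomorphism for $E^{\oplus n}$ restricts to an isomorphism for the direct summand $N$: a natural transformation between additive functors that is an isomorphism on $E^{\oplus n}$ is an isomorphism on each direct summand, because the summand's component is cut out by an idempotent endomorphism and isomorphisms are preserved under passing to the image of an idempotent.

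The point requiring care — and the main obstacle — is the topological/pseudo-compact bookkeeping: one must check that all the identifications above are identifications of pseudo-compact $\OO\br{\GG}$-modules and that the completed tensor product $\wtimes_{\OO\br{\GG}}$ and the continuous-Hom functor behave well with respect to the finite-rank presentations used in the excerpt (so that $\Hom^{\cont}_E(N,M)$ really is the closed submodule of $M^{\oplus n}$ described before the lemma, and that $\wtimes$ commutes with finite direct products which here are finite direct sums). In particular, the exactness of $\wtimes_{\OO\br{\GG}}$ against the split short exact sequence $0 \to N' \to E^{\oplus n} \to N \to 0$ — which holds because the sequence is split, so no flatness of $\md$ is needed — must be invoked to transport the isomorphism to the summand. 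Once one grants that $\wtimes$ and $\Hom^{\cont}_E$ are additive and that the $N = E$ case is the tautological identity, the argument is a formal dévissage; I would write it in exactly that order: establish the case $N=E$, extend to $N = E^{\oplus n}$ by additivity, and conclude for general finitely generated projective $N$ by splitting off a direct summand.
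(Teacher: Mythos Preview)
Your proposal is correct and follows essentially the same approach as the paper: establish the isomorphism for $N=E^{\oplus n}$ (the paper does this in one step rather than first $N=E$, but that is immaterial), then pass to a direct summand via the idempotent. The paper also notes explicitly that the map \eqref{one} arises from the continuous bilinear map $(v,\phi)\mapsto [w\mapsto v\wtimes\phi(w)]$, which is worth including as it addresses the ``topological bookkeeping'' you flag.
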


\begin{proof} Since $N$ is finitely generated and projective, we may present it as
$$ F\overset{e}{\longrightarrow} F \rightarrow N \rightarrow 0,$$ 
where $F\cong E^{\oplus n}$ and $e$ is an idempotent. In particular, $N$ is a pseudo-compact $E$-module. The map in \eqref{one} is induced by a continuous bilinear map 
$$(v, \phi)\mapsto [ w\mapsto v \wtimes \phi(w)].$$ 
It is an isomorphism if $N=F$. The general case follows by applying the idempotent $e$ to the isomorphism obtained for $N=F$. 
\end{proof}

\begin{lem}\label{double} Let $\{\rho_i\}_{i\in I}$ be a family of pairwise distinct absolutely irreducible right pseudo-compact $\OO\br{\GG}$-modules. Then the map 
\begin{equation}\label{prod}
\OO\br{\GG}\rightarrow \prod_{i\in I} \End_k(\rho_i)^{\op}
\end{equation}
is surjective. 
\end{lem}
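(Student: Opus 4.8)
The plan is to deduce Lemma~\ref{double} from the Jacobson density theorem, together with a finiteness/compactness argument to handle the infinite product and the topology. Let me sketch the steps.

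First I would treat the case where $I$ is finite. Each $\rho_i$ is an absolutely irreducible $\OO\br{\GG}$-module, hence is in fact a finite-dimensional $k$-vector space on which $\OO\br{\GG}$ acts through a discrete quotient (the action factors through a finite quotient of $\GG$, since $\rho_i$ is smooth and finite-dimensional); moreover $\End_{\OO\br{\GG}}(\rho_i) = k$ by absolute irreducibility. The pairwise distinctness means the $\rho_i$ are non-isomorphic simple $\OO\br{\GG}$-modules. By the Jacobson density theorem applied to the semisimple module $\bigoplus_{i\in I}\rho_i$, the image of $\OO\br{\GG}$ in $\End_k\left(\bigoplus_i \rho_i\right)$ is the full commutant of $\End_{\OO\br{\GG}}\left(\bigoplus_i\rho_i\right) = \prod_i k$, which is exactly $\prod_i \End_k(\rho_i)$. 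Transposing to account for the ``op'' (the map lands in $\prod_i\End_k(\rho_i)^{\op}$ because we are acting on right modules, or equivalently one may phrase everything for the opposite algebra), this gives surjectivity of \eqref{prod} for finite $I$.

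Next I would pass to the general case. The target $\prod_{i\in I}\End_k(\rho_i)^{\op}$ is a pseudo-compact ring: it is the inverse limit over finite subsets $S\subseteq I$ of the finite-dimensional (hence discrete, complete) rings $\prod_{i\in S}\End_k(\rho_i)^{\op}$. The map \eqref{prod} is continuous, and by the finite case its composition with each projection $\prod_{i\in I}\to\prod_{i\in S}$ is surjective. Since $\OO\br{\GG}$ is compact and each $\prod_{i\in S}\End_k(\rho_i)^{\op}$ is Hausdorff, the image of \eqref{prod} is a closed subgroup of the inverse limit that surjects onto every finite-stage quotient; a standard inverse-limit argument (the image is closed, hence complete, and dense) then shows it is all of $\prod_{i\in I}\End_k(\rho_i)^{\op}$.

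**The main obstacle** I anticipate is purely bookkeeping rather than conceptual: making sure the pseudo-compact topologies are handled correctly, in particular that ``absolutely irreducible pseudo-compact $\OO\br{\GG}$-module'' really does force $\rho_i$ to be a finite-dimensional $k$-vector space with $\End=k$ (so that Jacobson density in its classical form applies), and that the map out of the compact ring $\OO\br{\GG}$ has closed image so that surjectivity onto each finite quotient upgrades to surjectivity onto the inverse limit. Neither is hard, but both need to be stated cleanly; everything else is a direct invocation of density.
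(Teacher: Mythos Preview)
Your proposal is correct and follows essentially the same approach as the paper. The only cosmetic difference is in the finite case: the paper phrases it as a Chinese remainder argument (each $\Ker\varphi_i$ is an open maximal two-sided ideal, distinct $\rho_i$ give coprime kernels, hence surjectivity onto finite products), which is equivalent to your invocation of Jacobson density; the passage to infinite $I$ via ``dense image $+$ closed image'' is identical.
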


\begin{proof} Since $\GG$ is profinite each $\rho_i$ is a finite dimensional $k$-vector space. Thus $\varphi_i: \OO\br{\GG} \rightarrow \End_{k}(\rho_i)^{\op}$, given by the action, is continuous for the discrete topology on the target. Since $\rho_i$ is absolutely irreducible $\varphi_i$ is surjective. Moreover, $\Ker \varphi_i$ is an open maximal two-sided ideal of  $\OO\br{\GG}$. If $i\neq j$ then $\rho_i\not \cong \rho_j$ and thus $\Ker \varphi_i +\Ker \varphi_j= \OO\br{\GG}$. This implies that for every finite subset $F$ of $I$ the map $\OO\br{\GG}\rightarrow \prod_{i\in F} \End_k(\rho_i)^{\op}$ is surjective.  Thus the image of \eqref{prod} is dense for the product topology on the target. On the other hand \eqref{prod} is a continuous map between pseudo-compact $\OO$-modules and thus its image is closed, which implies surjectivity.
\end{proof} 

If $M$ is in $\PC(E)$ then we let $\rM$ be the intersection of open 
maximal submodules of $M$. Then $\rE$ is a closed two-sided ideal of $E$ and 
$\rM$ is the closure of $\rE M$ inside $M$. 

\begin{prop}\label{surjective} Let us assume that the following hold
\begin{enumerate}
\item $M$ is a finitely generated projective $E$-module;
\item $M/\rM = \cosoc_{\GG} M$;
\item for all  $S\in \Irr(E)$, such that 
$$\rho_S:=\Hom_{E}^{\cont}(M, S)\neq 0,$$ 
$\dim_k \rho_S$ is finite and $\rho_S$ is an absolutely irreducible  representation of $\GG$; 
\item if $S, S'\in \Irr(E)$ and $S\not\cong S'$   then $\Hom_{\GG}(\rho_S, \rho_{S'})=0$.
\end{enumerate}
Then the map $\OO\br{\GG}\rightarrow \End_{E}^{\cont}(M)$ is surjective. 
\end{prop}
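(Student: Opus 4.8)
The plan is to reduce the surjectivity statement to Lemma \ref{double} by a Nakayama-type/completeness argument, using the finitely generated projective hypothesis (1) to commute $\Hom^{\cont}_E(M,-)$ with the relevant tensor products. First I would observe that $\OO\br{\GG}\to\End^{\cont}_E(M)$ is a continuous homomorphism of pseudo-compact $\OO$-modules, so its image is closed; hence it suffices to show the image is dense, i.e. that the induced map
$$\OO\br{\GG}\longrightarrow \End^{\cont}_E(M)/\JJ$$
is surjective for every open two-sided ideal $\JJ$ of $\End^{\cont}_E(M)$ lying in a cofinal system. The natural choice is to filter by the ideal $\rad_{\GG}$ generated by the action of $\GG$, or more precisely to use hypothesis (2): the cosocle $M/\rM$ as a $\GG$-module equals $M/\rM$ as an $E$-module (the intersection of the open maximal $E$-submodules), so $\rM$ is simultaneously the $E$-radical and the $\GG$-radical closure. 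Thus the first reduction is to prove surjectivity of $\OO\br{\GG}\to\End^{\cont}_E(M/\rM)$ and then lift, using that $M$ is projective over $E$ so that $\End^{\cont}_E(M)$ is $\rE$-adically complete and its graded pieces are controlled by $M/\rM$.

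For the graded/cosocle layer: write $M/\rM=\bigoplus_{S} S^{\oplus n_S}$ as an $E$-module, the sum over $S\in\Irr(E)$ with $\rho_S=\Hom^{\cont}_E(M,S)\neq 0$, and $n_S=\dim_k\rho_S<\infty$ by (3). Then by Lemma \ref{confused} applied with $N=M$ (projective, f.g. by (1)) one has, for each such $S$, a natural identification turning $\Hom^{\cont}_E(M, M/\rM)$ into $\bigoplus_S \End_k(\rho_S)$ in a way compatible with the $\GG$-action, because $\rho_S$ is exactly the multiplicity space and the $\GG$-action on $\Hom^{\cont}_E(M,S)$ is the given irreducible action. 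Here hypothesis (4), $\Hom_\GG(\rho_S,\rho_{S'})=0$ for $S\not\cong S'$, guarantees there are no off-diagonal $\GG$-equivariant endomorphisms, so $\End^{\cont}_{E}(M/\rM)$ as a $\GG$-algebra is precisely $\prod_S\End_k(\rho_S)^{\op}$. Now Lemma \ref{double}, applied to the pairwise distinct absolutely irreducible $\GG$-modules $\{\rho_S\}$ (distinctness again from (4), absolute irreducibility from (3)), gives surjectivity of $\OO\br{\GG}\to\prod_S\End_k(\rho_S)^{\op}=\End^{\cont}_E(M/\rM)$.

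It remains to lift surjectivity from $M/\rM$ to $M$. Here I would argue inductively on the filtration $M\supseteq \rM\supseteq \rM^{[2]}\supseteq\cdots$ by (closures of) powers of the radical, noting $\bigcap \rM^{[m]}=0$ and $\End^{\cont}_E(M)=\varprojlim \End^{\cont}_E(M)/\JJ_m$ for $\JJ_m$ the ideal of endomorphisms carrying $M$ into $\rM^{[m]}$; projectivity of $M$ over $E$ (1) ensures each successive quotient $\JJ_m/\JJ_{m+1}$ is a subquotient of $\Hom^{\cont}_E(M, \rM^{[m]}/\rM^{[m+1]})$, an $\End^{\cont}_E(M/\rM)$-bimodule on which, by the previous paragraph, $\OO\br{\GG}$ already acts through the full endomorphism algebra on both sides. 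A standard successive-approximation argument (lift a preimage modulo $\JJ_m$, correct the error which lies in $\JJ_m/\JJ_{m+1}$ and is hit because $\OO\br{\GG}$ surjects onto $\End^{\cont}_E(M/\rM)$, iterate, pass to the limit using completeness) then yields surjectivity of $\OO\br{\GG}\to\End^{\cont}_E(M)$. The main obstacle I anticipate is the bookkeeping in this last step — making precise that $M$ projective gives the right exactness so that the successive quotients of $\End^{\cont}_E(M)$ are genuinely governed by $\Hom^{\cont}_E(M,-)$ of the radical layers, and that the topologies match up so the limiting argument is legitimate; Lemma \ref{confused} is the technical device that makes the layers computable, and everything else is assembling (2)–(4) with Lemma \ref{double}.
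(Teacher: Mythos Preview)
Your cosocle-layer argument is correct and matches the paper: you identify $\End^{\cont}_E(M/\rM)\cong\prod_S\End_k(\rho_S)^{\op}$ and apply Lemma~\ref{double}. The gap is in the lifting step. Knowing that $\OO\br{\GG}$ surjects onto $\End^{\cont}_E(M/\rM)$ and that $\JJ_m/\JJ_{m+1}$ is an $\End^{\cont}_E(M/\rM)$-bimodule does \emph{not} let you ``correct the error'': the bimodule structure says nothing about whether individual elements of $\JJ_m/\JJ_{m+1}$ lie in the image of $\OO\br{\GG}$. For instance $k\hookrightarrow k[t]/(t^2)$ surjects modulo the radical, and $(t)$ is a $k$-bimodule, yet the inclusion is not surjective. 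A successive-approximation scheme of this shape needs surjectivity onto $B/\rad(B)^2$ --- equivalently, onto a generating set for $\rad(B)$ as a two-sided ideal --- not merely onto $B/\rad(B)$, and nothing in your outline produces such generators.

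The paper does not filter. Instead it applies Lemma~\ref{confused} (which you invoked only at the cosocle, where it is not actually needed) directly to $\End^{\cont}_E(M)=\Hom^{\cont}_E(M,M)$: for every irreducible right $\OO\br{\GG}$-module $\md$,
\[
\md\wtimes_{\OO\br{\GG}}\End^{\cont}_E(M)\;\cong\;\Hom^{\cont}_E\bigl(M,\,\md\wtimes_{\OO\br{\GG}}M\bigr).
\]
Hypothesis~(2) now forces $\md\wtimes_{\OO\br{\GG}}M\cong\md\wtimes_{\OO\br{\GG}}(M/\rM)$, since tensoring with an irreducible over $\OO\br{\GG}$ only sees the $\GG$-cosocle; hence the right-hand side is $\md\wtimes_{\OO\br{\GG}}\End^{\cont}_E(M/\rM)$, onto which $\md=\md\wtimes_{\OO\br{\GG}}\OO\br{\GG}$ already surjects by your cosocle computation. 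If the cokernel of $\OO\br{\GG}\to\End^{\cont}_E(M)$ were nonzero it would admit an irreducible quotient $\md'$; taking $\md=(\md')^*$ and noting $\md\wtimes_{\OO\br{\GG}}\md'\neq 0$ contradicts the surjectivity just established. This Nakayama-via-irreducibles trick is exactly where hypothesis~(2) does its real work, and it is the missing idea in your outline.
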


\begin{proof} For $S\in \Irr(E)$ we let $\overline{M}_S$ be the smallest quotient of $M$ such that 
$$\rho_S=\Hom_E^{\cont}(M, S)=\Hom_E^{\cont}(\overline{M}_S, S).$$
It follows from  (3) that the above subspaces are finite dimensional. Thus $\overline{M}_S\cong S^{\oplus d}$ such that $d \cdot \dim_k \End_E^{\cont}(S)= \dim_k \rho_S$. Since $S$ is irreducible $\End_E^{\cont}(S)$ is a skew field. It acts on $\rho_S$ and this action commutes with the action of $\GG$. Since $\rho_S$ is absolutely irreducible we conclude that $\End_E^{\cont}(S)=k$ and $\dim_k \rho_S=d$. Thus $\End^{\cont}_E( \overline{M}_S)\cong M_d(k)$ and thus does not have non-trivial two-sided ideals. Hence, the natural right action of $\End_E( \overline{M}_S)$ on $\rho_S$ induces an injective ring homomorphism 
\begin{equation}\label{iso}
\End_E^{\cont}( \overline{M}_S) \rightarrow  \End_k(\rho_S)^{\op},
\end{equation}
which has to be surjective as both $k$-vector spaces have dimension equal to $d^2$.

The isomorphism $M/\rM \cong \prod_{S\in \Irr(E)} \overline{M}_S$ induces an isomorphism 
$$ \End_E^{\cont}( M /\rM) \cong \prod_{S\in \Irr(E)} \End_E^{\cont}(\overline{M}_S).$$
Since $\rho_S \not \cong \rho_{S'}$ if $S\not \cong S'$,  it follows from Lemma \ref{double} together with the isomorphism \eqref{iso} that the action of $\OO\br{\GG}$ on  $M /\rM$ induces a 
surjection 
\begin{equation}\label{surj1}
\OO\br{\GG} \twoheadrightarrow \End_E^{\cont}(M /\rM).
\end{equation}
Since $M$ is projective and $M/\rM$ is pro-semisimple we have 
\begin{equation}\label{surj2}
 \End^{\cont}_E(M) \twoheadrightarrow \Hom^{\cont}_E(M, M/\rM)\cong \End_E^{\cont}( M/\rM).
\end{equation} 
If $\md$ is an irreducible right pseudo-compact $\OO\br{\GG}$-module and $\mathfrak a$ is its annihilator then $\OO\br{\GG}/\mathfrak a$ is a finite dimensional simple $k$-algebra, and thus $\OO\br{\GG}/\mathfrak a$ is semi-simple as a left $\OO\br{\GG}$-module, and thus $(\OO\br{\GG}/\mathfrak a) \wtimes_{\OO\br{\GG}} M$ is semi-simple as a left $\OO\br{\GG}$-module. Hence, the surjection $M\twoheadrightarrow  (\OO\br{\GG}/\mathfrak a) \wtimes_{\OO\br{\GG}} M$ factors through as
$$ M\twoheadrightarrow \cosoc_{\GG} M \twoheadrightarrow (\OO\br{\GG}/\mathfrak a) \wtimes_{\OO\br{\GG}} M.$$ 
Moreover, the maps become isomorphisms after applying $\md\wtimes_{\OO\br{\GG}}$. Thus 
\begin{equation}\label{dazed}
\md \wtimes_{\OO\br{\GG}} M \cong \md \wtimes_{\OO\br{\GG}} \cosoc_{\GG} M\cong  \md \wtimes_{\OO\br{\GG}} M/\rM,
\end{equation} 
as $M/\rM= \cosoc_{\GG} M$ by assumption. Lemma \ref{confused} together with \eqref{dazed} imply that by applying $\md \wtimes_{\OO\br{\GG}}$ to \eqref{surj2} we obtain isomorphisms 
\begin{equation}\label{surj3} \begin{split} 
\md\wtimes_{\OO\br{\GG}} \End^{\cont}_E(M)&\overset{\cong}{\longrightarrow}
\Hom^{\cont}_E(M, \md\wtimes_{\OO\br{\GG}} M)\\
&\overset{\cong}{\longrightarrow}
\Hom^{\cont}_E(M, \md\wtimes_{\OO\br{\GG}} M/\rM)\\
&\overset{\cong}{\longrightarrow}
\md\wtimes_{\OO\br{\GG}} \Hom^{\cont}_E(M, M/\rM)\\
&\overset{\cong}{\longrightarrow}
\md\wtimes_{\OO\br{\GG}} \End^{\cont}_E(M/\rM),
\end{split}
\end{equation}
where the $\GG$-action on $\End^{\cont}_E(M)$ and $\End^{\cont}_E(M/\rM)$ is given by $(g.\varphi)(v):= g (\varphi(v))$. 

If the map $\OO\br{\GG}\rightarrow \End^{\cont}_{E}(M)$ is not surjective then its cokernel is a non-zero left pseudo-compact $\OO\br{\GG}$-module and thus 
will have an irreducible quotient $\md'$. If we let $\md= \Hom_k(\md', k)$ with the right $\OO\br{\GG}$-action then $\md\wtimes_{\OO\br{\GG}} \md'$ is non-zero, as the evaluation map $\md\wtimes_{\OO\br{\GG}} \md'\twoheadrightarrow k$ is non-zero.  By construction the composition 
$$ \md\wtimes_{\OO\br{\GG}} \OO\br{\GG}\rightarrow \md\wtimes_{\OO\br{\GG}} \End^{\cont}_E(M)\twoheadrightarrow \md \wtimes_{\OO\br{\GG}} \md' $$ 
is the zero map. Thus  $\md\wtimes_{\OO\br{\GG}} \OO\br{\GG}\rightarrow \md\wtimes_{\OO\br{\GG}} \End^{\cont}_E(M)$ cannot be surjective. However, the commutative diagram 
\[
  \begin{tikzcd}
   \md\wtimes_{\OO\br{\GG}} \OO\br{\GG}\arrow[r, rightarrow] \arrow[d, "="] &  \md\wtimes_{\OO\br{\GG}} \End^{\cont}_E(M)\arrow[d, "\cong", "\eqref{surj3}"']\\
 \md\wtimes_{\OO\br{\GG}} \OO\br{\GG}\arrow[r, twoheadrightarrow, "\eqref{surj1}" ] & \md\wtimes_{\OO\br{\GG}} \End^{\cont}_E(M/\rM)
  \end{tikzcd}
\]
implies that the top horizontal arrow is surjective, yielding a contradiction.
\end{proof}

We remind the reader that as a consequence of the topological Nakayama's lemma, see Lemma 0.3.3 in Expos\'e $VII_B$ in  \cite{MR2867622}, the following holds: 

\begin{lem} \label{lem:Eproptest}
Let $N$ be a pseudo-compact left $E$-module. Then $N$ is projective in $\PC(E)$ if and only if the functor $\md \mapsto \md\wtimes_E N$ from 
the category of right pseudo-compact $E$-modules to the category of abelian groups is exact. In this case, $N\twoheadrightarrow N/\mathfrak r(N)$ 
is a projective envelope of $N/\mathfrak r(N)$. 
\end{lem}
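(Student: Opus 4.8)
The plan is purely formal: I would assemble Gabriel's structure theory of pseudo-compact modules \cite{MR232821} with the topological Nakayama lemma quoted just above. The inputs I would draw on are: every projective object of $\PC(E)$ is a retract of a product $\prod_{i\in I}E$ of copies of $E$; arbitrary products are exact in $\PC(E)$ and in $\PC(\OO)$, and $\wtimes_E$ commutes with products of copies of $E$, so that $\md\wtimes_E\prod_{i\in I}E\cong\prod_{i\in I}\md$ naturally in $\md$; every pseudo-compact $E$-module admits a projective envelope; and for every $L$ in $\PC(E)$ one has $(E/\rE)\wtimes_E L\cong L/\mathfrak r(L)$, since $\mathfrak r(L)$ is the closure of $\rE L$. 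For the forward implication: if $N$ is projective, write $N\oplus N'\cong F:=\prod_{i\in I}E$; then $\md\mapsto\md\wtimes_E F\cong\prod_{i\in I}\md$ is exact, and it decomposes naturally as $(\md\wtimes_E N)\oplus(\md\wtimes_E N')$, so $\md\mapsto\md\wtimes_E N$, being a naturally split direct summand of an exact functor, is exact.

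For the converse, suppose $\md\mapsto\md\wtimes_E N$ is exact; then $N$ is flat and $\Tor^E_1(\md,N)=0$ for every right pseudo-compact $E$-module $\md$. By Gabriel's theory the pro-semisimple module $N/\mathfrak r(N)$ has a projective envelope $P\twoheadrightarrow N/\mathfrak r(N)$, so $P$ is projective and $P/\mathfrak r(P)\iso N/\mathfrak r(N)$. Using projectivity of $P$ I would lift the envelope map along $N\twoheadrightarrow N/\mathfrak r(N)$ to $\pi\colon P\to N$. Applying the right-exact functor $L\mapsto L/\mathfrak r(L)$ to $\pi$ recovers the envelope isomorphism $\bar\pi\colon P/\mathfrak r(P)\iso N/\mathfrak r(N)$. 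Let $C:=\Coker(\pi)$ with quotient map $q\colon N\twoheadrightarrow C$; then $q\circ\pi=0$, hence $\bar q\circ\bar\pi=0$, and since $\bar\pi$ is an isomorphism and $\bar q\colon N/\mathfrak r(N)\twoheadrightarrow C/\mathfrak r(C)$ is surjective, we get $C/\mathfrak r(C)=0$, i.e.\ $C=\overline{\rE C}$, so $C=0$ by topological Nakayama. Thus $\pi$ is surjective; set $K:=\Ker\pi$, giving a short exact sequence $0\to K\to P\to N\to 0$ in $\PC(E)$ with the second map $\pi$.

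Now I would apply $(E/\rE)\wtimes_E-$ to this sequence. Because $\Tor^E_1(E/\rE,N)=0$ the result is again short exact, and after the identification $(E/\rE)\wtimes_E L\cong L/\mathfrak r(L)$ it reads $0\to K/\mathfrak r(K)\to P/\mathfrak r(P)\to N/\mathfrak r(N)\to 0$. But the second map here is the envelope isomorphism $\bar\pi$, so $K/\mathfrak r(K)=0$, i.e.\ $K=\overline{\rE K}$, and topological Nakayama gives $K=0$. Hence $\pi\colon P\iso N$, and $N$ is projective. Finally, for any projective $N$ the submodule $\mathfrak r(N)=\overline{\rE N}$ is superfluous --- if $L+\mathfrak r(N)=N$ then $N/L=\overline{\rE(N/L)}$, so $N/L=0$ --- whence $N\twoheadrightarrow N/\mathfrak r(N)$ is a projective envelope, which is the last assertion.

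\emph{Where the work sits.} The argument is a formal consequence of the listed inputs, so there is no genuine obstacle beyond making sure each of these is correctly available in the pseudo-compact setting. The mildest subtlety is the use of the $\Tor$-formalism for $\wtimes_E$ --- equivalently, that the flatness of $N$ keeps $0\to K\to P\to N\to 0$ exact after applying $(E/\rE)\wtimes_E-$ --- which is standard for pseudo-compact modules and also follows from the description of projectives together with the cited Nakayama lemma; everything else is bookkeeping, and the whole statement is covered by \cite{MR232821} and \cite{MR2867622}.
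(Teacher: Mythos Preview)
Your proposal is correct and is exactly the standard argument the paper is alluding to: the paper does not actually give a proof but simply records the lemma as ``a consequence of the topological Nakayama's lemma'' with a reference to SGA3, and your write-up is precisely the fleshing-out of that reference (projective envelope of $N/\mathfrak r(N)$, lift, kill cokernel and kernel by Nakayama using flatness). There is nothing to add.
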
 

\begin{cor} \label{cor:autosurj}
If in addition to the assumptions of Proposition \ref{surjective} we assume that $M/\rM \cong E/\rE$ as $E$-modules
then $M$ is a free $E$-module of rank $1$ and the action of $\OO\br{\GG}$ on $M$ induces a surjection 
$$\OO\br{\GG}\twoheadrightarrow E^{\op},$$
which is uniquely determined up to a conjugation by $E^{\times}$. 
\end{cor}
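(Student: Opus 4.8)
The plan is to deduce Corollary \ref{cor:autosurj} directly from Proposition \ref{surjective} and Lemma \ref{lem:Eproptest}, so only the two new conclusions — freeness of $M$ and the uniqueness of the resulting surjection up to conjugation — require work. First I would establish freeness. By hypothesis $M$ is a finitely generated projective $E$-module, so by Lemma \ref{lem:Eproptest} the quotient map $M\twoheadrightarrow M/\rM$ is a projective envelope of $M/\rM$. Since $M/\rM\cong E/\rE$ as $E$-modules and $E\twoheadrightarrow E/\rE$ is also a projective envelope (again by Lemma \ref{lem:Eproptest}, as $E$ is projective over itself), the uniqueness of projective envelopes forces an isomorphism $M\cong E$ of $E$-modules; equivalently, lifting a generator of $M/\rM$ through the projective cover and applying the topological Nakayama lemma produces a surjection $E\twoheadrightarrow M$ which, being a surjection between projective covers of the same module, is an isomorphism. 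Thus $M$ is free of rank $1$.

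Next I would produce the surjection onto $E^{\op}$. Proposition \ref{surjective} applies verbatim (its hypotheses (1)--(4) are assumed), giving a surjection $\OO\br{\GG}\twoheadrightarrow \End_E^{\cont}(M)$. Using the chosen isomorphism $M\cong E$ of left $E$-modules, $\End_E^{\cont}(M)$ is identified with $\End_E^{\cont}(E)$, which is canonically $E^{\op}$ via right multiplication: an $E$-linear endomorphism of $E$ is determined by the image of $1$, and composition corresponds to multiplication in the opposite order. Composing gives the desired surjection $\OO\br{\GG}\twoheadrightarrow E^{\op}$, and by construction it encodes exactly the $\GG$-action on $M$.

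Finally, the uniqueness statement. The only choice made was the $E$-module isomorphism $M\xrightarrow{\sim} E$; any two such isomorphisms differ by an $E$-linear automorphism of $E$, and $\Aut_E(E)=(E^{\op})^{\times}=E^{\times}$ acting by right multiplication by a unit. Replacing the chosen generator $v$ of $M$ by $uv$ for $u\in E^{\times}$ conjugates the identification $\End_E^{\cont}(M)\cong E^{\op}$ by $u$, hence conjugates the surjection $\OO\br{\GG}\twoheadrightarrow E^{\op}$ by $u\in E^{\times}$. This shows the surjection is well-defined up to conjugation by $E^{\times}$, completing the proof.

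I do not expect a serious obstacle here: the content is entirely in Proposition \ref{surjective}, and the corollary is a bookkeeping exercise combining it with the characterization of projective modules in Lemma \ref{lem:Eproptest}. The only mildly delicate point is to phrase the passage from "$M/\rM\cong E/\rE$" to "$M\cong E$" correctly in the pseudo-compact setting — one must invoke the topological Nakayama lemma (as packaged in Lemma \ref{lem:Eproptest}) rather than the classical one, and use that a projective cover of $M/\rM$ is unique up to (non-unique) isomorphism, which is exactly what pins down the conjugation ambiguity in the final surjection.
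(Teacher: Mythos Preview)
Your proposal is correct and matches the paper's intent: the corollary is stated without proof, immediately after Proposition \ref{surjective} and Lemma \ref{lem:Eproptest}, precisely because it follows from them in the way you describe. Your argument supplies exactly the details the paper leaves implicit.
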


We will now give a characterisation of the kernel of $\OO\br{\GG}\rightarrow \End_E^{\cont}(M)$ 
in favourable settings. 

Let $\md$ be a finite dimensional $L$-vector space with continuous $\OO$-linear action of $E$ on the right. The image of $E$ in $\End_L(\md)$ is a compact $\OO$-module and thus $E$ stabilises an $\OO$-lattice $\md^0$ in $\md$. The action of $\OO\br{\GG}$ on $M$ induces a continuous left action of $\OO\br{\GG}$ on $\md^0\wtimes_E M$ and hence on 
$$\md \otimes_E M= (\md^0\otimes_E M)[1/p]= (\md^0\wtimes_E M)[1/p].$$

\begin{lem} \label{lem:kergalend}
Let $\{\md_i\}_{i\in I}$ be a family of  finite dimensional $L$-vector spaces with continuous right $\OO$-linear action of $E$. For each $i\in I$, let $\mathfrak a_i$ be the $E$-annihilator of $\md_i$ and let $\mathfrak b_i$ be the $\OO\br{\GG}$-annihilator of $\md_i \otimes_E M$. If $M$ is a free $E$-module of finite rank and $\bigcap_{i\in I} \mathfrak a_i=0$ then 
$$ \Ker(\OO\br{\GG}\rightarrow \End_E^{\cont}(M))= \bigcap_{i\in I} \mathfrak b_i.$$
\end{lem}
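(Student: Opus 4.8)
The plan is to factor the map $\OO\br{\GG}\to\End_E^{\cont}(M)$ through a product indexed by $I$ and to exploit that $M$ is free of finite rank together with the hypothesis $\bigcap_{i}\mathfrak a_i=0$. Since the $\GG$-action on $M$ is $E$-linear, for each $i\in I$ the assignment $\varphi\mapsto\id_{\md_i^0}\wtimes\varphi$ is a well-defined ring homomorphism $\End_E^{\cont}(M)\to\End_{\OO}^{\cont}(\md_i^0\wtimes_E M)$, and inverting $p$ yields a ring homomorphism $\theta_i\colon\End_E^{\cont}(M)\to\End_L(\md_i\otimes_E M)$ that sends the image of $x\in\OO\br{\GG}$ to the endomorphism by which $x$ acts on $\md_i\otimes_E M$. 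By the definition of $\mathfrak b_i$ this means $x\in\mathfrak b_i$ if and only if $\theta_i(\bar x)=0$, where $\bar x$ denotes the image of $x$ in $\End_E^{\cont}(M)$. In particular $\Ker(\OO\br{\GG}\to\End_E^{\cont}(M))\subseteq\bigcap_{i}\mathfrak b_i$ always holds, and to get equality it suffices to show
\[
\bigcap_{i\in I}\Ker\theta_i=0 \quad\text{inside } \End_E^{\cont}(M).
\]

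To prove this I would use an isomorphism $M\cong E^{\oplus n}$ (with $n=\rank_E M$) to identify $\End_E^{\cont}(M)$ with $n\times n$ matrices over $E$, so that an element vanishes exactly when all its entries vanish, and likewise $\md_i\otimes_E M\cong\md_i\otimes_E E^{\oplus n}\cong\md_i^{\oplus n}$. Under these identifications $\theta_i$ is, up to the evident bookkeeping, entrywise application of the quotient map $E\twoheadrightarrow E/\mathfrak a_i$ followed by the embedding $E/\mathfrak a_i\hookrightarrow\End_L(\md_i)$ induced by the right $E$-action; the one computation to check is that a matrix over $E$ lies in $\Ker\theta_i$ precisely when all its entries lie in $\mathfrak a_i$, which follows by moving the entries across the tensor product into $\md_i$ and testing on vectors supported in a single coordinate, using that $\mathfrak a_i$ is by definition the $E$-annihilator of $\md_i$. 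Consequently $\bigcap_i\Ker\theta_i$ consists of those matrices all of whose entries lie in $\bigcap_i\mathfrak a_i$, which is $0$ by hypothesis, giving the displayed vanishing and hence the lemma.

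The step I expect to be the main obstacle --- essentially the only non-formal point --- is the identification of $\theta_i$ with entrywise reduction modulo $\mathfrak a_i$: one must keep the left $E$-module structure on $M$ and the right $E$-module structure on $\md_i$ straight, confirm that the $\GG$-action on $\md_i^0\wtimes_E M$ (and hence on $\md_i\otimes_E M$) is $\id\wtimes(\text{action on }M)$, and observe that passing from $\md_i$ to the $E$-stable lattice $\md_i^0$ is harmless, since $p$ is inverted at the end and the vanishing conditions defining $\mathfrak a_i$ and $\mathfrak b_i$ are insensitive to it. Everything else is routine bookkeeping with the module structures.
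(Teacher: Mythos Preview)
Your proposal is correct and follows essentially the same route as the paper: introduce the annihilator $\mathfrak c_i=\Ker\theta_i$ of $\md_i\otimes_E M$ in $\End_E^{\cont}(M)$, reduce to showing $\bigcap_i\mathfrak c_i=0$, then use a free basis $w_1,\dots,w_n$ of $M$ to identify $\End_E^{\cont}(M)$ with $n\times n$ matrices and compute that $\varphi\in\mathfrak c_i$ exactly when all its entries lie in $\mathfrak a_i$. The paper's proof is just a terser version of what you wrote, with the explicit formula $v\otimes\varphi(w_k)=\sum_j(v\,a_{kj})\otimes w_j$ doing the work of your ``entrywise reduction modulo $\mathfrak a_i$'' description.
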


\begin{proof} 
For each $i\in I$, let $\mathfrak c_i$ be the $\End_E^{\cont}(M)$-annihilator of $\md_i \otimes_E M$. Since $\mathfrak b_i$ is the preimage of $\mathfrak c_i$ in $\OO\br{\GG}$, it is enough to show that $\bigcap_{i\in I} \mathfrak c_i=0$.

Let $w_1, \ldots, w_n$ be an $E$-basis of $M$. Then we may identify 
$\End_E^{\cont}(M)$ with $M_n(E)$ by mapping $\varphi$ to the matrix $(a_{kj})$, given by 
$$ \varphi(w_k)= \sum_{j=1}^n a_{kj} w_j$$ 
for all $1\le k\le n$. If $v\in \md_i$ then 
$$v \wtimes \varphi(w_k)= \sum_{j=1}^n ( v a_{kj})\wtimes w_j.$$ 
Thus $\varphi$ annihilates $\md_i\otimes_E M$ if and only if $v a_{kj}=0$ for all $v\in \md_i$ and all $1\le k, j\le n$, which is equivalent to $\varphi \in M_n(\mathfrak a_i)$. Since $\bigcap_{i\in I} \mathfrak a_i=0$ we have $\bigcap_{i\in I} M_n(\mathfrak a_i)=0$ and thus $\bigcap_{i\in I} \mathfrak c_i=0$.
\end{proof}

\section{Pseudorepresentations} \label{section:pseudorepresentations} 

Let $\GG$ be a profinite group and let $\rhobar$ be a continuous semi-simple representation of $\GG$ on a $2$-dimensional $k$-vector space. 
We fix a continuous  group homomorphism $\psi: \GG\rightarrow \OO^{\times}$ lifting $\det \rhobar$. Let $D^{\ps, \psi}$ be the functor from the category of augmented artinian $\OO$-algebras with residue field $k$ to the category of sets, which maps $(A, \mm_A)$ to the set of continuous functions $t: \GG \rightarrow A$ satisfying the following conditions:
\begin{itemize} 
\item $t(1)=2$;
\item $t(g) \equiv \tr \rhobar(g)\pmod{\mm_A}, \quad \forall g\in \GG$;
\item $t(gh)=t(hg), \quad \forall g, h\in \GG$;
\item $\psi(g) t(g^{-1} h)-t(g)t(h) +t(gh)=0, \quad \forall g, h\in \GG$.
\end{itemize}
The data $(t, \psi)$ determines a continuous polynomial law of homogeneous degree $2$ on $\GG$, see \cite[Lemma 1.9]{che_durham}. This deformation problem is pro-representable by a local $\OO$-algebra $R^{\ps, \psi}$ with residue field $k$, complete with respect to profinite topology. We denote by $T: \GG\rightarrow R^{\ps, \psi}$ the universal deformation. We extend it $R^{\ps, \psi}$-linearly to a continuous function $T:R^{\ps, \psi}\br{\GG}\rightarrow R^{\ps, \psi}$. The homomorphism $\psi: \GG \rightarrow \OO^{\times}$ induces a continuous $\OO$-algebra homomorphism $\psi:\OO\br{\GG}\rightarrow \OO$, which we extend $R^{\ps, \psi}$-linearly to a continuous $R^{\ps, \psi}$-algebra homomorphism 
$\psi: R^{\ps, \psi}\br{\GG}\rightarrow R^{\ps, \psi}$. Let $J$ be the closed two-sided ideal of $R^{\ps, \psi}\br{\GG}$ generated by $a^2- T(a) a + \psi(a)$ for all $a\in R^{\ps, \psi}\br{\GG}$. 

\begin{prop} \label{prop:galsurj}
The ring homomorphism $\OO\br{\GG}\rightarrow R^{\ps, \psi}\br{\GG}/ J$ is surjective.
\end{prop}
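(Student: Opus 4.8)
The plan is to show that $R^{\ps,\psi}\br{\GG}/J$ is, as a topological ring, generated by the image of $\OO\br{\GG}$, i.e.\ that the structure map $R^{\ps,\psi}\to R^{\ps,\psi}\br{\GG}/J$ lands in the closed subring generated by the image of $\GG$. Since the source and target are pseudo-compact $\OO$-algebras and the map $\OO\br{\GG}\to R^{\ps,\psi}\br{\GG}/J$ is continuous, its image is closed; hence it suffices to prove that the image is dense. Equivalently, writing $C$ for the closure of the image of $\OO\br{\GG}$ in $R^{\ps,\psi}\br{\GG}/J$, I must show $T(g)\bmod J \in C$ for every $g\in\GG$, because the $T(g)$ together with the image of $\GG$ topologically generate the whole ring (the coefficient ring $R^{\ps,\psi}$ is topologically generated over $\OO$ by the elements $T(g)$, $g\in\GG$, by the very construction of the universal pseudodeformation as a quotient of a power series ring in such variables).

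The key identity is the Cayley--Hamilton relation defining $J$: for every $a\in R^{\ps,\psi}\br{\GG}$ we have $a^2 - T(a)\,a + \psi(a) \in J$. Apply this with $a = g \in \GG$, so $\psi(g)\in\OO^\times$ is a unit and $g$ itself is a unit in $\OO\br{\GG}$. From $g^2 - T(g)\,g + \psi(g) \equiv 0 \pmod J$ and invertibility of $g$, multiply by $g^{-1}$ to get
\[
T(g) \equiv g + \psi(g)\,g^{-1} \pmod J .
\]
The right-hand side lies in the image of $\OO\br{\GG}$, since $g,g^{-1}\in\GG\subset\OO\br{\GG}$ and $\psi(g)\in\OO$. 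Thus every $T(g)$ is congruent mod $J$ to an element of the image of $\OO\br{\GG}$. I would then note that the map $T\colon R^{\ps,\psi}\br{\GG}\to R^{\ps,\psi}$ is $R^{\ps,\psi}$-linear and continuous, and that $R^{\ps,\psi}$ is topologically generated as an $\OO$-algebra by $\{T(g): g\in\GG\}$; consequently the image of $R^{\ps,\psi}$ in $R^{\ps,\psi}\br{\GG}/J$ is contained in $C$. Since the image of $\GG$ is obviously in $C$ and $R^{\ps,\psi}\br{\GG}/J$ is topologically generated over the image of $R^{\ps,\psi}$ by the image of $\GG$, we get $C = R^{\ps,\psi}\br{\GG}/J$, i.e.\ surjectivity.

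I do not expect a serious obstacle here; the argument is essentially the one-line observation $T(g)\equiv g+\psi(g)g^{-1}$. The only points requiring a little care are topological: one must check that the image of $\OO\br{\GG}$ is closed (which follows because a continuous morphism of pseudo-compact modules has closed image) so that density is enough, and one must justify that $R^{\ps,\psi}$ is topologically generated over $\OO$ by the values $T(g)$ — this is part of the standard construction of $R^{\ps,\psi}$ (e.g.\ via \cite[Lemma 1.9]{che_durham} and the presentation of the deformation functor), so I would simply cite it. With these two bookkeeping points in place the proof is complete.
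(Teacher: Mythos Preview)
Your proof is correct and takes essentially the same approach as the paper: both extract the identity $T(g)\equiv g+\psi(g)\,g^{-1}\pmod J$ from the Cayley--Hamilton relation to show that the image of $R^{\ps,\psi}$ lands in the closed image of $\OO\br{\GG}$. The only difference is that where you cite topological generation of $R^{\ps,\psi}$ by the $T(g)$, the paper instead sets $B=\overline{R}\cap C$ (with $\overline{R}$ the image of $R^{\ps,\psi}$), checks directly that $(B,\mm_B)$ is a complete local $\OO$-algebra with residue field $k$, and then invokes the universal property of $R^{\ps,\psi}$ to produce a map $R^{\ps,\psi}\to B$ whose composite with $B\hookrightarrow\overline{R}$ recovers the original surjection, forcing $B=\overline{R}$.
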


\begin{proof} 
Let $\overline{R}$ and $C$ be the images of $R^{\ps, \psi}$ and $\OO\br{\GG}$ in $R^{\ps, \psi}\br{\GG}/ J$, respectively. Since $R^{\ps, \psi}$,  $\OO\br{\GG}$ and $R^{\ps, \psi}\br{\GG}/ J$ are pseudo-compact $\OO$-modules $\overline{R}$ and $C$ are closed subrings of $R^{\ps, \psi}\br{\GG}/ J$.
It is enough to show that $C$ contains $\overline{R}$, since in this case we deduce that $C$ contains the image of $R^{\ps, \psi}\br{\GG}$, which is equal to $R^{\ps, \psi}\br{\GG}/ J$.

Let $B= \overline{R}\cap C$ and let $\mm_B$ be the intersection of $B$ with the maximal ideal of $\overline{R}$. Then $B$ is a closed subring of $\overline{R}$. This implies that $B$ is complete for the profinite topology. If  $x\in \mm_B$ then $1+x$ has an inverse in $B$ given by the geometric series. Since $B$ is an $\OO$-algebra and the residue field of $\overline{R}$ is $k$, we conclude that $(B, \mm_B)$ is a local ring with residue field $k$. 

Let $\overline{T}$ be the specialisation of $T$ along $R^{\ps, \psi}\twoheadrightarrow \overline{R}$. The relation $g^2 - \overline{T}(g) g +\psi(g)=0$ in $R^{\ps, \psi}\br{\GG}/ J$ implies that $\overline{T}(g)= g + g^{-1} \psi(g)$. Thus $\overline{T}$ takes values in $B$. The universal property of $R^{\ps, \psi}$ implies that there is a continuous homomorphism of $\OO$-algebras $\varphi: R^{\ps, \psi} \rightarrow B$, such that $\varphi(T(g))= \overline{T}(g)$ for all $g\in \GG$. Using the universal property of $R^{\ps, \psi}$ again, we conclude that if we compose $\varphi$ with the inclusion $B \subset \overline{R}$ we get back the surjection $R^{\ps, \psi}\twoheadrightarrow \overline{R}$ that we have started with. Thus $B=\overline{R}$.
 \end{proof}
 
\begin{remar} 
The proposition does not hold if we do not fix the determinant or consider representations $\rhobar$ of dimension bigger than $2$. Counterexamples may be obtained with $\GG=\Zp$ and $\rhobar$ trivial representation of $\GG$ on an $n$-dimensional $k$-vector space, using \cite[Ex.\,1.7(i), 1.11(i)]{che_durham}.
\end{remar}

\section{Representations of $\GL_2(\Qp)$}

Let $G$ be a $p$-adic analytic group and let $Z$ be its center. We let $\msm_G(\OO)$ be the category of smooth representations of $G$ on $\OO$-torsion modules. Pontryagin duality, $\pi \mapsto \pi^{\vee}:=\Hom_{\OO}(\pi, L/\OO)$ equipped with the compact open topology, induces an antiequivalence of categories between $\msm_G(\OO)$ and the category $\mpro_G(\OO)$ of linearly compact $\OO$-modules with a continuous $G$-action \cite[Lemma 2.2.7]{MR2667882}. 
The inverse is given by $M \mapsto M^{\vee}:=\Hom^{\cont}_{\OO}(M, L/\OO)$. 
In particular, if $G$ is compact then $\mpro_G(\OO)$ is the category of linearly compact $\OO\br{G}$-modules, where $\OO\br{G}$ is the completed group algebra. We define $\msm_G(k)$ and $\mpro_G(k)$ the same way with $\OO$ replaced by $k$. Moreover for a continuous character $\zeta : Z \rightarrow \OO^\times$, adding the subscript $\zeta$ in any of the above categories indicates the corresponding full subcategory of $G$-representations on which $Z$ acts by $\zeta$. Denote $\mlf_{G, \zeta}(\OO)$ the full subcategory of $\msm_{G, \zeta}(\OO)$ consisting of representations in $\msm_{G, \zeta}(\OO)$ which are equal to the union of their subrepresentations of finite length. We let $\CC(\OO)$ be the full subcategory of $\mpro_G(\OO)$ antiequivalent to $\mlf_{G, \zeta}(\OO)$ under the Pontryagin duality.

\subsection{Blocks} \label{section:blocks}
From now on, we will assume $G = \GL_2(\Qp)$. Every irreducible object $\pi$ of $\msm_G(\OO)$ is killed by $\varpi$ and hence is an object of $\msm_G(k)$.  

Let $\Irr_{G, \zeta}$ be the set of irreducible representations in $\msm_{G, \zeta}(k)$. We write $\pi \leftrightarrow \pi'$ if $\pi \cong \pi'$ or $\Ext^1_{G, \zeta}(\pi, \pi') \neq 0$ or $\Ext^1_{G, \zeta}(\pi', \pi) \neq 0$, where $\Ext^1_{G, \zeta}(\pi, \pi')$ is the Yoneda extension group of $\pi'$ by $\pi$ in $\msm_{G, \zeta}(k)$. We write $\pi \sim \pi'$ if there exists $\pi_1, \cdots, \pi_n \in \Irr_{G, \zeta}$ such that $\pi \cong \pi_1$, $\pi' \cong \pi_n$ and $\pi_i \leftrightarrow \pi_{i+1}$ for $1 \leq i \leq n-1$. The relation $\sim$ is an equivalence relation on $\Irr_{G, \zeta}$. A block is an equivalence class of $\sim$. 

Barthel-Livn\'{e} \cite{MR1290194} and Breuil \cite{MR2018825} have classified the absolutely irreducible smooth representations $\pi$ admitting a central character. The blocks containing an absolutely irreducible representation have been determined in \cite[Corollary 6.2]{MR3444235}. There are the  following cases:
\begin{enumerate}[label=(\roman*)]
    \item $\BB = \{ \pi \}$ with $\pi$ supersingular;
    \item $\BB = \{ (\Ind_B^G\chi_1 \otimes \chi_2 \omega^{-1})_{\sm},  (\Ind_B^G\chi_2 \otimes \chi_1 \omega^{-1})_{\sm}\}$ with $\chi_2 \chi_1^{-1} \neq \Eins, \omega^{\pm 1}$;
    \item $p>2$ and $\BB = \{ (\Ind_B^G \chi \otimes \chi \omega^{-1})_{\sm} \}$;
    \item $p \geq 5$ and $\BB = \{ \Eins, \Sp, (\Ind^G_B \omega \otimes \omega^{-1})_{\sm} \} \otimes \chi \circ \det$;
    \item $p=3$ and $\BB = \{ \Eins, \Sp, \omega \circ \det, \Sp \otimes \omega \circ \det \} \otimes \chi \circ \det$;
    \item $p=2$ and $\BB = \{ \Eins, \Sp \} \otimes \chi \circ \det$;
\end{enumerate}
where $\chi$, $\chi_1$, $\chi_2: \Qp^{\times} \rightarrow k^{\times}$ are smooth characters and $\omega: \Qp^{\times} \rightarrow k^{\times}$ is the character $\omega(x) = x |x| \pmod{\unif}$, 
and $\Sp$ is the Steinberg representation defined by the exact sequence
\[
0 \rightarrow \Eins \rightarrow (\Ind_B^G \Eins)_{\sm} \rightarrow \Sp \rightarrow 0.
\]
If $\pi\in \Irr_{G, \zeta}$ is not absolutely irreducible then there is a finite extension $k'$ of $k$, such that $\pi\otimes_k k'$ is a finite direct sum of absolutely irreducible representations, see \cite[Proposition 5.11]{image}, so no information is lost by working with absolutely irreducible representations.

Given a block $\BB$, we denote by $\pi_{\BB}$ the direct sum of all representations in $\BB$ and let $\PB$ be a projective envelope of $\pi_{\BB}^{\vee}$ in $\CC(\OO)$. Then $\EB = \End_{\CC(\OO)}(\PB)$ is a pseudo-compact $\OO$-algebra. We denote  the centre of $\EB$ by $\ZB$. 

By \cite[Corollary 5.35]{image}, the category $\CC(\OO)$ decomposes into a direct product of subcategories
\begin{equation} \label{equa:blockdecomp}
    \CC(\OO) \cong \prod_{\BB \in \Irr_{G, \zeta} / \sim} \CC(\OO)_{\BB},
\end{equation}
where the objects of $\CC(\OO)_{\BB}$ are those $M$ in $\CC(\OO)$ such that for every irreducible subquotient $S$ of $M$, $S^{\vee}$ lies in $\BB$. Moreover, the category $\CC(\OO)_{\BB}$ is equivalent to the category of compact right $\EB$-modules and the centre of $\CC(\OO)_{\BB}$ is isomorphic to $\ZB$ \cite[Proposition 5.45]{image}.

\begin{lem}\label{ZB_local} 
If $\BB$ contains an absolutely irreducible representation then $\ZB$ is a local pseudo-compact $\OO$-algebra with residue field $k$. 
\end{lem}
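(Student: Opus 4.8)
The plan is to use the block decomposition of $\CC(\OO)$ together with the standard correspondence between the centre of a category of modules over a pseudo-compact ring and the centre of the ring. Since $\ZB$ is by definition the centre of $\EB = \End_{\CC(\OO)}(\PB)$, and since $\CC(\OO)_{\BB}$ is equivalent to the category of compact right $\EB$-modules, it suffices to show that the centre of $\EB$ is a local ring with residue field $k$. For this I would first show that $\EB$ itself is a local pseudo-compact $\OO$-algebra: since $\BB$ contains an absolutely irreducible representation, I claim $\BB$ is in fact one of the cases (i)--(vi) where the projective envelope $\PB$ has an irreducible cosocle, hence $\End_{\CC(\OO)}(\PB)$ has a unique maximal left (equivalently, right, equivalently two-sided) ideal. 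The cleanest route: $\PB$ is a projective envelope of $\pi_\BB^\vee$, and its cosocle $\PB/\rad\PB \cong \pi_\BB^\vee$. If $\BB$ happened to contain more than one isomorphism class of irreducibles this would fail, so the key observation is that when $\BB$ contains an \emph{absolutely} irreducible representation $\pi$ — which by \cite[Proposition 5.11]{image} and Schur's lemma has $\End_G(\pi) = k$ — the ring $\EB$ is a (possibly non-commutative) pseudo-compact local ring. Actually, rather than analyzing the size of $\BB$, I would argue more robustly.

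Here is the argument I would carry out in detail. Let $\pi \in \BB$ be absolutely irreducible, so $\End_G(\pi) \cong k$ by Schur's lemma for absolutely irreducible representations. Let $P_{\pi^\vee}$ be a projective envelope of $\pi^\vee$ in $\CC(\OO)_\BB$; then $D := \End_{\CC(\OO)}(P_{\pi^\vee})$ is a pseudo-compact $\OO$-algebra whose quotient by its Jacobson radical is $\End_{\CC(\OO)}(\pi^\vee) \cong \End_G(\pi)^{\op} \cong k$, so $D$ is local with residue field $k$. Now $\ZB$ is the centre of the category $\CC(\OO)_\BB$, i.e. the ring of natural transformations of the identity functor; restricting such a natural transformation to the object $P_{\pi^\vee}$ gives a ring homomorphism $\ZB \to \End_{\CC(\OO)}(P_{\pi^\vee}) = D$ landing in the centre $Z(D)$ of $D$. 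Since $P_{\pi^\vee}$ is a (faithfully) projective object whose endomorphism ring $D$ generates the block in the sense that every object is built from $D$-modules, this homomorphism $\ZB \to Z(D)$ is injective; in fact I expect it to be an isomorphism, because a central endomorphism of the projective generator extends uniquely to a natural transformation of the identity functor by functoriality. Finally, $Z(D)$ is local with residue field $k$: it is a commutative pseudo-compact $\OO$-subalgebra of the local ring $D$, and the maximal ideal $\mm_D$ of $D$ intersects $Z(D)$ in an ideal $\mm_D \cap Z(D)$ with $Z(D)/(\mm_D \cap Z(D)) \hookrightarrow D/\mm_D = k$, forcing equality since $Z(D)$ contains (the image of) $\OO$ hence surjects onto $k$; any element of $Z(D)$ outside $\mm_D \cap Z(D)$ is a unit in $D$, and its inverse, commuting with everything that the element commutes with, lies back in $Z(D)$, so $Z(D)$ is local with residue field $k$. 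Combining, $\ZB \cong Z(D)$ is a local pseudo-compact $\OO$-algebra with residue field $k$.

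The main obstacle is the identification $\ZB \cong Z(\EB) \cong Z(D)$ where $D = \End_{\CC(\OO)}(P_{\pi^\vee})$ is the endomorphism ring of the projective envelope of a \emph{single} absolutely irreducible object rather than of $\pi_\BB^\vee$. The point to nail down carefully is that even if $\BB$ contains several irreducibles — so that $\EB$ is a genuinely non-local matrix-like pseudo-compact ring with several idempotents — its centre still coincides with the centre of each "diagonal block" $e_i \EB e_i$ only after using that the block is indecomposable (so the centre has no non-trivial idempotents) and that all the projective envelopes $P_{\pi_j^\vee}$ are linked; the honest statement is that the centre of a basic pseudo-compact algebra over an indecomposable block is local whenever \emph{one} of the simple modules is absolutely simple, because the centre is then a subring of $\End_{\CC(\OO)}(P_{\pi^\vee}) $ via the idempotent projection (central elements act on the indecomposable projective $P_{\pi^\vee}$, and this action is faithful since the block is indecomposable and generated by $P_{\pi^\vee}$-isotypic data through the linking). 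I would handle this by invoking the general theory of \cite[\S IV.3]{MR232821} and the block decomposition \eqref{equa:blockdecomp}: the centre of $\CC(\OO)_\BB$ has no idempotents other than $0, 1$ since the block is indecomposable, and it embeds into the local ring $D$ compatibly with reduction mod $\mm$, which gives locality with residue field $k$ directly.
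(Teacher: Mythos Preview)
Your central claim that $\ZB \hookrightarrow D = \End_{\CC(\OO)}(P_{\pi^\vee})$ is not justified, and the reasoning you give for it is wrong: when $|\BB| > 1$ the object $P_{\pi^\vee}$ is \emph{not} a projective generator of $\CC(\OO)_\BB$, so there is no a priori reason the action of $\ZB$ on this single indecomposable projective is faithful. You acknowledge this as the ``main obstacle'' and then try to resolve it by asserting the embedding again, which is circular. You do mention that $\ZB$ has no nontrivial idempotents (by indecomposability of the block); combined with the structure theory of commutative pseudo-compact rings this would indeed force $\ZB$ to be local, but you never actually make that argument---you use the idempotent observation only as a side remark while still relying on the unjustified embedding.

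The paper's proof avoids the embedding entirely and is more elementary. For each irreducible $\pi' \in \BB$ the action of $\ZB$ gives a homomorphism $c_{\pi'}: \ZB \to \End_G(\pi')$; the key step is to show that all these homomorphisms coincide (landing in $k$). This is done by propagating along the linking relation: if $\pi', \pi''$ are joined by a non-split extension $\tau$, then $\End_G(\tau) = k$ and the action of $\ZB$ on $\tau$ forces $c_{\pi'} = c_\tau = c_{\pi''}$. Transitivity of $\sim$ then gives $c_{\pi'} = c_\pi$ for all $\pi' \in \BB$. Since by Gabriel's description the Jacobson radical of $\ZB$ consists exactly of the elements killing every irreducible, one concludes $\rad(\ZB) = \Ker c_\pi$ and hence $\ZB/\rad(\ZB) = k$. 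This is precisely the connectivity argument you allude to vaguely (``linked \ldots through the linking'') but never carry out; it is what makes the proof work, and it replaces the need for any embedding into $D$.
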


\begin{proof} 
If $\pi$ is absolutely irreducible then $\End_G(\pi)=k$ and thus the action of $\ZB$ on $\pi$ defines a homomorphism of $\OO$-algebras $c_{\pi}: \ZB\rightarrow k$. If $\pi, \pi'\in \BB$ are distinct and there is a non-split extension $0\rightarrow \pi \rightarrow \tau \rightarrow \pi'\rightarrow 0$ then $\End_G(\tau)=k$ and we conclude that $c_{\pi}=c_{\tau}=c_{\pi'}$. Using the transitivity property of the relation $\sim$ on $\Irr_{G, \zeta}$, we conclude that $c_{\pi}=c_{\pi'}$ for all $\pi, \pi'\in \BB$. It follows from the proof of \cite[Proposition IV.4.12]{MR232821} that the Jacobson radical of $\ZB$ consists of elements that kill all the irreducible representations. Thus $\Ker c_{\pi}$ is the maximal ideal of $\ZB$ with residue field $k$. The last assertion follows from the fact that $\ZB$ is closed in $\EB$ and \cite[Proposition IV.4.13]{MR232821}.
\end{proof}

\begin{lem} \label{lem:PSL2}
Let $P$ be projective in $\CC(\OO)$. Then $P^{\SL_2(\Qp)} = 0$.
\end{lem}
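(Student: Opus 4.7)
The strategy is to restrict $P$ to a suitable torsion-free pro-$p$ open subgroup $K_1\subseteq \SL_2(\Qp)$ disjoint from the centre $Z$ of $G$, arrange that the restriction is free as a pseudo-compact $\OO\br{K_1}$-module, and then exploit the fact that $\OO\br{K_1}$ is an integral domain to conclude that $P^{K_1}=0$. Since $P^{\SL_2(\Qp)}\subseteq P^{K_1}$, this is enough. A concrete choice is $K_1 := \ker(\SL_2(\Zp)\to \SL_2(\Zp/\pif\Zp))$ with $\pif = p$ if $p>2$ and $\pif = 4$ if $p=2$: by Lazard this is uniform pro-$p$, hence torsion-free, and $K_1\cap Z=\{1\}$ because $-I\not\equiv I\pmod{\pif}$.

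The main technical point is to check that the restriction functor $\CC(\OO)\to \PC(\OO\br{K_1})$ sends projectives to projectives. This is where the condition $K_1\cap Z=\{1\}$ is essential: any pseudo-compact $\OO\br{K_1}$-module $N$ extends canonically to a pseudo-compact $\OO\br{K_1 Z}$-module by letting $Z$ act through $\zeta$, and a pseudo-compact form of $\cInd_{K_1 Z}^{G}$ then produces an object of $\CC(\OO)$. Compact induction is exact and is left adjoint to restriction by Frobenius reciprocity, so restriction preserves projectivity. Moreover, $\OO\br{K_1}$ is a local pseudo-compact ring with residue field $k$ (because $K_1$ is pro-$p$), so Lemma \ref{lem:Eproptest} applied to $E=\OO\br{K_1}$ together with topological Nakayama identifies projective objects in $\PC(\OO\br{K_1})$ with pseudo-compact free $\OO\br{K_1}$-modules.

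To conclude, it suffices to show that $(\OO\br{K_1})^{K_1}=0$ for the left regular action, as the invariants of a product of copies of this module are then taken coordinatewise. By the Lazard--Brumer theorem, $\OO\br{K_1}$ is a Noetherian integral domain whenever $K_1$ is uniform pro-$p$, so for any $k\in K_1\setminus\{1\}$ the element $k-1$ is nonzero in this domain; any $x\in (\OO\br{K_1})^{K_1}$ satisfies $(k-1)x=0$, forcing $x=0$. I expect the main obstacle to be the clean formulation of pseudo-compact compact induction with prescribed central character in the second step and the verification of the adjunction, but once these are correctly set up, the disjointness $K_1\cap Z=\{1\}$ makes the argument go through in the standard way.
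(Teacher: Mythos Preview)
Your argument is correct and coincides with the paper's \emph{alternative} route, which it records as Proposition~\ref{HMP}: restrict to a pro-$p$ open $K'\subset\SL_2(\Qp)$ with $K'\cap Z=\{1\}$, identify $P|_{K'}$ with a product of copies of $\OO\br{K'}$, and use that $\OO\br{K'}$ is an integral domain. The paper's \emph{primary} proof is different: it passes to the Pontryagin dual $J=P^\vee$, which is injective in $\mlf_{G,\zeta}(\OO)$, and deduces $J_{\SL_2(\Qp)}=0$ from the vanishing of unipotent coinvariants $J_N=0$ established in Emerton's work on ordinary parts.

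One point in your sketch deserves more care. You assert that a pseudo-compact form of $\cInd_{K_1 Z}^G$ lands in $\CC(\OO)$, and then invoke adjointness. But $\CC(\OO)$ is the dual of the \emph{locally finite} category $\mlf_{G,\zeta}(\OO)$, not of all of $\msm_{G,\zeta}(\OO)$; the Frobenius reciprocity you want holds a priori only in the larger category. So either one must know that $\cInd_{K_1 Z}^G\tau$ is locally finite for every $\tau$ (equivalently, that every smooth $G$-representation with central character is locally finite), or that a projective in $\CC(\OO)$ remains projective in $\mpro_{G,\zeta^{-1}}(\OO)$. Both are true for $\GL_2(\Qp)$ but neither is formal; the paper handles this step by citing \cite[Corollary~3.10]{MR2667892} directly rather than rederiving the adjunction. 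Once that input is granted, your integral-domain argument for $(\OO\br{K_1})^{K_1}=0$ is a clean way to finish and is slightly more self-contained than the paper's appeal to \cite{ludwig} in the proof of Proposition~\ref{HMP}.
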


\begin{proof}
Let $J$ be the Pontryagin dual of $P$. Then $J$  is injective in $\mlf_{G, \zeta}(\OO)$ and the assertion is equivalent to $J_{\SL_2(\Qp)} = 0$. Let $N$ be the unipotent subgroup $\left(\smallmatrix 1 & \Qp \\ 0 & 1 \endsmallmatrix\right)$. Then by \cite[Proposition 3.6.2]{MR2667883} and \cite[Corollary 3.12]{MR2667892}, we have $J_{N} = 0$. Thus $J_{\SL_2(\Qp)} = 0$ and the lemma follows. Alternatively, one can deduce the statement from Proposition \ref{HMP} below. 
\end{proof}

\begin{prop}\label{HMP} 
Let $P$ and $M$ be  in $\dualcat(\OO)$, such that $P$ is projective and $M/\varpi M$ is of finite length. Then $\Hom_{\dualcat(\OO)}(M, P)=0$. 
\end{prop}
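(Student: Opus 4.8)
The plan is to reduce to the case where $M$ is killed by $\varpi$ and is of finite length, and then to an irreducible $M$, and finally to deduce the vanishing from Lemma \ref{lem:PSL2} together with the fact that $\cV$ (or rather the $N$-coinvariants functor) kills everything relevant. First I would observe that if $0\to M'\to M\to M''\to 0$ is a short exact sequence in $\dualcat(\OO)$ with $M'/\varpi M'$ and $M''/\varpi M''$ of finite length, then $\Hom_{\dualcat(\OO)}(M,P)$ is squeezed between $\Hom_{\dualcat(\OO)}(M'',P)$ and $\Hom_{\dualcat(\OO)}(M',P)$, so it suffices to treat the case $\varpi M = 0$ (filtering $M$ by $\varpi^n M$ and passing to the inverse limit, using that $P$ is $\varpi$-adically separated and that $\Hom$ commutes with the relevant limits). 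Once $\varpi M = 0$, the object $M$ has finite length in $\dualcat(k)_{\BB}$, so by dévissage along a composition series it is enough to prove $\Hom_{\dualcat(\OO)}(S, P) = 0$ for every irreducible $S$, i.e. $\Hom_{\dualcat(\OO)}(\pi^\vee, P) = 0$ for every irreducible smooth $\pi$ with central character $\zeta$.

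Next I would use that, dually, $\Hom_{\dualcat(\OO)}(\pi^\vee, P) = \Hom_{\mlf_{G,\zeta}(\OO)}(J, \pi)$ where $J = P^\vee$ is injective in $\mlf_{G,\zeta}(\OO)$; equivalently a non-zero such map would produce $\pi$ as a quotient of $J$, hence (by injectivity considerations) $\pi$ would embed into $J$ up to replacing by the socle — more cleanly, a non-zero map $\pi^\vee \to P$ in $\dualcat(\OO)$ dualizes to a non-zero map $J \to \pi$, so $\pi$ is a quotient of $J$. Every irreducible smooth representation of $G = \GL_2(\Qp)$ with a central character is either one-dimensional, or a subquotient of a principal series, or supersingular; in each case, inspecting the classification of Barthel--Livn\'e and Breuil recalled in Section \ref{section:blocks}, one sees that $\pi$ admits a non-zero $N$-coinvariant quotient, i.e. $\pi_N \neq 0$ (for $\pi$ finite-dimensional this is because $N$ acts trivially; for principal series and supersingulars this is the standard computation of Jacquet modules, resp. the fact that supersingulars have non-trivial $N$-coinvariants). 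But by \cite[Proposition 3.6.2]{MR2667883} and \cite[Corollary 3.12]{MR2667892}, already used in the proof of Lemma \ref{lem:PSL2}, we have $J_N = 0$; since $N$-coinvariants is right exact, a surjection $J \twoheadrightarrow \pi$ would force $\pi_N = 0$, a contradiction. Hence $\Hom_{\dualcat(\OO)}(\pi^\vee, P) = 0$.

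The main obstacle is the reduction to the torsion-free/finite-length situation: one must be careful that $\Hom_{\dualcat(\OO)}(M, P)$ behaves well under the filtration $M \supseteq \varpi M \supseteq \varpi^2 M \supseteq \cdots$, i.e. that $\Hom_{\dualcat(\OO)}(M,P) = \varprojlim_n \Hom_{\dualcat(\OO)}(M/\varpi^n M, P)$ and that each graded piece $\varpi^n M/\varpi^{n+1}M$ is a quotient of $M/\varpi M$, hence of finite length, so that the inductive vanishing applies to each $M/\varpi^n M$. This uses that $M$ is compact and $\varpi$-adically complete (it lies in $\mpro_G(\OO)$) and that $P$, being an object of $\dualcat(\OO)$, is also $\varpi$-adically separated, so no completion issues arise; the passage to the limit then gives $\Hom_{\dualcat(\OO)}(M,P) = 0$. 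Everything else is a routine dévissage plus the input $J_N = 0$ that is already available.
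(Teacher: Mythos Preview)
Your reduction in step~1 does not work as written. You aim to reduce $\Hom_{\dualcat(\OO)}(M,P)=0$ to the case $\varpi M=0$ while keeping the \emph{same} target $P$; but once $\varpi M=0$ and $M$ has finite length, the vanishing $\Hom_{\dualcat(\OO)}(\pi^\vee,P)=0$ is automatic for a reason you never invoke: $P$ is $\OO$-torsion free (being projective in $\mpro_{K'}(\OO)$ for $K'\subset \SL_2(\Qp)$), so any map from a $\varpi$-torsion object to $P$ is zero. That means all the work must be in the reduction itself, and your justification (``pass to the inverse limit'') fails. Concretely, the natural maps go $\Hom(M/\varpi^nM,P)\to \Hom(M,P)$, not the other way, and a map $\phi\colon M\to P$ factors through $M/\varpi^nM$ only if $\varpi^n\phi(M)=0$, which by torsion-freeness forces $\phi=0$ already. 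In the toy model $G=\{1\}$, $M=P=\OO$, one has $\Hom(M/\varpi^nM,P)=0$ for every $n$ while $\Hom(M,P)=\OO\neq 0$; so the proposition genuinely uses something about $\GL_2(\Qp)$ that your reduction never touches. The correct reduction (which the paper carries out via Nakayama after writing $P\cong\prod\OO\br{K'}$) replaces the target by $P/\varpi P$, and one is left with showing $\Hom_{\dualcat(k)}(\pi^\vee,P/\varpi P)=0$, which is not trivial.

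Even granting that corrected reduction, your step~5 is wrong in characteristic $p$: the claim $\pi_N\neq 0$ fails for the Steinberg and for supersingular representations. For $\Sp$ one computes directly that $C^\infty_c(\Qp,k)_N=0$ (the characteristic function of any ball becomes $p$ times that of a smaller ball in the coinvariants), hence $\Sp_N=0$. For $\pi$ supersingular, Frobenius reciprocity gives $\Hom_T(\pi_N,\sigma)\cong\Hom_G(\pi,\Ind_B^G\sigma)=0$ for every character $\sigma$, which forces $\pi_N=0$. So the argument ``$J_N=0$ and $\pi_N\neq 0$ give a contradiction'' does not cover precisely the cases that matter. The paper instead restricts to a pro-$p$ open $K'\subset\SL_2(\Qp)$, uses $P\cong\prod\OO\br{K'}$, reduces modulo $\varpi$, and then invokes the fact (from \cite{ludwig}) that no $\pi^\vee$ embeds into $k\br{K'}$.
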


\begin{proof} 
Let $K'$ be a compact open pro-$p$ subgroup of $\SL_2(\Qp)$ such that $K' \cap Z = \{1\}$. Then $P$ is projective in $\mpro_{K'}(\OO)$ by \cite[Corollary 3.10]{MR2667892} and thus $P \cong \prod_{i \in I} \OO \llbracket K' \rrbracket$ for some index set $I$. Thus it is enough to show that $\Hom_{\OO\br{K'}}^{\cont}(M, \OO\br{K'})=0$. Topological Nakayama's lemma for compact $\OO$-modules implies that it is enough to show that $\Hom_{k\br{K'}}^{\cont}(M/\varpi M, k\br{K'})=0$. Since $M/\varpi M$ is of finite length, it is enough to show that $\Hom_{k\br{K'}}(\pi^{\vee}, k\br{K'})=0$ for every $\pi \in \BB$. This follows from \cite[Lemma 5.16]{ludwig}. Note that if $K_n= 1+ M_2(2 p^n \Zp)$ and $K_n'= K_n\cap \SL_2(\Qp)$ then $K_n= K_n'  ( Z\cap K_n)$  and so $\pi^{K_n}= \pi^{K'_n}$ as $Z\cap K_n$ acts trivially on $\pi$, and so the argument in \cite[Lemma 5.16]{ludwig} carries over to the restriction of $\pi$ to $\SL_2(\Qp)$. 
\end{proof}

We will denote by $\Ord_B$ Emerton's functor of ordinary parts, \cite{MR2667882}. 

\begin{lem} \label{lem:OrdJinj} Let $\pi \hookrightarrow J$ be an injective envelope of $\pi\in \BB$ in 
$\Mod^{\lfin}_{G, \zeta}(\OO)$. If $\pi$ is supersingular then $\Ord_B J=0$. If $\pi$ is an irreducible 
subquotient of $(\Ind_{\overline{B}}^G \chi)_{\sm}$ for some character $\chi: T\rightarrow k^{\times}$, 
where $\overline{B}$ is the subgroup of lower triangular matrices, 
 then $\Ord_B J$ is isomorphic to an injective envelope of $\chi$ in $\mlf_{T, \zeta}(\OO)$, and also in $\Mod^{\sm}_{T, \zeta}(\OO)$. 
\end{lem}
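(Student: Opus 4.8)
The plan is to deduce both assertions from Emerton's adjunction between ordinary parts and parabolic induction from the opposite Borel \cite{MR2667882}: for $\sigma\in\msm_{T,\zeta}(\OO)$ and $V\in\msm_{G,\zeta}(\OO)$ there is a natural isomorphism
\begin{equation*}
\Hom_G\bigl((\Ind_{\overline B}^G\sigma)_{\sm}, V\bigr)\cong\Hom_T\bigl(\sigma,\Ord_B V\bigr),
\end{equation*}
so $\Ord_B$ is right adjoint to $(\Ind_{\overline B}^G -)_{\sm}$. I will use, in addition, that $(\Ind_{\overline B}^G -)_{\sm}$ is exact and, for $G=\GL_2(\Qp)$, carries $\mlf_{T,\zeta}(\OO)$ into $\mlf_{G,\zeta}(\OO)$ — it commutes with filtered colimits and sends each smooth character of $T$ to a representation of length $\le 2$ — and that $\Ord_B$ carries $\mlf_{G,\zeta}(\OO)$ into $\mlf_{T,\zeta}(\OO)$ \cite{MR2667882, image}. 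Granting this, the restriction of $\Ord_B$ to these locally finite subcategories is a right adjoint of an exact functor, hence preserves injectives, so $\Ord_B J$ is injective in $\mlf_{T,\zeta}(\OO)$. Note also that $J$ lies in the $\BB$-component $\mlf_{G,\zeta}(\OO)_{\BB}$, so all its irreducible subquotients have duals in $\BB$; and that for the torus $T$ the injective envelope of a smooth character computed in $\mlf_{T,\zeta}(\OO)$ is already locally finite, hence is also an injective envelope in $\msm_{T,\zeta}(\OO)$. It then remains, in the second case, only to compute $\soc_T(\Ord_B J)$: an injective object of a locally finite category whose socle is a single copy of an irreducible $S$ must be an injective envelope of $S$, since such an envelope splits off as a direct summand whose complement has zero socle and is therefore zero.

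\emph{Supersingular case.} Suppose $\Ord_B J\neq 0$ and pick $0\neq v\in\Ord_B J$; fixed by a compact open subgroup $T_0\le T$, it yields a nonzero $T$-homomorphism $k[T/T_0]\to\Ord_B J$, which by adjunction corresponds to a nonzero $G$-homomorphism $f\colon(\Ind_{\overline B}^G k[T/T_0])_{\sm}\to J$. Since $k[T/T_0]$ is the union of its finite-dimensional $T$-subrepresentations, each a successive extension of smooth characters, and $(\Ind_{\overline B}^G-)_{\sm}$ is exact, every irreducible subquotient of the source of $f$ — hence of its image $Q$ — is a subquotient of a principal series, so is not supersingular. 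On the other hand $Q$ is a nonzero subrepresentation of $J$, so every irreducible subquotient of $Q$ is isomorphic to the supersingular $\pi$; as $Q$ is a nonzero colimit of finite-length subrepresentations, it has at least one irreducible subquotient, a contradiction. Hence $\Ord_B J=0$.

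\emph{Principal series case.} By the reduction above it remains to show $\soc_T(\Ord_B J)\cong\chi$ with multiplicity one. For an irreducible $\sigma\in\msm_{T,\zeta}(k)$ the adjunction gives $\Hom_T(\sigma,\Ord_B J)\cong\Hom_G\bigl((\Ind_{\overline B}^G\sigma)_{\sm},J\bigr)$. Because $J$ is an injective envelope of the absolutely irreducible $\pi$, the functor $\Hom_G(-,J)$ is exact on $\mlf_{G,\zeta}(\OO)$ and $\Hom_G(S,J)$ vanishes for simple $S\not\cong\pi$ and equals $\End_G(\pi)=k$ for $S\cong\pi$; hence $\dim_k\Hom_G(X,J)=[X:\pi]$ for every finite-length $X$. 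Taking $X=(\Ind_{\overline B}^G\sigma)_{\sm}$, of length $\le 2$, gives $\Hom_T(\sigma,\Ord_B J)\neq 0$ exactly when $\pi\in\JH\bigl((\Ind_{\overline B}^G\sigma)_{\sm}\bigr)$. By the classification of the principal series of $\GL_2(\Qp)$ modulo $p$ \cite{MR1290194, MR2018825} and the list of blocks in \S\ref{section:blocks} (this is where the exceptional primes $p=2,3$ are handled), the unique such character is $\sigma=\chi$, and then $[(\Ind_{\overline B}^G\chi)_{\sm}:\pi]=1$. Hence $\soc_T(\Ord_B J)\cong\chi$ with multiplicity one, so $\Ord_B J$ is an injective envelope of $\chi$ in $\mlf_{T,\zeta}(\OO)$, and by the previous paragraph also in $\msm_{T,\zeta}(\OO)$.

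The only genuinely non-formal ingredients are that $\Ord_B$ preserves local finiteness for $\GL_2(\Qp)$ — which is what lets us treat $\Ord_B J$ as an injective object of $\mlf_{T,\zeta}(\OO)$, so that its socle pins down its isomorphism type — and the combinatorial fact that each principal-series irreducible occurs in exactly one $(\Ind_{\overline B}^G\sigma)_{\sm}$; I expect the former to be the point that needs the most care, everything else following formally from the adjunction and the exactness of parabolic induction.
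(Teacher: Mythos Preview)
Your argument is essentially the paper's: use the adjunction $\Hom_G\bigl((\Ind_{\overline B}^G\sigma)_{\sm},J\bigr)\cong\Hom_T(\sigma,\Ord_B J)$, deduce that $\Ord_B J$ is injective in $\mlf_{T,\zeta}(\OO)$, and then compute its socle from the multiplicity $[(\Ind_{\overline B}^G\sigma)_{\sm}:\pi]$, invoking Barthel--Livn\'e for the uniqueness of $\chi$. Your treatment of the supersingular case via a vector fixed by $T_0$ is a harmless variant of the paper's direct Hom computation.

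There is one sentence that does not say what you need. You write that ``the injective envelope of a smooth character computed in $\mlf_{T,\zeta}(\OO)$ is already locally finite, hence is also an injective envelope in $\msm_{T,\zeta}(\OO)$.'' The first clause is a tautology, and the ``hence'' is a non sequitur: an object of $\mlf_{T,\zeta}(\OO)$ that is injective there need not, a priori, be injective in the larger category $\msm_{T,\zeta}(\OO)$. What actually has to be checked is the converse direction---that the injective envelope of $\chi$ in $\msm_{T,\zeta}(\OO)$ is locally finite (equivalently, that injectives in $\mlf_{T,\zeta}(\OO)$ remain injective in $\msm_{T,\zeta}(\OO)$). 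The paper does not argue this inline either; it invokes the proof of \cite[Proposition~5.16]{image}, adapted to the torus. You should either cite that, or sketch why for $T/Z\cong\Qp^{\times}$ the injective hull of a character in $\msm_{T,\zeta}(\OO)$ is locally finite. As you yourself note at the end, this and the preservation of local finiteness by $\Ord_B$ are exactly the two non-formal inputs; you flagged the second but glossed over the first.
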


\begin{proof}
 Since $J$ is injective and $\Ord_B$ is  adjoint to  parabolic induction, which is an exact functor, 
 $\Ord_B J$ is injective in $\Mod^{\lfin}_{T, \zeta}(\OO)$. Thus it is a direct sum of injective envelopes of characters of $T$. Moreover, 
 \begin{equation}\label{ord}
 \Hom_T(\chi, \Ord_B J)\cong\Hom_G( (\Ind_{\overline{B}}^G \chi)_{\sm}, J).
 \end{equation} 
 Since $J$ is an injective envelope of $\pi$, this group is non-zero if and only if $\pi$ is  a subquotient of 
 $(\Ind_{\overline{B}}^G \chi)_{\sm}$, in which case the dimension of the spaces in \eqref{ord} is equal 
 to the multiplicity with which $\pi$ occurs as a subquotient of $(\Ind_{\overline{B}}^G \chi)_{\sm}$.
 If $\pi$ is supersingular then it does not occur as a subquotient of principal series and thus $\Ord_B J=0$. Otherwise, it follows from \cite{MR1290194} that there is a unique character $\chi$, such that 
 the multiplicity is non-zero, in which case it is equal to $1$. This proves the first assertion. 
  The same (albeit easier) proof as in \cite[Proposition 5.16]{image} implies that 
  every injective object in $\Mod^{\lfin}_{T, \zeta}(\OO)$ is also injective in $\Mod^{\sm}_{T, \zeta}(\OO)$.
\end{proof}

\begin{lem} \label{lem:Jinvfinite} Let $\pi \hookrightarrow J$ be  an injective envelope of $\pi\in \BB$ in $\Mod^{\lfin}_{G, \zeta}(\OO)$. 
If $\pi$ is not a character then $J^{\SL_2(\Qp)}=0$, otherwise 
$(J^{\SL_2(\Qp)})^{\vee}$ is nonzero and finitely generated over $\OO$.
\end{lem}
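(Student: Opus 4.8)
The plan is to analyze the module $J^{\SL_2(\Qp)}$ via the $N$-coinvariants of $J$ and the adjunction with parabolic induction, using Lemma \ref{lem:OrdJinj} as the main computational input. First I would observe that $J^{\SL_2(\Qp)}$ is a smooth representation of $G/\SL_2(\Qp)\cong \Qp^\times$ (via determinant) on which $Z$ acts by $\zeta$, so its Pontryagin dual is naturally a module over $\OO\br{\Qp^\times/Z'}$ for a suitable finite-index subgroup, and proving it is finitely generated over $\OO$ amounts to controlling the underlying vector space. The key point is that $\SL_2(\Qp)$-invariants are computed by first taking $N$-invariants (equivalently, the dual of $N$-coinvariants $J_N$) and then imposing invariance under the torus direction inside $\SL_2$ and under the opposite unipotent $\overline{N}$.

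Second, I would split into the two cases of the statement. If $\pi$ is not a character, then $\pi^\vee$ lies in $\CC(\OO)_{\BB}$ and $J^\vee$ is projective there; by Lemma \ref{lem:PSL2} applied to the projective object $J^\vee$, we get $(J^\vee)^{\SL_2(\Qp)}=0$, i.e. $J^{\SL_2(\Qp)}=0$ — wait, more carefully: Lemma \ref{lem:PSL2} gives $(J^\vee)_{\SL_2(\Qp)}=0$ which dualizes to $J^{\SL_2(\Qp)}=0$. Actually the cleanest route is: $J^{\SL_2(\Qp)}$ is the Pontryagin dual of $(J^\vee)_{\SL_2(\Qp)}$, and Lemma \ref{lem:PSL2} (or Proposition \ref{HMP}) shows this vanishes whenever $J^\vee$ is projective, which holds when $\pi$ is any irreducible object. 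So the vanishing in the non-character case is essentially immediate; the content is entirely in the character case.

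Third, for the character case, say $\pi = \chi\circ\det$ (or $\Sp$-type twisted — but $\Sp$ is not a character, so the characters in the exceptional/relevant blocks are the $\chi\circ\det$'s appearing in cases (iv)–(vi) and (iii)). Here one uses that $\chi\circ\det$ embeds in $(\Ind_{\overline{B}}^G \tilde\chi)_{\sm}$ for the character $\tilde\chi = \chi\otimes\chi$ of $T$ (up to the $\omega$-twist bookkeeping). By Lemma \ref{lem:OrdJinj}, $\Ord_B J$ is an injective envelope of this character in $\Mod^{\sm}_{T,\zeta}(\OO)$, hence its Pontryagin dual is a free module of rank one over $\OO\br{T/Z'}$ for an appropriate open subgroup $Z'\subset Z$ with $Z'\to$ torsion-free; in particular $(\Ord_B J)^\vee$ is finitely generated over $\OO\br{T_1}$ where $T_1$ is a one-dimensional $p$-adic group. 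Then I would relate $J^{\SL_2(\Qp)}$ to the submodule of $\Ord_B J$ (or of $J^N$) cut out by requiring $\overline{N}$- and $T\cap\SL_2$-invariance: $J^{\SL_2(\Qp)} = (J^N)^{\overline{N}, T\cap\SL_2}$, and $J^N$ injects into a space built from $\Ord_B J$ together with the $\overline{N}$-invariants. The cleanest formulation: $\SL_2(\Qp)$-invariants land inside $B$-invariants land inside $N$-invariants, and the extra $\overline{B}\cap\SL_2$ invariance forces finite dimensionality. Concretely, a $\overline{N}$-fixed vector in $J^N$ that is also $T\cap\SL_2$-fixed generates a finite-dimensional (even at most one-dimensional, since the center acts by $\zeta$) subrepresentation, by considering the action of $\begin{pmatrix} p & 0 \\ 0 & 1\end{pmatrix}$.

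The main obstacle, I expect, is making precise the passage from ``$\Ord_B J$ is an injective envelope of a character, hence dual is free of rank one over a one-dimensional Iwasawa algebra'' to ``the $\SL_2(\Qp)$-invariants are finitely generated over $\OO$'' — i.e. showing that imposing the remaining $\overline{N}$-invariance (beyond the $N$-invariance / ordinary part) collapses the infinite-dimensional free $\OO\br{T_1}$-module down to something $\OO$-finite. Here I would argue that a vector $v\in J^{\SL_2(\Qp)}$, being fixed by all of $\SL_2(\Qp)$ and in particular by the unipotent $\overline{N}$, maps under the quotient $J\twoheadrightarrow J_N$ (whose dual is the relevant part of $J^\vee$) — actually I should use $J^N$ directly: $J^N \hookleftarrow J^{\SL_2(\Qp)}$, and $J^N$ as a $T$-representation is related to $\Ord_B J$ up to the modulus character, while $J^{\SL_2(\Qp)} \subset J^N$ is precisely the part killed by $\overline{N}$. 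A $\overline{N}$-fixed vector in $\Ord_B J$ corresponds under \eqref{ord}-type reciprocity to a $G$-map from $(\Ind_{\overline{B}}^G\tilde\chi)_{\sm}$ into $J$ that factors through the one-dimensional quotient $\chi\circ\det$ (or sub), so the space of such is at most $\dim\Hom_G(\chi\circ\det, J) \le 1$; combined with the central character constraint this bounds $\dim_k(J^{\SL_2(\Qp)}/\varpi) $ and hence, by topological Nakayama, shows $(J^{\SL_2(\Qp)})^\vee$ is finitely generated over $\OO$, and it is nonzero because $\chi\circ\det$ itself, being in $\BB$, embeds into $J$ as an $\SL_2(\Qp)$-fixed line. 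I would double-check the modulus-character and $\omega$-twist bookkeeping at the very end, since that is where sign/normalization errors hide, but it does not affect the finiteness conclusion.
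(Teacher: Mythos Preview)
Your handling of the non-character case contains a genuine confusion between invariants and coinvariants. Lemma~\ref{lem:PSL2} asserts $P^{\SL_2(\Qp)}=0$ (invariants) for projective $P$, not $P_{\SL_2(\Qp)}=0$ (coinvariants). Under Pontryagin duality one has $(J^{\SL_2(\Qp)})^\vee \cong (J^\vee)_{\SL_2(\Qp)}$, so what you need is the vanishing of the \emph{coinvariants} of $J^\vee$, which Lemma~\ref{lem:PSL2} does not give. Indeed your claim that $(J^\vee)_{\SL_2(\Qp)}$ vanishes ``when $\pi$ is any irreducible object'' is false precisely when $\pi$ is a character: then $\pi^\vee$ is itself a nonzero $\SL_2(\Qp)$-trivial quotient of $J^\vee$. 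The paper avoids duality altogether: if $J^{\SL_2(\Qp)}\neq 0$ then essentiality of $\pi\hookrightarrow J$ forces $\pi\cap J^{\SL_2(\Qp)}\neq 0$, so $\pi$ has nonzero $\SL_2(\Qp)$-fixed vectors and is therefore a character.

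For the character case you miss the structural observation that makes the $\Ord_B$ detour unnecessary. Since $Z$ acts on $J^{\SL_2(\Qp)}$ by the fixed character $\zeta$ and $\SL_2(\Qp)$ acts trivially, the $G$-action factors through the \emph{finite} group
\[
G/Z\SL_2(\Qp)\cong \Qp^\times/(\Qp^\times)^2.
\]
Thus $J^{\SL_2(\Qp)}[\varpi]$ is a $k$-representation of a finite abelian group with one-dimensional $G$-socle $\pi$, hence finite-dimensional; Nakayama then gives finite generation of $(J^{\SL_2(\Qp)})^\vee$ over $\OO$. Your proposed route through $J^N$ and $\Ord_B J$ is not correctly formulated (``$\overline{N}$-fixed vectors in $\Ord_B J$'' makes no sense, as $\overline{N}$ does not act on $\Ord_B J$), and the reciprocity step you sketch does not in fact bound $\dim_k J^{\SL_2(\Qp)}[\varpi]$.
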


\begin{proof} If $J^{\SL_2(\Qp)}$ is non-zero then $\pi \cap J^{\SL_2(\Qp)}$ is non-zero as 
$\pi\hookrightarrow J$ is essential. Since $\pi$ is absolutely irreducible, we deduce that 
$\pi$ is a character and  $\pi$ is the $G$-socle of $J^{\SL_2(\Qp)}$.
Note that the action of $G$ on $J^{\SL_2(\Qp)}$ factors through
\[
G / Z \SL_2(\Qp) \cong \Qp^{\times} / (\Qp^{\times})^2 \cong
\begin{cases}
\ZZ / 2 \ZZ \times \ZZ / 2 \ZZ &\text{if } p>2 \\
\ZZ / 2 \ZZ \times \ZZ / 2 \ZZ \times \ZZ / 2 \ZZ &\text{if } p=2.
\end{cases}
\]
It follows that $J^{\SL_2(\Qp)}[\varpi]$ is a finite dimensional $k$-vector space. Dually this 
 implies that $(J^{\SL_2(\Qp)})^{\vee} / \varpi (J^{\SL_2(\Qp)})^{\vee}$ is finite-dimensional over $k$ and the lemma follows from Nakayama's lemma.
\end{proof}

\begin{lem} \label{lem:nullhomOrd} Let $J$ be injective in $\Mod^{\lfin}_{G, \zeta}(\OO)$ and 
$\kappa \in \Mod^{\lfin}_{T, \zeta}(\OO)$ be such that $\kappa^{\vee}$ is a finitely generated $\OO$-module. Then 
$\Hom_T(\Ord_B J, \kappa) = 0$.
\end{lem}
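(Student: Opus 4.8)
The plan is to prove $\Hom_T(\Ord_B J, \kappa) = 0$ by reducing to the case where $J$ is an injective envelope of a single irreducible $\pi\in\BB$ and then combining the two previous lemmas.

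First I would reduce to the indecomposable case. Since $J$ is injective in $\Mod^{\lfin}_{G, \zeta}(\OO)$ and this category is locally finite, $J$ is a (possibly infinite) direct sum of injective envelopes $J_\pi$ of irreducible objects $\pi$. The functor $\Ord_B$ is additive and commutes with direct sums (it is defined as $(\ast)^{N_0}$ followed by a Hecke-type localisation, all of which preserve direct sums of smooth representations), so $\Ord_B J = \bigoplus_\pi \Ord_B J_\pi$. Since $\kappa^\vee$ is finitely generated over $\OO$, $\kappa$ is an object of finite length in $\Mod^{\sm}_{T,\zeta}(\OO)$, hence a cofinite (in particular "small") object: $\Hom_T(\bigoplus_\pi \Ord_B J_\pi, \kappa) = \prod_\pi \Hom_T(\Ord_B J_\pi, \kappa)$ — actually what we need is simply that it suffices to show each factor vanishes, which is immediate since a nonzero map out of the direct sum restricts to a nonzero map out of some summand. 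So we may assume $J$ is the injective envelope of a single $\pi\in\BB$.

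Now I would split into the two cases of Lemma \ref{lem:OrdJinj}. If $\pi$ is supersingular, then $\Ord_B J = 0$ by that lemma, and there is nothing to prove. If $\pi$ is an irreducible subquotient of $(\Ind_{\overline B}^G\chi)_{\sm}$ for some character $\chi: T\to k^\times$, then Lemma \ref{lem:OrdJinj} says $\Ord_B J$ is an injective envelope of $\chi$ in $\Mod^{\lfin}_{T,\zeta}(\OO)$, and also in $\Mod^{\sm}_{T,\zeta}(\OO)$. The key point is then that such an injective envelope, call it $J_\chi$, has the property that $(J_\chi)^\vee$ is \emph{not} finitely generated over $\OO$ — indeed, as a representation of the abelian group $T\cong \Qp^\times\times\Qp^\times$ (modulo the central character $\zeta$ constraint), the injective envelope of a character is a large object: dually it is a free module of rank one over a power-series ring $\OO\br{T/T_0}$ for an appropriate open subgroup $T_0$, hence infinitely generated over $\OO$. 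A nonzero map $J_\chi\to\kappa$ would dualise to a nonzero map $\kappa^\vee\to (J_\chi)^\vee$; since $\kappa^\vee$ is finitely generated over $\OO$ and $T$ acts on it through a finite quotient (the action factors through the finite quotient over which $\kappa$ is defined, as $\kappa$ has finite length), its image in $(J_\chi)^\vee$ would be a finitely generated $\OO$-submodule stable under $T$. But $(J_\chi)^\vee$, being torsion-free-free of rank one over $\OO\br{T/T_0}$ — or more robustly: $J_\chi$ being an \emph{essential} extension of $\chi$ with $J_\chi^{T_0}$ infinite-dimensional — admits no nonzero finite $\OO$-length $T$-stable submodule quotient on the dual side compatible with a finitely-generated source; one shows directly $\Hom_T(J_\chi,\kappa)=\Hom_T(\chi,\kappa)$ via essentiality, and this is $0$ because... hmm, actually $\Hom_T(\chi,\kappa)$ can be nonzero.

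Let me correct the last step: the right argument is that $\Hom_T(J_\chi,\kappa)\hookrightarrow \Hom_T(J_\chi, \kappa)$ and we instead dualise and use that $\Hom^{\cont}_\OO((J_\chi)^\vee, \kappa^\vee)$... The clean approach: $J_\chi$ is injective in $\Mod^{\sm}_{T,\zeta}(\OO)$, so $\Hom_T(-,\kappa)$ applied to $\chi\hookrightarrow J_\chi$... no. The correct mechanism is \emph{coinduction}: an injective envelope of a character $\chi$ of the abelian compact-mod-centre group $T$ is, up to the finite part, $\Hom^{\cont}_{\OO\br{T_0}}(\OO\br{T_0}, L/\OO)$-flavoured, i.e. its Pontryagin dual is $\OO\br{T/T_0\cap\text{(stuff)}}$ which is a domain of dimension $\ge 2$; any $T$-equivariant $\OO$-linear map from the finite-length $\kappa^\vee$ into this domain must be zero because the image would be a finite-length submodule of a torsion-free module, forcing it to be $0$. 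The main obstacle is precisely making this dimension/torsion-freeness statement rigorous: one must identify $(\Ord_B J)^\vee$ with (a summand of) the regular module over a completed group algebra of a $p$-adic abelian group of rank $\ge 1$, deduce $\OO$-torsion-freeness of that module (or at least that it has no nonzero finite $\OO$-submodule), and then conclude. Everything else is formal reduction and invoking Lemma \ref{lem:OrdJinj}.
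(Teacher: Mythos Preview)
Your eventual approach matches the paper's, but the paper is more direct: it neither decomposes $J$ into indecomposables nor splits into cases on $\pi$. Instead, it uses Lemma~\ref{lem:OrdJinj} to conclude straightaway that $\Ord_B J$ is injective in $\Mod^{\sm}_{T,\zeta}(\OO)$ (that part of the lemma uses only that $\Ord_B$ is right adjoint to an exact functor, plus the general fact that injectives in $\Mod^{\lfin}_{T,\zeta}(\OO)$ are injective in $\Mod^{\sm}_{T,\zeta}(\OO)$). One then restricts to an open torsion-free pro-$p$ subgroup $T_0$ of $T\cap\SL_2(\Qp)$, so $T_0\cong\Zp$ and $\OO\br{T_0}\cong\OO\br{x}$. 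Restriction to $T_0$ preserves injectivity (being right adjoint to the exact functor $\cInd_{ZT_0}^T$), hence dually $(\Ord_B J)^\vee\cong\prod_I\OO\br{T_0}$, and the problem reduces to $\Hom^{\cont}_{\OO\br{T_0}}(\kappa^\vee,\OO\br{T_0})=0$. This is immediate: a nonzero ideal of the domain $\OO\br{x}$ contains a copy of $\OO\br{x}$ and so is not finitely generated over $\OO$.

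Two corrections to your write-up. Your claim that $\Ord_B$ commutes with direct sums is unjustified --- as a right adjoint it commutes with products, not \emph{a priori} with sums --- and the paper sidesteps this by never decomposing $J$. And your phrase ``finite-length submodule of a torsion-free module'' misstates both hypothesis and conclusion: $\kappa^\vee$ is only assumed finitely generated over $\OO$ (it could be $\OO$ itself, which is not of finite length), and mere $\OO$-torsion-freeness of the target would not suffice; the point is that $\OO\br{T_0}$ is a domain of Krull dimension $2$, so none of its nonzero ideals are finitely generated over $\OO$.
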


\begin{proof} 
The following is an analogue of Proposition \ref{HMP} in an easier setting.
Let $T_0$ to be an open  torsion-free pro-$p$ subgroup of $T\cap \SL_2(\Qp)$. Since $\Ord_B J$ is injective in $\Mod^{\sm}_{T,\zeta}(\OO)$ by Lemma \ref{lem:OrdJinj} it is injective in
 $\Mod^{\sm}_{T_0}(\OO)$, as restriction to $T_0$ is adjoint to $\cInd_{ZT_0}^{T}$, which is exact. Thus the Pontryagin dual of $\Ord_B J$ is isomorphic to $\prod_{i \in I} \OO \llbracket T_0 \rrbracket$ for some index set $I$. Hence, it suffices to show that 
$$\cHom{\OO \llbracket T_0 \rrbracket}(\kappa^{\vee}, \OO \llbracket T_0 \rrbracket) = 0.$$ 
This is clear as $\OO\br{T_0}$ is isomorphic to the ring of formal power series in one variable and 
$\kappa^{\vee}$ is a finitely generated $\OO$-module.
\end{proof}

\begin{lem} \label{lem:quotprinciple}
Let $\tau$ be a smooth representation of $G$ whose irreducible subquotients consists of characters in $\BB$. Then $\SL_2(\Qp)$ acts trivially on $\tau$ and there is an exact sequence
\begin{equation}\label{quotient_tau}
0 \rightarrow \tau \rightarrow (\Ind^G_B \tau)_{\sm} \rightarrow Q \rightarrow 0.
\end{equation}
Moreover, the irreducible subquotients of $Q$ are twists of $\Sp$ by a character. In particular, 
$\Hom_G(\tau, Q)=0$.
\end{lem}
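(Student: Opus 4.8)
The plan is to establish the three assertions in order. Only the first requires a genuine idea; the rest is formal manipulation of smooth parabolic induction.

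To see that $\SL_2(\Qp)$ acts trivially on $\tau$: every character of $G$ factors through $\det$ and so is trivial on $\SL_2(\Qp)$, and since $\SL_2(\Qp)$ is a perfect group this forces it to act trivially on every irreducible subquotient of $\tau$. One then propagates this along a composition series of $\tau$. Suppose $0\to\tau'\to\tau''\to\chi\circ\det\to 0$ is an extension of $G$-representations with $\SL_2(\Qp)$ already acting trivially on $\tau'$. For $g\in\SL_2(\Qp)$ the operator $g-1$ on $\tau''$ vanishes on $\tau'$ and, because $g$ acts trivially on the quotient, has image in $\tau'$; hence $g-1$ is induced by a $k$-linear map $\phi_g\colon\chi\circ\det\to\tau'$. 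Moreover $g$ acts trivially on $\tau'$, which contains the image of $h-1$ for every $h$, so $(gh-1)=(h-1)+(g-1)$ as maps $\tau''\to\tau'$; thus $g\mapsto\phi_g$ is a homomorphism from $\SL_2(\Qp)$ to the abelian group $\Hom_k(\chi\circ\det,\tau')$ and is therefore trivial by perfectness, i.e. $g-1=0$ on $\tau''$. A transfinite induction, taking unions at limit stages, gives the claim for $\tau$.

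With $\SL_2(\Qp)$ trivial, $\tau|_B$ is just the restriction to $B$ of the $G$-representation $\tau$ (viewed through $\det$), so the unit of the adjunction between restriction to $B$ and smooth induction $(\Ind^G_B-)_{\sm}$ gives a $G$-map $\tau\to(\Ind^G_B\tau)_{\sm}$, $v\mapsto(g\mapsto gv)$, which is injective since $v$ is the value at $1$. I would let $Q$ be its cokernel, giving the stated exact sequence. The projection formula $(\Ind^G_B(\Res^G_B\pi\otimes\sigma))_{\sm}\cong\pi\otimes(\Ind^G_B\sigma)_{\sm}$ with $\sigma=\Eins$ identifies $(\Ind^G_B\tau)_{\sm}$ with $\tau\otimes(\Ind^G_B\Eins)_{\sm}$ compatibly with the unit maps, so our injection is $\tau\otimes\iota$ for $\iota\colon\Eins\hookrightarrow(\Ind^G_B\Eins)_{\sm}$ the unit for the trivial representation; since $\Sp$ is by definition the cokernel of $\iota$ and $\otimes$ is right exact, $Q\cong\tau\otimes\Sp$, and as $\varpi\Sp=0$ this is $(\tau/\varpi\tau)\otimes_k\Sp$.

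Now $\tau/\varpi\tau$ is a $k$-representation all of whose irreducible subquotients are characters in $\BB$, and $-\otimes_k\Sp$ is exact, so a composition series of $\tau/\varpi\tau$ gives a filtration of $Q$ with irreducible graded pieces of the form $\Sp\otimes\chi_i\circ\det$; hence every irreducible subquotient of $Q$ is a twist of $\Sp$. Finally, the image of a $G$-map $\tau\to Q$ is both a quotient of $\tau$, so has a character among its irreducible subquotients, and a subrepresentation of $Q$, so has a twist of $\Sp$ among them; a character of $G$ is not a twist of $\Sp$, so the image is $0$ and $\Hom_G(\tau,Q)=0$. The one delicate point I foresee is in the first step: one needs $\tau$ to admit a (possibly transfinite) composition series with one-dimensional graded pieces — automatic in the locally finite setting where the Lemma is used — and then the transfinite induction must be run with care; everything else rests only on exactness of smooth induction along $B\backslash G=\mathbb P^1(\Qp)$, the projection formula, and the defining sequence of $\Sp$.
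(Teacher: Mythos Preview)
Your proof is correct and follows essentially the same route as the paper. The paper cites \cite[Lemma 1.2.1]{Tung2020} for the $\SL_2(\Qp)$-triviality whereas you supply a direct argument via perfectness of $\SL_2(\Qp)$ and transfinite induction; and for the structure of $Q$ the paper filters $\tau$ by characters and uses exactness of parabolic induction, while you invoke the projection formula to identify $Q\cong(\tau/\varpi\tau)\otimes_k\Sp$ in one stroke. Your version is more self-contained and your identification of $Q$ is sharper, but the underlying ideas are the same; your caveat about needing $\tau$ to admit a (transfinite) composition series is well taken and is implicitly used in the paper's filtration argument as well.
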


\begin{proof}
Since $\SL_2(\Qp)$ acts trivially on $\tau$ by \cite[Lemma 1.2.1]{Tung2020}, the unipotent radical of $B$ acts trivially on $\tau$. Thus $(\Ind^G_B \tau |_{B})_{\sm}$ coincides with the parabolic induction. Since the map $\tau \rightarrow (\Ind^G_B \tau)_{\sm}$ defined by $v \mapsto (g \mapsto gv)$ is $G$-equivariant and injective, we obtain the first assertion.

To show the second assertion, we choose an increasing and exhaustive filtration $\{R^j \}_{j \geq 0}$ of $\tau$ such that $R^j / R^{j-1}$ is a character for each $j$. Then we have $R^j / R^{j-1} \hookrightarrow (\Ind^G_B R^j / R^{j-1})_{\sm}$ with quotient isomorphic to a twist of $\Sp$ by a character. Thus the second assertion follows from the exactness of $(\Ind^G_B -)_{\sm}$.
\end{proof}

\begin{lem} \label{lem:nullhomJ} 
Let $J$  be injective in $\Mod^{\lfin}_{G, \zeta}(\OO)$ and let $\tau$ and $Q$  be as in Lemma \ref{lem:quotprinciple}. If $\tau^{\vee}$ is finitely generated over $\OO$ then
$$\Hom_G(J, \tau) = \Hom_G(J, (\Ind^G_B \tau)_{\sm})= \Hom_G(J, Q) = 0.$$
\end{lem}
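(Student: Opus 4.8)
The plan is to deduce all three vanishing statements from the lemmas already established, using the projectivity of $J^\vee$ together with adjunction of $\Ord_B$ to parabolic induction. First I would observe that $\Hom_G(J,\tau)=0$ is immediate: by Lemma \ref{lem:quotprinciple} the representation $\tau$ has a finite (even infinite but exhaustive) filtration by characters of $\BB$, so it is killed by $\varpi^n$ for no reason in general---wait, $\tau^\vee$ finitely generated over $\OO$ only says $\tau$ is $\varpi$-divisible-dual-finite, so I would instead argue as follows. Dualizing, $J^\vee$ is projective in $\dualcat(\OO)$, while $\tau^\vee$ is a finitely generated $\OO$-module on which $\SL_2(\Qp)$ acts trivially; in particular $\tau^\vee/\varpi$ is of finite length. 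Hence Proposition \ref{HMP} applies and gives $\Hom_{\dualcat(\OO)}(\tau^\vee, J^\vee)=0$, i.e.\ $\Hom_G(J,\tau)=0$.

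Next I would treat $\Hom_G(J,(\Ind^G_B\tau)_{\sm})$. Since parabolic induction is right adjoint to $\Ord_B$ (Emerton), we have
\[
\Hom_G(J,(\Ind^G_B\tau)_{\sm})\cong \Hom_T(\Ord_B J,\tau).
\]
Here I must be slightly careful: $\Ord_B$ here is the left adjoint picture, so really $\Hom_G$ out of $J$ into an induction from $B$ is computed by the \emph{right} ordinary-parts-type functor; but in the form used in the excerpt, $\Hom_G((\Ind^G_{\overline B}\chi)_{\sm},J)\cong\Hom_T(\chi,\Ord_B J)$, so the relevant adjunction for a $\Hom$ \emph{out of} $J$ into $(\Ind^G_B\tau)_{\sm}$ is obtained by dualizing: $(\Ind^G_B\tau)_{\sm}$ is the Pontryagin dual of a suitable coinduction, and one gets $\Hom_G(J,(\Ind^G_B\tau)_{\sm})\cong\Hom_T(\Ord_B J,\tau)$ after matching $\overline B$ with $B$ via the intertwining. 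Granting this identification, $\tau^\vee$ is finitely generated over $\OO$ by hypothesis, so $\tau$ satisfies the hypothesis of Lemma \ref{lem:nullhomOrd} (with $\kappa=\tau$, since $\SL_2(\Qp)$ acts trivially and the central character is $\zeta$), and therefore $\Hom_T(\Ord_B J,\tau)=0$. This gives the middle vanishing.

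Finally, for $\Hom_G(J,Q)=0$ I would apply $\Hom_G(J,-)$ to the short exact sequence \eqref{quotient_tau}. Since $J$ is injective in $\Mod^{\lfin}_{G,\zeta}(\OO)$, the functor $\Hom_G(J,-)$ is exact on this category (injectives make $\Hom(J,-)$ exact only when $J$ is injective as a cogenerator-target; more precisely, $\Hom_G(J,-)$ is exact because $J$ is injective, hence $-\mapsto\Hom_G(J,-)$ sends short exact sequences in $\Mod^{\lfin}_{G,\zeta}(\OO)$ to short exact sequences). Actually the clean statement is: $\Hom_G(J,-)$ is exact since $J^\vee$ is projective. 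Applying this to $0\to\tau\to(\Ind^G_B\tau)_{\sm}\to Q\to 0$ yields an exact sequence
\[
\Hom_G(J,(\Ind^G_B\tau)_{\sm})\to\Hom_G(J,Q)\to \Ext^1_G(J,\tau),
\]
and the last group vanishes because $J$ is injective (so all higher Ext out of it vanish), while the first vanishes by the previous paragraph; hence $\Hom_G(J,Q)=0$.

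The main obstacle I anticipate is the bookkeeping in the second step: correctly matching the adjunction $\Hom_G(J,(\Ind^G_B\tau)_{\sm})\cong\Hom_T(\Ord_B J,\tau)$ in the ``$J$ in the source'' position, given that the excerpt records the adjunction in the ``induction in the source'' position (equation \eqref{ord}), and ensuring the Borel $B$ versus $\overline B$ and the modulus character are handled consistently. Once that identification is in place, Lemma \ref{lem:nullhomOrd} does all the work, and the rest is a formal diagram chase using injectivity of $J$ and projectivity of $J^\vee$.
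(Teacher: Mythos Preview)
Your first step (for $\Hom_G(J,\tau)$) is fine and matches the paper. The other two steps each contain a genuine gap.

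For the middle term, the adjunction you invoke does not hold. Emerton's $\Ord_B$ is \emph{right} adjoint to parabolic induction from $\overline B$, so it computes $\Hom_G(\Ind^G_{\overline B}\,\cdot\,,J)\cong\Hom_T(\,\cdot\,,\Ord_B J)$; there is no adjunction giving $\Hom_G(J,(\Ind^G_B\tau)_{\sm})\cong\Hom_T(\Ord_B J,\tau)$. The left adjoint of $(\Ind^G_B-)_{\sm}$ is the Jacquet functor $J\mapsto J_N$, which is a different object from $\Ord_B J$, so Lemma~\ref{lem:nullhomOrd} is not applicable here. You correctly flag this as the main obstacle, and indeed it cannot be fixed as written.

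For $\Hom_G(J,Q)$, your claim that $\Ext^1_G(J,\tau)=0$ ``because $J$ is injective'' confuses the two variances: injectivity of $J$ gives $\Ext^i(-,J)=0$, not $\Ext^i(J,-)=0$. Likewise, projectivity of $J^\vee$ makes $\Hom_{\dualcat(\OO)}(J^\vee,-)$ exact, which dualizes to exactness of $\Hom_G(-,J)$, again the wrong variance. The vanishing $\Ext^1_{G,\zeta}(J,\tau)=0$ is in fact Proposition~\ref{prop:JExt}, which is proved \emph{after} this lemma and uses it, so invoking it here would be circular.

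The paper sidesteps both issues by applying Proposition~\ref{HMP} uniformly to all three terms. Since $\tau^\vee$ is finitely generated over $\OO$, $\tau[\varpi]$ is a finite-dimensional $k$-vector space, hence $(\Ind^G_B\tau)_{\sm}^\vee/\varpi\cong(\Ind^G_B\tau[\varpi])_{\sm}^\vee$ has finite length in $\dualcat(\OO)$; the same then holds for the subquotients $\tau^\vee/\varpi$ and $Q^\vee/\varpi$. Proposition~\ref{HMP} with $P=J^\vee$ then kills all three Hom groups at once.
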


\begin{proof} 
Since $\tau^{\vee}$ is finitely generated over $\OO$, $\tau[\varpi]$ is a finite $k$-vector space,
thus 
$$(\Ind^G_B \tau )_{\sm}^{\vee}/ \varpi \cong (\Ind^G_B \tau[\varpi])_{\sm}^{\vee}$$ 
is of finite length in $\dualcat(\OO)$ and the assertion follows from Proposition 
\ref{HMP}. 
\end{proof}

\begin{prop} \label{prop:JExt} Let $J$ and $\tau$ be as in Lemma \ref{lem:nullhomJ}. Then
$\Ext^1_{G, \zeta}(J, \tau) = 0$.
\end{prop}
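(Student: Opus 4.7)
Plan. I proceed in two steps, exploiting the exact sequence of Lemma \ref{lem:quotprinciple} and the hom-vanishing lemmas.

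Step 1 (reduction to the principal series). Apply $\Hom_G(J,-)$ to the short exact sequence
\[
0\to \tau\to (\Ind_B^G\tau)_{\sm}\to Q\to 0
\]
of Lemma \ref{lem:quotprinciple}. By Lemma \ref{lem:nullhomJ}, each of $\Hom_G(J,\tau)$, $\Hom_G(J,(\Ind_B^G\tau)_{\sm})$ and $\Hom_G(J,Q)$ vanishes (note that all three objects have Pontryagin dual finitely generated over $\OO$ modulo $\varpi$, which is the hypothesis needed). The resulting six-term long exact sequence produces an injection
\[
\Ext^1_{G,\zeta}(J,\tau)\hookrightarrow \Ext^1_{G,\zeta}(J,(\Ind_B^G\tau)_{\sm}),
\]
so it is enough to prove the target vanishes.

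Step 2 (vanishing via ordinary parts). I would analyse an arbitrary extension
\[
\mathcal E\colon 0\to (\Ind_B^G\tau)_{\sm}\to E\to J\to 0
\]
and produce a splitting using Emerton's left-exact functor $\Ord_B$, the right adjoint to the exact functor $(\Ind_B^G-)_{\sm}$. The unit of the adjunction gives a canonical isomorphism $\tau \iso \Ord_B(\Ind_B^G\tau)_{\sm}$ (using that $\tau$ has trivial $N$-action), so applying $\Ord_B$ to $\mathcal E$ yields a left-exact sequence
\[
0\to \tau\to \Ord_B E\to \Ord_B J.
\]
By Lemma \ref{lem:OrdJinj}, $\Ord_B J$ is injective in $\Mod^{\sm}_{T,\zeta}(\OO)$, and by Lemma \ref{lem:nullhomOrd} we have $\Hom_T(\Ord_B J,\tau)=0$. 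Combining these two facts, I would argue that the above $T$-extension splits: the obstructing $\Ext^1_T$-class must lie in a group controlled by $\Hom_T(\Ord_B J, \tau)$ via a Nakayama-type / devissage argument using the explicit socle structure of $\Ord_B J$ from Lemma \ref{lem:OrdJinj}. This yields a $T$-equivariant retraction $\Ord_B E\twoheadrightarrow \tau$. Transporting this retraction back through the adjunction $((\Ind_B^G-)_{\sm},\Ord_B)$ produces a $G$-equivariant retraction $E\twoheadrightarrow (\Ind_B^G\tau)_{\sm}$, which splits $\mathcal E$ and finishes the proof.

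The main obstacle I anticipate is precisely the splitting of the $T$-extension in Step 2: injectivity of $\Ord_B J$ gives vanishing of $\Ext^1_T(-,\Ord_B J)$ rather than the $\Ext^1_T(\Ord_B J,-)$ we need, so this splitting is not automatic. The crucial input is the combination of Lemmas \ref{lem:nullhomJ} and \ref{lem:nullhomOrd}, which together say that neither $J$ nor $\Ord_B J$ admits any non-trivial $\Hom$ into the building blocks of $\tau$; a careful bookkeeping of this, together with the adjunction triangle identities, should force the $T$-extension to be trivial and thereby allow the formal transport back to the $G$-side.
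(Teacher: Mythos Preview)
Your Step 1 reduction is valid, but Step 2 contains a genuine gap in the transport argument. The adjunction $((\Ind_{\bar B}^G)_{\sm},\Ord_B)$ gives a bijection
\[
\Hom_G\bigl((\Ind_{\bar B}^G\sigma)_{\sm},\pi\bigr)\cong\Hom_T(\sigma,\Ord_B\pi),
\]
which relates maps \emph{into} $\Ord_B\pi$ with maps \emph{out of} an induced representation. A retraction $\Ord_B E\twoheadrightarrow\tau$ is a map \emph{out of} $\Ord_B E$, so the adjunction simply does not produce from it a map $E\to(\Ind_B^G\tau)_{\sm}$; the counit goes $(\Ind^G \Ord_B E)_{\sm}\to E$, not the other way. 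No amount of triangle-identity bookkeeping fixes this direction problem. You also never actually establish the splitting of the $T$-sequence; as you yourself note, injectivity of $\Ord_B J$ controls $\Ext^1_T(-,\Ord_B J)$, which is irrelevant here.

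The paper's proof works with the extension $0\to\tau\to I\to J\to 0$ directly (no Step 1 reduction) and uses the \emph{derived} functor $\mathrm R^1\Ord_B\cong(-)_N\otimes\alpha^{-1}$ rather than $\Ord_B$ itself. Since $\Ord_B\tau=0$ and $J_N=0$, the long exact sequence together with Lemma~\ref{lem:nullhomOrd} forces $\tau\cong I_N$. The correct adjunction is then the \emph{other} Frobenius reciprocity
\[
\Hom_G\bigl(I,(\Ind_B^G\tau)_{\sm}\bigr)\cong\Hom_T(I_N,\tau)\cong\Hom_T(\tau,\tau),
\]
and a short diagram chase against the sequence of Lemma~\ref{lem:quotprinciple} (using $\Hom_G(\tau,Q)=0$) shows $\Hom_G(I,\tau)\cong\Hom_G(\tau,\tau)$; the preimage of $\id_\tau$ splits the original extension. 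The moral: the splitting lives on the $N$-coinvariant side, not the $\Ord_B$ side.
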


\begin{proof}
Consider an exact sequence $0 \rightarrow \tau \rightarrow I \rightarrow J \rightarrow 0$. Applying $\Ord_B$ to it, we get an exact sequence
\begin{align*}
    0 &\rightarrow \Ord_B \tau \rightarrow \Ord_B I \rightarrow \Ord_B J 
   \rightarrow \RR^1 \Ord_B \tau \rightarrow \RR^1 \Ord_B I \rightarrow \RR^1 \Ord_B J
\end{align*}
of smooth $T$-representations. It is proved in \cite{MR2667892} that the functors $H^i\Ord_B$ in 
\cite{MR2667883} coincide with the derived functors $\RR^i\Ord_B$. Moreover, 
$H^1\Ord_B$ coincides with the $N$-coinvariants twisted by the character $\alpha^{-1}$, where
$\alpha(\bigl ( \begin{smallmatrix} a & 0 \\ 0 & d\end{smallmatrix} \bigr))=\omega(a d^{-1})$, by \cite[Proposition 3.6.2]{MR2667883}.

Since $\SL_2(\Qp)$ acts trivially on $\tau$, we have 
$\Ord_B \tau=0$ and $\RR^1 \Ord_B \tau
\cong \tau \otimes \alpha^{-1}$.
Thus the above exact sequence reduces to
\begin{equation} \label{equa:HomI}
    0 \rightarrow \Ord_B I \rightarrow \Ord_B J \rightarrow \tau \otimes \alpha^{-1}\rightarrow  I_N \otimes \alpha^{-1} \rightarrow 0.
\end{equation}
Since the middle map is zero by Lemma \ref{lem:nullhomOrd}, the map $\tau\hookrightarrow I$ induces an isomorphism $\tau \cong I_N$ of $T$-representations 
and hence
\begin{equation} \label{equa:Frob}
    \Hom_G(I, (\Ind^G_B \tau)_{\sm}) \cong \Hom_G(\tau, (\Ind^G_B \tau)_{\sm}).
\end{equation}
by adjunction formula. 

It follows from Lemma \ref{lem:nullhomJ} that the first $3$ terms in the long exact sequence obtained by applying $\Hom_G(J, -)$ to \eqref{quotient_tau} are  zero. Thus
by applying $\Hom_G(I, -)\rightarrow \Hom_G(\tau,-)$ to \eqref{quotient_tau}
we obtain the following commutative diagram:
\[
  \begin{tikzcd}
  \Hom_G(I, \tau) \arrow[r, hookrightarrow] \arrow[d, hookrightarrow] &\Hom_G(\tau, \tau) \arrow[d, hookrightarrow] \\
  \Hom_G(I, (\Ind^G_B \tau)_{\sm}) \arrow[r, "\sim", "\eqref{equa:Frob}"'] \arrow[d] &\Hom_G(\tau, (\Ind^G_B \tau)_{\sm}) \arrow[d] \\
  \Hom_G(I, Q) \arrow[r, hookrightarrow] &\Hom_G(\tau, Q)
  \end{tikzcd}
\]
with exact columns. Last part of Lemma \ref{lem:quotprinciple} says that
 $\Hom_G(\tau, Q)=0$ and hence $\Hom_G(I, Q)=0$. Thus all the maps in the 
 top square of the diagram are isomorphisms. 
 The preimage of $\id_{\tau}\in \Hom_G(\tau, \tau)$ in $\Hom_G(I, \tau)$ splits 
 the exact sequence $0 \rightarrow \tau \rightarrow I \rightarrow J \rightarrow 0$. This proves the proposition.
\end{proof}

\subsection{Quotient category} \label{section:quotcat}
Let $\TT(\OO)$ be the full subcategory of $\CC(\OO)$ whose objects have trivial $\SL_2(\Qp)$-action. By \cite[Lemma 10.25]{image} (for $p>2$) and \cite[Lemma 1.2.1]{Tung2020} (for $p=2$), $\TT(\OO)$ is a thick subcategory of $\CC(\OO)$ and hence we may consider the quotient category $\mathfrak Q(\OO) := \CC(\OO) / \TT(\OO)$. Let $\cT: \CC(\OO) \rightarrow \mathfrak Q(\OO)$ be the quotient functor; we note that $\cT$ is the identity on objects. It is shown in \cite[\S 10.3]{image} that $\mathfrak Q(\OO)$ is an abelian category with enough projectives and $\cT$ is an exact functor.

For a block $\BB$, we denote 
\[
\PB' = \bigoplus_{\substack{\pi \in \BB \\ \pi^{\SL_2(\Qp)} = 0}} P_{\pi^{\vee}}, \quad
\EB' = \End_{\CC(\OO)}(\PB'), \quad
\ZB' = Z(\EB'),
\]
where $P_{\pi^{\vee}}$ is a projective envelope of $\pi^{\vee}$ in $\CC(\OO)$.

\begin{prop} \label{prop:CEeuqiv}
Let $\BB$ be a block.
\begin{enumerate}
    \item $\cT \PB'$ is a projective object of $\mathfrak Q(\OO)$ and $\EB' \cong \End_{\mathfrak Q(\OO)}(\cT \PB')$.
    \item The functor $M \mapsto \Hom_{\mathfrak Q(\OO)}(\cT \PB', M)$ defines an equivalence of categories between $\mathfrak Q(\OO)_{\BB}$ and the category of pseudo-compact right $\EB'$-modules, with the inverse given by $\md \mapsto \md \wtimes_{\EB'} \cT \PB'$.
\end{enumerate}
\end{prop}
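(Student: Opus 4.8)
The plan is to prove that $\cT\PB'$ is a projective generator of $\mathfrak Q(\OO)_\BB$ whose endomorphism ring is $\EB'$, and then to run the Gabriel--Morita argument used for $\CC(\OO)_\BB$ in \cite[Proposition 5.45]{image}. First, $\cT$ sends projectives to projectives (see \cite[\S 10.3]{image}): under Pontryagin duality $\TT(\OO)$ corresponds to the Serre subcategory of $\Mod^{\lfin}_{G,\zeta}(\OO)$ of representations with trivial $\SL_2(\Qp)$-action, which is closed under direct sums, hence localizing, so the associated localization functor preserves injectives. In particular $\cT\PB'$ is projective.

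To identify endomorphism rings I would use the Serre-quotient formula $\Hom_{\mathfrak Q(\OO)}(\cT\PB',\cT M)=\varinjlim_{X',Y'}\Hom_{\CC(\OO)}(X',M/Y')$, the colimit over $X'\subseteq\PB'$ with $\PB'/X'\in\TT(\OO)$ and $Y'\subseteq M$ with $Y'\in\TT(\OO)$. The irreducible subquotients of any object of $\TT(\OO)$ again have trivial $\SL_2(\Qp)$-action, hence are characters, so a nonzero quotient $\PB'/X'$ lying in $\TT(\OO)$ would have character cosocle, whereas its cosocle is a nonzero quotient of $\cosoc\PB'=\bigoplus_\pi\pi^\vee$, a sum of non-characters; this forces $X'=\PB'$ (alternatively, dualize and use Lemma \ref{lem:Jinvfinite}). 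When $M=\PB'$ any $Y'\in\TT(\OO)$ inside $\PB'$ lies in $(\PB')^{\SL_2(\Qp)}=0$ by Lemma \ref{lem:PSL2}, so the colimit collapses and the natural map $\EB'\to\End_{\mathfrak Q(\OO)}(\cT\PB')$ is an isomorphism, proving (1). The same forcing $X'=\PB'$ is valid for arbitrary $M$; and since $\PB'$ is projective the maps $\Hom_{\CC(\OO)}(\PB',M)\to\Hom_{\CC(\OO)}(\PB',M/Y')$ are surjective, and injective because $\Hom_{\CC(\OO)}(\PB',Y')=0$ --- a nonzero map from some $P_{\pi^\vee}$ with $\pi$ a non-character in $\BB$ would exhibit $\pi^\vee$ as a composition factor of $Y'\in\TT(\OO)$, which is impossible. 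Hence $\Hom_{\CC(\OO)}(\PB',-)$ annihilates $\TT(\OO)\cap\CC(\OO)_\BB$ and factors through $\cT$ as an exact functor $\bar F$ with $\bar F(\cT M)\cong\Hom_{\CC(\OO)}(\PB',M)$ naturally in $M\in\CC(\OO)_\BB$; this is the functor in (2).

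It then suffices to show $\cT\PB'$ is a generator, i.e. $\Hom_{\CC(\OO)}(\PB',M)=0\Rightarrow M\in\TT(\OO)$. If this $\Hom$ vanishes, then for each non-character $\pi\in\BB$ no subobject of $M$ has $\pi^\vee$ in its cosocle (otherwise lift $P_{\pi^\vee}\twoheadrightarrow\pi^\vee$ to a nonzero map into $M$), so $M$ has no non-character composition factor; thus $M^\vee$ is a smooth $G$-representation all of whose irreducible subquotients are characters in $\BB$, and Lemma \ref{lem:quotprinciple} gives that $\SL_2(\Qp)$ acts trivially on $M^\vee$, i.e. $M\in\TT(\OO)$ and $\cT M=0$. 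Now $\cT\PB'$ is a projective generator of $\mathfrak Q(\OO)_\BB$ with $\End_{\mathfrak Q(\OO)}(\cT\PB')=\EB'$, and $\mathfrak Q(\OO)$ has the pseudo-compact structure needed for Gabriel's theorem by \cite[\S 10.3]{image}; so $\bar F$ is an equivalence with quasi-inverse $\md\mapsto\md\wtimes_{\EB'}\cT\PB'$, exactly as in \cite[Proposition 5.45]{image} --- one checks the unit and counit on the free module $\EB'$, where both composites return $\cT\PB'$, and extends to arbitrary $\md$ and $X$ using exactness of $\bar F$ and continuity of $-\wtimes_{\EB'}\cT\PB'$. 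The one genuinely non-formal input is the last step of the generator argument, that a $G$-representation with only characters among its composition factors has trivial $\SL_2(\Qp)$-action; for the exceptional blocks this is precisely what Lemma \ref{lem:quotprinciple} (hence \cite[Lemma 1.2.1]{Tung2020}) provides, while the collapse of the Hom-colimits is where Lemmas \ref{lem:PSL2} and \ref{lem:Jinvfinite} are used.
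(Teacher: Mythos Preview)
Your proof is correct and follows essentially the same route as the paper, which simply cites \cite[Lemma 10.27]{image} for part (1) and Gabriel's general theory \cite[\S IV.4]{MR232821} for part (2); you have spelled out the content of those references. Your collapse of the Serre-quotient $\Hom$ formula is exactly \cite[Lemma 10.26]{image} specialized to $\PB'$ (using $(\PB')^{\SL_2(\Qp)}=0$ from Lemma \ref{lem:PSL2} and $(\PB')_{\SL_2(\Qp)}=0$ from Lemma \ref{lem:Jinvfinite}), and your verification that $\cT\PB'$ is a generator is the one ingredient left implicit in the paper's citation of Gabriel.
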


\begin{proof}
See \cite[Lemma 10.27]{image} for the first assertion and \cite[\S IV.4 Theorem 4, Corollary 1, Corollary 5]{MR232821} for the second assertion.
\end{proof}

\subsection{The centre}\label{centre_dash} 
In this section we will prove key results towards showing \ that $\EB$ and $\ZB$ are finite over $\ZB'$. If $\BB$ is of type  (i), (ii) or (iii) then we have $\PB=\PB'$ and thus $\ZB=\ZB'$.

\begin{lem} \label{lem:Endfg}
Let $E$ be a ring with centre $Z$ and let $\md$ be a finitely generated (right) $E$-module. Assume that $Z$ is noetherian and $E$ is (module) finite over $Z$. Then $\End_E(\md)$ and its centre $Z(\End_E(\md))$ are noetherian and finite over $Z$.
\end{lem}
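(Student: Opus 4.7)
The plan is to bound $\End_E(\md)$ inside a finitely generated $Z$-module and then use that $Z$ is noetherian. First I would observe that $\md$ is finitely generated as a $Z$-module: by hypothesis $\md$ admits a finite generating set $v_1, \dots, v_n$ over $E$, and $E$ itself is finitely generated as a $Z$-module by some $e_1, \dots, e_r$, so the elements $v_i e_j$ generate $\md$ over $Z$.

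Next, choose a surjection $E^{\oplus n} \twoheadrightarrow \md$ (sending the standard basis to the $v_i$'s) and apply $\Hom_E(-, \md)$. This gives an injection of $Z$-modules
\[
\End_E(\md) \hookrightarrow \Hom_E(E^{\oplus n}, \md) \cong \md^{\oplus n},
\]
where the $Z$-action comes from the central embedding $Z \hookrightarrow E$. Since $\md^{\oplus n}$ is finitely generated over $Z$ and $Z$ is noetherian, the $Z$-submodule $\End_E(\md)$ is itself finitely generated over $Z$. The centre $Z(\End_E(\md))$ is then also a $Z$-submodule of this finitely generated $Z$-module, hence finitely generated over $Z$ for the same reason.

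Finally, I would upgrade ``finite over $Z$'' to ``noetherian as a ring''. Any algebra that is module-finite over a central noetherian subring is itself noetherian: every one-sided ideal is in particular a $Z$-submodule of the underlying finitely generated $Z$-module, hence is itself finitely generated as a $Z$-module, and a fortiori as an ideal. Applied to $\End_E(\md)$ and to its commutative subring $Z(\End_E(\md))$, this gives the noetherian conclusion. There is no real obstacle here; the only thing to verify carefully is the central action of $Z$ on $\End_E(\md)$, which is immediate because $Z$ is the centre of $E$ and therefore commutes with every $E$-linear endomorphism.
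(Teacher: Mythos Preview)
Your proof is correct and takes essentially the same approach as the paper: embed $\End_E(\md)$ into $\Hom_E(E^{\oplus n},\md)$ via a surjection $E^{\oplus n}\twoheadrightarrow\md$, and use that the target is a finitely generated $Z$-module. The only cosmetic difference is that the paper bounds $\Hom_E(E^{\oplus n},\md)$ as a quotient of $M_n(E^{\op})$, whereas you identify it directly with $\md^{\oplus n}$ after first noting that $\md$ is finite over $Z$; your route is, if anything, slightly more direct.
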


\begin{proof}
Since $\md$ is finitely generated over $E$, there is a surjection $E^{\oplus n} \twoheadrightarrow \md$ for some $n$. This induces an injection $\End_E(\md) \hookrightarrow \Hom_E(E^{\oplus n}, \md)$ and a surjection $M_n(E^{\op}) \cong \End_E(E^{\oplus n}) \twoheadrightarrow \Hom_E(E^{\oplus n}, \md)$ of $Z$-modules. Since $M_n(E^{\op})$ is finite over $E^{\op}$, it is finite and noetherian over $Z$. It follows that $\Hom_E(E^{\oplus n}, \md)$ and thus $\End_E(\md)$, which can be identified with a $Z$-submodule of $\Hom_E(E^{\oplus n}, \md)$, are finite and noetherian over $Z$. This proves the lemma.
\end{proof}

\begin{lem} \label{lem:MSL2}
Let $\MB = \ker(\PB \rightarrow (\PB)_{\SL_2(\Qp)})$. Then the following hold:
\begin{enumerate}
    \item $(\MB)_{\SL_2(\Qp)} = 0$.
    \item For all $\pi \in \BB$, $\Hom_{\CC(\OO)}(\MB, \pi^{\vee})$ is finite dimensional over $k$.
\end{enumerate}
\end{lem}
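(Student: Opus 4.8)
The plan is to transport everything to $\Mod^{\lfin}_{G,\zeta}(\OO)$ via Pontryagin duality. Set $J:=\PB^\vee$; it is an injective object of $\Mod^{\lfin}_{G,\zeta}(\OO)$, and since $\BB$ is a finite set (the classification of \S\ref{section:blocks}) we have $J=\bigoplus_{\pi\in\BB}J_\pi$ with $J_\pi$ an injective envelope of $\pi$. Put $\tau:=J^{\SL_2(\Qp)}$. Duality identifies $(\PB)_{\SL_2(\Qp)}$ with $\tau^\vee$ and hence $\MB^\vee$ with $J/\tau$, so $(1)$ amounts to $(J/\tau)^{\SL_2(\Qp)}=0$. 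Also, by Lemma~\ref{lem:Jinvfinite} each $J_\pi^{\SL_2(\Qp)}$ vanishes unless $\pi$ is a character, in which case its Pontryagin dual is finitely generated over $\OO$; as $\BB$ is finite this shows that $\tau^\vee$ is finitely generated over $\OO$, which is the key structural input for $(2)$.

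For $(1)$ I would argue with a commutator trick. Let $W\subseteq J$ be the preimage of $(J/\tau)^{\SL_2(\Qp)}$ under $J\twoheadrightarrow J/\tau$, so that $gw-w\in\tau$ for all $g\in\SL_2(\Qp)$ and $w\in W$. For each $g$ the map $\phi_g\colon W\to\tau$, $w\mapsto gw-w$, is $\OO$-linear, and since $\SL_2(\Qp)$ acts trivially on $\tau$ one checks directly that $\phi_{gh}=\phi_g+\phi_h$ (using that $g$ fixes $hw-w\in\tau$). Thus $g\mapsto\phi_g$ is a group homomorphism from $\SL_2(\Qp)$ into the abelian group $\Hom_\OO(W,\tau)$; as $\SL_2(\Qp)$ is perfect it must be trivial, so $gw=w$ for all $g$ and $w$, i.e.\ $W\subseteq J^{\SL_2(\Qp)}=\tau$. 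Hence $(J/\tau)^{\SL_2(\Qp)}=0$, which is $(1)$.

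For $(2)$ I would dualise the defining sequence to $0\to\MB\to\PB\to\tau^\vee\to 0$ in $\CC(\OO)$ and apply $\Hom_{\CC(\OO)}(-,\pi^\vee)$. Since $\PB$ is projective we get $\Ext^1_{\CC(\OO)}(\PB,\pi^\vee)=0$ and an exact sequence $\Hom_{\CC(\OO)}(\PB,\pi^\vee)\to\Hom_{\CC(\OO)}(\MB,\pi^\vee)\to\Ext^1_{\CC(\OO)}(\tau^\vee,\pi^\vee)\to 0$; the first term equals $\Hom_{\CC(\OO)}(\cosoc\PB,\pi^\vee)=\Hom_{\CC(\OO)}(\pi_{\BB}^\vee,\pi^\vee)$, which is finite-dimensional over $k$. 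So it suffices to bound $\Ext^1_{\CC(\OO)}(\tau^\vee,\pi^\vee)$. Let $T_0\subseteq\tau^\vee$ be the $\OO$-torsion submodule, which is of finite length in $\CC(\OO)$, and $T_1:=\tau^\vee/T_0$, which is free and finitely generated over $\OO$; the sequence $0\to T_0\to\tau^\vee\to T_1\to 0$ reduces the bound to $T_0$ and $T_1$ separately. For $T_0$ the composition factors lie among $\{\pi'^\vee:\pi'\in\BB\}$ and $\Ext^1_{\CC(\OO)}(\pi'^\vee,\pi^\vee)\cong\Ext^1_{G,\zeta}(\pi,\pi')$ is finite-dimensional (a standard fact for $\GL_2(\Qp)$; cf.\ \cite{image}), so dévissage applies. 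For $T_1$, note $\pi^\vee$ is killed by $\varpi$, so multiplication by $\varpi$ acts as $0$ on $\Ext^\bullet_{\CC(\OO)}(-,\pi^\vee)$; applying the long exact sequence to $0\to T_1\xrightarrow{\varpi}T_1\to T_1/\varpi T_1\to 0$ yields a surjection $\Ext^1_{\CC(\OO)}(T_1/\varpi T_1,\pi^\vee)\twoheadrightarrow\Ext^1_{\CC(\OO)}(T_1,\pi^\vee)$, and $T_1/\varpi T_1$ is again of finite length in $\CC(\OO)$, so its $\Ext^1$ is finite by the same dévissage. Together these give $(2)$.

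The substantive inputs are Lemma~\ref{lem:Jinvfinite}, the perfectness of $\SL_2(\Qp)$, and the finite-dimensionality of $\Ext^1_{G,\zeta}$ between irreducible objects. The step I expect to require the most care is the passage from $\tau^\vee$ to its $\OO$-free quotient $T_1$ in $(2)$: since $T_1$ is not of finite length in $\CC(\OO)$, one has to exploit that $\pi^\vee$ is $\varpi$-torsion precisely in order to descend the bound on $\Ext^1_{\CC(\OO)}(T_1,\pi^\vee)$ to the finite-length module $T_1/\varpi T_1$.
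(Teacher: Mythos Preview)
Your proof is correct. For part~(2) your argument is essentially the Pontryagin dual of the paper's: both reduce to bounding an $\Ext^1$ against a finite-length object whose composition factors are characters, using that $\pi$ (or $\pi^\vee$) is killed by $\varpi$ to pass from $\tau$ (or $\tau^\vee$) to a finite-length piece, and then invoking finite-dimensionality of $\Ext^1_{G,\zeta}$ between irreducibles. The paper works on the smooth side and reduces to $\Ext^1_{G,\zeta}(\pi,\tau[\varpi])$; you work in $\CC(\OO)$ and reduce to $\Ext^1_{\CC(\OO)}(T_1/\varpi T_1,\pi^\vee)$ together with the torsion part $T_0$. These are the same d\'evissage seen through duality.

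For part~(1) your route is genuinely different. The paper applies $\Hom_G(\chi,-)$ to the sequence $0\to\tau\to J_\BB\to\MB^\vee\to 0$ for each character $\chi\in\BB$ and invokes the vanishing $\Ext^1_{G,\zeta}(\chi,\tau)=0$ from \cite[Lemma~1.2.1]{Tung2020} (equivalently, that $\TT(\OO)$ is a thick subcategory) to conclude $\Hom_G(\chi,\MB^\vee)=0$. Your cocycle argument---observing that $g\mapsto\phi_g$ is a homomorphism into an abelian group, hence trivial since $\SL_2(\Qp)$ is perfect---bypasses this citation and reproves the relevant instance of thickness directly. This is more self-contained and in fact recovers the content of the cited lemma; conversely, the paper's approach has the side benefit of yielding the sharper statement $\Hom_{\CC(\OO)}(\MB,\chi^\vee)=0$ for characters $\chi$, which it later uses (though your~(1) implies this too, since any character in the socle of $\MB^\vee$ would lie in $(\MB^\vee)^{\SL_2(\Qp)}$).
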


\begin{proof}
It suffices to consider the block $\BB$ of type $(iv)$, $(v)$ and $(vi)$, otherwise $\MB = \PB$ and the assertion is trivial. Let $J_{\BB}$ be the Pontryagin dual of $\PB$. Then we have an exact sequence $0 \rightarrow J_{\BB}^{\SL_2(\Qp)} \rightarrow J_{\BB} \rightarrow \MB^{\vee} \rightarrow 0$. By applying $\Hom_{\GL_2(\Qp)}(\pi, -)$, we get a long exact sequence
\begin{align*}
    0 &\rightarrow \Hom_{G}(\pi, J_{\BB}^{\SL_2(\Qp)}) \rightarrow \Hom_{G}(\pi, J_{\BB}) \rightarrow \Hom_{G}(\pi, \MB^{\vee}) \\
    &\rightarrow \Ext^1_{G, \zeta}(\pi, J_{\BB}^{\SL_2(\Qp)}) \rightarrow \Ext^1_{G, \zeta}(\pi, J_{\BB}) = 0,
\end{align*}
where $\Ext^1_{G, \zeta}$ is the extension group in $\mlf_{G, \zeta}(\OO)$. 

If $\pi \in \BB$ is a character, then both $\Hom_{G}(\pi, J_{\BB}^{\SL_2(\Qp)})$ and $\Hom_{G}(\pi, J_{\BB})$ are $1$-dimensional over $k$, and $\Ext^1_{G, \zeta}(\pi, J_{\BB}^{\SL_2(\Qp)}) = 0$ by \cite[Lemma 1.2.1]{Tung2020}. Thus $\Hom_G(\pi, \MB^{\vee}) = 0$ and the first assertion follows (see \cite[Lemma III.40]{MR3267142} for another proof).

If $\pi \in \BB$ is  not a character, then $\Hom_{G}(\pi, J_{\BB}^{\SL_2(\Qp)}) = 0$ and $\Hom_{G}(\pi, J_{\BB})$ is $1$-dimensional over $k$. Since $\pi$ is killed by $\varpi$, for the second assertion it is enough to show that $\Ext^1_{G, \zeta}(\pi, (J_{\BB}[\varpi])^{\SL_2(\Qp)})$ is finite dimensional over $k$. This holds, since $(J_{\BB}[\varpi])^{\SL_2(\Qp)}$ is of finite length by Lemma \ref{lem:Jinvfinite}
and if $\chi$ is a character then $\Ext^1_{G, \zeta}(\pi, \chi \circ \det)$ is finite dimensional. 
\end{proof}

\begin{cor} \label{cor:MBfg}
There is a surjection $(\PB')^{\oplus n} \twoheadrightarrow \MB$ for some $n \in \NN$.
\end{cor}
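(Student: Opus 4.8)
The plan is to deduce Corollary \ref{cor:MBfg} directly from Lemma \ref{lem:MSL2} together with the structure theory of $\CC(\OO)_{\BB}$ as a category of pseudo-compact modules. First I would observe that by Lemma \ref{lem:MSL2}(1) the object $\MB$ has trivial $\SL_2(\Qp)$-coinvariants, equivalently $\MB$ lies in the subcategory $\CC(\OO)_{\BB}$ and has no quotient on which $\SL_2(\Qp)$ acts trivially; in particular no twist of the trivial character $\Eins$ by $\chi\circ\det$ can occur in the cosocle of $\MB$. Hence every irreducible quotient $S^\vee$ of $\MB$ satisfies $(S^\vee)^{\SL_2(\Qp)}=0$, i.e. $S$ runs only over those $\pi\in\BB$ with $\pi^{\SL_2(\Qp)}=0$, which are exactly the $\pi$ appearing in the definition of $\PB'$.

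Next I would quantify the cosocle. By Lemma \ref{lem:MSL2}(2), $\Hom_{\CC(\OO)}(\MB,\pi^\vee)$ is finite-dimensional over $k$ for every $\pi\in\BB$, and since $\BB$ is a finite set (inspect the list in \S\ref{section:blocks}), the cosocle $\cosoc \MB=\MB/\rad\MB$ is a finite-length object, say isomorphic to $\bigoplus_{\pi}(\pi^\vee)^{\oplus m_\pi}$ with $m_\pi=0$ whenever $\pi^{\SL_2(\Qp)}\neq 0$ by the previous paragraph. Therefore there is a surjection $\bigoplus_{\pi:\,\pi^{\SL_2(\Qp)}=0}(\pi^\vee)^{\oplus m_\pi}\twoheadleftarrow$ — more precisely, a surjection onto $\cosoc\MB$ — which lifts, using projectivity of each $P_{\pi^\vee}$, to a map $\bigoplus_\pi P_{\pi^\vee}^{\oplus m_\pi}\to\MB$. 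Since $\MB$ is a pseudo-compact (compact) object, topological Nakayama's lemma (as in Lemma \ref{lem:Eproptest}, or the version cited from \cite{MR2867622}) guarantees that a map of compact objects inducing a surjection on cosocles is itself surjective. Setting $n=\sum_\pi m_\pi$ and noting $\bigoplus_\pi P_{\pi^\vee}^{\oplus m_\pi}$ is a quotient of $(\PB')^{\oplus n}$ (as $\PB'=\bigoplus_{\pi:\pi^{\SL_2(\Qp)}=0}P_{\pi^\vee}$), we obtain the desired surjection $(\PB')^{\oplus n}\twoheadrightarrow\MB$.

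The one point requiring a little care — and the main obstacle if anything is — is verifying that the cosocle of $\MB$ really only involves the $\pi$ with $\pi^{\SL_2(\Qp)}=0$, rather than merely arguing abstractly. The cleanest route is Lemma \ref{lem:MSL2}(1): if some $\pi^\vee$ with $\pi^{\SL_2(\Qp)}\neq 0$ (necessarily a twist of $\Eins$) occurred in $\cosoc\MB$, we would get a surjection $\MB\twoheadrightarrow\pi^\vee$ with $\pi^\vee$ having trivial $\SL_2(\Qp)$-action, hence $(\MB)_{\SL_2(\Qp)}\twoheadrightarrow(\pi^\vee)_{\SL_2(\Qp)}=\pi^\vee\neq 0$, contradicting $(\MB)_{\SL_2(\Qp)}=0$. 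This closes the argument; the remaining steps are the standard projective-cover/Nakayama manipulations in the pseudo-compact category, already used repeatedly in \cite{image} and recalled above.
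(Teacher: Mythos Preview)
Your proof is correct and follows exactly the same approach as the paper's own (one-sentence) proof: both deduce the result from Lemma \ref{lem:MSL2}, using part (1) to exclude characters from the cosocle of $\MB$ and part (2) together with finiteness of $\BB$ to bound the multiplicities, then lifting via projectivity and Nakayama. You have simply written out the details that the paper leaves implicit.
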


\begin{proof}
This follows from Lemma \ref{lem:MSL2}, which shows that the cosocle of $\MB$ contains no characters and each irreducible representation in $\BB$ appears with finite multiplicity.
\end{proof}

\begin{lem} \label{lem:Extnull}
Both $\Hom_{\CC(\OO)}(\PB / \MB, \PB)$ and $\Ext^1_{\CC(\OO)}(\PB / \MB, \PB)$ are equal to zero.
\end{lem}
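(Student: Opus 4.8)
The plan is to analyze the structure of $\PB/\MB$ and exploit that it is a smooth representation of $G$ on which $\SL_2(\Qp)$ acts trivially, with irreducible subquotients that are characters lying in $\BB$. Indeed, by definition $\PB/\MB \cong (\PB)_{\SL_2(\Qp)}$, so its Pontryagin dual $(\PB/\MB)^{\vee}$ is $J_{\BB}^{\SL_2(\Qp)}$, which by Lemma \ref{lem:Jinvfinite} has a Pontryagin dual that is finitely generated over $\OO$. Hence $\tau := (\PB/\MB)^{\vee}$ is a smooth $G$-representation whose irreducible subquotients are characters in $\BB$ and with $\tau^{\vee}$ finitely generated over $\OO$, i.e. $\tau$ is exactly of the shape considered in Lemma \ref{lem:quotprinciple} and Proposition \ref{prop:JExt}.

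First I would translate the two vanishing statements via Pontryagin duality into statements about $J_{\BB}$ and $\tau$. We have $\Hom_{\CC(\OO)}(\PB/\MB, \PB) \cong \Hom_G(J_{\BB}, \tau)$ and $\Ext^1_{\CC(\OO)}(\PB/\MB, \PB) \cong \Ext^1_{G,\zeta}(J_{\BB}, \tau)$, where the Ext groups are taken in $\Mod^{\lfin}_{G,\zeta}(\OO)$ (using that Pontryagin duality is an anti-equivalence of abelian categories, so it identifies Yoneda Ext groups). Since $\PB$ is projective in $\CC(\OO)$, its dual $J_{\BB}$ is injective in $\Mod^{\lfin}_{G,\zeta}(\OO)$, so $J_{\BB}$ is an injective object of the type appearing in Lemma \ref{lem:nullhomJ} and Proposition \ref{prop:JExt}.

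Then I would simply invoke Lemma \ref{lem:nullhomJ}, which gives $\Hom_G(J_{\BB}, \tau) = 0$, settling the Hom-vanishing, and Proposition \ref{prop:JExt}, which gives $\Ext^1_{G,\zeta}(J_{\BB}, \tau) = 0$, settling the Ext-vanishing. The only point requiring care is to check the hypotheses of these two results are met, namely that $\tau$ has irreducible subquotients consisting of characters in $\BB$ (which follows because $\PB/\MB = (\PB)_{\SL_2(\Qp)}$ has trivial $\SL_2(\Qp)$-action, and an irreducible smooth representation of $G$ with trivial $\SL_2(\Qp)$-action on which $Z$ acts by $\zeta$ is a character) and that $\tau^{\vee}$ is finitely generated over $\OO$ (which is the content of Lemma \ref{lem:Jinvfinite} applied to each $\pi \in \BB$, noting that $J_{\BB}^{\SL_2(\Qp)} = \bigoplus_{\pi \in \BB} (J_{\pi})^{\SL_2(\Qp)}$ up to multiplicities and only finitely many summands are nonzero since only characters contribute).

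The main obstacle, such as it is, is purely bookkeeping: making sure the duality correctly matches $\Ext^1$ in $\CC(\OO)$ with $\Ext^1$ in $\Mod^{\lfin}_{G,\zeta}(\OO)$ rather than in $\Mod^{\sm}_{G,\zeta}(\OO)$, and confirming that $\PB/\MB$ is genuinely of finite length (equivalently $\tau^{\vee}$ finitely generated over $\OO$) so that Lemma \ref{lem:nullhomJ} applies — but all of this is already packaged in the earlier lemmas, so no new ideas are needed beyond assembling them. If one prefers a proof avoiding duality, one could instead argue directly in $\CC(\OO)$: $\PB/\MB$ is a quotient of $\PB$ that is of finite length (hence $(\PB/\MB)/\varpi$ is of finite length), and then Proposition \ref{HMP} gives $\Hom_{\CC(\OO)}(\PB/\MB, \PB) = 0$ immediately, while the $\Ext^1$ statement is the dual form of Proposition \ref{prop:JExt}.
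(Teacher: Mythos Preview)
Your proposal is correct and takes essentially the same approach as the paper: both reduce the $\Ext^1$-vanishing to Proposition \ref{prop:JExt} applied to $J = J_{\BB}$ and $\tau = J_{\BB}^{\SL_2(\Qp)}$, using Lemma \ref{lem:Jinvfinite} to verify that $\tau^{\vee}$ is finitely generated over $\OO$. The only cosmetic difference is in the $\Hom$-vanishing: the paper argues directly that any map $\PB/\MB \to \PB$ must land in $\PB^{\SL_2(\Qp)} = 0$ (Lemma \ref{lem:PSL2}), whereas you go via duality and Lemma \ref{lem:nullhomJ} (or Proposition \ref{HMP}), but this is immaterial.
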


\begin{proof}
The first assertion follows since $\SL_2(\Qp)$ acts trivially on $\PB / \MB$ and $\PB^{\SL_2(\Qp)} = 0$ (see Lemma \ref{lem:PSL2}), and the second assertion follows from Proposition \ref{prop:JExt}
applied to $J=P_{\BB}^{\vee}$ and $\tau= J_{\BB}^{\SL_2(\Qp)}$. It follows from Lemma \ref{lem:Jinvfinite} that $\tau^{\vee}$ is a finitely generated $\OO$-module.
\end{proof}

\begin{prop} \label{prop:MBcentre}
There is a natural isomorphism $\EB \cong \End_{\CC(\OO)}(\MB)$. In particular, $\ZB \cong Z(\End_{\CC(\OO)}(\MB))$.
\end{prop}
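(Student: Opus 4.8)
The plan is to produce the isomorphism $\EB \cong \End_{\CC(\OO)}(\MB)$ by applying the functor $\Hom_{\CC(\OO)}(-, \MB)$ and $\Hom_{\CC(\OO)}(-, \PB)$ to the defining short exact sequence
\begin{equation}\label{eq:MBseq}
0 \rightarrow \MB \rightarrow \PB \rightarrow \PB/\MB \rightarrow 0,
\end{equation}
where $\PB/\MB = (\PB)_{\SL_2(\Qp)}$, and comparing the resulting long exact sequences. First I would apply $\Hom_{\CC(\OO)}(\PB, -)$ to \eqref{eq:MBseq}. Since $\SL_2(\Qp)$ acts trivially on $\PB/\MB$ and $\PB^{\SL_2(\Qp)} = 0$ by Lemma \ref{lem:PSL2}, we get $\Hom_{\CC(\OO)}(\PB, \PB/\MB) = 0$; there is no $\Ext^1$ term obstructing us here because $\PB$ is projective. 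Hence $\Hom_{\CC(\OO)}(\PB, \MB) \overset{\cong}{\longrightarrow} \Hom_{\CC(\OO)}(\PB, \PB) = \EB$.

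Next I would apply $\Hom_{\CC(\OO)}(-, \MB)$ to \eqref{eq:MBseq}, obtaining
\[
\Hom_{\CC(\OO)}(\PB/\MB, \MB) \rightarrow \Hom_{\CC(\OO)}(\PB, \MB) \rightarrow \Hom_{\CC(\OO)}(\MB, \MB) \rightarrow \Ext^1_{\CC(\OO)}(\PB/\MB, \MB).
\]
To conclude that the middle map $\Hom_{\CC(\OO)}(\PB, \MB) \rightarrow \End_{\CC(\OO)}(\MB)$ is an isomorphism, it suffices to show that the two flanking groups $\Hom_{\CC(\OO)}(\PB/\MB, \MB)$ and $\Ext^1_{\CC(\OO)}(\PB/\MB, \MB)$ vanish. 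To get at these, I would apply $\Hom_{\CC(\OO)}(\PB/\MB, -)$ to \eqref{eq:MBseq} itself: the relevant piece of the long exact sequence is
\[
\Hom_{\CC(\OO)}(\PB/\MB, \PB) \rightarrow \Hom_{\CC(\OO)}(\PB/\MB, \PB/\MB) \rightarrow \Ext^1_{\CC(\OO)}(\PB/\MB, \MB) \rightarrow \Ext^1_{\CC(\OO)}(\PB/\MB, \PB).
\]
By Lemma \ref{lem:Extnull}, $\Hom_{\CC(\OO)}(\PB/\MB, \PB) = 0$ and $\Ext^1_{\CC(\OO)}(\PB/\MB, \PB) = 0$. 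Combining these with the long exact sequence for $\Hom_{\CC(\OO)}(\PB/\MB,-)$ applied to \eqref{eq:MBseq}, one reads off both $\Hom_{\CC(\OO)}(\PB/\MB, \MB) = 0$ (from the first three terms $0 \to \Hom_{\CC(\OO)}(\PB/\MB,\MB) \to \Hom_{\CC(\OO)}(\PB/\MB,\PB) = 0$) and $\Ext^1_{\CC(\OO)}(\PB/\MB, \MB) \cong \Hom_{\CC(\OO)}(\PB/\MB,\PB/\MB)/\,\mathrm{image}$, which injects into $\Ext^1_{\CC(\OO)}(\PB/\MB,\PB) = 0$ once we check the connecting map $\Hom_{\CC(\OO)}(\PB/\MB,\PB/\MB) \to \Ext^1_{\CC(\OO)}(\PB/\MB,\MB)$ is zero. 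That last point follows because $\Hom_{\CC(\OO)}(\PB/\MB, \PB) \to \Hom_{\CC(\OO)}(\PB/\MB, \PB/\MB)$ would have to be surjective, but its source is $0$ while its target contains $\id$; so instead one argues directly that $\Ext^1_{\CC(\OO)}(\PB/\MB, \MB)$ injects into $\Ext^1_{\CC(\OO)}(\PB/\MB, \PB) = 0$ provided $\Hom_{\CC(\OO)}(\PB/\MB,\PB/\MB)$ maps to $\Ext^1$ by zero — which it does not automatically, so the cleaner route is: the sequence $0 = \Hom_{\CC(\OO)}(\PB/\MB, \PB) \to \Hom_{\CC(\OO)}(\PB/\MB, \PB/\MB) \overset{\delta}{\to} \Ext^1_{\CC(\OO)}(\PB/\MB, \MB) \to \Ext^1_{\CC(\OO)}(\PB/\MB, \PB) = 0$ shows $\delta$ is an isomorphism, so $\Ext^1_{\CC(\OO)}(\PB/\MB, \MB) \cong \End_{\CC(\OO)}(\PB/\MB)$, which is generally nonzero.

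I expect the main obstacle to be precisely this last point: the naive vanishing of $\Ext^1_{\CC(\OO)}(\PB/\MB, \MB)$ is false, so the argument must instead be arranged so that the image of $\Hom_{\CC(\OO)}(\PB, \MB)$ in $\End_{\CC(\OO)}(\MB)$ is already all of it before the $\Ext^1$ term enters. The way to do this is to note that $\Hom_{\CC(\OO)}(\PB, \MB) \to \End_{\CC(\OO)}(\MB)$ is \emph{a priori} injective (since $\Hom_{\CC(\OO)}(\PB/\MB, \MB) = 0$ by the argument above), and that it is an $\EB$-module map which is the identity on the image of $\EB$; surjectivity then follows from Corollary \ref{cor:MBfg}, which gives a surjection $(\PB')^{\oplus n} \twoheadrightarrow \MB$, together with $\MB$ being a quotient of $\PB$ (indeed \eqref{eq:MBseq} exhibits $\MB$ as a submodule of $\PB$, but one also checks via the projectivity of $\PB$ and the fact that every endomorphism of $\MB$ lifts along $\PB \twoheadrightarrow \MB$ — wait, $\MB$ is a \emph{sub}object, not a quotient). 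The correct and cleanest completion is: given $\varphi \in \End_{\CC(\OO)}(\MB)$, compose with $\MB \hookrightarrow \PB$ to get $\MB \to \PB$; since $\Ext^1_{\CC(\OO)}(\PB/\MB, \PB) = 0$ (Lemma \ref{lem:Extnull}), the functor $\Hom_{\CC(\OO)}(-,\PB)$ is exact on \eqref{eq:MBseq}, so $\Hom_{\CC(\OO)}(\PB,\PB) \to \Hom_{\CC(\OO)}(\MB,\PB)$ is surjective; hence $\MB \hookrightarrow \PB \overset{\varphi\text{-lift}}{\cdots}$ — concretely, $\MB \to \PB$ extends to some $\tilde\varphi: \PB \to \PB$, and since $\SL_2(\Qp)$ acts trivially on $\PB/\MB$ while $\PB$ has no nonzero trivial $\SL_2(\Qp)$-quotient-compatible piece landing outside $\MB$ (use $(\MB)_{\SL_2(\Qp)} = 0$ from Lemma \ref{lem:MSL2}, so $\MB = \ker(\PB \to (\PB)_{\SL_2(\Qp)})$ is functorial), $\tilde\varphi$ preserves $\MB$ and restricts to $\varphi$. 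Thus $\varphi$ lies in the image of $\EB = \Hom_{\CC(\OO)}(\PB,\PB) \cong \Hom_{\CC(\OO)}(\PB, \MB) \to \End_{\CC(\OO)}(\MB)$, proving surjectivity; injectivity was already established. Finally, since the isomorphism is induced by functoriality it is an isomorphism of $\OO$-algebras, so it carries centres to centres, giving $\ZB \cong Z(\End_{\CC(\OO)}(\MB))$.
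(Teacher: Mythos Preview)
Your first step contains a genuine error: the claim that $\Hom_{\CC(\OO)}(\PB, \PB/\MB) = 0$ is false --- the projection $\PB \twoheadrightarrow \PB/\MB$ is itself a nonzero element. You have confused invariants with coinvariants: the vanishing $\PB^{\SL_2(\Qp)} = 0$ from Lemma~\ref{lem:PSL2} controls maps \emph{into} $\PB$ from objects with trivial $\SL_2(\Qp)$-action, not maps \emph{out of} $\PB$ to such objects. A map $\PB \to X$ with $\SL_2(\Qp)$ acting trivially on $X$ factors through $(\PB)_{\SL_2(\Qp)} = \PB/\MB$, which is nonzero in the cases of interest. This breaks your identification $\Hom_{\CC(\OO)}(\PB, \MB) \cong \EB$, and hence the injectivity you claim ``was already established'' at the end.

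Your final paragraph, however, essentially contains the correct argument, which is also the paper's. The point is to go through $\Hom_{\CC(\OO)}(\MB, \PB)$ rather than $\Hom_{\CC(\OO)}(\PB, \MB)$. Applying $\Hom_{\CC(\OO)}(\MB, -)$ to the short exact sequence gives $\End_{\CC(\OO)}(\MB) \cong \Hom_{\CC(\OO)}(\MB, \PB)$, since $\Hom_{\CC(\OO)}(\MB, \PB/\MB) = 0$ by $(\MB)_{\SL_2(\Qp)} = 0$ (Lemma~\ref{lem:MSL2}). Applying $\Hom_{\CC(\OO)}(-, \PB)$ to the same sequence gives $\EB \cong \Hom_{\CC(\OO)}(\MB, \PB)$, using both vanishings of Lemma~\ref{lem:Extnull}. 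Your surjectivity argument (extend $\iota\circ\varphi$ to $\tilde\varphi \in \EB$ via $\Ext^1(\PB/\MB,\PB)=0$, then observe $\tilde\varphi$ preserves $\MB$) is exactly this, phrased elementwise. Injectivity is the missing piece: if $\tilde\varphi \in \EB$ vanishes on $\MB$ then it factors through $\PB/\MB$, hence lies in $\Hom_{\CC(\OO)}(\PB/\MB, \PB) = 0$ by Lemma~\ref{lem:Extnull}. Once you discard the false first step and the meandering middle, what remains is the paper's proof.
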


\begin{proof}
Let $\phi \in \EB = \End_{\CC(\OO)}(\PB)$. Then the composition $\MB \xrightarrow{\phi} \PB \twoheadrightarrow \PB / \MB$ is the zero map since $\SL_2(\Qp)$ acts trivially on $\PB / \MB \cong (\PB)_{\SL_2(\Qp)}$ and $(\MB)_{\SL(\Qp)}= 0$ by Lemma \ref{lem:MSL2}. Thus $\phi$ maps $\MB$ to $\MB$ and restriction to $\MB$ induces a ring homomorphism $\EB \rightarrow \End_{\CC(\OO)}(\MB)$.

Applying the functor $\Hom_{\CC(\OO)}(\MB, -)$ to the exact sequence $0 \rightarrow \MB \rightarrow \PB \rightarrow \PB/\MB \rightarrow 0$, we get the exact sequence
\[
0 \rightarrow \End_{\CC(\OO)}(\MB) \rightarrow \Hom_{\CC(\OO)}(\MB, \PB) \rightarrow \Hom_{\CC(\OO)}(\MB, \PB / \MB).
\]
Since the last term is equal to zero by Lemma \ref{lem:MSL2}, we obtain $\End_{\CC(\OO)}(\MB) \cong \Hom_{\CC(\OO)}(\MB, \PB)$. On the other hand, by applying the functor $\Hom_{\CC(\OO)}(-, \PB)$ to the same short exact sequence, we get the exact sequence
\begin{align*}
    0 &\rightarrow \Hom_{\CC(\OO)}(\PB / \MB, \PB) \rightarrow \End_{\CC(\OO)}(\PB) \rightarrow \Hom_{\CC(\OO)}(\MB, \PB) \\ &\rightarrow \Ext^1_{\CC(\OO)}(\PB / \MB, \PB).
\end{align*}
Since $\Hom_{\CC(\OO)}(\PB / \MB, \PB) = 0$ and $\Ext^1_{\CC(\OO)}(\PB / \MB, \PB) = 0$ by Lemma \ref{lem:Extnull}, we deduce $\End_{\CC(\OO)}(\PB) \cong \Hom_{\CC(\OO)}(\MB, \PB)$ and the proposition follows.
\end{proof}

\begin{cor}\label{surjectiveZB} There is a natural surjective homomorphism 
$$\ZB'\rightarrow Z(\End_{\CC(\OO)}(\MB))\cong \ZB.$$
\end{cor}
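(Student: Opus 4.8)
The plan is to produce the map $\ZB'\to\ZB$ by combining Proposition \ref{prop:MBcentre} with Corollary \ref{cor:MBfg}. First I would recall that Corollary \ref{cor:MBfg} gives a surjection $q\colon(\PB')^{\oplus n}\twoheadrightarrow\MB$ in $\CC(\OO)$. Applying $\End_{\CC(\OO)}(-)$ is not functorial, so instead I would use $q$ to transport endomorphisms: for $\phi\in\End_{\CC(\OO)}(\MB)$, consider $\phi\circ q\colon(\PB')^{\oplus n}\to\MB$, and lift it through $q$ — which is possible because $(\PB')^{\oplus n}$ is projective — to some $\widetilde\phi\in\End_{\CC(\OO)}((\PB')^{\oplus n})=M_n(\EB')$. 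The lift is not unique, but its image under $\End_{\CC(\OO)}((\PB')^{\oplus n})\to\End_{\CC(\OO)}(\MB)$ is forced to be $\phi$; what I really want is the reverse direction, namely to understand the \emph{centre} map, so I would argue slightly differently, as in the next paragraph.

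The cleaner route: $\ZB'=Z(\EB')$ acts functorially on every object of $\CC(\OO)_{\BB}$, because $\ZB'$ is the centre of the category (this is the content of the equivalence $\CC(\OO)_{\BB}\cong$ pseudo-compact $\EB'$-modules together with the identification of $\ZB'$ with the centre of that category). Actually more directly: by Proposition \ref{prop:CEeuqiv}, $\mathfrak Q(\OO)_{\BB}$ is equivalent to pseudo-compact $\EB'$-modules, so $\ZB'$ is the centre of $\mathfrak Q(\OO)_{\BB}$ and hence acts functorially on $\cT\MB$; but I want an action on $\MB$ itself in $\CC(\OO)$. So I would instead use Corollary \ref{cor:MBfg} honestly: an element $z\in\ZB'$ gives a central endomorphism of $\PB'$, hence of $(\PB')^{\oplus n}$ (acting diagonally, which is central in $M_n(\EB')$ since $z$ is central in $\EB'$), call it $z\cdot\id$. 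Because $z\cdot\id$ is central it commutes with $\ker q$, i.e. it preserves $\ker q$, hence descends to an endomorphism $\bar z$ of $\MB=(\PB')^{\oplus n}/\ker q$. The assignment $z\mapsto\bar z$ is a ring homomorphism $\ZB'\to\End_{\CC(\OO)}(\MB)$, and its image is central: given any $\psi\in\End_{\CC(\OO)}(\MB)$, lift $\psi\circ q$ through $q$ to $\widetilde\psi\in M_n(\EB')$; then $(z\cdot\id)\circ\widetilde\psi$ and $\widetilde\psi\circ(z\cdot\id)$ both lift $\bar z\circ\psi\circ q=\psi\circ\bar z\circ q$ (using centrality of $z$ in $M_n(\EB')$), so after composing down to $\MB$ we get $\bar z\circ\psi=\psi\circ\bar z$. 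Hence $z\mapsto\bar z$ lands in $Z(\End_{\CC(\OO)}(\MB))\cong\ZB$ by Proposition \ref{prop:MBcentre}, giving the homomorphism $\ZB'\to\ZB$.

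For surjectivity I would unwind the isomorphism of Proposition \ref{prop:MBcentre}: it is built from the identifications $\End_{\CC(\OO)}(\PB)\cong\Hom_{\CC(\OO)}(\MB,\PB)\cong\End_{\CC(\OO)}(\MB)$, the first coming from restriction $\phi\mapsto\phi|_{\MB}$ (an isomorphism by Lemma \ref{lem:Extnull}) and the second from the fact that any map $\MB\to\PB$ factors through $\MB$ (Lemma \ref{lem:MSL2}(1), since $\PB/\MB$ has trivial $\SL_2(\Qp)$-action and $\PB^{\SL_2(\Qp)}=0$). Now $\MB$ is by Corollary \ref{cor:MBfg} a quotient of $(\PB')^{\oplus n}$, and $\End_{\CC(\OO)}((\PB')^{\oplus n})=M_n(\EB')$; an endomorphism of $\MB$ that is central must, after lifting, be congruent to a scalar matrix modulo the ideal of liftings of the zero map, and chasing this through gives that the composite $\ZB'\to Z(\End_{\CC(\OO)}(\MB))\cong\ZB$ is surjective — concretely, every central endomorphism of $\MB$ is induced by a central endomorphism of $\PB'$ acting diagonally, because $\MB$ is generated as an object by the images of the $n$ copies of $\PB'$ and a central endomorphism is determined by, and compatible with, its effect on each generator. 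The main obstacle I anticipate is precisely this last surjectivity bookkeeping: making rigorous that ``central endomorphism of a quotient of $(\PB')^{\oplus n}$ comes from a central (diagonal) endomorphism of $\PB'$'' without circularity, which I expect to handle by working inside $M_n(\EB')$, using that $\EB'$ is a pseudo-compact ring so that the relevant submodules are closed and topological Nakayama applies, and invoking Lemma \ref{lem:Extnull} to control the ambiguity in lifting through $q$. The case distinction is painless: for blocks of type (i), (ii), (iii) one has $\PB=\PB'$, $\MB=\PB$ and the map is the identity.
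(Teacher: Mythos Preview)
Your construction of the map has a subtle gap: you claim that because $z\cdot\id$ is central in $M_n(\EB')=\End_{\CC(\OO)}((\PB')^{\oplus n})$ it ``commutes with $\ker q$, i.e.\ preserves $\ker q$''. But centrality in $\End(X)$ only means commuting with \emph{endomorphisms} of $X$; it does not force preservation of an arbitrary subobject, and $\ker q$ is not the image of an idempotent endomorphism in general. The paper circumvents this by passing to the quotient category $\mathfrak Q(\OO)_{\BB}$, whose centre is exactly $\ZB'$ (Proposition~\ref{prop:CEeuqiv}), so $\ZB'$ acts \emph{functorially} on every object there, in particular on $\cT\MB$. Since $(\MB)^{\SL_2(\Qp)}=(\MB)_{\SL_2(\Qp)}=0$, the functor $\cT$ induces $\End_{\CC(\OO)}(\MB)\cong\End_{\mathfrak Q(\OO)}(\cT\MB)$ by \cite[Lemma 10.26]{image}, and this transports the $\ZB'$-action back to $\CC(\OO)$.

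The surjectivity argument is where the real gap lies. Your claim that ``an endomorphism of $\MB$ that is central must, after lifting, be congruent to a scalar matrix'' is neither proved nor true in general: a quotient of $(\PB')^{\oplus n}$ can have central endomorphisms not coming from $\ZB'$, and you give no mechanism to rule this out. The paper's argument is completely different and much cleaner: since $\ZB$ is the centre of $\CC(\OO)_{\BB}$, it acts functorially on $\PB'$, giving a ring map $\ZB\to\ZB'$. One then checks (using $\ZB$-equivariance of the surjection $(\cT\PB')^{\oplus n}\twoheadrightarrow\cT\MB$) that the homomorphism $\ZB'\to Z(\End_{\CC(\OO)}(\MB))$ just constructed is a map of $\ZB$-algebras. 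Composing with the isomorphism of Proposition~\ref{prop:MBcentre} yields
\[
\ZB\longrightarrow\ZB'\longrightarrow Z(\End_{\CC(\OO)}(\MB))\cong\ZB,
\]
and this composite is the identity. Hence $\ZB'\to\ZB$ is surjective. You should replace your lifting-to-scalar attempt with this retraction argument.
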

\begin{proof} 
Note that we have $(\MB)^{\SL_2(\Qp)} = 0$ by Lemma \ref{lem:PSL2} and $(\MB)_{\SL_2(\Qp)} = 0$ by Lemma \ref{lem:MSL2}. It follows from \cite[Lemma 10.26]{image} that 
the functor $\cT$ induces  an isomorphism
\begin{align*}
    \End_{\CC(\OO)}(\MB) &\cong \End_{\mathfrak Q(\OO)}(\cT \MB), \quad \varphi \mapsto \cT\varphi.
\end{align*}
Since $\ZB'$ is the centre of $\mathfrak Q(\OO)_{\BB}$ it acts on $\cT \MB$ and this action induces a homomorphism $\ZB'\rightarrow Z(\End_{\mathfrak Q(\OO)}(\cT \MB))$, 
which we may compose with an isomorphism above to obtain a homomorphism $\ZB'\rightarrow Z(\End_{\CC(\OO)}(\MB))$. Since $\ZB$ is the centre of $\dualcat(\OO)_{\BB}$ a similar argument 
shows that $\ZB'$ is a $\ZB$-algebra and the surjection $(\PB')^{\oplus n} \twoheadrightarrow \MB$
in Corollary \ref{cor:MBfg} is  $\ZB$-equivariant. It induces a $\ZB$-equivariant surjection 
$(\cT \PB')^{\oplus n} \twoheadrightarrow \cT \MB$. We deduce that the map 
 $$\ZB'\rightarrow Z(\End_{\CC(\OO)}(\MB))$$ 
is a homomorphism of 
$\ZB$-algebras. Proposition \ref{prop:MBcentre} implies that the composition 
$$ \ZB\rightarrow \ZB' \rightarrow Z(\End_{\CC(\OO)}(\MB))\cong\ZB$$
is the identity map, which implies that the homomorphism is surjective. 
\end{proof}

\subsection{Colmez's Montreal functor}\label{CMF}
Let $\gal$ be the absolute Galois group of $\Qp$. We will consider $\zeta$ as a character of $\gal$ via local class field theory, normalized so that the uniformizers correspond to geometric Frobenius. Let $\varepsilon: \gal \rightarrow \Zp^{\times}$ be the $p$-adic cyclotomic character.

Colmez \cite{MR2642409} has defined an exact and covariant functor $\VV$ from the category of smooth, finite length representations of $G$ on $\OO$-torsion modules with a central character to the category of continuous finite-length representations of $\gal$ on $\OO$-torsion modules. This functor is modified in \cite[\S 5.7]{image} to an exact covariant functor
\[
\cV: \CC(\OO) \rightarrow \mpro_{\gal}(\OO)
\]
as follows. Let $M$ be in $\CC(\OO)$, if it is of finite length, we define $\cV(M) := \VV(M^\vee)^\vee(\zeta\varepsilon)$ where $\vee$ denotes the Pontryagin dual. For general $M \in \CC(\OO)$, we may write $M \cong \varprojlim M_i$, with $M_i$ of finite length in $\CC(\OO)$ and define $\cV(M) := \varprojlim \cV(M_i)$. With this normalization, we have 
\begin{itemize}
\item $\cV(\pi^{\vee}) = 0$ if $\pi \cong \chi \circ \det$;
\item $\cV(\pi^{\vee})= \chi_1$ if $\pi \cong (\Ind^G_B \chi_1 \otimes \chi_2)_{\sm}$;
\item $\cV(\pi^{\vee}) = \chi$ if $\pi \cong \Sp \otimes \chi \circ \det$;
\item $\cV(\pi^{\vee}) = \VV(\pi)$ is a $2$-dimensional absolutely irreducible Galois representation if $\pi$ is supersingular.
\end{itemize}
The functor $\cV$ induces a bijection $\BB\mapsto \rhobar_{\BB}$ between blocks containing an absolutely irreducible representation and  equivalence classes of semi-simple representations $\rhobar: \gal\rightarrow \GL_2(k)$, such that all irreducible summands of $\rhobar$ are absolutely irreducible. The representation $\rhobar_{\BB}$ can be described explicitly according to the classification of blocks given in section \ref{section:blocks}: in case (i) $\rhobar_{\BB}=\cV(\pi^{\vee})$ is absolute irreducible, in case (ii) $\rhobar_{\BB}=\chi_1\oplus \chi_2$,  in cases (iii) and (vi) $\rhobar_{\BB}=\chi\oplus \chi$, in cases (iv) and (v) $\rhobar_{\BB}= \chi \oplus \chi \omega$. 

Since the functor $\cV: \CC(\OO) \rightarrow \mpro_{\gal}(\OO)$ kills characters and hence every object in $\TT(\OO)$, it factors through $\cT: \CC(\OO) \rightarrow \mathfrak Q(\OO)$. We denote $\cV: \mathfrak Q(\OO) \rightarrow \mpro_{\gal}(\OO)$ by the same letter.

\begin{prop} \label{prop:DGalequiv}
For each block $\BB$, the functor $\cV$ induces an equivalence of categories between $\mathfrak Q(\OO)_{\BB}$ and its essential image in $\mpro_{\gal}(\OO)$.
\end{prop}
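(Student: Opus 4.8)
The plan is to verify that $\cV$, viewed as a functor out of the quotient category $\mathfrak Q(\OO)_{\BB}$, is fully faithful; by Proposition \ref{prop:CEeuqiv} the category $\mathfrak Q(\OO)_{\BB}$ is equivalent to pseudo-compact right $\EB'$-modules, so it is generated by the projective object $\cT\PB'$, and full faithfulness can be tested on this generator. Concretely, it is enough to show that for all $M$ in $\mathfrak Q(\OO)_{\BB}$ the natural map
$$\Hom_{\mathfrak Q(\OO)}(\cT\PB', M)\to \Hom_{\gal}^{\cont}(\cV(\cT\PB'),\cV(M))$$
is bijective, and then to deduce the statement for arbitrary pairs of objects by a standard argument writing a general object as a cokernel of maps between products of copies of $\cT\PB'$ and using exactness of $\cV$ together with the five lemma. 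Since $\Hom_{\mathfrak Q(\OO)}(\cT\PB', M)$ is, by projectivity of $\cT\PB'$, an exact functor of $M$, and $\cV$ is exact on $\mathfrak Q(\OO)$, both sides are exact functors of $M$, so it actually suffices to check the comparison map is an isomorphism when $M$ runs over the (single, up to isomorphism and up to twist) irreducible objects $\cT(\pi^{\vee})$ with $\pi\in\BB$ and $\pi^{\SL_2(\Qp)}=0$.

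First I would reduce to the irreducible case as above. Then, for $M=\cT(\pi^{\vee})$ irreducible, $\Hom_{\mathfrak Q(\OO)}(\cT\PB', \cT(\pi^{\vee}))$ is a finite-dimensional $k$-vector space (it computes a multiplicity space in the cosocle of $\PB'$), and $\cV(\cT(\pi^{\vee}))=\cV(\pi^{\vee})$ is the corresponding irreducible Galois representation from the list in section \ref{CMF}, which is absolutely irreducible (supersingular case) or one-dimensional (twist of $\Sp$). In either case $\End_{\gal}^{\cont}(\cV(\pi^{\vee}))=k$, and one computes directly that the comparison map $\Hom_{\mathfrak Q(\OO)}(\cT\PB',\cT(\pi^{\vee}))\to\Hom_{\gal}^{\cont}(\cV(\cT\PB'),\cV(\pi^{\vee}))$ is an isomorphism of $k$-lines (non-vanishing of the source forces $\cV(\cT\PB')$ to surject onto $\cV(\pi^{\vee})$, which gives surjectivity, and a dimension count gives injectivity). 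The remaining, and genuinely substantive, point is faithfulness at the level of $\Ext^1$, i.e. that $\cV$ sends a non-split extension of objects in $\mathfrak Q(\OO)_{\BB}$ to a non-split extension of $\gal$-representations; equivalently, that the comparison map is injective in general and not merely surjective. This is where the real work lies: one must show that no nonzero morphism $\cT\PB'\to M$ is killed by $\cV$, and dually that $\Ext^1$ does not acquire new elements.

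The hard part will be precisely this injectivity/non-split-extensions statement, and I would handle it by invoking the results already in the literature rather than reproving them: for $p>2$ this is due to Colmez and is recorded in the relevant sections of \cite{image}, while for $p=2$ it is exactly the content of the second author's thesis \cite{Tung2020} (as flagged in the sketch in section \ref{sketch}). Concretely one reduces, via dévissage along the (finite) Jordan--Hölder filtrations and the long exact $\Ext$-sequences, to the statement that for irreducible $\pi,\pi'\in\BB$ with trivial $\SL_2(\Qp)$-coinvariants the map $\Ext^1_{\mathfrak Q(\OO)}(\cT\pi^{\vee},\cT\pi'^{\vee})\to\Ext^1_{\gal}(\cV\pi^{\vee},\cV\pi'^{\vee})$ is injective; this is checked case by case against Colmez's computation of $\VV$ on reducible length-two representations and on extensions involving the Steinberg and its twists. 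Granting those inputs, exactness of $\cV$, the fact that $\cT\PB'$ is a projective generator (Proposition \ref{prop:CEeuqiv}), and the comparison isomorphism on objects established above, a formal argument then upgrades full faithfulness on the generator to an equivalence between $\mathfrak Q(\OO)_{\BB}$ and the full subcategory of $\mpro_{\gal}(\OO)$ which is its essential image, completing the proof.
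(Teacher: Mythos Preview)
The paper's proof is simply a citation: the result is already established case by case in \cite{image} (blocks (i)--(iv)), \cite{2018arXiv180307451T} (block (v), $p=3$), and \cite{Tung2020} (block (vi), $p=2$). Your proposal ultimately lands in the same place---invoking the literature for the $\Ext^1$ injectivity---but the scaffolding you build around it has a real gap.

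The problematic step is the sentence ``both sides are exact functors of $M$''. The left-hand side $\Hom_{\mathfrak Q(\OO)}(\cT\PB',-)$ is exact because $\cT\PB'$ is projective, but the right-hand side $\Hom^{\cont}_{\gal}(\cV(\cT\PB'),\cV(-))$ is a priori only left exact: exactness of $\cV$ does not help, since $\Hom^{\cont}_{\gal}(\cV(\cT\PB'),-)$ need not be right exact unless you already know $\cV(\cT\PB')$ is projective in the essential image---which is essentially the content of the proposition. So the reduction to irreducibles is circular as written. You implicitly recognise this when you later say the ``genuinely substantive point'' is $\Ext^1$-faithfulness, but that directly contradicts the earlier exactness claim: if both sides were exact in $M$, there would be no $\Ext^1$ obstruction. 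Relatedly, your ``dimension count'' for the irreducible case assumes the target $\Hom^{\cont}_{\gal}(\cV(\cT\PB'),\cV(\pi^{\vee}))$ is one-dimensional, which again presupposes control of the Galois cosocle of $\cV(\PB')$.

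A minor but concrete omission: you cite \cite{image} for $p>2$ and \cite{Tung2020} for $p=2$, but block (v) with $p=3$ is not covered by \cite{image}; the paper points to \cite{2018arXiv180307451T} for that case. Since your argument reduces to the literature anyway, the cleanest fix is to drop the flawed reduction and cite all three sources as the paper does.
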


\begin{proof}
This is due to \cite{image} for cases (i)-(iv), \cite[Proposition 2.8]{2018arXiv180307451T} for case (v) and \cite[Proposition 1.3.2]{Tung2020} for case (iv).
\end{proof}

\begin{prop} \label{prop:surjgalE}
The map $\OO\br{\gal} \rightarrow \End_{\EB'}^{\cont}(\cV(\PB'))$
is surjective. Moreover, if $\BB$ is supersingular then $\End_{\EB'}^{\cont}(\cV(\PB'))\cong M_2((\EB')^{\op})$, and $\End_{\EB'}^{\cont}(\cV(\PB'))\cong(\EB')^{\op}$, otherwise. 
\end{prop}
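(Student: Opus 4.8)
The plan is to deduce Proposition \ref{prop:surjgalE} from the abstract Proposition \ref{surjective} by taking $E = \EB'$, $M = \cV(\PB')$, and $\GG = \gal$, so the real work is verifying hypotheses (1)--(4) of that proposition in the present situation. First I would record that $\cV(\PB')$ is a pseudo-compact $\EB'$-module carrying a continuous commuting $\gal$-action: indeed $\PB'$ is a projective generator of $\CC(\OO)_{\BB}'$ in the sense that $\mathfrak Q(\OO)_{\BB}$ is equivalent to pseudo-compact $\EB'$-modules via $\cT\PB'$ (Proposition \ref{prop:CEeuqiv}), and $\cV$ restricted to $\mathfrak Q(\OO)_{\BB}$ is fully faithful onto its essential image by Proposition \ref{prop:DGalequiv}; hence $\cV(\PB') = \cV(\cT\PB')$ is a flat (indeed the equivalence sends $\cT\PB'$ to $\EB'$ as a module over itself) $\EB'$-module, establishing hypothesis (1) that it is finitely generated projective over $\EB' = \End(\cT\PB')^{\op}$ — more precisely it is free of rank one in the non-supersingular case and of rank two in the supersingular case, which already gives the ``Moreover'' clause identifying $\End_{\EB'}^{\cont}(\cV(\PB'))$ with $(\EB')^{\op}$ or $M_2((\EB')^{\op})$.

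Next I would identify the irreducible $\EB'$-modules $S$ with $\rho_S := \Hom_{\EB'}^{\cont}(\cV(\PB'), S)\neq 0$. Under the equivalence of Proposition \ref{prop:CEeuqiv}, the irreducible pseudo-compact $\EB'$-modules correspond to $\cT(\pi^\vee)$ for those $\pi \in \BB$ with $\pi^{\SL_2(\Qp)}=0$, i.e. $\pi$ not a character; and applying $\cV$ (which is fully faithful on $\mathfrak Q(\OO)_{\BB}$) sends these to $\cV(\pi^\vee)$, which by the explicit list in section \ref{CMF} is: a character $\chi_1$ (resp. $\chi$) if $\pi$ is a subquotient of a principal series (resp. a twist of Steinberg), and a $2$-dimensional absolutely irreducible $\gal$-representation if $\pi$ is supersingular. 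In every case $\rho_S$ is finite-dimensional over $k$ and absolutely irreducible as a $\gal$-representation, giving hypothesis (3); and since distinct $\pi,\pi'$ in a block give non-isomorphic $\cV(\pi^\vee)$ (the block is a single $\sim$-equivalence class and $\cV$ detects it), hypothesis (4) that $\Hom_{\gal}(\rho_S,\rho_{S'})=0$ for $S\not\cong S'$ also follows. Hypothesis (2), that $M/\rE M = \cosoc_{\gal} M$, is the statement that the $\EB'$-cosocle and the $\gal$-cosocle of $\cV(\PB')$ coincide; this holds because the equivalence $\mathfrak Q(\OO)_{\BB} \simeq \PC(\EB')$ together with full faithfulness of $\cV$ forces the $\gal$-subrepresentation generated by any lift of a cosocle generator to be all of $\cV(\PB')$ — concretely, $\cV$ of the cosocle quotient map $\PB' \twoheadrightarrow \pi_{\BB}^{\mathrm{ns},\vee}$ (the sum of the non-character irreducibles) is the $\gal$-cosocle projection, and semisimplicity of $\cV(\PB')/\rE\cV(\PB')$ as a $\gal$-module matches.

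With (1)--(4) verified, Proposition \ref{surjective} immediately gives surjectivity of $\OO\br{\gal}\rightarrow \End_{\EB'}^{\cont}(\cV(\PB'))$, completing the proof. The main obstacle, I expect, is hypothesis (2): one must be careful that ``cosocle as $\EB'$-module'' and ``cosocle as $\OO\br{\gal}$-module'' really agree, rather than merely one containing the other. The clean way is to note that for a free or projective $M$ over $\EB'$ one has $M/\rE M \cong \bigoplus_S \rho_S$ as a $\gal$-representation (using hypothesis (3), which forces $\End_{\EB'}^{\cont}(S)=k$, so the multiplicity spaces are exactly the $\rho_S$), and this quotient is $\gal$-semisimple with no repeated constituents by (4); since $\rM \supseteq \rE M$ always and $M/\rE M$ is already $\gal$-semisimple, we get $\rM = \rE M$, i.e. $M/\rM = \cosoc_{\gal}M = M/\rE M$. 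A secondary point to handle carefully is the supersingular case, where $\cV(\PB')$ has rank two and one should check that the two $\gal$-isotypic pieces of the cosocle are the single $2$-dimensional irreducible with multiplicity one over $\End_{\EB'}(S)=M_1(k)=k$ — consistent with $\dim_k\rho_S = 2 = $ rank, exactly as in the proof of Proposition \ref{surjective}.
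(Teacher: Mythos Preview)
Your strategy is exactly the paper's: verify hypotheses (1)--(4) of Proposition~\ref{surjective} with $E=\EB'$, $M=\cV(\PB')$, $\GG=\gal$, then read off the ``Moreover'' clause from the rank of $\cV(\PB')$ as a free $\EB'$-module. Your treatments of (1), (3), (4) are fine in outline, though you should justify freeness of rank $1$ or $2$ rather than assert it: the paper does this by observing that $(\EB'/\rr)\wtimes_{\EB'}\cV(\PB')\cong\cV((\EB'/\rr)\wtimes_{\EB'}\PB')$ is $1$- or $2$-dimensional and invoking topological Nakayama together with projectivity (Lemma~\ref{lem:Eproptest}); in the two-irreducible non-supersingular cases (ii), (iv), (v) one then matches $\cV(\PB')/\rr\cV(\PB')$ with $\EB'/\rr$ and applies Corollary~\ref{cor:autosurj}.

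There is a genuine gap in your argument for (2). What you establish in your ``clean way'' is only the inclusion $\rr_{\gal}(M)\subseteq\rr_{\EB'}(M)$: you show that the $\EB'$-cosocle $M/\rr_{\EB'}(M)\cong\bigoplus_S\rho_S$ is $\gal$-semisimple, hence dominated by the $\gal$-cosocle. The sentence ``since $\rM\supseteq\rE M$ always and $M/\rE M$ is already $\gal$-semisimple, we get $\rM=\rE M$'' does not give the reverse inclusion; knowing that $M/\rr_{\EB'}(M)$ is $\gal$-semisimple does not preclude the $\gal$-cosocle from being strictly larger. Concretely, you have not excluded the possibility that $\dim_k\Hom_{\gal}(\cV(\PB'),\cV(\pi^\vee))>1$ for some non-character $\pi\in\BB$. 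This is precisely where full faithfulness of $\cV$ on $\mathfrak Q(\OO)_{\BB}$ (Proposition~\ref{prop:DGalequiv}) is needed: it forces
\[
\Hom_{\gal}(\cV(\PB'),\cV(\pi^\vee))\cong\Hom_{\mathfrak Q(\OO)}(\cT\PB',\cT(\pi^\vee)),
\]
which is $1$-dimensional, so every $\gal$-surjection from $\cV(\PB')$ to an irreducible comes from an $\EB'$-surjection, giving $\rr_{\EB'}(M)\subseteq\rr_{\gal}(M)$. You gesture at full faithfulness in your main paragraph, but the ``clean way'' you offer at the end bypasses it and therefore does not close the argument. The paper simply cites Proposition~\ref{prop:DGalequiv} for (2) and leaves it at that; you should do the same, or spell out the one-line computation above.
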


\begin{proof}
It suffices to show that $M=\cV(\PB')$ satisfies all four conditions in Proposition \ref{surjective}. We note that the functor $\md \mapsto \md \wtimes_{\EB'} \cV(\PB')$ is exact by the equivalence of categories in Proposition \ref{prop:CEeuqiv} (2), the exactness of $\cV$ and the isomorphism
\[
\cV(\md \wtimes_{\EB'} \PB') \cong \md \wtimes_{\EB'} \cV(\PB')
\]
see the proof of \cite[Lemma 5.53]{image}. Thus by Lemma \ref{lem:Eproptest}, $\cV(\PB')$ is a projective $\EB'$-module. Let $\rr$ be the Jacobson radical of $\EB'$. 
Since $$(\EB'/\rr)\wtimes_{\EB'}\cV(\PB')\cong \cV((\EB'/\rr) \wtimes_{\EB'} \PB')$$ is either $1$- or $2$-dimensional $k$-vector space, topological Nakayama's lemma implies that $\cV(\PB')$ is a finitely generated $\EB'$-module, and thus part (1) of Proposition \ref{surjective} holds. Proposition \ref{prop:DGalequiv} implies that part (2) of Proposition \ref{surjective} holds. 

If $\md$ is a right pseudo-compact $\EB'$-module  then  \cite[Lemma 2.4]{brumer} implies that
\begin{equation}\label{dual}
\Hom_{\OO}^{\cont}( \md \wtimes_{\EB'} \cV(\PB') , k) \cong  \Hom_{\EB'}^{\cont}( \cV(\PB'), \Hom_{\OO}^{\cont}(\md, k)).
\end{equation}
Since in our situation $\EB'$ is a compact $\OO$-algebra, the irreducible (left or right) $\EB'$-modules are finite dimensional vector spaces with discrete topology, thus the map $\md\mapsto \md^*:=\Hom_k(\md, k)$  induces a bijection between irreducible left and irreducible right $\EB'$-modules. Moreover, if $\md$ is an irreducible right $\EB'$-module then it follows from \eqref{dual} that $\rho_{\md^*}\cong (\md \wtimes_{\EB'} \cV(\PB'))^*$, thus parts (3) and (4) of Proposition \ref{surjective} follow from Proposition \ref{prop:DGalequiv}.

If $\BB$ is supersingular then $\BB$ contains only one irreducible $\pi$, which is not a character. Thus $\PB=\PB'$, $\EB=\EB'$ is a local ring and $k\wtimes_{\EB} \PB= \pi^{\vee}$. Thus $ k\wtimes_{\EB} \cV(\PB)\cong \cV(\pi^{\vee})\cong \rhobar_{\BB}$ is an absolutely irreducible $2$-dimensional representation. Since $\EB$ is a local ring, we deduce that $\cV(\PB)$ is a free $\EB$-module of rank $2$. Thus $\End^{\cont}_{\EB}(\cV(\PB))\cong M_2((\EB')^{\op})$.

If $\BB$ is of type (iii) of (vi) then the block in the quotient category contains only one irreducible object and Colmez's functor maps it to a $1$-dimensional $\gal$-representation. The same argument as in the supersingular case shows that $\EB'$ is a local ring and $\cV(\PB')$ is a free $\EB'$-module of rank $1$, and thus $\End_{\EB'}^{\cont}(\cV(\PB'))= (\EB')^{\op}$.

If $\BB$ is of type (ii), (iv) or (v) then $\mathfrak Q(\OO)_{\BB}$ contains exactly two irreducible objects and Colmez's functor sends them to distinct $1$-dimensional $\gal$-representations $\chi_1$, $\chi_2$.  It follows from Corollary \ref{cor:autosurj} that $\cV(\PB')$ is a free $\EB'$-module of rank $1$, and thus $\End_{\EB'}^{\cont}(\cV(\PB'))= (\EB')^{\op}$.
\end{proof}

\subsection{Banach space representations}\label{bsp}
Let $\aBan_{G, \zeta}(L)$ be the category of admissible unitary $L$-Banach space representations 
\cite[\S 3]{st_iw} on which $Z$ acts by the character $\zeta$. We note that $\aBan_{G, \zeta}(L)$ is an abelian category \cite[Theorem 3.5]{st_iw}. Any $\Pi \in \aBan_{G, \zeta}(L)$ has an open, bounded and $G$-invariant lattice $\Theta$ and $\Theta \otimes_\OO k$ is an admissible smooth $k$-representation of $G$. Let $\Theta^d = \Hom_\OO(\Theta, \OO)$ be the Schikhof dual of $\Theta$ endowed with the topology of pointwise convergence. Then $\Theta^d$ is an object of $\mpro_G(\OO)$, \cite[Lemma 4.4]{image}. If $\Theta^d$ is in $\CC(\OO)$ then $\Xi^d$ is in $\CC(\OO)$ for every open bounded $G$-invariant lattice $\Xi$ in $\Pi$, since $\Theta$ and $\Xi$ are commensurable and $\CC(\OO)$ is closed under subquotients \cite[Lemma 4.6]{image}.

If $\Pi \in \aBan_{G, \zeta}(L)$, then we let
\[
\cV(\Pi) = \cV(\Theta^d) \otimes_{\OO} L,
\]
where $\Theta$ is any open bounded $G$-invariant lattice in $\Pi$. Then $\cV$ is exact and contravariant on $\aBan_{G, \zeta}(L)$.

\section{Density}\label{density}

\subsection{Capture} \label{section:capture}
Let $G = \GL_2(\Qp)$ and $K = \GL_2(\Zp)$. Write $Z$ for the center of $G$ and $Z(K)$ for the center of $K$. Let $\psi: Z(K) \rightarrow \OO^{\times}$ be a continuous character. We identify $Z$ with $\Qp^{\times}$ and $Z(K)$ with $\Zp^{\times}$ via the map  $\left(\smallmatrix x & 0 \\ 0 & x \endsmallmatrix\right) \mapsto x$. 

\begin{lem} \label{lem:capture}
Let $\{V_i\}_{i \in I}$ be a family of continuous representations of $K$ on finite dimensional $L$-vector spaces with central character $\psi$, and let $M \in \mpro_{K, \psi}(\OO)$ be $\OO$-torsion free. The following conditions are equivalent.
\begin{enumerate}
    \item The smallest quotient $M \twoheadrightarrow Q$ such that $\Hom^{\cont}_{\OO\br{K}}(Q, V_i^*) \cong \Hom^{\cont}_{\OO\br{K}}(M, V_i^*)$ for all $i \in I$ is equal to $M$.
    \item The intersection of the kernels of all $\phi \in \Hom^\cont_{\OO\br{K}}(M, V^*_i)$ for each $i \in I$ is equal to zero.
    \item The image of the evaluation map
    \[
    \bigoplus_{i \in I} \Hom_K(V_i, \Pi(M)) \otimes_L V_i \rightarrow \Pi(M)
    \]
    is a dense subspace, where $\Pi(M) := \Hom^{\cont}_{\OO}(M, L)$ is an L-Banach space equipped with the supremum norm.
\end{enumerate}
\end{lem}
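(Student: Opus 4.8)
The plan is to establish $(1)\Leftrightarrow(2)$ by a formal manipulation of closed submodules, and $(2)\Leftrightarrow(3)$ via Schikhof duality.

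First I would treat $(1)\Leftrightarrow(2)$. For each $i\in I$ and each $\phi\in\Hom^{\cont}_{\OO\br{K}}(M,V_i^*)$ the kernel $\ker\phi$ is a closed $\OO\br{K}$-submodule of $M$; let $N$ be their intersection over all $i$ and all $\phi$, which is again closed, so $M/N$ lies in $\mpro_{K,\psi}(\OO)$. A continuous $\OO\br{K}$-homomorphism $M\to V_i^*$ factors through a quotient $Q=M/N'$ (with $N'$ closed) if and only if it kills $N'$; hence the natural injection $\Hom^{\cont}_{\OO\br{K}}(Q,V_i^*)\hookrightarrow\Hom^{\cont}_{\OO\br{K}}(M,V_i^*)$ is an isomorphism for every $i$ precisely when $N'\subseteq N$. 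Thus $M/N$ is the smallest quotient with the property demanded in $(1)$, and it equals $M$ exactly when $N=0$, which is $(2)$.

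For $(2)\Leftrightarrow(3)$ the key inputs I would use are: since $M$ is $\OO$-torsion free, $\Pi(M)=\Hom^{\cont}_{\OO}(M,L)$ is an $L$-Banach space whose continuous dual is naturally $M\otimes_{\OO}L$ (Schikhof duality, the pairing being evaluation); and for a finite dimensional continuous $L$-representation $V$ of $K$ with central character $\psi$, adjunction of bilinear pairings gives a natural isomorphism $\Hom_K(V,\Pi(M))\cong\Hom^{\cont}_{\OO\br{K}}(M,V^*)$, $\phi\mapsto\widehat\phi$, characterised by $\phi(v)(m)=\langle\widehat\phi(m),v\rangle$ for $v\in V$, $m\in M$, where $\langle\,\cdot\,,\cdot\,\rangle\colon V^*\times V\to L$ is the tautological pairing. (Each $\widehat\phi$ has compact, hence bounded, image, so extends uniquely to a continuous $L$-linear map $M\otimes_{\OO}L\to V^*$, and conversely.) Granting these, I would argue: because $L$ is spherically complete, Hahn--Banach shows that the image $W$ of the evaluation map is dense in $\Pi(M)$ iff its annihilator in $M\otimes_{\OO}L$ vanishes; unwinding, $x\in M\otimes_{\OO}L$ annihilates $W$ iff $\langle\widehat\phi(x),v\rangle=0$ for all $i$, all $\phi$ and all $v\in V_i$, i.e.\ iff $\widehat\phi(x)=0$ for all such $\phi$; so $(3)$ is equivalent to $\bigcap\ker\widehat\phi=0$ in $M\otimes_{\OO}L$. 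Finally, $\OO$-torsion freeness of $M$ lets me pass between this and $(2)$: the intersection in $(2)$ is the one just described intersected with $M$, and if $x=p^{-n}m$ with $m\in M$ kills every $\widehat\phi$ then $m=p^{n}x$ kills every $\phi$, so $m=0$ by $(2)$.

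I expect the main obstacle to be purely technical bookkeeping: keeping straight the $\OO$-linear versus $L$-linear duals, checking that $\Hom^{\cont}_{\OO\br{K}}(M,V_i^*)$ is correctly identified with $\Hom_K(V_i,\Pi(M))$ at the level of topologies, and invoking the correct form of Schikhof duality identifying $\Pi(M)'$ with $M\otimes_{\OO}L$. None of these is deep, but they are the places where lattice or normalisation errors could creep in.
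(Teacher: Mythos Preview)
Your argument is correct. The paper does not give its own proof but simply cites \cite[Lemma 2.7, Lemma 2.10]{CDP}; your direct argument via the identification $\Hom_K(V_i,\Pi(M))\cong\Hom^{\cont}_{\OO\br{K}}(M,V_i^*)$, Schikhof duality $\Pi(M)'\cong M\otimes_{\OO}L$, and Hahn--Banach (valid since $L$, being a finite extension of $\Qp$, is spherically complete) is exactly the standard proof and is essentially what one finds in the cited reference.
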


\begin{proof}
See \cite[Lemma 2.7, Lemma 2.10]{CDP}.
\end{proof}

\begin{defi}
We say that $\{V_i\}_{i \in I}$ captures $M$ if it satisfies one of the equivalent conditions above.
\end{defi}

Since $1+p\Zp$ (resp. $1+4\ZZ_2$) is a free pro-$p$ group of rank 1 if $p>2$ (resp. $p=2$), there is a smooth non-trivial character $\chi: \Zp^\times \rightarrow L^\times$ and a continuous character $\eta_0 : \Zp^{\times} \rightarrow L^\times$ such that $\psi = \chi \eta_0^2$. Let $e$ be the smallest integer such that $\chi$ is trivial on $1+p^e\Zp$. Let 
\[
J = \left(\smallmatrix \Zp^\times & \Zp \\ p^e \Zp & \Zp^\times \endsmallmatrix\right),
\]
and let $\chi \otimes \Eins: J \rightarrow L^\times$ be the character which sends $\left(\smallmatrix a & b \\ c & d \endsmallmatrix\right)$ to $\chi(a)$. Then the representation $\tau=\Ind^{K}_{J} (\chi \otimes \Eins)$ is a principal series type. That is, for an irreducible smooth $\overline{L}$-representation $\pi$ of $G$, we have $\Hom_{L[K]}(\tau, \pi) \neq 0$ if and only if $\pi \cong (\Ind^G_B \psi_1 \otimes \psi_2)_\sm$, where $B$ is a Borel subgroup and $\psi_1|_{\Zp^\times} = \chi$ and $\psi_2|_{\Zp^\times} = \Eins$ \cite[\S A 2.2]{MR1944572}.

\begin{prop} \label{prop:principalcapture}
The family
\[
\{\Ind^{K}_{J} (\chi \otimes \Eins) \otimes \Sym^{2a} L^2 \otimes (\det)^{-a} \otimes \eta \eta_0 \circ \det \}_{a \in \NN , \eta},
\]
where $\eta$ runs over all the characters with $\eta^2 = \Eins$, captures every projective object in $\mpro_{K, \psi^{-1}}(\OO)$.
\end{prop}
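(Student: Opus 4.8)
The plan is to reduce first to the projective generator, then to a density statement in a space of continuous functions.

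\textbf{Step 1: reduction to the projective generator.} Capture is inherited by direct summands of arbitrary products: if $\{V_i\}$ captures $N$ and $P$ is a direct summand of $\prod_{j}N$, then given $0\neq x\in P$ one picks a coordinate $x_j\neq 0$ and a morphism $N\to V_i^*$ not killing $x_j$, and composes with the $j$-th projection; criterion~(2) of Lemma~\ref{lem:capture} then holds for $P$. Every projective object of $\mpro_{K,\psi^{-1}}(\OO)$ is such a direct summand of a product of copies of the projective generator $\widetilde P:=\OO\br{K}\wtimes_{\OO\br{Z(K)}}\OO_{\psi^{-1}}$ (with $\OO_{\psi^{-1}}$ denoting $\OO$ with $Z(K)$ acting through $\psi^{-1}$), and $\widetilde P$ is $\OO$-torsion free since $\OO\br{K}$ is free over $\OO\br{Z(K)}$. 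So it suffices to show that the family captures $\widetilde P$.

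\textbf{Step 2: passage to functions and reduction to a smooth central character.} Under the duality of Lemma~\ref{lem:capture}, $\Pi(\widetilde P)=\Hom^{\cont}_{\OO}(\widetilde P,L)$ is the $L$-Banach space $\mathcal C_\psi$ of continuous $f\colon K\to L$ with $f(zk)=\psi(z)f(k)$ for $z\in Z(K)$, with $K$ acting by translation, and by criterion~(3) of Lemma~\ref{lem:capture} we must prove that the evaluation map of the family into $\mathcal C_\psi$ has dense image. As $\eta_0^2=\psi\chi^{-1}$ is unitary we may, after enlarging $L$, assume $\eta_0$ is $\OO^{\times}$-valued; multiplication by $\eta_0\circ\det$ is then an isometric $K$-equivariant isomorphism $\mathcal C_\chi\xrightarrow{\ \sim\ }\mathcal C_\psi$, and since $\eta\circ\det$ is trivial on $Z(K)$ whenever $\eta^2=\Eins$ it carries the family of the statement onto $\{\tau\otimes\Sym^{2a}L^2\otimes(\det)^{-a}\otimes\eta\circ\det\}_{a\in\NN,\eta}$. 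We are thus reduced to showing that this family has dense evaluation image in $\mathcal C_\chi$ with $\chi$ smooth. In $\mathcal C_\chi$ the locally algebraic vectors are dense: polynomial functions on $K$ are dense in $\mathcal C(K,L)$ (Mahler--Amice), and a locally algebraic function with central character $\chi$ is a finite sum of translates of products of a polynomial and a locally constant function.

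\textbf{Step 3: the density.} Let $\mathcal I\subseteq\mathcal C_\chi$ be the closure of the evaluation image, a closed $K$-subrepresentation; it suffices to show $\mathcal I=\mathcal C_\chi$. Every locally algebraic vector lies in a finite-dimensional $K$-subrepresentation whose irreducible subquotients all have central character $\chi$ and are therefore of the form $\Sym^{2a}L^2\otimes(\det)^{-a}\otimes\sigma$ with $a\in\NN$ and $\sigma$ irreducible smooth of central character $\chi$; decomposing the algebraic part reduces us to a fixed weight $2a$. Since $\Sym^{2a}L^2\otimes(\det)^{-a}\otimes\sigma$ is a quotient of $\tau\otimes\Sym^{2a}L^2\otimes(\det)^{-a}\otimes\eta\circ\det$ exactly when $\sigma\otimes\eta\circ\det$ is a subquotient of $\tau=\Ind^K_J(\chi\otimes\Eins)$, the point is to show that, as $a$ runs over $\NN$ and $\eta$ over the quadratic characters, the corresponding finite-dimensional subrepresentations of $\mathcal C_\chi$ span a dense subspace. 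This is the main obstacle. It rests on the explicit description of $\tau$ as a principal-series type recalled above (\cite[\S A\,2.2]{MR1944572}), which controls precisely the smooth constituents of $\tau$ and of its quadratic twists, together with the density of polynomial functions; the quadratic characters $\eta$ are present exactly to absorb the discrepancy between a twist by a power of $\det$ and the fixed central character, and the non-triviality of $\chi$ (i.e.\ $e\geq 1$) is what makes $\tau$ a genuine principal-series type. Granting this, criterion~(3) of Lemma~\ref{lem:capture} is verified and the proposition follows.
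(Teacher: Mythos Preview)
The paper does not give a proof here; it simply cites \cite[Proposition 2.7]{2adicANT}. So the comparison is really between your sketch and the argument in that reference.

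Your Steps~1 and~2 are fine and are the standard reductions. The problem is Step~3. You correctly identify that after the twist by $\eta_0\circ\det$ one must show that the images of the evaluation maps for the family $\{\tau\otimes W_a\otimes\eta\circ\det\}$ are dense in $\mathcal C_\chi$, and you correctly observe that, since $K$ is compact and we are in characteristic~$0$, this image is the sum of the $\pi$-isotypic subspaces of $\mathcal C_\chi^{\mathrm{lalg}}$ as $\pi$ ranges over the irreducible constituents of the $V_i$. But then you write ``This is the main obstacle\dots Granting this\dots'': you have not actually proved the density. Knowing the constituents of $\tau$ as an abstract $K$-representation is not by itself enough; one has to show that the isotypic components for this restricted set of irreducibles already have dense span in $\mathcal C_\chi$, and this is a genuine computation that you have not carried out. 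In particular, not every irreducible smooth $K$-representation with central character $\chi$ occurs in $\tau$ or its quadratic twists, so one cannot simply appeal to density of all locally algebraic vectors.

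The argument in \cite{2adicANT} proceeds differently: rather than analysing isotypic components of $\mathcal C_\chi$, it uses Frobenius reciprocity to transfer the problem from $K$ to $J$. Since $V_i=\Ind^K_J((\chi\otimes\Eins))\otimes W_a\otimes\eta\circ\det$, one has $\Hom_K(V_i,\Pi(M))\cong\Hom_J((\chi\otimes\Eins)\otimes W_a\otimes\eta\circ\det,\Pi(M)|_J)$, and the capture condition for $M$ becomes a capture condition for $M|_J$ by characters of $J$ twisted by symmetric powers. Because $J$ has an Iwahori factorisation, this reduces to a one-variable Mahler--Amice density statement, which is where the $\Sym^{2a}$ and the quadratic twists $\eta$ enter concretely. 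Your outline does not make this reduction and so leaves the essential step unjustified.
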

\begin{proof}
See \cite[Proposition 2.7]{2adicANT}.
\end{proof}
We will denote the family of representations in the above proposition by $\{V_i\}_{i\in I}$. We
note that each $V_i$ is a twist of a locally algebraic representation by a unitary character $\eta_0$, which might not be locally algebraic.  However, twisting by its inverse will get us to a locally algebraic 
situation, which is sufficient for all arguments that follow. 

\subsection{Locally algebraic vectors in $\Pi(P)$} \label{section:localg}
Let $\zeta: Z \rightarrow \OO^{\times}$ be a continuous character and $\psi = \zeta |_{K}$. Let $P$ be a projective object in $\CC(\OO)$ and $E = \End_{\CC(\OO)}(P)$. In particular, $P$ is a torsionfree compact linear-topological $\OO$-module. Let 
\[
\Pi(P) := \cHom{\OO}(P, L)
\]
with the topology induced by the supremum norm. Then we have $E[1/p] \cong \End^{\cont}_{G}(\Pi(P))$.

If $V$ is a continuous representation of $K$ on a finite dimensional $L$-vector space, then
\begin{equation} \label{equa:dual}
    \Hom_K(V, \Pi(P)) \cong \cHom{\OO \br{K}} (P, V^*).
\end{equation}
Since $P$ is projective in $\mpro_{K, \psi^{-1}}(\OO)$ by \cite[Corollary 3.10]{MR2667892}, the family of finite dimensional $K$-representations associated to $\psi$ in Proposition \ref{prop:principalcapture}, which we denote by $\{V_i\}_{i \in I}$, captures $P$. We view $V_i$ as a representation of $KZ$ by letting $\unifmatrix$ act by $\zeta(\varpi)$.

\begin{prop} \label{prop:Hecke}
For each $i \in I$, we let $A_i := \End_G(\cInd^G_{KZ} V_i)$. Then
\begin{enumerate}
    \item $A_i$ is isomorphic to $L[T]$;
    \item $\cInd^G_{KZ} V_i$ is flat over $A_i$.
\end{enumerate}
\end{prop}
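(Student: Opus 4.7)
The plan is to reduce to computing the endomorphism algebra of $\cInd^G_{KZ}\tau$ for the smooth principal series type $\tau := \Ind^K_J(\chi\otimes\Eins)$, and then to carry out a Mackey-theoretic analysis using the Cartan decomposition of $G$. First I would factor $V_i \cong \sigma \otimes W_a$ as $KZ$-representations, with $\sigma := \Ind^K_J(\chi\otimes\Eins)\otimes (\eta\eta_0\circ\det)$ and $W_a := \Sym^{2a}L^2\otimes \det^{-a}$, and use the projection formula $\cInd^G_{KZ}V_i \cong (\cInd^G_{KZ}\sigma)\otimes_L W_a$ as $G$-representations. Since $W_a$ is an absolutely irreducible algebraic $G$-representation with $\End_G(W_a)=L$, and $\Hom_G(\pi,\pi'\otimes V)=0$ whenever $\pi,\pi'$ are smooth and $V$ is a nontrivial irreducible algebraic summand of $W_a\otimes W_a^*$, this gives $\End_G(\cInd^G_{KZ}V_i) \cong \End_G(\cInd^G_{KZ}\sigma)$. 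A further twist by the continuous character $\eta\eta_0\circ\det$ does not affect the endomorphism algebra, reducing the problem to $A_i \cong \End_G(\cInd^G_{KZ}\tau)$ with $\tau$ extended to $KZ$ by an appropriate central character.

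Next I would compute $\End_G(\cInd^G_{KZ}\tau)$ via Frobenius reciprocity combined with the Cartan decomposition $G = \bigsqcup_{n\geq 0}KZ\alpha^n KZ$, where $\alpha = \bigl(\begin{smallmatrix}\varpi & 0 \\ 0 & 1\end{smallmatrix}\bigr)$. This yields
\[
\End_G(\cInd^G_{KZ}\tau) \cong \bigoplus_{n\geq 0} \HH_n, \qquad \HH_n \cong \Hom_{KZ\cap \alpha^n KZ \alpha^{-n}}(\tau, \alpha^n\tau).
\]
A Mackey-type computation using the explicit form of $J$ and the non-triviality of $\chi$ on $\Zp^\times$ shows each $\HH_n$ is one-dimensional, spanned by an operator $T_n$ obtained as an average over suitable representatives in $\alpha^n KZ$. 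Convolution then produces $T_1 \cdot T_n = c_n T_{n+1}$ for some $c_n \in L^{\times}$, with no lower-degree term appearing (non-triviality of $\chi$ rules out intertwiners between $\tau$ and $\alpha^{-1}\tau\alpha$ that could contribute a $T_{n-1}$ piece). Setting $T := T_1$, this gives $A_i \cong L[T]$, proving (1).

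For (2), since $L[T]$ is a principal ideal domain it suffices to show torsion-freeness. Equipping $\cInd^G_{KZ}\tau$ with the support filtration $F_n := \{f : \mathrm{supp}(f)\subseteq \bigsqcup_{k\leq n}KZ\alpha^k KZ\}$, the convolution analysis implies that $T$ carries $F_n\setminus F_{n-1}$ strictly into $F_{n+1}\setminus F_n$; hence for any nonzero $p(T)\in L[T]$ of degree $d$ and any nonzero $f \in F_n\setminus F_{n-1}$ we have $p(T)f \in F_{n+d}\setminus F_{n+d-1}$, so $p(T)f\neq 0$. Tensoring with $W_a$ preserves flatness, finishing the argument. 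The main obstacle is the explicit Mackey computation in the second paragraph: verifying that each $\HH_n$ is exactly one-dimensional and that the convolution $T_1\cdot T_n$ has no lower-degree contribution. The non-triviality of $\chi$ on $\Zp^\times$ is essential here, since in the Iwahori (unramified) case one would instead obtain an affine Hecke algebra isomorphic to $L[T^{\pm 1}]$ rather than $L[T]$.
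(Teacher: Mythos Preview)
Your reduction step—factoring $V_i$ as $\tau \otimes W_a \otimes (\eta\eta_0\circ\det)$ and using the projection formula together with absolute irreducibility of $W_a$—matches the paper's opening move exactly. The paper then diverges: rather than a direct Mackey computation, it invokes the Bushnell--Kutzko theory of covers to identify $\End_G(\cInd^G_{JZ}(\chi\otimes\Eins))$ with the Levi Hecke algebra $\End_M(\cInd^M_{K_MZ}(\chi\otimes\Eins))$, which is transparently the group algebra of $M/K_MZ \cong \ZZ$ and hence a PID. Torsion-freeness then follows from compact support, essentially as in your filtration argument.

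Your Mackey analysis, however, contains a genuine error: the assertion $\dim\HH_n = 1$ for $n \ge 1$ is false—one has $\dim\HH_n = 2$. Passing to $JZ$-double cosets via $\cInd^G_{KZ}\tau = \cInd^G_{JZ}(\chi\otimes\Eins)$, the cosets supporting intertwiners are exactly $JZ\alpha^m JZ$ for $m \in \ZZ$: the non-triviality of $\chi$ kills the \emph{Weyl-reflected} cosets (those involving $w_0$ or $\Pi$, which conjugate $\chi\otimes\Eins$ to $\Eins\otimes\chi$), \emph{not} the negative powers of $\alpha$. Since $w_0\alpha^n w_0^{-1} \in \alpha^{-n}Z$ with $w_0 \in K$, both $JZ\alpha^n JZ$ and $JZ\alpha^{-n}JZ$ sit inside $KZ\alpha^n KZ$ and each contributes an independent operator. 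So there is no single generator $T_1$ of $\HH_1$, and your convolution relation $T_1 T_n = c_n T_{n+1}$ cannot hold as written. Your closing remark is also inverted: it is in the \emph{spherical} (unramified) case that Satake produces the $W$-invariants $L[U+U^{-1}]$, a genuine polynomial ring, while for a regular ramified type the Weyl stabiliser is trivial and one gets the full Laurent ring $L[U,U^{-1}]$. (The paper's notation ``$L[T]$'' is loose on this point; only the PID property is used downstream.) The cover-theoretic approach sidesteps these double-coset subtleties entirely.
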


\begin{proof} We may write $V_i= \Ind^{K}_{J} (\chi \otimes \Eins)\otimes_L W_i$, where the action of $KZ$ on $W_i$ extends to an action of $G$, see Proposition \ref{prop:principalcapture}. Then 
$$ \cInd^G_{KZ} V_i \cong \cInd^G_{KZ}(\Ind^{K}_{J} (\chi \otimes \Eins))\otimes_L W_i.$$
Here we view $\Ind^{K}_{J} (\chi \otimes \Eins)$ as a representation of $KZ$ by letting $\unifmatrix$ act on $\Ind^{K}_{J} (\chi \otimes \Eins)$ by $\zeta(\varpi) \zeta^{-1}_{W_i}( \varpi)$,
where $\zeta_{W_i}$ is the central character of $W_i$. Since the restriction of $W_i$  to any compact open subgroup of $G$ remains absolutely irreducible, the above isomorphism induces an isomorphism of $L$-algebras $$\End_G(\cInd^G_{KZ} V_i) \cong \End_G(\cInd^G_{KZ}(\Ind^{K}_{J} (\chi \otimes \Eins))).$$ Thus we may assume that $W_i$ is the trivial representation. 

By \cite{MR1711578}, the $K$-type $\Ind^{K}_{J} (\chi \otimes \Eins)$ is a $G$-cover of the $K_M$-type $\chi \otimes \Eins$, where $M = \Qp^{\times} \times \Qp^{\times}$ and $K_M = \Zp^{\times} \times \Zp^{\times}$. Thus there is an algebra isomorphism
\[
j_M: \End_{M}(\cInd^M_{K_M} (\chi \otimes \Eins)) \xrightarrow{\sim} \End_G(\cInd^G_J(\chi \otimes \Eins))
\]
such that for each $f \in \End_{M}(\cInd^M_{K_M} \chi \otimes \Eins)$, we have $\supp(j_M f) = J \supp(f) J$. It follows that
\[
L[T] \xrightarrow{\sim} \End_{M}(\cInd^{M}_{K_MZ} (\chi \otimes \Eins)) \xrightarrow[j_M]{\sim} A_i,
\]
where $T$ maps to an element in $A_i$ supported at $JZ \left(\smallmatrix p & 0 \\ 0 & 1 \endsmallmatrix\right) JZ$. Here we view $\chi \otimes \Eins$ as a representation of $K_M Z$ by letting $\unifmatrix$ act by $\zeta(\varpi)$. This shows the first assertion.

To prove the second assertion, it suffices to show that $\cInd^G_{KZ} V_i$ is torsion free since $A_i$ is a PID. After tensoring with $\overline{L}$, this is equivalent to $\cInd^G_{KZ} V_i$ has no $T - \lambda$ torsion, which is easily seen using the fact that the functions in $\cInd^G_{KZ} V_i$ are compactly supported.
\end{proof}

In particular, Frobenius reciprocity gives
\begin{equation} \label{equa:Frobenius}
    \Hom_K(V_i, \Pi(P)) \cong \Hom_G(\cInd^G_{KZ} V_i, \Pi(P)).
\end{equation}
Hence, $\Hom_K(V_i, \Pi(P))$ is naturally an $A_i$-module and we may transport the action of $A_i$ onto $\cHom{\OO \br{K}} (P, V_i^*)$ via (\ref{equa:dual}).

If $V$ is a continuous representation of $K$ on a finite dimensional $L$-vector space and if $\Theta$ is an open, bounded $K$-invariant lattice in $V$, let $|\cdot|$ be the norm on $V^*$ given by $|\ell| := \sup_{v \in \Theta} |\ell(v)|$, so that $\Theta^d = \Hom_\OO(\Theta, \OO)$ is the unit ball in $V^*$ with respect to $|\cdot|$. The topology on $\cHom{\OO\br{K}}(P, V^*)$ is given by the norm $\lVert \phi \rVert := \sup_{v \in P} |\phi(v)|$, and $\cHom{\OO\br{K}}(P, \Theta^d)$ is the unit ball in this Banach space.

\begin{prop} \label{prop:lfindense}
For all $i \in I$ the submodule
\[
\cHom{\OO \br{K}} (P, V_i^*)_{\lfin} := \{\phi \in \cHom{\OO \br{K}} (P, V_i^*): \ell_{A_i}(A_i \phi) < \infty \}
\]
is dense in $\cHom{\OO \br{K}} (P, V_i^*)$, where $\ell_{A_i}(A_i \phi)$ is the length of $A_i \phi$ as an $A_i$-module.
\end{prop}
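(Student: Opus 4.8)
The plan is to reduce everything to a statement about the Hecke module $\cHom{\OO\br{K}}(P, V_i^*)$ and its completion with respect to the $p$-adic topology, using the $A_i = L[T]$-module structure established in Proposition \ref{prop:Hecke}. Write $H_i := \cHom{\OO\br{K}}(P, V_i^*)$, an $A_i$-module which is the generic fibre of the $\OO$-lattice $H_i^0 := \cHom{\OO\br{K}}(P, \Theta_i^d)$, where $\Theta_i$ is a $K$-stable lattice in $V_i$; the Banach topology on $H_i$ is the one for which $H_i^0$ is the unit ball. The submodule $H_{i,\lfin}$ consists of those $\phi$ whose $A_i$-span is finite-dimensional over $L$, equivalently those $\phi$ killed by some nonzero $P(T)\in A_i$, i.e. the $T$-torsion-and-locally-finite part. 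First I would observe that since $P$ is projective, hence a direct summand of a product of copies of $\OO\br{KZ/\,?}$, and $\cInd^G_{KZ}V_i$ is $A_i$-flat by Proposition \ref{prop:Hecke}(2), the module $H_i = \Hom_G(\cInd^G_{KZ}V_i, \Pi(P))$ inherits good finiteness/separatedness properties; in particular $H_i^0$ is a $T$-stable $\OO$-lattice and $H_i = H_i^0[1/p]$.

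The key point is an approximation statement: given $\phi \in H_i^0$ and $n\ge 1$, I want to produce $\phi' \in H_{i,\lfin}$ with $\phi \equiv \phi' \pmod{p^n H_i^0}$. For this I would work modulo $p^n$: the reduction $\overline{H}_i^0 := H_i^0/p^n H_i^0 = \cHom{\OO\br{K}}(P, \Theta_i^d/p^n)$ is a module over $\OO/p^n$ with an action of $T$, and it is a \emph{compact} (profinite) module because $P$ is compact and $\Theta_i^d/p^n$ is finite; moreover it is the Pontryagin dual (or Schikhof-type dual) of the smooth $G$-representation $(\cInd^G_{KZ}V_i \otimes \OO/p^n)$-isotypic part of $\Pi(P)^\vee$-type object, on which $T$ acts \emph{locally finitely} because any smooth vector is fixed by a compact open subgroup and the relevant Hecke operator acts through a finite-dimensional quotient on each such. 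Concretely: every element of $\overline{H}_i^0$ lies in a finitely generated $\OO/p^n$-submodule, and since $\cInd^G_{KZ}V_i$ is finitely generated over $A_i$ it follows from \cite[standard smoothness/admissibility arguments, cf.]{CDP} that $T$ acts on $\overline{H}_i^0$ with the property that every element is contained in a $T$-stable submodule which is finitely generated over $\OO/p^n$, i.e. on which $T$ satisfies a monic polynomial. Lifting: by the projectivity of $P$ (equivalently, $H_i^0$ being a direct summand of something of the form $\cHom{\OO\br{K}}(\prod \OO\br{K}, \Theta_i^d)$, explicitly a product of copies of $V_i^*$-lattices which are $A_i$-modules of the expected shape) one checks that $\overline{H}_i^0$ itself is a union of $T$-stable $\OO/p^n$-finitely generated submodules, so $\phi \bmod p^n$ lands in such a submodule $N$; then $A_i\cdot \tilde\phi$ for a lift $\tilde\phi\in H_i^0$ need not be finite, but $A_i$-flatness of $\cInd^G_{KZ}V_i$ lets us lift the finite-length $A_i/p^n$-module $N$ to a finitely generated $A_i$-submodule, whose image gives the desired $\phi'\in H_{i,\lfin}$ with $\phi'\equiv\phi\pmod{p^n}$.

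More carefully, I would phrase the core lemma as: \emph{the image of $H_{i,\lfin}\cap H_i^0$ in $H_i^0/p^nH_i^0$ equals all of $H_i^0/p^nH_i^0$.} Granting this for every $n$, density of $H_{i,\lfin}$ in the $p$-adic (= Banach) topology on $H_i$ is immediate: any $\phi\in H_i$ is $p^{-m}\psi$ for some $\psi\in H_i^0$, approximate $\psi$ by $\psi_n\in H_{i,\lfin}\cap H_i^0$ modulo $p^{n}H_i^0$, then $p^{-m}\psi_n\to\phi$. To prove the core lemma, use that $H_i^0/p^n = \cHom{\OO\br K}(P,\Theta_i^d/p^n)$; write $P$ up to a summand as $\prod_{j}\OO\br{K}$ (it is projective in $\mpro_{K,\psi^{-1}}(\OO)$ by \cite[Cor.\ 3.10]{MR2667892}), so $H_i^0/p^n$ is, up to a summand, a closed submodule of $\prod_j (\Theta_i^d/p^n)$, and $T$ acts diagonally through its action on each finite module $\Theta_i^d/p^n$-times-Hecke, which is the action coming from $\cInd^G_{KZ}(\Theta_i^d/p^n)^\vee$ being a smooth finitely $A_i$-generated representation; since a finitely generated $A_i$-module which is $p^n$-torsion and finite over $\OO/p^n$ in each "coordinate" is locally finite, every element of $H_i^0/p^n$ generates a finite $A_i$-submodule, which is exactly the image of $H_{i,\lfin}$.

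The main obstacle I expect is the lifting step: making precise that an element of $H_{i,\lfin}$ reducing to a given vector mod $p^n$ actually exists, i.e. that the torsion $T$-locally-finite part surjects onto each finite-level quotient. This requires controlling how the $A_i$-flatness of $\cInd^G_{KZ}V_i$ interacts with reduction mod $p^n$ and with the (infinite) product decomposition of $P$; the cleanest route is probably to first treat $P = \OO\br{KZ}$ (or $\OO\br K$ with the central character twist), where $\cHom{\OO\br{K}}(\OO\br{KZ},V_i^*)\cong V_i^*$ as a $KZ$-representation and the $A_i$-module structure is transparent from $\cInd^G_{KZ}V_i = \bigoplus_{G/KZ}V_i$ and the explicit support of $T$ at $JZ\,\mathrm{diag}(p,1)\,JZ$, so that $H_{i,\lfin}$ there is the span of finitely-supported functions and density is the statement that finitely-supported functions are dense in a space of $\OO$-valued functions with supremum norm — which is classical — and then deduce the general (projective $P$) case by passing to direct summands of products, using that $H_{i,\lfin}$ is compatible with finite direct sums and that density passes to closed subspaces that are $A_i$-stable. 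This matches the level of argument in \cite[Lemma 2.7, Lemma 2.10]{CDP} and \cite[Prop.\ 2.7]{2adicANT}, which I would cite for the topological bookkeeping.
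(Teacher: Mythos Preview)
The paper's own proof is a bare citation to \cite[Proposition 2.19]{CDP}; you are attempting to reconstruct that argument. Your overall shape (pass to $H_i^0/\varpi^n$, establish local $T$-finiteness there, then lift) is the right one, and your sketch of why every element of $H_i^0/\varpi^n$ is $T$-finite can be made precise: via Pontryagin duality $H_i^0/\varpi \cong \Hom_K(\Theta_i/\varpi, P^\vee[\varpi])$, the target $P^\vee[\varpi]$ lies in $\mlf_{G,\zeta}(\OO)$, so any $\phi$ has image in a finite-length $G$-subrepresentation $\tau$, and $T$ (acting by precomposition) preserves $\Hom_G(\cInd^G_{KZ}(\Theta_i/\varpi),\tau)$, which is finite-dimensional by admissibility.

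The genuine gap is the lifting step, and your proposed fix does not work. Your reduction ``first treat $P=\OO\br{KZ}$'' is ill-posed: $\OO\br{KZ}$ carries no $G$-action, so there is no Hecke operator $T$ on $\cHom{\OO\br{K}}(\OO\br{KZ},V_i^*)$ to speak of, and the identification with ``finitely supported functions'' with a classical density statement is meaningless in this context. More generally, the decomposition $P\cong\prod_j \OO\br{K'}$ that you invoke is only $K'$-equivariant; the $G$-action (and hence $T$) mixes the factors, so one cannot deduce the result for general $P$ from a ``free rank one'' case. Worse, the implication ``$T$ locally finite on $M/\varpi$ $\Rightarrow$ $T$-finite vectors dense in $M[1/p]$'' is \emph{false} without further input: take $M=\OO\br{x}$ with $T$ acting as multiplication by $\varpi x$; then $T$ is zero on $M/\varpi=k\br{x}$, yet the $L[T]$-torsion in $M[1/p]$ is $\{0\}$ since $M[1/p]$ is a domain. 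So something beyond ``local finiteness mod $\varpi$'' is needed.

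What distinguishes the actual situation from the counterexample is that $H_i^0/\varpi$ is a \emph{discrete} $k$-vector space (a direct limit of finite-dimensional $T$-stable subspaces, as follows from the identification with $\Hom_K(\Theta_i/\varpi,P^\vee[\varpi])$ above), equivalently $H_i^0$ is orthonormalizable. The argument in \cite[Proposition 2.19]{CDP} exploits exactly this: it combines orthonormalizability of $H_i^0$ with boundedness of $T$ and the mod-$\varpi$ local finiteness to run a genuine successive-approximation argument producing, for each $\phi\in H_i^0$ and each $n$, an element of $H_{i,\lfin}\cap H_i^0$ congruent to $\phi$ modulo $\varpi^n$. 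Your write-up gestures at this structure but never uses it; you should replace the appeal to a free-$P$ case by the orthonormalizability of $H_i^0$ and make the approximation explicit, or simply cite \cite[Proposition 2.19]{CDP} as the paper does.
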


\begin{proof}
See \cite[Proposition 2.19]{CDP}.
\end{proof}

\begin{prop} \label{prop:completion} Let $\mm$ be a maximal ideal of $A_i$ and let $\Pi$ be a completion of $A_i / \mm^n \otimes_{A_i} \cInd^G_{KZ} V_i$ with respect to a $G$-invariant norm. Then $\Pi$ is the universal unitary completion of $A_i / \mm^n \otimes_{A_i} \cInd^G_{KZ} V_i$. Moreover, the action of $A_i$ on $A_i / \mm^n \otimes_{A_i} \cInd^G_{KZ} V_i$ extends to a continuous action of $A_i$ on $\Pi$.
\end{prop}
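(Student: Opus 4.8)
The plan is to reduce the statement to the case $n=1$, which is the theorem of Berger--Breuil on universal unitary completions of locally algebraic principal series \cite{MR2642406}, and then to propagate it to all $n$ by dévissage along the powers of $\mm$. Write $\Sigma_m:=A_i/\mm^m\otimes_{A_i}\cInd^G_{KZ}V_i$, so that $\Sigma_n$ is the module in question. First one reduces to the case where $V_i$ is genuinely locally algebraic: as explained after Proposition \ref{prop:principalcapture}, twisting $\cInd^G_{KZ}V_i$ by the inverse of the unitary character $\eta_0$ is an autoequivalence commuting both with the $A_i$-action and with the formation of universal unitary completions, so I may assume $V_i=\sigma\otimes_L W$ with $\sigma=\Ind^K_J(\chi\otimes\Eins)$ a smooth type and $W$ an algebraic representation of $G$, and then $\cInd^G_{KZ}V_i\cong(\cInd^G_{KZ}\sigma)\otimes_L W$.

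For $n=1$: writing $\mm=(f)$ in $A_i\cong L[T]$ and $\kappa:=A_i/\mm$, the quotient $\Sigma_1$ is identified, using the type-theoretic description of $\cInd^G_{KZ}\sigma$ modulo a Hecke eigenvalue recalled before Proposition \ref{prop:principalcapture}, with a locally algebraic principal series $W_\kappa\otimes_\kappa(\Ind^G_B\psi_1\otimes\psi_2)_{\sm}$ over $\kappa$, where $\psi_1|_{\Zp^\times}=\chi$, $\psi_2|_{\Zp^\times}=\Eins$, and $\psi_1(p),\psi_2(p)$ are pinned down by $\mm$ and $\zeta$. By \cite{MR2642406} such a representation has a universal unitary completion $\widehat\Sigma_1$, which is an admissible unitary Banach representation, and the completion of $\Sigma_1$ with respect to \emph{any} $G$-invariant norm coincides with $\widehat\Sigma_1$. (For the crystabelline $\mm$ relevant in the applications $\widehat\Sigma_1$ is moreover topologically irreducible; the remaining $\mm$ are handled together with the known description of the Banach completions of algebraic twists of the Steinberg representation.) The ``moreover'' part for $n=1$ is then formal: the $A_i$-action on $\Sigma_1$ factors through the finite $L$-algebra $A_i/\mm$ and is by $G$-equivariant endomorphisms, so by functoriality of the universal unitary completion it extends to a continuous $A_i$-action on $\Pi=\widehat\Sigma_1$.

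For general $n$ I argue by induction on $n$, the inductive hypothesis being that $\widehat\Sigma_m$ is admissible and that every completion of $\Sigma_m$ with respect to a $G$-invariant norm is canonically isomorphic to $\widehat\Sigma_m$, for all $m<n$. Since $\cInd^G_{KZ}V_i$ is flat over $A_i$ (Proposition \ref{prop:Hecke}(2)) and $\mm^{n-1}/\mm^n\cong A_i/\mm$ (as $A_i$ is a principal ideal domain), multiplication by $f^{n-1}$ yields a short exact sequence $0\to\Sigma_1\to\Sigma_n\to\Sigma_{n-1}\to0$ of $A_i[G]$-modules. Let $\Pi$ be a completion of $\Sigma_n$ with respect to a $G$-invariant norm, so $\Sigma_n\hookrightarrow\Pi$, and let $\overline S$ be the closure of the image of $\Sigma_1$ in $\Pi$. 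Then $\overline S$ is a closed $G$-stable subspace, hence a unitary Banach representation and a unitary completion of $\Sigma_1$ for the restricted norm, so $\overline S\cong\widehat\Sigma_1$ by the base case, while $\Pi/\overline S$ is a unitary Banach representation carrying a dense $G$-map from $\Sigma_{n-1}$, which by the inductive hypothesis (and using $\widehat\Sigma_{n-1}\neq0$, e.g. because $\Sigma_{n-1}\twoheadrightarrow\Sigma_1$ forces $\widehat\Sigma_{n-1}\twoheadrightarrow\widehat\Sigma_1\neq0$) is identified with $\widehat\Sigma_{n-1}$. On the other hand the universal unitary completion functor is right exact, giving an exact sequence $\widehat\Sigma_1\to\widehat\Sigma_n\to\widehat\Sigma_{n-1}\to0$ together with a canonical continuous $G$-map $\widehat\Sigma_n\to\Pi$, compatible at both ends with the above identifications; a diagram chase, using that $\overline S$ is closed in $\Pi$ and that $\widehat\Sigma_n\to\widehat\Sigma_{n-1}$ is surjective with kernel the closure of the image of $\widehat\Sigma_1$, shows that $\widehat\Sigma_n\to\Pi$ is bijective, hence a topological isomorphism by the open mapping theorem. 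Thus $\Pi\cong\widehat\Sigma_n$; its admissibility follows by dévissage from that of $\widehat\Sigma_1$ and $\widehat\Sigma_{n-1}$, and the ``moreover'' part follows, as in the case $n=1$, from functoriality of the universal unitary completion applied to the $A_i/\mm^n$-action by $G$-endomorphisms on $\Sigma_n$.

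The main obstacle is the base case $n=1$: the entire analytic content rests on the Berger--Breuil theorem that the universal unitary completion of a locally algebraic principal series is admissible and is recovered from any $G$-invariant norm, uniformly in $\mm$. Granting that, the passage to $\Sigma_n$ is essentially formal; the one point requiring care is the identification of $\Pi/\overline S$ with $\widehat\Sigma_{n-1}$ and, more generally, the admissibility of the Banach representations involved, so that the open mapping theorem and the stability of admissibility under subquotients and extensions may be applied --- at the crystabelline $\mm$ relevant later this causes no difficulty, the graded pieces $\Sigma_1$ being topologically irreducible after completion.
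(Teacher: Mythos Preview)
Your approach is genuinely different from the paper's, and there is a gap in the inductive step that you yourself flag but do not resolve.

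The paper does not argue at the level of Banach completions at all. It invokes Emerton's criterion \cite[Proposition 1.17]{MR2181093}: a completion $\Pi$ of a locally algebraic representation with respect to a $G$-invariant norm is the universal unitary completion if and only if the unit ball $\Theta=\Pi^{\circ}\cap\Sigma_n$ is finitely generated over $\OO[G]$. One then filters $\Sigma_n$ by $R^j=\mm^{n-j}/\mm^n\otimes_{A_i}\cInd^G_{KZ}V_i$, so that each graded piece is $\Sigma_1$, and observes that the induced filtration $\Theta^j=\Theta\cap R^j$ on $\Theta$ has graded pieces $\Theta^j/\Theta^{j-1}$ which are $G$-invariant lattices in $\Sigma_1$. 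By the proof of \cite[Theorem 4.3.1]{MR2642406} each such lattice is finitely generated over $\OO[G]$, hence so is $\Theta$. The ``moreover'' then follows because any $\phi\in A_i$ sends the finitely generated $\Theta$ into $\varpi^m\Theta$ for some $m\in\ZZ$.

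Your d\'evissage on the Banach side requires, in order to apply the inductive hypothesis to $\Pi/\overline{S}$, that $\Sigma_{n-1}\to\Pi/\overline{S}$ be injective, equivalently that $\Sigma_n\cap\overline{S}=\Sigma_1$ inside $\Pi$. This is not automatic: you only know $\overline{S}\cong\widehat{\Sigma_1}$, and a priori an element of $\Sigma_n\setminus\Sigma_1$ could lie in the closure of $\Sigma_1$. One can fill this gap by showing that the locally $V_i$-algebraic vectors of $\widehat{\Sigma_1}$ are exactly $\Sigma_1$ (which follows from Colmez's description of $\widehat{\Sigma_1}$), but this is an extra input you do not invoke; the appeal to topological irreducibility of $\widehat{\Sigma_1}$ in the crystabelline case does not by itself give it, since $\widehat{\Sigma_n}$ could in principle have several subobjects isomorphic to $\widehat{\Sigma_1}$, and your five-lemma needs the \emph{specific} one containing $\Sigma_1$. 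The lattice argument of the paper sidesteps this entirely.
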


\begin{proof}
Let $\Pi^{\circ}$ be a $G$-invariant $\OO$-lattice of $\Pi$. Then 
\[
\Theta := \Pi^{\circ} \cap (A_i / \mm^n \otimes_{A_i} \cInd^G_{KZ} V_i)
\]
is a $G$-invariant $\OO$-lattice of $A_i / \mm^n \otimes_{A_i} \cInd^G_{KZ} V_i$. By \cite[Proposition 1.17]{MR2181093}, it suffices to show that $\Theta$ is of finite type over $\OO[G]$.

By Proposition \ref{prop:Hecke} (i), we have $\kappa(\mm) := A_i / \mm \cong L[T] / f(T)$, where $f(T) \in L[T]$ is an irreducible polynomial, is a finite extension of $L$. Define a finite, increasing, exhaustive filtration $\{ R^j \}_{n \geq j \geq 0}$ of $A_i / \mm^n \otimes_{A_i} \cInd^G_{KZ} V_i$ by $G$-invariant $A_i$-submodules $R^j = \mm^{n-j} / \mm^n \otimes_{A_i} \cInd^G_{KZ} V_i$. Then we have 
\[
R^j / R^{j-1} \cong \kappa(\mm) \otimes_{A_i} \cInd^G_{KZ} V_i
\]
for each $j$ and $\{ \Theta^j := \Theta \cap R^j\}_{n \geq j \geq 0}$ is a finite, increasing, exhaustive filtration of $\Theta$ such that $\Theta^j$ is a $G$-invariant $\OO$-lattice of $R^j$ for each $j$. Moreover, $\Theta^j / \Theta^{j-1}$ gives rise to a $G$-invariant $\OO$-lattice of $R^j / R^{j-1} \cong A_i / \mm \otimes_{A_i} \cInd^G_{KZ} V_i$, and thus is finitely generated over $\OO[G]$ by the proof of \cite[Theorem 4.3.1]{MR2642406}. This implies that $\Theta$ is finitely generated over $\OO[G]$ and the first assertion follows. 

If $\phi \in A_i$ then $\phi(\Theta) \subset \varpi^n \Theta$ for some $n\in \ZZ$, as $\Theta$ is finitely generated over $\OO[G]$. This implies the second assertion.
\end{proof}

We denote the universal unitary completion of $A_i / \mm^n \otimes_{A_i} \cInd^G_{KZ} V_i$ by $\Pi_{i,\mm, n}$. If $n=1$ then $\Pi_{i,\mm, 1}$ is the universal unitary completion of $\kappa(\mm) \otimes_{A_i} \cInd^G_{KZ} V_i$ studied in \cite[Theorem 4.3.1]{MR2642406} and \cite[Proposition 2.2.1]{MR2667890}.

\begin{cor} \label{cor:completion}
 If $\phi \in \cHom{\OO \br{K}} (P, V_i^*)_{\lfin}$ is such that $A_i \phi \cong A_i / \mm^n$ for  a maximal ideal $\mm$ of $A_i$, then $\phi$ induces an injection $\Pi_{i,\mm, n} \hookrightarrow \Pi(P)$. Moreover, $\Pi_{i,\mm, n}$ admits a filtration of length $n$ such that each graded piece is isomorphic to the universal unitary completion $\Pi_{i,\mm, 1}$ of $\kappa(\mm) \otimes_{A_i} \cInd^G_{KZ} V_i$.
\end{cor}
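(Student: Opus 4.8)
\emph{Plan of proof.} The strategy is to transfer $\phi$ to the locally algebraic side via Frobenius reciprocity, pin down the $G$-representation through which the corresponding map factors, and then invoke Proposition~\ref{prop:completion} to extend to the universal unitary completion; the real content is the injectivity, which I would extract from the $A_i$-module structure on $\cInd^G_{KZ}V_i$ together with irreducibility of the relevant locally algebraic principal series. First I would use \eqref{equa:Frobenius} to regard $\phi$ as a $G$-equivariant map $\tilde\phi\colon\cInd^G_{KZ}V_i\to\Pi(P)$, noting that under \eqref{equa:dual} and \eqref{equa:Frobenius} the $A_i$-action on $\Hom_G(\cInd^G_{KZ}V_i,\Pi(P))$ (precomposition through $A_i=\End_G(\cInd^G_{KZ}V_i)$) corresponds to the given action on $\cHom{\OO\br K}(P,V_i^*)$. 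Hence $A_i\tilde\phi\cong A_i\phi\cong A_i/\mm^n$, so $\ann_{A_i}(\tilde\phi)=\mm^n$ and $\tilde\phi$ factors as $\cInd^G_{KZ}V_i\twoheadrightarrow W_n\xrightarrow{\bar\phi}\Pi(P)$ with $W_n:=A_i/\mm^n\otimes_{A_i}\cInd^G_{KZ}V_i$. By Proposition~\ref{prop:Hecke}(2), $\cInd^G_{KZ}V_i$ is flat over $A_i=L[T]$, hence torsion free, so multiplication by a generator $f$ of $\mm$ is injective on it; this makes the $\mm$-adic filtration $R^j:=\mm^{n-j}\cInd^G_{KZ}V_i/\mm^n\cInd^G_{KZ}V_i$ ($0\le j\le n$) of $W_n$ have all graded pieces isomorphic to $W_1:=\kappa(\mm)\otimes_{A_i}\cInd^G_{KZ}V_i$, and it gives $\bigcap_j\mm^j\cInd^G_{KZ}V_i=0$ by the compact-support argument used in the proof of Proposition~\ref{prop:Hecke}; in particular $W_1\neq0$.

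Next I would determine the socle of $W_n$. By \cite{MR1290194}, \cite{MR1711578} and the description of $V_i$ in Proposition~\ref{prop:principalcapture}, $W_1$ is the twist by a character (and by $\Sym^{2a}L^2\otimes(\det)^{-a}$) of a smooth principal series $(\Ind^G_B\psi_1\otimes\psi_2)_{\sm}$ with $\psi_1\psi_2^{-1}|_{\Zp^\times}=\chi$; since $\chi$ is non-trivial, $\psi_1\psi_2^{-1}$ lies in none of $\{\Eins,\lvert\cdot\rvert^{\pm1}\}$, so $W_1$ is an irreducible locally algebraic $G$-representation. I would then check that $0\to R^{n-1}\to W_n\to W_1\to0$ is non-split, and likewise $0\to R^{m-1}\to R^m\to W_1\to 0$ at every stage: a splitting produces a $G$-submodule $\widetilde S\subseteq\cInd^G_{KZ}V_i$ with $\mm^n\cInd^G_{KZ}V_i\subseteq\widetilde S$ and $\widetilde S+\mm\cInd^G_{KZ}V_i=\cInd^G_{KZ}V_i$, so $\mm$ acts surjectively on $N:=\cInd^G_{KZ}V_i/\widetilde S$ while $\mm^nN=0$, forcing $N=\mm^nN=0$, hence $\widetilde S=\cInd^G_{KZ}V_i$ and $W_1\cong N=0$ — a contradiction. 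Since every composition factor of $W_n$ is isomorphic to $W_1$, any irreducible subrepresentation of $W_n$ not contained in $R^{n-1}$ would split the sequence; iterating, the socle of $W_n$ is the simple module $R^1\cong W_1$.

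Injectivity of $\bar\phi$ is then immediate: the composite $W_1\xrightarrow{\,f^{n-1}\,}R^1\hookrightarrow W_n\xrightarrow{\bar\phi}\Pi(P)$ is the map induced by $f^{n-1}\tilde\phi\in\Hom_G(\cInd^G_{KZ}V_i,\Pi(P))$, which is non-zero because $f^{n-1}\notin\mm^n=\ann_{A_i}(\tilde\phi)$; as $R^1$ is irreducible this composite is injective, so $\ker\bar\phi$ cannot contain the socle $R^1$ and must be $0$. To extend $\bar\phi$ I would pull back the supremum norm of $\Pi(P)$ along the injection $\bar\phi$ to a $G$-invariant norm on $W_n$; then the closure $C$ of $\bar\phi(W_n)$ in $\Pi(P)$ is a completion of $W_n$ with respect to a $G$-invariant norm, hence by Proposition~\ref{prop:completion} canonically the universal unitary completion $\Pi_{i,\mm,n}$, and the resulting map $\Psi\colon\Pi_{i,\mm,n}\xrightarrow{\ \sim\ }C\hookrightarrow\Pi(P)$ is the one induced by $\phi$ and is injective. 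For the last assertion I would use the $G$-invariant lattice $\Theta$ of $\Pi_{i,\mm,n}$ built in the proof of Proposition~\ref{prop:completion}, with its filtration $\Theta^j:=\Theta\cap R^j$, whose subquotients $\Theta^j/\Theta^{j-1}$ are $G$-invariant lattices in $R^j/R^{j-1}\cong W_1$; after $p$-adic completion and inverting $p$ this yields $0=\Pi^0\subset\Pi^1\subset\cdots\subset\Pi^n=\Pi_{i,\mm,n}$ with each $\Pi^j/\Pi^{j-1}$ a $G$-invariant-norm completion of $W_1$, hence isomorphic to $\Pi_{i,\mm,1}$ by Proposition~\ref{prop:completion} applied with $n=1$.

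I expect the main obstacle to be the injectivity of $\bar\phi$, and within it the identification of the socle of $W_n$: this is where one has to use both the flatness (equivalently, torsion-freeness) of $\cInd^G_{KZ}V_i$ over $A_i$ and the fact — built into the capture set-up — that the type $\chi$ was chosen non-trivial, which is exactly what makes every $W_1$ irreducible and excludes the reducible principal series; everything else is a bookkeeping exercise once Propositions~\ref{prop:Hecke} and \ref{prop:completion} are available.
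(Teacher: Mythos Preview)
Your argument for the injectivity of $\bar\phi\colon W_n\to\Pi(P)$ is correct and in fact supplies the details the paper's one-line assertion elides: identifying the socle of $W_n$ via irreducibility of $W_1$ (which indeed uses that $\chi$ is non-trivial) and then checking that $f^{n-1}\tilde\phi\neq 0$ is exactly the right mechanism. One small slip: in your non-splitting argument you define $N:=\cInd^G_{KZ}V_i/\widetilde S$ and correctly deduce $N=0$, but then write ``$W_1\cong N=0$''. In fact $N\cong W_n/S'\cong R^{n-1}$ (or $R^{m-1}$ at stage $m$), not $W_1$; the contradiction is that $R^{m-1}\neq 0$ for $m\ge 2$. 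Alternatively, $\widetilde S=\cInd^G_{KZ}V_i$ forces the section image $S'$ to be all of $W_n$, contradicting $S'\cong W_1$. Either way the argument stands.

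For the filtration, your approach differs from the paper's and has a genuine gap. You pass from the lattice filtration $\Theta^j$ to the filtration $\Pi^j$ on $\Pi_{i,\mm,n}$ by ``$p$-adic completion and inverting $p$'', and then assert that each $\Pi^j/\Pi^{j-1}$ is a $G$-invariant-\emph{norm} completion of $W_1$. But what you actually obtain is a completion with respect to the quotient \emph{seminorm} on $R^j/R^{j-1}$, and you have not ruled out that this seminorm is identically zero, i.e.\ that $\Pi^j=\Pi^{j-1}$. Knowing that $\Theta^j/\Theta^{j-1}$ is a lattice in $W_1$ (as in the proof of Proposition~\ref{prop:completion}) does not by itself give $p$-adic separatedness of this lattice, and nothing prevents $R^{j-1}$ from being dense in $R^j$ for the norm at hand. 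The paper closes this gap differently: it uses the continuity of the $A_i$-action on $\Pi_{i,\mm,n}$ (established in Proposition~\ref{prop:completion}) to get $\mm\Pi^j=\Pi^{j-1}$, hence $\Pi^j=\mm^{n-j}\Pi_{i,\mm,n}$; then $\Pi^j=\Pi^{j-1}$ would force $\Pi^j=\mm\Pi^j=\cdots=\mm^n\Pi_{i,\mm,n}=0$, contradicting $0\neq R^j\hookrightarrow\Pi^j$ (the latter using the injection $W_n\hookrightarrow\Pi_{i,\mm,n}$, which follows from your first part). Once $\Pi^j/\Pi^{j-1}\neq 0$, irreducibility of $W_1$ gives $W_1\hookrightarrow\Pi^j/\Pi^{j-1}$ with dense image, and Proposition~\ref{prop:completion} finishes. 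Adding this Nakayama-type step to your argument repairs it.
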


\begin{proof}
The assumption $A_i \phi \cong A_i / \mm^n$ implies that 
$A_i / \mm^n \otimes_{A_i} \cInd^G_{KZ} V_i$ injects into $\Pi(P)$. 
The first assertion follows immediately from Proposition \ref{prop:completion}.
To show the second assertion, we let $\{R^j\}_{n \geq j \geq 0}$ be the filtration of 
$A_i / \mm^n \otimes_{A_i} \cInd^G_{KZ} V_i$ defined in Proposition \ref{prop:completion}.
Let $\Pi^j$ be the closure of $R^j$ in $\Pi_{i,\mm, n}$; then $\Pi^n= \Pi_{i,\mm,n}$.
Since $\mm R^j = R^{j-1}$ we have $\mm \Pi^j = \Pi^{j-1}$ and hence $\Pi^j= \mm^{n-j} \Pi_{i,\mm,n}$.
If $\Pi^j= \Pi^{j-1}$ then $\Pi^j=\mm \Pi^j$ and hence $\Pi^j= \mm^j \Pi^j= \mm^n \Pi_{i,\mm,n}=0$. Since $\Pi^j\neq 0$ for $1\le j\le n$ we conclude
that $\Pi^j\neq \Pi^{j-1}$ for $1\le j\le n$. Moreover, $\Pi^j / \Pi^{j-1}$ 
 contains $R^j / R^{j-1} \cong A_i / \mm \otimes_{A_i} \cInd^G_{KZ} V_i$ as a dense subspace, 
 and thus is isomorphic to $\Pi_{i,\mm, 1}$. This proves the corollary.
\end{proof}

Note that the image of any $\phi \in \Hom_G(A_i / \mm^n \otimes_{A_i} \cInd^G_{KZ} V_i, \Pi(P))$ is isomorphic to $A_i / \mm^k \otimes_{A_i} \cInd^G_{KZ} V_i$ for some $0 \leq k \leq n$. Hence it induces an injection $\Pi_{i, \mm, k} \hookrightarrow \Pi(P)$ by Corollary \ref{cor:completion}.

\begin{prop} \label{prop:density}
Let $P$ be a projective object in $\CC(\OO)$. Then the image of the evaluation map
\[
\bigoplus_{i \in I} \bigoplus_{\mm, n} \cHom{G}(\Pi_{i,\mm, n}, \Pi(P)) \otimes_L \Pi_{i,\mm, n} \rightarrow \Pi(P),
\]
where $\mm$ runs through maximal ideals of $A_i$ and $n \in \NN$, is a dense subspace.
\end{prop}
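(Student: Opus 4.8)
The plan is to reduce the statement to the capture property of the family $\{V_i\}_{i\in I}$ established in Proposition \ref{prop:principalcapture}, and then to replace each algebraic representation $V_i$ by the (infinitely many) universal unitary completions $\Pi_{i,\mm,n}$ attached to it. First I would invoke Lemma \ref{lem:capture}: since $P$ is projective in $\CC(\OO)$, it is projective in $\mpro_{K,\psi^{-1}}(\OO)$ by \cite[Corollary 3.10]{MR2667892}, so by Proposition \ref{prop:principalcapture} the family $\{V_i\}_{i\in I}$ captures $P$. By the equivalence of conditions in Lemma \ref{lem:capture}, this means exactly that the image of the evaluation map
\[
\bigoplus_{i\in I}\Hom_K(V_i,\Pi(P))\otimes_L V_i\rightarrow \Pi(P)
\]
is dense. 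So it suffices to show that each $V_i$, viewed as a subspace of $\Pi(P)$ via any $K$-equivariant map, lies in the closure of the span of the images of the $\Pi_{i,\mm,n}$.

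Next I would upgrade from $K$-maps to $G$-maps using the compact induction adjunction \eqref{equa:Frobenius}: an element of $\Hom_K(V_i,\Pi(P))$ corresponds to an element of $\Hom_G(\cInd^G_{KZ}V_i,\Pi(P))$, hence to a $\cHom{\OO\br{K}}{}$-element $\phi$, and $\Hom_K(V_i,\Pi(P))$ carries an $A_i$-module structure with $A_i\cong L[T]$ by Proposition \ref{prop:Hecke}(i). Now apply Proposition \ref{prop:lfindense}: the $A_i$-finite part $\cHom{\OO\br{K}}(P,V_i^*)_{\lfin}$ is dense in $\cHom{\OO\br{K}}(P,V_i^*)$. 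Therefore, up to an arbitrarily small error in the sup-norm, I may assume the given map $V_i\to\Pi(P)$ corresponds to a $\phi$ with $A_i\phi$ of finite length as an $A_i$-module. Decomposing a finite-length $L[T]$-module into its primary components, $A_i\phi$ is a finite direct sum of modules of the form $A_i/\mm^n$; equivalently $\phi$ factors through a finite direct sum of the $A_i/\mm^n\otimes_{A_i}\cInd^G_{KZ}V_i$, each of which injects into $\Pi(P)$ and whose closure in $\Pi(P)$ is a quotient of $\Pi_{i,\mm,n}$ by Corollary \ref{cor:completion} (and the remark following it). The image of $V_i$ under this $\phi$ therefore lies in the sum of the images of finitely many maps $\Pi_{i,\mm,n}\to\Pi(P)$.

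Putting these together: given $v$ in the dense subspace produced by capture and any $\varepsilon>0$, write $v$ as a finite sum of images of vectors in the various $V_i$; perturb each contributing $K$-map by at most $\varepsilon$ so that it becomes $A_i$-finite; then each perturbed contribution lies in the image of the evaluation map of the proposition. Hence $v$ is within $C\varepsilon$ (for a constant $C$ depending only on the finitely many terms) of the image of $\bigoplus_{i}\bigoplus_{\mm,n}\cHom{G}(\Pi_{i,\mm,n},\Pi(P))\otimes_L\Pi_{i,\mm,n}\to\Pi(P)$, and since the latter image is a linear subspace we conclude it is dense. The main technical point to be careful about is the interchange of the two density statements — the capture density of Lemma \ref{lem:capture} and the $\lfin$-density of Proposition \ref{prop:lfindense} — which requires checking that the perturbation is uniform enough across the finitely many terms appearing in a given approximant; this is where one uses that only finitely many $i$ contribute to any fixed element of the dense subspace, and that the norms on the $\cHom{\OO\br{K}}(P,V_i^*)$ are compatible with the sup-norm on $\Pi(P)$ under \eqref{equa:dual} and \eqref{equa:Frobenius}.
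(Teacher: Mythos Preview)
Your argument is correct and uses the same three ingredients as the paper: the capture property of $\{V_i\}$ (Proposition \ref{prop:principalcapture} and Lemma \ref{lem:capture}), the density of the $A_i$-locally-finite vectors (Proposition \ref{prop:lfindense}), and the passage from $A_i$-finite $\phi$ to maps out of the $\Pi_{i,\mm,n}$ (Corollary \ref{cor:completion}). The organisation differs, however. You invoke capture via condition (3) of Lemma \ref{lem:capture} (density of the image of $\bigoplus_i\Hom_K(V_i,\Pi(P))\otimes V_i$) and then run an explicit $\varepsilon$-approximation, replacing each contributing $K$-map by a nearby $A_i$-finite one. The paper instead works dually via condition (1): it lets $\Pi$ be the closure of the image of the evaluation map, defines $M$ to be the corresponding quotient of $P$, and shows the chain of isomorphisms
\[
\cHom{\OO\br{K}}(P,V_i^*)_{\lfin}\cong\bigoplus_{\mm}\varinjlim_n\cHom{G}(\Pi_{i,\mm,n},\Pi(P))=\bigoplus_{\mm}\varinjlim_n\cHom{G}(\Pi_{i,\mm,n},\Pi)\cong\cHom{\OO\br{K}}(M,V_i^*)_{\lfin},
\]
whence $\cHom{\OO\br{K}}(P,V_i^*)=\cHom{\OO\br{K}}(M,V_i^*)$ by Proposition \ref{prop:lfindense}, and so $M=P$ by capture. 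The advantage of the paper's route is that the density of Proposition \ref{prop:lfindense} is used once, abstractly, and no norm bookkeeping is needed; your route is more hands-on and requires the compatibility check you flag at the end (which is fine, since the identification \eqref{equa:dual} is an isometry for the norms described just before Proposition \ref{prop:lfindense}). One small clean-up: when you decompose $A_i\phi$, note that it is cyclic, hence isomorphic to $\prod_j A_i/\mm_j^{n_j}$ with \emph{distinct} $\mm_j$ by the Chinese remainder theorem, so Corollary \ref{cor:completion} applies directly to each summand $\phi_j$.
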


\begin{proof}
Let $\Pi$ be the closure of the image of the evaluation map and $M$ the image of $P$ under $\Hom^{\cont}_{L}(\Pi(P), L) \twoheadrightarrow \Hom^{\cont}_{L}(\Pi, L)$. Then we have
\begin{align*}
    \cHom{\OO \br{K}} (P, V_i^*)_{\lfin} 
    &\cong \oplus_{\mm} \cHom{\OO \br{K}} (P, V_i^*)[\mm^{\infty}] \\
    &\cong \oplus_{\mm} \Hom_K(V_i, \Pi(P))[\mm^{\infty}] \\
    &\cong \oplus_{\mm} \Hom_G(\cInd^G_{KZ} V_i, \Pi(P))[\mm^{\infty}] \\
    &\cong \oplus_{\mm} \varinjlim_{n} \Hom_G(\cInd^G_{KZ} V_i, \Pi(P))[\mm^n] \\
    &\cong \oplus_{\mm} \varinjlim_{n} \Hom_G(A_i / \mm^n \otimes_{A_i} \cInd^G_{KZ} V_i, \Pi(P)) \\
    &\cong \oplus_{\mm} \varinjlim_{n} \cHom{G}(\Pi_{i,\mm, n}, \Pi(P)).
\end{align*}
Here the first isomorphism is due to the fact that any module $M$ over a commutative ring $A$, such that every finitely generated submodule is of finite length, admits a decomposition $M \cong \oplus_{\mm} M[\mm^{\infty}]$ with $\mm$ running through maximal ideals of $A$, the second isomorphism is given by (\ref{equa:dual}), and the last isomorphism is due to Corollary \ref{cor:completion}. Similarly, we have
\begin{align*}
    \cHom{\OO \br{K}} (M, V_i^*)_{\lfin} 
    &\cong \oplus_{\mm} \cHom{\OO \br{K}} (M, V_i^*)[\mm^{\infty}] \\
    &\cong \oplus_{\mm} \Hom_K(V_i, \Pi)[\mm^{\infty}] \\
    &\cong \oplus_{\mm} \Hom_G(\cInd^G_{KZ} V_i, \Pi)[\mm^{\infty}] \\
    &\cong \oplus_{\mm} \varinjlim_{n} \Hom_G(A_i / \mm^n \otimes_{A_i} \cInd^G_{KZ} V_i, \Pi) \\
    &\cong \oplus_{\mm} \varinjlim_{n} \cHom{G}(\Pi_{i,\mm, n}, \Pi) \\
    &= \oplus_{\mm} \varinjlim_{n} \cHom{G}(\Pi_{i,\mm, n}, \Pi(P)).
\end{align*}
Thus by Proposition \ref{prop:lfindense}, we have $\cHom{\OO \br{K}} (P, V_i^*) \cong \cHom{\OO \br{K}} (M, V_i^*)$ for each $i \in I$. Combining this with Lemma \ref{lem:capture}, we deduce the proposition.
\end{proof}

\begin{cor} \label{cor:density}
Let $\md_{i,\mm, n} := \Hom_{\CC(\OO)}(P, \Theta^d) \otimes_{\OO} L$, where $\Theta$ is an open bounded $G$-invariant lattice in $\Pi_{i,\mm, n}$. Then 
$$\bigcap_{i \in I} \bigcap_{\mm, n} \mathfrak{a}_{i,\mm, n} = 0,$$
where $\mathfrak{a}_{i,\mm, n} := \ann_E \bigl(\md_{i,\mm, n}\bigr)$.
\end{cor}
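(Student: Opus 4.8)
The plan is to transfer the assertion to the Banach space $\Pi(P)$: an element $e$ of the intersection will be shown to act on $\Pi(P)$ as a continuous $G$-endomorphism vanishing on a dense subspace, hence as $0$, which forces $e=0$ because $E$ embeds into $E[1/p]\cong\End^{\cont}_G(\Pi(P))$.

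First I would recall from \S\ref{section:localg} that $P$ is $\OO$-torsion free, so $E=\End_{\CC(\OO)}(P)$ is $\OO$-torsion free, the localisation map $E\hookrightarrow E[1/p]$ is injective, and $E[1/p]\cong\End^{\cont}_G(\Pi(P))$; for $e\in E$ write $\hat e\in\End^{\cont}_G(\Pi(P))$ for its image. (If $P=0$ then $E=0$ and there is nothing to prove.) Next, for a fixed $i\in I$, a maximal ideal $\mm$ of $A_i$, an $n\in\NN$, and an open bounded $G$-invariant lattice $\Theta$ in $\Pi_{i,\mm,n}$, I would invoke the contravariant duality between admissible unitary $L$-Banach space representations and objects of $\CC(\OO)$ (which underlies the chain of isomorphisms in the proof of Proposition \ref{prop:density}, and which sends $\Pi_{i,\mm,n}$ to $\Theta^{d}$ and $\Pi(P)$ to $P$) to obtain a natural isomorphism
\[
\md_{i,\mm,n}=\Hom_{\CC(\OO)}(P,\Theta^d)\otimes_{\OO}L\;\cong\;\cHom{G}(\Pi_{i,\mm,n},\Pi(P)).
\]
Since the duality is contravariant, the right $E$-action on the left-hand side by precomposition, $\varphi\mapsto\varphi\circ e$, corresponds on the right-hand side to postcomposition by $\hat e$, namely $\psi\mapsto\hat e\circ\psi$; on continuous $\OO$-linear duals this is just $(\varphi\circ e)^{\ast}=e^{\ast}\circ\varphi^{\ast}$ with $e^{\ast}=\hat e$.

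Granting this, the rest is immediate. Let $e\in\bigcap_{i,\mm,n}\mathfrak a_{i,\mm,n}$. For every $i$, $\mm$, $n$ and every $\psi\in\cHom{G}(\Pi_{i,\mm,n},\Pi(P))$, the corresponding element of $\md_{i,\mm,n}$ is killed by $e$, so $\hat e\circ\psi=0$; that is, $\hat e$ vanishes on $\Image \psi$. Hence $\hat e$ vanishes on the $L$-span of the images of all such $\psi$ over all $i,\mm,n$, which is exactly the image of the evaluation map of Proposition \ref{prop:density} and therefore dense in $\Pi(P)$. By continuity $\hat e=0$ on $\Pi(P)$, so $e=0$ in $E[1/p]$, hence $e=0$ in $E$; thus $\bigcap_{i,\mm,n}\mathfrak a_{i,\mm,n}=0$.

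The one place to be careful — and essentially the only non-formal step — is the identification in the second paragraph: pinning down, through the contravariant Banach–profinite duality, that the annihilator condition on $\md_{i,\mm,n}$ is equivalent to the vanishing of $\hat e$ on the images of all $G$-maps $\Pi_{i,\mm,n}\to\Pi(P)$, with the correct side of pre-/post-composition. Once this is set up, Proposition \ref{prop:density} does the rest, and no further density or approximation argument is needed.
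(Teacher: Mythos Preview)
Your proof is correct and follows the same approach as the paper's. The paper packages the argument slightly differently --- observing that the evaluation map of Proposition \ref{prop:density} is $E[1/p]$-linear with dense image, and that $E[1/p]$ acts faithfully on $\Pi(P)$, hence on the source $\bigoplus \md_{i,\mm,n}\otimes_L\Pi_{i,\mm,n}$ --- but this is exactly your argument that $\hat e$ vanishes on a dense subspace and hence everywhere; you have simply unpacked the $E$-equivariance more carefully.
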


\begin{proof}
For $\Pi \in \aBan_{G, \zeta}(L)$ with a $G$-invariant $\OO$-lattice $\Theta$, we have 
$$\Hom_{\CC(\OO)}(P, \Theta^d) \otimes_{\OO} L \cong \cHom{G}(\Pi, \Pi(P)).$$ Thus the evaluation map in Proposition \ref{prop:density} induces an $E[1/p]$-homomorphism
\[
\bigoplus_{i \in I} \bigoplus_{\mm, n} \md_{i,\mm, n} \otimes_L \Pi_{i,\mm, n} \rightarrow \Pi(P)
\]
with a dense image. Since $E[1/p]$ acts faithfully on the RHS of the map, it acts faithfully on the LHS as well. This proves the corollary since the $E$-action on LHS factors through the quotient $E / \bigcap_{i \in I} \bigcap_{\mm, n} \mathfrak{a}_{i,\mm, n}$.
\end{proof}

\section{Main results}

Given a block $\BB$, we have defined $\pi_{\BB}$, $\PB$, $\EB$ in \S \ref{section:blocks} and $\PB'$, $\EB'$ in \S \ref{section:quotcat}. We assume that all irreducibles in $\BB$ are absolutely irreducible. This can be achieved by replacing $k$ with a finite extension. Let $\rhobar_{\BB}$ be the 2-dimensional semi-simple Galois representation of $\gal$ over $k$ defined by $\cV(\pi_{\BB}^{\vee})$ in cases (i), (ii), (iv), (v) and by a direct sum of two copies of $\cV(\pi_{\BB}^{\vee})$ in cases (iii), (vi), see section 
\ref{CMF} for an explicit description.  We write $R^{\ps, \zeta\varepsilon}_{\tr\rhobar_{\BB}}$ for the universal pseudodeformation ring of $\tr \rhobar_{\BB}$ with a fixed determinant $\zeta \varepsilon$. This ring is noetherian by \cite[Proposition F]{che_durham}. We let $T: \gal\rightarrow R^{\ps, \zeta\varepsilon}_{\tr\rhobar_{\BB}}$ be the universal object, see section \ref{section:pseudorepresentations}.

\subsection{Finiteness} 
Let $\{V_i \}_{i \in I}$ be a family of $K$-representations defined in Proposition \ref{prop:principalcapture} and let $A_i = \End_G(\cInd^G_{KZ} V_i)$. For each $i \in I$, a maximal ideal $\mm$ of $A_i$ and $n \in \NN$, we write $\Pi_{i,\mm, n}$ for the universal unitary completion of $A_i / \mm^n \otimes_{A_i} \cInd^G_{KZ} V_i$.

\begin{lem} \label{lem:infdefm}
Assume $\Pi_{i,\mm, n}$ is a subrepresentation of $\Pi(\PB')$. Then $\cV(\Pi_{i,\mm, n})$ is a finite free $A_{i} / \mm^n$-module of rank equal to $\dim_{A_i / \mm}(\cV(\Pi_{i,\mm, 1})) \leq 2$. Moreover,
\begin{enumerate}[label=(\roman*)]
    \item if $\rank_{A_i / \mm^n} \cV(\Pi_{i,\mm, n}) = 2$ then $\cV(\Pi_{i,\mm, n})$ is a deformation to $A_{i} / \mm^n$ of the absolutely irreducible $2$-dimensional $L$-representation $\cV(\Pi_{i,\mm, 1})$ of $\gal$;
    \item if $\rank_{A_{i} / \mm^n} \cV(\Pi_{i,\mm, n}) = 1$ then the action of $\gal$ on $\cV(\Pi_{i,\mm, n})$ is given by a $(A_{i} / \mm^n)^{\times}$-valued character lifting $\cV(\Pi_{i,\mm, 1})$.
\end{enumerate}
\end{lem}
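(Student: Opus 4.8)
The plan is to induct on $n$, feeding in the structural results of Corollary \ref{cor:completion} and exploiting that $\cV$ is exact and \emph{contravariant} on admissible Banach representations. Write $R:=A_i/\mm^n$, fix a generator $f$ of $\mm\subset A_i\cong L[T]$ and let $\bar f$ be its image in $R$, so that $R$ is an Artinian principal ideal ring with maximal ideal $\mathfrak m=(\bar f)$ and residue field $\kappa(\mm)$. By Proposition \ref{prop:completion} the $A_i$-action on $\Pi_{i,\mm,n}$ factors through $R$ and commutes with $G$, so after applying $\cV$ the module $\cV(\Pi_{i,\mm,n})$ is an $R$-module carrying a commuting continuous $\gal$-action, finite-dimensional over $L$; the $R$-module structure on $\cV(\Pi_{i,\mm,n})$ is by definition $\cV$ of the $R$-action on $\Pi_{i,\mm,n}$, so $\cV$ of multiplication by $\bar f$ is again multiplication by $\bar f$.

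For the base case $n=1$: $\Pi_{i,\mm,1}$ is the universal unitary completion of the locally algebraic principal series $\kappa(\mm)\otimes_{A_i}\cInd^G_{KZ}V_i$, so by Berger--Breuil \cite{MR2642406} (see also \cite{MR2667890}, \cite{CDP}) it is admissible and $\cV(\Pi_{i,\mm,1})$ is either a $2$-dimensional crystabelline, hence absolutely irreducible, representation of $\gal$ over $\kappa(\mm)$, or a character $\gal\to\kappa(\mm)^\times$; in particular $d:=\dim_{\kappa(\mm)}\cV(\Pi_{i,\mm,1})\le 2$ and, being a module over $A_i/\mm=\kappa(\mm)$, it is trivially free of rank $d$. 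For the inductive step, recall from (the proof of) Corollary \ref{cor:completion} that $\mathfrak m^j\Pi_{i,\mm,n}\cong\Pi_{i,\mm,n-j}$ and $\mathfrak m^j\Pi_{i,\mm,n}/\mathfrak m^{j+1}\Pi_{i,\mm,n}\cong\Pi_{i,\mm,1}$ for $0\le j\le n-1$; applying $\cV$ to the filtration by the $\mathfrak m^j\Pi_{i,\mm,n}$ gives $\dim_L\cV(\Pi_{i,\mm,n})=n\dim_L\cV(\Pi_{i,\mm,1})=nd\,[\kappa(\mm):L]$, and applying $\cV$ to $0\to\mathfrak m\Pi_{i,\mm,n}\to\Pi_{i,\mm,n}\to\Pi_{i,\mm,n}/\mathfrak m\Pi_{i,\mm,n}\to 0$ yields an exact sequence $0\to\cV(\Pi_{i,\mm,1})\to\cV(\Pi_{i,\mm,n})\to\cV(\Pi_{i,\mm,n-1})\to 0$ whose quotient is free of rank $d$ over $A_i/\mm^{n-1}$ by the inductive hypothesis. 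The key point is that multiplication by $\bar f$ on $\Pi_{i,\mm,n}$ factors as $\Pi_{i,\mm,n}\twoheadrightarrow\mathfrak m\Pi_{i,\mm,n}\hookrightarrow\Pi_{i,\mm,n}$; since $\cV$ sends surjections to injections, $\ker\bigl(\bar f\mid\cV(\Pi_{i,\mm,n})\bigr)=\ker\bigl(\cV(\mathfrak m\Pi_{i,\mm,n}\hookrightarrow\Pi_{i,\mm,n})\bigr)=\cV(\Pi_{i,\mm,n}/\mathfrak m\Pi_{i,\mm,n})=\cV(\Pi_{i,\mm,1})$, of $L$-dimension $d\,[\kappa(\mm):L]$. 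Comparing dimensions forces $\dim_L\bar f\,\cV(\Pi_{i,\mm,n})=(n-1)d\,[\kappa(\mm):L]$, hence $\dim_L\cV(\Pi_{i,\mm,n})/\mathfrak m\,\cV(\Pi_{i,\mm,n})=d\,[\kappa(\mm):L]$; so $\cV(\Pi_{i,\mm,n})$ is generated by $d$ elements over $R$, and since $\dim_L R^{\oplus d}=nd\,[\kappa(\mm):L]=\dim_L\cV(\Pi_{i,\mm,n})$ the surjection $R^{\oplus d}\twoheadrightarrow\cV(\Pi_{i,\mm,n})$ must be an isomorphism. This gives freeness of rank $d$, and the surjection $\cV(\Pi_{i,\mm,n})\twoheadrightarrow\cV(\Pi_{i,\mm,1})$ coming from $\mathfrak m^{n-1}\Pi_{i,\mm,n}\hookrightarrow\Pi_{i,\mm,n}$ identifies $\cV(\Pi_{i,\mm,n})\otimes_R\kappa(\mm)$ with $\cV(\Pi_{i,\mm,1})$ $\gal$-equivariantly. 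Thus in case $d=2$, $\cV(\Pi_{i,\mm,n})$ is a free rank-$2$ deformation to $R$ of the absolutely irreducible $L$-representation $\cV(\Pi_{i,\mm,1})$, proving (i); in case $d=1$ the $\gal$-action is $R$-linear on a free rank-$1$ module, i.e. given by a character $\gal\to R^\times$ reducing to $\cV(\Pi_{i,\mm,1})$, proving (ii).

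The main obstacle is twofold. First, one must invoke cleanly that the closure of $\mm^j/\mm^n\otimes_{A_i}\cInd^G_{KZ}V_i$ inside $\Pi_{i,\mm,n}$ really is a universal unitary completion (so that $\mathfrak m^j\Pi_{i,\mm,n}\cong\Pi_{i,\mm,n-j}$), which is implicit in the proof of Corollary \ref{cor:completion} and uses uniqueness of the universal completion when it exists; the rest of the dévissage over the Artinian principal ideal ring $A_i/\mm^n$ is then elementary if slightly fiddly. Second, and more substantively, the base case rests entirely on Berger--Breuil's description of universal completions of locally algebraic principal series and on the behaviour of Colmez's functor on them (the crystabelline versus the reducible case, and the resulting bound $\le 2$ and irreducibility in (i)); this is where the genuine input from the $p$-adic Langlands correspondence enters.
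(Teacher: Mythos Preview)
Your argument is correct and follows essentially the same route as the paper: identify $\cV(\Pi_{i,\mm,1})$ via Berger--Breuil, then use Nakayama over the Artinian ring $A_i/\mm^n$ together with the length count coming from Corollary~\ref{cor:completion} and exactness of $\cV$ to promote a surjection $(A_i/\mm^n)^{\oplus d}\twoheadrightarrow\cV(\Pi_{i,\mm,n})$ to an isomorphism. The paper does this in one stroke by observing directly that $\cV(\Pi_{i,\mm,n})/\mm\,\cV(\Pi_{i,\mm,n})\cong\cV(\Pi_{i,\mm,1})$ (contravariance applied to $\mm^{n-1}\Pi_{i,\mm,n}\hookrightarrow\Pi_{i,\mm,n}$), whereas you reach the same conclusion by first computing $\cV(\Pi_{i,\mm,n})[\bar f]\cong\cV(\Pi_{i,\mm,1})$ and then dimension-chasing; note that once you have steps (1)--(5) of your argument the induction on $n$ is not actually used and can be dropped.
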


\begin{proof}
Since $V_i$ is a principal series type, the $\gal$-module
\[
\cV(\Pi_{i,\mm, n}) / \mm \cV(\Pi_{i,\mm, n}) \xrightarrow{\sim} \cV(\Pi_{i,\mm, 1})
\]
has dimension $r \leq 2$ over $\kappa(\mm) := A_i / \mm$ by \cite[Theorem 4.3.1]{MR2642406} and \cite[Proposition 2.2.1]{MR2667890}. Nakayama's lemma implies that we have a surjection
\[
(A_i / \mm^n)^{\oplus r} \twoheadrightarrow \cV(\Pi_{i,\mm, n}).
\]
Proposition \ref{cor:completion} and exactness of $\cV$ imply that $\cV(\Pi_{i,\mm, n})$ has length $nr$ as $A_i / \mm^n$-module. Hence the surjection is an isomorphism and the lemma follows.
\end{proof}

\begin{lem}\label{wrong_lemma}
Under the same assumptions as in Lemma \ref{lem:infdefm}, there is a natural map $\theta_{i,\mm, n}: R^{\ps, \zeta\varepsilon}_{\tr\rhobar_{\BB}} \rightarrow A_i / \mm^n$, which induces a map
\[
R^{\ps, \zeta\varepsilon}_{\tr\rhobar_{\BB}} \br{\gal}/ J \rightarrow \End_{A_i / \mm^n}(\cV(\Pi_{i,\mm, n})).
\]
\end{lem}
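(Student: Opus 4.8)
The plan is to construct $\theta_{i,\mm,n}$ from the universal property of the pseudodeformation ring $R^{\ps,\zeta\varepsilon}_{\tr\rhobar_{\BB}}$, applied to the trace of the $\gal$-action on $\cV(\Pi_{i,\mm,n})$, and then to factor the resulting $\OO$-algebra map $R^{\ps,\zeta\varepsilon}_{\tr\rhobar_{\BB}}\br{\gal}\to\End_{A_i/\mm^n}(\cV(\Pi_{i,\mm,n}))$ through the Cayley--Hamilton quotient $J$. First I would note that by Lemma \ref{lem:infdefm} the $A_i/\mm^n$-module $M_{i,\mm,n}:=\cV(\Pi_{i,\mm,n})$ is finite free of rank $r\le 2$; the $\gal$-action commutes with the $A_i/\mm^n$-action (the $A_i$-action on $\Pi_{i,\mm,n}$ is $G$-equivariant, hence carried by $\cV$ to a $\gal$-equivariant action on the Galois side), so we obtain a continuous representation $\rho:\gal\to\End_{A_i/\mm^n}(M_{i,\mm,n})$, i.e. $\rho:\gal\to\GL_r(A_i/\mm^n)$ after choosing a basis. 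Here $A_i/\mm^n$ is a finite local $L$-algebra with residue field $\kappa(\mm)=A_i/\mm$ a finite extension of $L$; one has to be mildly careful that $R^{\ps,\zeta\varepsilon}_{\tr\rhobar_{\BB}}$ is an $\OO$-algebra while $A_i/\mm^n$ is an $L$-algebra, but since we only ever map into $L$-algebras this causes no trouble — one uses the pseudodeformation functor evaluated on (pro-)artinian $\OO$-algebras and passes to the colimit, or equivalently works with $R^{\ps,\zeta\varepsilon}_{\tr\rhobar_{\BB}}[1/p]$.

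The key point for invoking the universal property is that $\tr\rho$ reduces mod $\mm$ to a pseudorepresentation lifting $\tr\rhobar_{\BB}$ with the correct determinant. In case (i) the rank is $2$ and $\tr(\rho\bmod\mm)=\tr\cV(\Pi_{i,\mm,1})$; by the compatibility of $\cV$ with reduction (the lattice $\Theta^d$ reduces to an object of $\CC(\OO)_{\BB}$, so $\cV(\Theta^d)\otimes_\OO k$ has trace equal to a lift of $\tr\rhobar_{\BB}$), and the determinant of $\cV$ on $\Pi_{i,\mm,n}$ is $\zeta\varepsilon$ by the normalisation of $\cV$ in section \ref{CMF} together with the fact that locally algebraic principal series of the relevant weight have the prescribed central character. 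In the rank-$1$ cases the argument is even simpler: $\rho$ is a character lifting $\cV(\Pi_{i,\mm,1})$, which is a character lifting one of the characters in the semisimplification of $\rhobar_{\BB}$, and $\tr\rho+\zeta\varepsilon\cdot(\tr\rho)^{\vee}$ — or rather the $2$-dimensional pseudorepresentation $g\mapsto\rho(g)+\zeta\varepsilon(g)\rho(g)^{-1}$ built from the character $\rho$ and $\zeta\varepsilon$ — lifts $\tr\rhobar_{\BB}$ with determinant $\zeta\varepsilon$. In both cases the universal property of $R^{\ps,\zeta\varepsilon}_{\tr\rhobar_{\BB}}$ yields the desired $\OO$-algebra homomorphism $\theta_{i,\mm,n}:R^{\ps,\zeta\varepsilon}_{\tr\rhobar_{\BB}}\to A_i/\mm^n$, and by construction the universal pseudorepresentation $T$ maps under $\theta_{i,\mm,n}$ to $\tr\rho$ (resp. to the $2$-dimensional pseudorepresentation attached to the character $\rho$).

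Next I would extend $\theta_{i,\mm,n}$ linearly to $R^{\ps,\zeta\varepsilon}_{\tr\rhobar_{\BB}}\br{\gal}\to (A_i/\mm^n)\br{\gal}$ and compose with the structure map $(A_i/\mm^n)\br{\gal}\to\End_{A_i/\mm^n}(M_{i,\mm,n})$ coming from the $\gal$-action $\rho$. This gives an $\OO$-algebra map $\Phi:R^{\ps,\zeta\varepsilon}_{\tr\rhobar_{\BB}}\br{\gal}\to\End_{A_i/\mm^n}(M_{i,\mm,n})$. To see that $\Phi$ kills $J$, recall $J$ is generated by $a^2-T(a)a+\psi(a)$ for $a\in R^{\ps,\zeta\varepsilon}_{\tr\rhobar_{\BB}}\br{\gal}$, where $\psi=\zeta\varepsilon$. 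Under $\Phi$, the element $a$ maps to an element $\bar a\in\End_{A_i/\mm^n}(M_{i,\mm,n})$, and $T(a)$ maps to $\tr(\bar a)$ while $\psi(a)$ maps to $\det(\bar a)$ — this is exactly the content of Lemma \ref{lem:infdefm} together with the fact that for a rank-$2$ free module the specialised pseudorepresentation computes the usual trace and determinant of endomorphisms, and for the rank-$1$ case the $2$-dimensional pseudorepresentation $g\mapsto\rho(g)+\zeta\varepsilon(g)\rho(g)^{-1}$ likewise has the Cayley--Hamilton property by a direct $2\times 2$ computation. Hence $\bar a^2-\tr(\bar a)\bar a+\det(\bar a)=0$ by the Cayley--Hamilton theorem for $2\times 2$ matrices over the commutative ring $A_i/\mm^n$, so $\Phi(J)=0$ and $\Phi$ factors through $R^{\ps,\zeta\varepsilon}_{\tr\rhobar_{\BB}}\br{\gal}/J$, yielding the claimed map.

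\textbf{Main obstacle.} The main obstacle is the bookkeeping around the Cayley--Hamilton / determinant identity in the \emph{rank-one} case: there one must check that the $2$-dimensional pseudorepresentation produced from the character $\rho$ and $\zeta\varepsilon$ genuinely lifts $\tr\rhobar_{\BB}$ (not just $\tr$ of one summand) and that the relation $a^2-T(a)a+\psi(a)=0$ holds after applying $\Phi$ even though $M_{i,\mm,n}$ is only $1$-dimensional over $A_i/\mm^n$ — the point being that the relevant Cayley--Hamilton relation is imposed by the $2$-dimensional determinant law, so one is really checking a $2\times 2$ identity about a scalar, which forces the correct interpretation of $\det$. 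I expect all the determinant/central-character normalisations (the twist by $\zeta\varepsilon$, the weight shift $\Sym^{2a}\otimes\det^{-a}$, the unitary character $\eta_0$) to require careful tracking, but no genuinely new idea beyond the universal property of $R^{\ps,\zeta\varepsilon}_{\tr\rhobar_{\BB}}$ and the exactness and normalisation of $\cV$.
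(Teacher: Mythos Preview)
Your proposal is correct and follows essentially the same route as the paper. In the rank-$2$ case the arguments are identical. In the rank-$1$ case the paper is slightly cleaner: rather than trying to verify the Cayley--Hamilton relation directly on the $1$-dimensional module (where ``$\tr(\bar a)$'' and ``$\det(\bar a)$'' have no intrinsic meaning), the paper literally forms the $2$-dimensional representation $\chi_{i,\mm,n}\oplus\chi_{i,\mm,n}^{-1}\zeta\varepsilon$, applies the rank-$2$ argument to obtain a map $R^{\ps,\zeta\varepsilon}_{\tr\rhobar_{\BB}}\br{\gal}/J\to\End_{A_i/\mm^n}(\chi_{i,\mm,n}\oplus\chi_{i,\mm,n}^{-1}\zeta\varepsilon)$, observes that the image commutes with the idempotent projecting onto the summand $\chi_{i,\mm,n}$ (since the image lies in the diagonal), and then projects. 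This is exactly the manoeuvre you identify as the ``main obstacle'' and sketch in words; the paper just makes it explicit. For the passage from the $\OO$-algebra $R^{\ps,\zeta\varepsilon}_{\tr\rhobar_{\BB}}$ to the $L$-algebra $A_i/\mm^n$, the paper simply cites \cite[\S4.1, Theorem~3.17]{che_durham}, which packages the universal property you invoke.
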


\begin{proof}
If $\cV(\Pi_{i,\mm, n})$ is of rank $2$ over $A_i/\mm^n$, it follows from Lemma \ref{lem:infdefm} (i) that $\cV(\Pi_{i,\mm, n})$ is a deformation of the 2-dimensional Galois representation $\cV(\Pi_{i,\mm, 1})$ to $A_i/{\mm}^n$ with determinant $\zeta\varepsilon$. It follows from \cite[\S 4.1, Theorem 3.17]{che_durham} that there is an $\OO$-algebra map $\theta_{i,\mm, n}: R^{\ps, \zeta\varepsilon}_{\tr\rhobar_{\BB}} \rightarrow A_i / \mm^n$ such that the specialization of $T$ along $\theta_{i,\mm, n}$ is equal to $\tr_{A_i/{\mm}^n} \cV(\Pi_{i,\mm, n})$. This map induces a homomorphism of $\OO$-algebras 
\[
R^{\ps, \zeta\varepsilon}_{\tr\rhobar_{\BB}} \br{\gal}\rightarrow \End_{A_i / \mm^n}(\cV(\Pi_{i,\mm, n})),
\]
and Cayley--Hamilton for $M_2(A_i / \mm^n)$ implies that $J$ lies in the kernel of this map. 

If $\cV(\Pi_{i,\mm, n})$ is given by a character $\chi_{i,\mm, n}: \gal \rightarrow (A_i / \mm^n)^{\times}$, then the same argument applies to the representation 
 $\chi_{i,\mm, n} \oplus \chi_{i,\mm,n}^{-1} \zeta\varepsilon$, so that we get a map
\[
R^{\ps, \zeta\varepsilon}_{\tr\rhobar_{\BB}} \br{\gal}/J\rightarrow \End_{A_i / \mm^n}(\chi_{i,\mm, n} \oplus \chi_{i,\mm,n}^{-1}\zeta\varepsilon ).\]
Its image commutes with the idempotent which projects onto the direct summand $\chi_{i,\mm, n}$. Hence, the image is contained in 
$\End_{A_i / \mm^n}(\chi_{i,\mm, n})\times  \End_{A_i / \mm^n}(\chi_{i,\mm,n}^{-1}\zeta\varepsilon)$
%The characters $\chi_{\mm, 1}$ and  $\chi_{\mm,1}^{-1} \zeta\varepsilon$ are distinct, since 
%they have  distinct Hodge--Tate weights, thus 
%\[
%\End_{A_i / \mm^n}(\chi_{i,\mm, n} \oplus \chi_{i,\mm,n}^{-1}\zeta\varepsilon)\cong\End_{A_i / \mm^n}(\chi_{i,\mm, n})\times  \End_{A_i / \mm^n}(\chi_{i,\mm,n}^{-1}\zeta\varepsilon) ,
%\]
and we may project to  $\End_{A_i / \mm^n}(\chi_{i,\mm, n})$ to obtain the required homomorphism.
\end{proof}

In Propositions \ref{prop:surjgalE} and \ref{prop:galsurj} we have established surjections 
$$\alpha: \OO\br{\gal}\twoheadrightarrow \End_{\EB'}^{\cont}(\cV(\PB')), \quad \beta:  \OO\br{\gal}\twoheadrightarrow R^{\ps, \zeta\varepsilon}_{\tr\rhobar_{\BB}}\br{\gal}/ J.$$ 
\begin{thm} \label{thm:main}
The above maps induce a surjection $$R^{\ps, \zeta\varepsilon}_{\tr\rhobar_{\BB}}\br{\gal}/ J \twoheadrightarrow \End_{\EB'}^{\cont}(\cV(\PB')).$$ In particular, $\EB'$ and $\ZB'$ are  finite over $R^{\ps, \zeta\varepsilon}_{\tr\rhobar_{\BB}}$ and hence noetherian.
\end{thm}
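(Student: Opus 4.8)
The plan is to derive the surjection from the inclusion $\Ker\beta\subseteq\Ker\alpha$; write $R$ for $R^{\ps,\zeta\varepsilon}_{\tr\rhobar_{\BB}}$. Once $\Ker\beta\subseteq\Ker\alpha$ is known, the surjection $\alpha$ factors through $\beta$, and since $\beta$ is surjective by Proposition \ref{prop:galsurj}, the induced map $R\br{\gal}/J\to\End_{\EB'}^{\cont}(\cV(\PB'))$ is automatically surjective. So the content of the first assertion is the comparison of the two kernels.

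First I would compute $\Ker\alpha$. By Proposition \ref{prop:surjgalE} the module $M:=\cV(\PB')$ is free of finite rank over $E:=\EB'$, so Lemma \ref{lem:kergalend} applies to the family $\{\md_{i,\mm,n}\}$ of finite-dimensional $L$-vector spaces with $E$-action occurring in Corollary \ref{cor:density}; since $\bigcap_{i,\mm,n}\mathfrak{a}_{i,\mm,n}=0$ by that corollary, Lemma \ref{lem:kergalend} gives $\Ker\alpha=\bigcap_{i,\mm,n}\mathfrak{b}_{i,\mm,n}$, where $\mathfrak{b}_{i,\mm,n}$ is the $\OO\br{\gal}$-annihilator of $\md_{i,\mm,n}\otimes_E M$. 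The key step is then to identify $\md_{i,\mm,n}\otimes_E M$ $\gal$-equivariantly with $\cV(\Pi_{i,\mm,n})$. For this I would combine: that $\cV$ factors through the quotient functor $\cT$; the equivalence $\mathfrak Q(\OO)_{\BB}\simeq\PC(\EB')$ of Proposition \ref{prop:CEeuqiv}, which gives $\cT\Theta^d\cong\Hom_{\mathfrak Q(\OO)}(\cT\PB',\cT\Theta^d)\wtimes_{\EB'}\cT\PB'$; the identification $\Hom_{\mathfrak Q(\OO)}(\cT\PB',\cT\Theta^d)\cong\Hom_{\CC(\OO)}(\PB',\Theta^d)$ via \cite[Lemma 10.26]{image}, applicable since the graded pieces of $\Pi_{i,\mm,n}$ are completions of principal series and so $\Theta^d$ has no $\SL_2(\Qp)$-invariants or coinvariants; and the compatibility $\cV(\md\wtimes_{\EB'}\PB')\cong\md\wtimes_{\EB'}\cV(\PB')$ from the proof of \cite[Lemma 5.53]{image}. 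After inverting $p$ these yield the desired $\gal$-equivariant isomorphism $\md_{i,\mm,n}\otimes_E M\cong\cV(\Pi_{i,\mm,n})$.

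It then remains to check $\Ker\beta\subseteq\mathfrak{b}_{i,\mm,n}$ for every triple. This is vacuous when $\md_{i,\mm,n}=0$; otherwise a nonzero homomorphism realizes some $\Pi_{i,\mm,k}$ with $1\le k\le n$ as a finite-length subrepresentation of $\Pi(\PB')$ by Corollary \ref{cor:completion}, so after replacing $n$ by $k$ I may assume $\Pi_{i,\mm,n}\hookrightarrow\Pi(\PB')$ and apply Lemma \ref{wrong_lemma}: the $\gal$-action on $\cV(\Pi_{i,\mm,n})$ factors through $R\br{\gal}/J$, hence through $\beta$, so $\Ker\beta$ annihilates $\cV(\Pi_{i,\mm,n})\cong\md_{i,\mm,n}\otimes_E M$, i.e. $\Ker\beta\subseteq\mathfrak{b}_{i,\mm,n}$. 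Intersecting over all triples gives $\Ker\beta\subseteq\Ker\alpha$, hence the surjection.

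For the finiteness statement, note $R$ is central in $R\br{\gal}/J$, so its image in $\End_{\EB'}^{\cont}(\cV(\PB'))$ lies in the centre, which by Proposition \ref{prop:surjgalE} is $\ZB'$ (that endomorphism ring being $(\EB')^{\op}$ or $M_2((\EB')^{\op})$); thus $\End_{\EB'}^{\cont}(\cV(\PB'))$ is an $R$-algebra, module-finite over $R$ because $R\br{\gal}/J$ is module-finite over $R$ by Wang-Erickson \cite{WE_alg} and $R$ is noetherian by \cite{che_durham}. Its underlying $R$-module being $\EB'$ or $\EB'^{\oplus 4}$, the ring $\EB'$ is module-finite over $R$, hence noetherian, and $\ZB'=Z(\EB')$, an $R$-submodule of a finite module over the noetherian ring $R$, is finite over $R$ and hence noetherian. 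I expect the main obstacle to be the $\gal$-equivariant identification $\md_{i,\mm,n}\otimes_E M\cong\cV(\Pi_{i,\mm,n})$ — one must thread together the capture machinery, the passage to $\mathfrak Q(\OO)_{\BB}$, and Lemma \ref{wrong_lemma}, keeping track of lattices and of the character Jordan--Hölder constituents that both $\cV$ and $\Hom_{\CC(\OO)}(\PB',-)$ ignore — whereas the genuinely deep input, Corollary \ref{cor:density} (resting on Berger--Breuil \cite{MR2642406}), is already in hand.
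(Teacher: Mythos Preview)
Your proposal is correct and follows the same route as the paper: compute $\Ker\alpha$ via Lemma~\ref{lem:kergalend} and Corollary~\ref{cor:density}, identify $\md_{i,\mm,n}\otimes_{\EB'}\cV(\PB')\cong\cV(\Pi_{i,\mm,n})$, invoke Lemma~\ref{wrong_lemma} to obtain $\Ker\beta\subseteq\mathfrak b_{i,\mm,n}$, and conclude finiteness from \cite{WE_alg}. Your unpacking of the identification through $\mathfrak Q(\OO)_\BB$ is more explicit than the paper's one-line chain $\md_{i,\mm,n}\otimes_{\EB'}\cV(\PB')\cong\cV(\md_{i,\mm,n}\otimes_{\EB'}\PB')\cong\cV(\Pi_{i,\mm,n})$, but note that you do not need $\Theta^d$ to have trivial $\SL_2(\Qp)$-(co)invariants: since $\PB'$ is projective in $\CC(\OO)$ and $\Hom_{\CC(\OO)}(\PB',T)=0$ for every $T\in\TT(\OO)$ (the cosocle of $\PB'$ contains no characters), one already has $\Hom_{\CC(\OO)}(\PB',N)\cong\Hom_{\mathfrak Q(\OO)}(\cT\PB',\cT N)$ for arbitrary $N$.

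One small wrinkle: the ``replace $n$ by $k$'' move does not close the argument as written, because you must show $\Ker\beta\subseteq\mathfrak b_{i,\mm,n}$ for the \emph{given} triple, not merely for some $k\le n$. The paper does not verify the embedding hypothesis of Lemma~\ref{wrong_lemma} either; it simply cites the lemma. The underlying point is that once $\md_{i,\mm,n}\neq 0$ the Banach representation $\Pi_{i,\mm,n}$ is nonzero and lies in block $\BB$, and the proofs of Lemmas~\ref{lem:infdefm} and~\ref{wrong_lemma} really only use this (the filtration argument in Corollary~\ref{cor:completion} goes through once one knows the locally algebraic representation maps injectively to its universal completion, which Proposition~\ref{prop:completion} supplies whenever a $G$-invariant norm exists). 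Alternatively, one may restrict the family in Lemma~\ref{lem:kergalend} to those triples for which $\Pi_{i,\mm,n}$ genuinely embeds in $\Pi(\PB')$ and observe, via the argument you sketch, that $\md_{i,\mm,n}$ is unchanged under this restriction, so $\bigcap\mathfrak a$ remains zero.
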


\begin{proof} For the first part we have to show that $\Ker \beta \subset \Ker \alpha$. 
Let $M = \cV(\PB')$ and $\md_{i,\mm, n} := \md(\Pi_{i,\mm, n})$ with $i,\mm, n$ as in Corollary \ref{cor:density}. Then the assumptions in Lemma \ref{lem:kergalend} are satisfied by Corollary \ref{cor:density}. It follows that the kernel of $\alpha$  is given by $\cap_{i \in I} \cap_{\mm, n} \mathfrak b_{i,\mm, n}$, where $\mathfrak b_{i,\mm, n}$ is the $\OO\br{\gal}$-annihilator of 
\[
\md_{i,\mm, n} \otimes_{\EB'} \cV(\PB') \cong \cV(\md_{i,\mm, n} \otimes_{\EB'} \PB') \cong \cV(\Pi_{i,\mm, n}).
\]
Since the action of $\OO\br{\gal}$ on $\cV(\Pi_{i,\mm, n})$ factors through $R^{\ps, \zeta\varepsilon}_{\tr\rhobar_{\BB}}\br{\gal}/ J$ by Lemma \ref{wrong_lemma}, $\mathfrak b_{i,\mm, n}$  contains the kernel of $\beta$ and hence $\Ker \beta \subset \Ker \alpha$.

 The second assertion is a consequence of the first assertion and the finiteness of $R^{\ps, \zeta\varepsilon}_{\tr\rhobar_{\BB}}\br{\gal}/ J$ over $R^{\ps, \zeta\varepsilon}_{\tr\rhobar_{\BB}}$ \cite[Proposition 3.6]{WE_alg}.
\end{proof}

\begin{cor}\label{cor:allfinite}
$\EB$  and $\ZB$ are finite over $R^{\ps, \zeta\varepsilon}_{\tr\rhobar_{\BB}}$ and hence noetherian. 
\end{cor}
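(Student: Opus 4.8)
The plan is to deduce the corollary from Theorem \ref{thm:main} by transporting the finiteness of $\EB'$ and $\ZB'$ to $\EB$ and $\ZB$ through the identification of $\EB$ with an endomorphism ring of $\MB$ furnished by Proposition \ref{prop:MBcentre}.

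First I would exhibit $\MB$ as a finitely generated module over $\EB'$. Since $(\MB)^{\SL_2(\Qp)}=0$ by Lemma \ref{lem:PSL2} and $(\MB)_{\SL_2(\Qp)}=0$ by Lemma \ref{lem:MSL2}, the exact quotient functor $\cT$ induces a ring isomorphism $\End_{\CC(\OO)}(\MB)\cong\End_{\mathfrak Q(\OO)}(\cT\MB)$, exactly as in the proof of Corollary \ref{surjectiveZB}. Under the equivalence of Proposition \ref{prop:CEeuqiv}(2) between $\mathfrak Q(\OO)_{\BB}$ and pseudo-compact right $\EB'$-modules, the object $\cT\MB$ corresponds to a pseudo-compact right $\EB'$-module $\md$, and $\End_{\mathfrak Q(\OO)}(\cT\MB)$ is thereby identified with $\End_{\EB'}(\md)$ (up to replacing a ring by its opposite, which is immaterial for what follows). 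Applying $\cT$ and then this equivalence to the surjection $(\PB')^{\oplus n}\twoheadrightarrow\MB$ of Corollary \ref{cor:MBfg} would produce a surjection $(\EB')^{\oplus n}\twoheadrightarrow\md$, so $\md$ is finitely generated over $\EB'$.

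Next I would appeal to Theorem \ref{thm:main}, by which $\ZB'$ is noetherian and $\EB'$ is finite over $\ZB'$, and apply Lemma \ref{lem:Endfg} with $E=\EB'$, $Z=\ZB'$ and module $\md$. This gives that $\End_{\EB'}(\md)$ and its centre $Z(\End_{\EB'}(\md))$ are noetherian and finite over $\ZB'$. Combining this with Proposition \ref{prop:MBcentre}, which identifies $\EB\cong\End_{\CC(\OO)}(\MB)$ and $\ZB\cong Z(\End_{\CC(\OO)}(\MB))$, I would conclude that $\EB$ and $\ZB$ are noetherian and finite over $\ZB'$. To finish, I would compose with the finite morphism $R^{\ps, \zeta\varepsilon}_{\tr\rhobar_{\BB}}\to\ZB'$ of Theorem \ref{thm:main}: since $R^{\ps, \zeta\varepsilon}_{\tr\rhobar_{\BB}}$ is noetherian and module-finiteness is transitive, $\EB$ and $\ZB$ are finite over $R^{\ps, \zeta\varepsilon}_{\tr\rhobar_{\BB}}$, hence noetherian.

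I do not expect any single step to be a serious obstacle, since each one invokes a result already established above; the corollary is essentially a bookkeeping exercise. The only point that requires a little care is tracking the left/right module structures and opposite rings when passing between $\End_{\CC(\OO)}(\MB)$, $\End_{\mathfrak Q(\OO)}(\cT\MB)$ and $\End_{\EB'}(\md)$. This is harmless here because all the rings in sight are finite over a commutative noetherian ring, and both the noetherian property and module-finiteness are left--right symmetric in that situation.
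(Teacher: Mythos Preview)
Your proposal is correct and follows essentially the same route as the paper's proof: both pass from $\EB$ to $\End_{\CC(\OO)}(\MB)$ via Proposition \ref{prop:MBcentre}, identify this with $\End_{\EB'}(\md)$ for the finitely generated $\EB'$-module $\md=\Hom_{\mathfrak Q(\OO)}(\cT\PB',\cT\MB)$ obtained from Corollary \ref{cor:MBfg}, and then apply Lemma \ref{lem:Endfg} together with Theorem \ref{thm:main}. The only cosmetic difference is that the paper names $\md$ explicitly as $\Hom_{\CC(\OO)}(\PB',\MB)$, which is the same module you describe.
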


\begin{proof}
By \cite[Lemma 10.26]{image}, we have
\begin{align*}
    \End_{\CC(\OO)}(\MB) &\cong \End_{\mathfrak Q(\OO)}(\cT \MB), \\
    \Hom_{\CC(\OO)}(\PB', \MB) &\cong \Hom_{\mathfrak Q(\OO)}(\cT \PB', \cT \MB),
\end{align*}
since $\MB^{\SL_2(\Qp)} = (\MB)_{\SL_2(\Qp)} = (\PB')_{\SL_2(\Qp)} = 0$. Let $\md := \Hom_{\CC(\OO)}(\PB', \MB)$, which is a finitely generated right $\EB'$-module by Corollary \ref{cor:MBfg}. Moreover, we have
\[
\End_{\EB'}(\md) \cong \End_{\mathfrak Q(\OO)}(\cT \MB)\cong \End_{\CC(\OO)}(\MB)
\]
by Proposition \ref{prop:CEeuqiv} (2) and the isomorphism above. Theorem \ref{thm:main} implies that the conditions of Lemma \ref{lem:Endfg} are satisfied with $E=\EB'$ and $Z=\ZB'$. Thus $\End_{\CC(\OO)}(\MB)$ and its centre are  finite over $\ZB'$, and thus finite  over $R^{\ps, \zeta\varepsilon}_{\tr\rhobar_{\BB}}$ by Theorem \ref{thm:main}, and hence noetherian. This implies the corollary since $\End_{\CC(\OO)}(\MB) \cong \EB$  and $Z(\End_{\CC(\OO)}(\MB)) \cong \ZB$ by Proposition \ref{prop:MBcentre}. 
\end{proof}

\begin{remar} 
Since $\ZB$ is noetherian the $\mm$-adic topology coincides with the linearly compact topology in Lemma \ref{ZB_local}.
\end{remar}

Let us spell out the properties of the map $R^{\ps, \zeta\varepsilon}_{\tr\rhobar_{\BB}}\rightarrow \ZB$ constructed in Corollary \ref{cor:allfinite}. Since $\ZB$ acts functorially on every object in $\dualcat(\OO)_{\BB}$, the homomorphism $R^{\ps, \zeta\varepsilon}_{\tr\rhobar_{\BB}}\rightarrow \ZB$ induces a functorial ring homomorphism 
$$c_M: R^{\ps, \zeta\varepsilon}_{\tr\rhobar_{\BB}} \rightarrow \End_{\dualcat(\OO)}(M),$$ 
for every object $M$ in $\dualcat(\OO)$. Since 
$\cV$ is a functor, it induces a ring homomorphism $$\End_{\dualcat(\OO)}(M)\rightarrow \End^{\cont}_{\gal}(\cV(M)),\quad  \varphi\mapsto \cV(\varphi).$$ 
We denote the action of $\gal$ on $\cV(M)$ by $\rho_{\cV(M)}$. Finally, for all $g\in \gal$ we may evaluate the universal pseudorepresentation $T: \gal \rightarrow R^{\ps, \zeta\varepsilon}_{\tr\rhobar_{\BB}}$ at $g\in \gal$ to obtain an element  $T(g)\in R^{\ps, \zeta\varepsilon}_{\tr\rhobar_{\BB}}$.

\begin{prop}\label{explain} For each $M\in \dualcat(\OO)_{\BB}$  and each $g\in \gal$ 
$$\cV(c_M(T(g)))= \rho_{\cV(M)}(g) + \rho_{\cV(M)}(g^{-1}) \zeta\varepsilon(g),$$
in $\End_{\gal}^{\cont}(\cV(M))$.
\end{prop}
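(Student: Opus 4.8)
The plan is to reduce the identity to the case of the modules $\cV(\Pi_{i,\mm,n})$ appearing in the density argument, where everything can be checked by hand, and then bootstrap back to a general $M$ using faithfulness. First I would observe that both sides of the claimed equality, $\cV(c_M(T(g)))$ and $\rho_{\cV(M)}(g)+\rho_{\cV(M)}(g^{-1})\zeta\varepsilon(g)$, are continuous functions of $g\in\gal$ with values in $\End_{\gal}^{\cont}(\cV(M))$, and that they are natural in $M$: the construction of $c_M$ is functorial, $\cV$ is a functor, and the right-hand side is manifestly functorial in $\cV(M)$. Hence it suffices to prove the identity for a single object $M$ which "sees" enough of $\EB'$, namely $M=\PB'$ (after which the general case in $\dualcat(\OO)_{\BB}$ follows because every $M$ is a quotient of a sum of copies of $\PB'$, resp. because $\cV$ factors through $\mathfrak Q(\OO)_\BB$ which is equivalent to $\PC(\EB')$, and both sides are compatible with the equivalence $\md\mapsto \md\wtimes_{\EB'}\PB'$).

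Next I would unwind what $c_{\PB'}(T(g))$ is. By construction in Corollary \ref{cor:allfinite} the map $R^{\ps,\zeta\varepsilon}_{\tr\rhobar_\BB}\to\ZB'$ is the one coming from the surjection $R^{\ps,\zeta\varepsilon}_{\tr\rhobar_\BB}\br{\gal}/J\twoheadrightarrow\End_{\EB'}^{\cont}(\cV(\PB'))\cong(\EB')^{\op}$ of Theorem \ref{thm:main}, which in turn is induced by the action $\alpha\colon\OO\br{\gal}\to\End_{\EB'}^{\cont}(\cV(\PB'))$; so $\cV(c_{\PB'}(T(g)))$ is precisely the image of $T(g)\in R^{\ps,\zeta\varepsilon}_{\tr\rhobar_\BB}$ under $R^{\ps,\zeta\varepsilon}_{\tr\rhobar_\BB}\to\End_{\EB'}^{\cont}(\cV(\PB'))\hookrightarrow\End_{\gal}^{\cont}(\cV(\PB'))$. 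Thus the claim becomes: inside $\End_{\EB'}^{\cont}(\cV(\PB'))$, the image of $T(g)$ equals $\rho_{\cV(\PB')}(g)+\rho_{\cV(\PB')}(g^{-1})\zeta\varepsilon(g)$. Now I would use that $\EB'$ injects into the product $\prod_{i,\mm,n}\End_{A_i/\mm^n}(\cV(\Pi_{i,\mm,n}))$ (this is exactly the content of Corollary \ref{cor:density} combined with Lemma \ref{lem:kergalend}, since $\bigcap\mathfrak a_{i,\mm,n}=0$ and $\EB'$ acts faithfully), so it is enough to verify the identity after projecting to each factor, i.e. to check that in $\End_{A_i/\mm^n}(\cV(\Pi_{i,\mm,n}))$ one has $\theta_{i,\mm,n}(T(g))=\rho(g)+\rho(g^{-1})\zeta\varepsilon(g)$, where $\rho$ is the $\gal$-action on $\cV(\Pi_{i,\mm,n})$.

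Finally I would check this last identity directly using Lemma \ref{lem:infdefm} and Lemma \ref{wrong_lemma}. In the rank-$2$ case, $\cV(\Pi_{i,\mm,n})$ is a genuine $2$-dimensional deformation with determinant $\zeta\varepsilon$, so $\tr\rho(g)=\theta_{i,\mm,n}(T(g))$ by the defining property of $\theta_{i,\mm,n}$, and the Cayley--Hamilton relation $\rho(g)^2-\tr\rho(g)\,\rho(g)+\det\rho(g)=0$ together with $\det\rho(g)=\zeta\varepsilon(g)$ gives $\tr\rho(g)=\rho(g)+\rho(g)^{-1}\zeta\varepsilon(g)=\rho(g)+\rho(g^{-1})\zeta\varepsilon(g)$, since $\rho(g)^{-1}\zeta\varepsilon(g)=\rho(g)^{-1}\det\rho(g)=\rho(g^{-1})\zeta\varepsilon(g)$; this is exactly the relation cut out by $J$, which is why $J$ lies in the kernel. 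In the rank-$1$ case $\rho(g)$ is scalar multiplication by $\chi_{i,\mm,n}(g)$, the pseudorepresentation factors through $\chi_{i,\mm,n}\oplus\chi_{i,\mm,n}^{-1}\zeta\varepsilon$, so $\theta_{i,\mm,n}(T(g))=\chi_{i,\mm,n}(g)+\chi_{i,\mm,n}(g)^{-1}\zeta\varepsilon(g)$, which is again $\rho(g)+\rho(g^{-1})\zeta\varepsilon(g)$. I expect the main obstacle to be purely bookkeeping rather than conceptual: one must be careful that the map $R^{\ps,\zeta\varepsilon}_{\tr\rhobar_\BB}\to\ZB'\hookrightarrow\EB'$ built in Theorem \ref{thm:main}/Corollary \ref{cor:allfinite} is compatible, factor by factor under $\EB'\hookrightarrow\prod\End_{A_i/\mm^n}(\cV(\Pi_{i,\mm,n}))$, with the maps $\theta_{i,\mm,n}$ of Lemma \ref{wrong_lemma} — i.e. that $T(g)$ really maps to $\theta_{i,\mm,n}(T(g))$ acting on $\cV(\Pi_{i,\mm,n})$ — and to confirm that the identification $\md_{i,\mm,n}\otimes_{\EB'}\cV(\PB')\cong\cV(\Pi_{i,\mm,n})$ used in the proof of Theorem \ref{thm:main} is $\gal$-equivariant, so that the annihilators and actions genuinely match up.
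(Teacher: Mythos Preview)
Your argument is correct, but it takes a much longer route than the paper's. The paper's proof is essentially one line: by definition of the ideal $J$, the relation $g^2 - T(g)g + \zeta\varepsilon(g) = 0$ holds in $R^{\ps,\zeta\varepsilon}_{\tr\rhobar_\BB}\br{\gal}/J$, and multiplying by $g^{-1}$ gives the identity $T(g) = g + \zeta\varepsilon(g)g^{-1}$ \emph{already in that ring}, before any specialization. Since the map $R^{\ps,\zeta\varepsilon}_{\tr\rhobar_\BB}\to\ZB$ was constructed precisely so that, after applying $\cV$, it factors through the surjection $R^{\ps,\zeta\varepsilon}_{\tr\rhobar_\BB}\br{\gal}/J\twoheadrightarrow\End_{\EB'}^{\cont}(\cV(\PB'))$ induced by the Galois action, the claimed equality for every $M$ is then literally ``unravelling the definitions''.

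Your approach works, but notice that the Cayley--Hamilton computation you carry out on each $\cV(\Pi_{i,\mm,n})$ in the rank-$2$ case is exactly the same manipulation, just performed after specializing. You are using the density machinery of Corollary~\ref{cor:density} and the faithfulness of $\EB'$ on the product---tools genuinely needed for Theorem~\ref{thm:main}---to verify an identity that already holds universally in the Cayley--Hamilton quotient. The bookkeeping concern you flag at the end (that the maps $\theta_{i,\mm,n}$ are compatible with $R^{\ps,\zeta\varepsilon}_{\tr\rhobar_\BB}\to\ZB'$ under the product embedding) is easily handled, since both arise from the Galois action on $\cV(\Pi_{i,\mm,n})$ and $\OO\br{\gal}\twoheadrightarrow R^{\ps,\zeta\varepsilon}_{\tr\rhobar_\BB}\br{\gal}/J$ is surjective; but recognizing this is essentially recognizing the paper's direct argument.
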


\begin{proof} 
Since $g^2 -T(g)g + \zeta\varepsilon(g)=0$ in $R^{\ps, \zeta\varepsilon}_{\tr\rhobar_{\BB}}\br{\gal}/J$, the equality  $T(g)\id = g + \zeta\varepsilon(g) g^{-1}$ holds in that ring. The rest is just unravelling the definitions. 
\end{proof}

Since $P_{\BB}$ is a projective generator for $\dualcat(\OO)_{\BB}$, the functor 
$$N \mapsto \md(N):= \Hom_{\dualcat(\OO)}(P_{\BB}, N)$$
induces an equivalence of categories between $\dualcat(\OO)_{\BB}$ and the category of right pseudo-compact $E_{\BB}$-modules. The inverse functor is given by $\md \mapsto \md \wtimes_{E_{\BB}} P_{\BB}$.

\begin{cor}\label{new_fibre} 
For $N$ in $\dualcat(\OO)_{\BB}$ the following assertions are equivalent: 
\begin{enumerate}
\item there is a surjection $\PB^{\oplus n} \twoheadrightarrow N$ for some $n\ge 1$;
\item $\md(N)$ is a finitely generated $E_{\BB}$-module;
\item $\md(N)$ is a finitely generated $R^{\ps, \zeta\varepsilon}_{\tr\rhobar_{\BB}}$-module;
\item $k\wtimes_{R^{\ps, \zeta\varepsilon}_{\tr\rhobar_{\BB}}} N$ is of finite length in $\dualcat(\OO)$;
\item the cosocle of $N$ in $\dualcat(\OO)$ 
is of finite length.
\end{enumerate} 
The equivalent conditions  hold if $N$ is finitely generated over $\OO\br{H}$ for a compact open subgroup $H$ of $G$. 
\end{cor}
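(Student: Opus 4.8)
The plan is to transport the whole statement through the equivalence $N\mapsto\md(N):=\Hom_{\dualcat(\OO)}(\PB,N)$ between $\dualcat(\OO)_{\BB}$ and pseudo-compact right $\EB$-modules (with quasi-inverse $\md\mapsto\md\wtimes_{\EB}\PB$), and then to exploit the two facts furnished by Corollary~\ref{cor:allfinite}: the ring $R:=R^{\ps,\zeta\varepsilon}_{\tr\rhobar_{\BB}}$ is noetherian and $\EB$ is a module-finite $R$-algebra, $R$ acting through the centre $\ZB$. Under this equivalence $\PB$ corresponds to $\EB$ (free of rank one over itself), $\cosoc N$ to $\cosoc\md(N)=\md(N)/\overline{\rad(\EB)\md(N)}$, and $k\wtimes_R N$ to $\md(N)/\mm_R\md(N)$ ($\mm_R$ the maximal ideal of $R$); I would also note at the outset that $\mm_R\EB\subseteq\rad(\EB)$, since $\EB/\mm_R\EB$ is a finite-dimensional $k$-algebra.

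For the equivalences I would run the cycle $(1)\Leftrightarrow(2)\Leftrightarrow(3)$, $(2)\Rightarrow(4)\Rightarrow(5)\Rightarrow(2)$. The equivalence $(1)\Leftrightarrow(2)$ is immediate from additivity and exactness of $\md(-)$ and $-\wtimes_{\EB}\PB$: a surjection $\PB^{\oplus n}\twoheadrightarrow N$ corresponds to a surjection $\EB^{\oplus n}\twoheadrightarrow\md(N)$. For $(2)\Leftrightarrow(3)$ one uses that $R$ is central in $\EB$ and $\EB$ is module-finite over $R$, so finite generation over $\EB$ and over $R$ agree. For $(2)\Rightarrow(4)$: $\md(N)/\mm_R\md(N)$ is finitely generated over the Artinian ring $\EB/\mm_R\EB$, hence finite-dimensional over $k$, hence of finite length, and it corresponds to $k\wtimes_R N$. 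For $(4)\Rightarrow(5)$: $\cosoc\md(N)$ is a quotient of $\md(N)/\mm_R\md(N)$, so it too is finite-dimensional over $k$. For $(5)\Rightarrow(2)$: a finite-length semisimple $\EB$-module is finite-dimensional over $k$, hence finitely generated over $\EB/\rad(\EB)$, and topological Nakayama then forces $\md(N)$ to be finitely generated over $\EB$. Throughout I would keep in mind that finite length in $\dualcat(\OO)$ is \emph{not} the same as finite $k$-dimension — $\pi^{\vee}$ is usually infinite-dimensional — but that it does correspond to a finite-dimensional $\EB$-module, the simple $\EB$-modules all being finite-dimensional over $k$.

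For the last assertion I would verify condition (5) directly. Given $N$ finitely generated over $\OO\br{H}$, I would first fix (using that $\BB$ is finite and its members are smooth) an open normal pro-$p$ subgroup $H_0\trianglelefteq H$ with $\pi^{H_0}\neq 0$ for every $\pi\in\BB$; then $N$ is finitely generated over $\OO\br{H_0}$, so $N/\varpi N$ is finitely generated over the local ring $k\br{H_0}$, and Nakayama gives that $(N/\varpi N)_{H_0}$ is finite-dimensional over $k$, i.e. $(N^{\vee}[\varpi])^{H_0}$ is finite-dimensional. Since $\soc_G(N^{\vee})$ is killed by $\varpi$ and hence lies in $N^{\vee}[\varpi]$, writing $\soc_G(N^{\vee})\cong\bigoplus_{\pi\in\BB}\pi^{\oplus m_{\pi}}$ and taking $H_0$-invariants forces $\sum_{\pi}m_{\pi}\dim_k\pi^{H_0}<\infty$, so each $m_{\pi}$ is finite; thus $\soc_G(N^{\vee})$, and dually $\cosoc N=(\soc_G N^{\vee})^{\vee}$, has finite length.

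I expect the only real friction to be on the pseudo-compact side, and it is already visible in $(5)\Rightarrow(2)$ and in the last assertion: a ``semisimple'' pseudo-compact object is a priori an infinite \emph{product} of simples rather than a direct sum, so finiteness of the cosocle has to be squeezed out via (topological) Nakayama or an admissibility argument rather than being automatic. The genuinely substantive input is Corollary~\ref{cor:allfinite} itself; once that is in hand, this corollary is essentially a formal consequence, modulo the care with topologies just mentioned.
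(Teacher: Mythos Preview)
Your proof is correct and follows essentially the same route as the paper: both argue the cycle through the equivalence $N\mapsto\md(N)$, use Corollary~\ref{cor:allfinite} for $(2)\Leftrightarrow(3)$, deduce $(4)$ and $(5)$ from the fact that $\mm_R$ kills every simple (equivalently $\mm_R\EB\subset\rad(\EB)$), and close the loop by a Nakayama-type step; the final assertion is handled identically via a pro-$p$ open subgroup. The only cosmetic difference is that the paper proves $(5)\Rightarrow(1)$ directly in $\dualcat(\OO)$ by lifting a surjection $\pi_{\BB}^{\oplus n}\twoheadrightarrow\cosoc(N)$ through the projective $\PB^{\oplus n}$, whereas you run the equivalent topological Nakayama on the $\EB$-module side---these are the same argument in the two pictures.
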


\begin{proof}  
(1) implies (2) since $\md$ is exact. (2) implies (3) by Corollary \ref{cor:allfinite}. Since 
$$ k\wtimes_{R^{\ps, \zeta\varepsilon}_{\tr\rhobar_{\BB}}}\md(N)\cong \md( k\wtimes_{R^{\ps, \zeta\varepsilon}_{\tr\rhobar_{\BB}}}N),$$
and the functor $\md$ is an anti-equivalence (3) implies (4). Let $N \twoheadrightarrow \cosoc(N)$ be the cosocle of $N$. Since the maximal ideal of $R^{\ps, \zeta\varepsilon}_{\tr\rhobar_{\BB}}$ acts trivially on every semi-simple object, the surjection factors through
$$ k\wtimes_{R^{\ps, \zeta\varepsilon}_{\tr\rhobar_{\BB}}} N \twoheadrightarrow \cosoc(N),$$
and so (4) implies (5). If $\cosoc(N)$ is of finite length then there is a surjection $\pi_{\BB}^{\oplus n} \twoheadrightarrow \cosoc(N)$ for some $n\ge1$. Since $\PB$ is projective there is a map $\varphi: \PB^{\oplus n}  \rightarrow N$ lifting $(\PB)^{\oplus n} \twoheadrightarrow \pi_{\BB}^{\oplus n} \twoheadrightarrow \cosoc(N)$. The cokernel of $\varphi$ will have zero cosocle and hence $\varphi$ is surjective, so that (5) implies (1). 

If $N$ is finitely generated over $\OO\br{H}$, which we may assume to be pro-$p$, then $((N/\varpi N)^{\vee})^H$ is a finite dimensional $k$-vector space, and hence the $G$-socle of $N^{\vee}$ is of finite length, which dually implies that (5) holds.  
\end{proof}

Since every irreducible in $\BB$ is admissible, its Pontryagin dual is finitely generated over
 $\OO\br{H}$ for any compact open subgroup $H$ of $G$. It follows from Corollary \ref{new_fibre} (4) 
 that  $k\wtimes_{R^{\ps, \zeta\varepsilon}_{\tr\rhobar_{\BB}}}P_{\BB}$ is also a finitely generated 
 $\OO\br{H}$-module. This implies that the assumptions made in section 4 of \cite{image} are 
 satisfied with the category $\dualcat(\OO)$ in \cite[\S 4]{image} equal to $\dualcat(\OO)_{\BB}$, 
 and we will record some consequences below. 

\subsection{Banach space representations}\label{main_banach}
The category $\aBan_{G, \zeta}(L)$ decomposes into a direct sum of categories \cite[Proposition 5.36]{image}:
\begin{equation}\label{sum0}
\aBan_{G, \zeta}(L) \cong \bigoplus_{\BB \in \Irr_{G, \zeta} / \sim} \aBan_{G, \zeta}(L)_{\BB}
\end{equation}
where the objects of $\aBan_{G, \zeta}(L)_{\BB}$ are those $\Pi$ in $\aBan_{G, \zeta}(L)$ such that for every open bounded $G$-invariant lattice $\Theta$ in $\Pi$ the irreducible subquotients of $\Theta \otimes_{\OO} k$ lie in $\BB$. This condition is equivalent to  require $\Theta^d$ to be an object of $\dualcat(\OO)_{\BB}$.

As in the previous subsection we fix a block $\BB$ consisting of absolutely irreducible representations. 
Let $\Mod^{\fg}_{\EB[1/p]}$ be the category of finitely generated right $\EB[1/p]$-modules. 
The functor 
$$\md: \Ban^{\adm}_{G, \zeta}(L)_{\BB} \rightarrow \Mod^{\fg}_{\EB[1/p]}, \quad \Pi \mapsto \md(\Pi):= \Hom_{\dualcat(\OO)}(\PB,\Theta^d)\otimes_{\OO} L,$$
where $\Theta$ is any open bounded $G$-invariant lattice in $\Pi$, is exact, contravariant and fully faithful by \cite[Lemma 4.45]{image}. Moreover, it induces an anti-equivalence of categories 
\begin{equation}\label{equivalence}
\md:\Ban^{\adm}_{G, \zeta}(L)_{\BB}^{\fl} \overset{\cong}{\longrightarrow} \Mod^{\fl}_{\EB[1/p]},
\end{equation}
where the superscript $\fl$ indicates the subcategories of objects of finite length in the respective categories, see \cite[Theorem 4.34]{image}. We write \textit{anti-equivalence} instead of \textit{equivalence} to indicate that $\md$ is contravariant. 

If $\mm$ is a maximal ideal of $R^{\ps, \zeta\varepsilon}_{\tr\rhobar_{\BB}}[1/p]$ then we let $\Ban^{\adm}_{G, \zeta}(L)_{\BB, \mm}^{\fl}$ be the full subcategory of 
$\Ban^{\adm}_{G, \zeta}(L)$ consisting of finite length Banach space representations, 
which are killed by some power of $\mm$. The functor $\md$ induces an anti-equivalence between 
this category and the category of $\EB[1/p]$-modules of finite length, which are killed by a power of 
$\mm$. Chinese remainder theorem, see \cite[Theorem 4.36]{image}, implies that 
we have an equivalence of categories 
\begin{equation}\label{equivalence2}
\Ban^{\adm}_{G, \zeta}(L)_{\BB}^{\fl}\cong \bigoplus_{\mm \in \mSpec R^{\ps, \zeta\varepsilon}_{\tr\rhobar_{\BB}}[1/p]} \Ban^{\adm}_{G, \zeta}(L)_{\BB, \mm}^{\fl}.
\end{equation}

\begin{cor} 
If $\Pi_1\in \Ban^{\adm}_{G, \zeta}(L)_{\BB, \mm_1}^{\fl}$ and  $\Pi_2\in \Ban^{\adm}_{G, \zeta}(L)_{\BB, \mm_2}^{\fl}$ for distinct maximal ideals $\mm_1$ and $\mm_2$ of $R^{\ps, \zeta\varepsilon}_{\tr\rhobar_{\BB}}[1/p]$, then the Yoneda $\Ext^i(\Pi_1, \Pi_2)$ computed in  $\Ban^{\adm}_{G, \zeta}(L)$ vanish for all $i\ge 0$. 
\end{cor}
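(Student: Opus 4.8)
The plan is to deduce the vanishing formally from the functorial action of $R:=R^{\ps, \zeta\varepsilon}_{\tr\rhobar_{\BB}}[1/p]$ on the block, once one has reduced the computation of the Yoneda groups to inside $\aBan_{G, \zeta}(L)_{\BB}$. First I would use the block decomposition \eqref{sum0}: it exhibits $\aBan_{G, \zeta}(L)_{\BB}$ as a direct summand of $\aBan_{G, \zeta}(L)$, cut out by a natural idempotent endomorphism $e_{\BB}$ of the identity functor of $\aBan_{G, \zeta}(L)$ (the projection of each object onto its $\BB$-component). Applying $e_{\BB}$ term by term to a Yoneda $i$-extension $0\to \Pi_2\to X_i\to\cdots\to X_1\to \Pi_1\to 0$ and using that $e_{\BB}\Pi_j=\Pi_j$, one sees that the natural map
$$\Ext^i_{\aBan_{G, \zeta}(L)_{\BB}}(\Pi_1,\Pi_2)\longrightarrow \Ext^i_{\aBan_{G, \zeta}(L)}(\Pi_1,\Pi_2)$$
is an isomorphism for every $i\ge 0$, so it suffices to prove the source vanishes.

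Next I would put an $R$-module structure on these groups. By Corollary \ref{cor:allfinite} there is an $\OO$-algebra homomorphism $R^{\ps, \zeta\varepsilon}_{\tr\rhobar_{\BB}}\to \ZB$, and $\ZB$ is the centre of $\dualcat(\OO)_{\BB}$, hence acts functorially on every object of $\dualcat(\OO)_{\BB}$; transporting this action through $\Theta\mapsto\Theta^d$ (for any open bounded $G$-invariant lattice $\Theta$, the resulting endomorphism of $\Pi$ being independent of the choice) and inverting $p$ equips every object of $\aBan_{G, \zeta}(L)_{\BB}$ with a functorial $R$-action. Consequently each Yoneda group $\Ext^i_{\aBan_{G, \zeta}(L)_{\BB}}(\Pi_1,\Pi_2)$ becomes an $R$-module, and by the usual naturality of a central action on Yoneda extensions, the $R$-structure obtained by acting on the source $\Pi_1$ coincides with the one obtained by acting on the target $\Pi_2$.

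Finally I would conclude by a coprimality argument. By definition of $\aBan_{G, \zeta}(L)_{\BB, \mm_j}^{\fl}$, the representation $\Pi_j$ is annihilated by some power $\mm_j^{N_j}$ of $\mm_j$; therefore $\mm_1^{N_1}$ kills $\Ext^i(\Pi_1,\Pi_2)$ (acting through $\Pi_1$) and $\mm_2^{N_2}$ kills it (acting through $\Pi_2$). Since $\mm_1\neq \mm_2$ are maximal ideals of $R$ we have $\mm_1^{N_1}+\mm_2^{N_2}=R$, whence $\Ext^i(\Pi_1,\Pi_2)=0$ for all $i\ge 0$. (For $i=0$ one could instead read the vanishing directly off \eqref{equivalence2}.)

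I expect the only point requiring care to be the middle step: checking that the $R$-action on $\aBan_{G, \zeta}(L)_{\BB}$ is genuinely functorial — independence of the choice of lattice, compatibility with Schikhof duality and with inverting $p$ — and that a natural endomorphism of the identity functor acts in the same way on a Yoneda $\Ext^i$ whether it is inserted at the source or at the target. Neither is difficult, but together they are exactly what makes the elementary ``coprime annihilators'' mechanism apply uniformly in all cohomological degrees; there is no deeper obstruction, the statement being essentially a formal consequence of the noetherianity package of Theorem \ref{thm:main} and Corollary \ref{cor:allfinite}.
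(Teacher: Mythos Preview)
Your argument is correct. Both your proof and the paper's begin by using the block decomposition \eqref{sum0} to reduce from $\aBan_{G,\zeta}(L)$ to $\aBan_{G,\zeta}(L)_{\BB}$, but then they diverge. The paper first invokes the direct-sum decomposition \eqref{equivalence2}, which lives only in the finite-length subcategory $\aBan_{G,\zeta}(L)_{\BB}^{\fl}$, to get vanishing there, and then cites \cite[Proposition 4.46, Corollary 4.48]{image} to identify the Yoneda $\Ext$ groups in $\aBan_{G,\zeta}(L)_{\BB}^{\fl}$ with those in the full block $\aBan_{G,\zeta}(L)_{\BB}$. You instead work directly in $\aBan_{G,\zeta}(L)_{\BB}$: since $\ZB$ is the centre of the whole block (not just of the finite-length part), the $R$-action is functorial on all objects and hence on the middle terms of any Yoneda extension, so the standard ``source and target actions of the centre agree on $\Ext$'' argument applies immediately, and comaximality of $\mm_1^{N_1}$ and $\mm_2^{N_2}$ finishes. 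Your route is a bit more self-contained, avoiding the external citation that compares $\Ext$ in the two categories; the paper's route, on the other hand, extracts the vanishing as a formal consequence of the categorical decomposition \eqref{equivalence2}, which is of independent interest.
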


\begin{proof} 
It follows from \eqref{equivalence2} that the assertion holds for the Yoneda $\Ext$ groups computed in  $\Ban^{\adm}_{G, \zeta}(L)_{\BB}^{\fl}$. It follows from \cite[Proposition 4.46, Corollary  4.48]{image} that these coincide with Yoneda $\Ext$ groups computed in $\Ban^{\adm}_{G, \zeta}(L)_{\BB}$, which is a direct summand of $\Ban^{\adm}_{G, \zeta}(L)$, see \eqref{sum0}.
\end{proof} 

We will determine the set of isomorphism classes $\Irr(\mm, L')$ of irreducible objects in $\Ban^{\adm}_{G, \zeta}(L')_{\BB, \mm}^{\fl}$ for a sufficiently large finite extension $L'$ of $L$. Recall that $\Pi\in \Ban^{\adm}_{G, \zeta}(L)$ is \textit{absolutely irreducible}, if $\Pi\otimes_L L'$ is irreducible in $\Ban^{\adm}_{G, \zeta}(L')$ for all finite extensions $L'$ of $L$. It follows from \eqref{equivalence} that for such $\Pi$ Schur's lemma holds, so that $\End^{\cont}_G(\Pi)=L$. This result is also proved in \cite{DS} in much more general setting. It follows from \eqref{equivalence} that irreducibles in $\Ban^{\adm}_{G, \zeta}(L)_{\BB, \mm}$ correspond to irreducible modules of the algebra $E_{\BB} \otimes_{R^{\ps, \zeta\varepsilon}_{\tr\rhobar_{\BB}}} \kappa(\mm)$. Corollary \ref{cor:allfinite} implies that this algebra is finite
dimensional over $\kappa(\mm)$, thus $\Irr(\mm, L')$ is finite for every finite extension $L'$ of $L$, and  there is a finite extension $L'$ of $L$ such that all $\Pi$ in   $\Irr(\mm, L')$ are absolutely irreducible. 

\begin{prop}\label{action_centre} 
Let $L'$ be a finite extension of $L$ and let $\Pi$ be absolutely irreducible in $\Ban^{\adm}_{G, \zeta}(L')_{\BB}$. Let $c_{\Pi}: R^{\ps, \zeta\varepsilon}_{\tr\rhobar_{\BB}}\rightarrow L'$ be the composition: 
$$ c_{\Pi}: R^{\ps, \zeta\varepsilon}_{\tr\rhobar_{\BB}}\rightarrow Z_{\BB}\rightarrow \End^{\cont}_G(\Pi)=L'.$$
Then one of the following holds:
\begin{enumerate} 
\item if $\Pi$ is a subquotient of $(\Ind_B^G \psi_1\otimes \psi_2 \varepsilon^{-1})_{\cont}$ for some 
unitary characters $\psi_1, \psi_2: \Qp^{\times} \rightarrow (L')^{\times}$ then $T_{c_{\Pi}}= \psi_1+\psi_2$;
\item otherwise, $\cV(\Pi)$ is a $2$-dimensional absolutely irreducible $L'$-representation of
$\gal$, $\det \cV(\Pi)= \zeta \varepsilon$ and $T_{c_{\Pi}}= \tr \cV(\Pi)$;
\end{enumerate}
where $T_{c_{\Pi}}$ is the specialization of the universal pseudorepresentation 
$T: \gal\rightarrow R^{\ps, \zeta\varepsilon}_{\tr\rhobar_{\BB}}$ along $c_{\Pi}$.
\end{prop}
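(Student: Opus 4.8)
The plan is to reduce the statement to a compatibility identity that is already encoded in the map $R^{\ps,\zeta\varepsilon}_{\tr\rhobar_{\BB}}\to\ZB$ from Corollary \ref{cor:allfinite}, together with the explicit description of $\cV$ on irreducible Banach representations. First I would observe that, since $\Pi$ is absolutely irreducible, $\End^{\cont}_G(\Pi)=L'$, so $c_\Pi$ is well-defined as an $\OO$-algebra homomorphism, and by Proposition \ref{explain} applied to $M=\Theta^d$ for a $G$-invariant lattice $\Theta\subset\Pi$ (after inverting $p$), for every $g\in\gal$ we have
\[
c_\Pi(T(g))\cdot\id_{\cV(\Pi)}=\rho_{\cV(\Pi)}(g)+\rho_{\cV(\Pi)}(g^{-1})\,\zeta\varepsilon(g)
\]
in $\End^{\cont}_{\gal}(\cV(\Pi))$. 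The point is then to identify the right-hand side in each of the two cases, using that $\cV$ is compatible with the block decomposition and with the universal pseudodeformation.

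In case (2), the dichotomy from \cite{CDP} (or the classification recalled in section \ref{CMF}) shows that if $\Pi$ is not a subquotient of a unitary continuous principal series, then $\cV(\Pi)$ is $2$-dimensional and absolutely irreducible. Here $\rho_{\cV(\Pi)}(g)+\rho_{\cV(\Pi)}(g^{-1})\zeta\varepsilon(g)$ is a scalar matrix for all $g$ precisely because the trace and determinant of a $2$-dimensional representation satisfy $\rho(g)+\det\rho(g)\rho(g)^{-1}=\tr\rho(g)\cdot\id$; one must first check $\det\cV(\Pi)=\zeta\varepsilon$. That determinant computation follows from the fact that on each block $\cV$ produces $2$-dimensional representations with determinant $\zeta\varepsilon$ by construction — this is how $\rhobar_{\BB}$ and the fixed-determinant pseudodeformation ring were set up, and on the level of lattices it is \cite[Lemma 5.53]{image} type bookkeeping together with $\det\VV(\pi)=\zeta$ for supersingular $\pi$ twisted by $\varepsilon$. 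Granting $\det\cV(\Pi)=\zeta\varepsilon$, the displayed identity becomes $c_\Pi(T(g))=\tr\cV(\Pi)(g)$ for all $g$, and since $T$ is the universal pseudorepresentation this says exactly $T_{c_\Pi}=\tr\cV(\Pi)$.

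In case (1), $\Pi$ is a subquotient of $(\Ind_B^G\psi_1\otimes\psi_2\varepsilon^{-1})_{\cont}$, and by the formulas for $\cV$ on principal series (section \ref{CMF}, applied after passing to lattices and reducing) one has $\cV(\Pi)$ one-dimensional, equal to $\psi_1$ up to the normalization by $\zeta\varepsilon$ — more precisely $\cV$ of the relevant lattice is $\psi_1$, possibly $\psi_2$, depending on which Jordan–Hölder constituent $\Pi$ is, but the symmetrized expression $\rho(g)+\rho(g^{-1})\zeta\varepsilon(g)$ is insensitive to this: if $\rho_{\cV(\Pi)}=\psi_1$ then since $\psi_1\psi_2=\zeta$ (the two characters multiply to the central character, with the cyclotomic twist accounting for $\varepsilon$) we get $\psi_1(g)+\psi_1(g^{-1})\zeta\varepsilon(g)=\psi_1(g)+\psi_2(g)$ once one tracks the normalizations carefully. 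So again $c_\Pi(T(g))=\psi_1(g)+\psi_2(g)$, i.e. $T_{c_\Pi}=\psi_1+\psi_2$, and this matches the reducible pseudorepresentation $\tr(\psi_1\oplus\psi_2)$ whose determinant is $\psi_1\psi_2=\zeta\varepsilon$.

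The main obstacle I expect is purely one of normalizations: keeping straight the various twists by $\zeta$, $\varepsilon$, $\omega$, and $\alpha$ that appear between the Montreal functor $\VV$, its modification $\cV(M)=\VV(M^\vee)^\vee(\zeta\varepsilon)$, the convention that uniformizers correspond to geometric Frobenius, and the identification of $\psi_i|_{\Qp^\times}$ with characters of $\gal$. The structural content — that Proposition \ref{explain} forces $c_\Pi(T(g))$ to equal the symmetrized trace, which is scalar, and that $T$'s universal property then pins down $T_{c_\Pi}$ — is immediate once the determinant is $\zeta\varepsilon$; verifying $\det\cV(\Pi)=\zeta\varepsilon$ and disentangling case (1) are the places where one has to be careful rather than clever.
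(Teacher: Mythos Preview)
Your approach via Proposition \ref{explain} is exactly the paper's, and case (2) is fine: the paper cites \cite[Corollary 1.2, Theorem 1.9]{CDP} for both the absolute irreducibility of $\cV(\Pi)$ and the equality $\det\cV(\Pi)=\zeta\varepsilon$, after which the matrix identity $\rho(g)+\det\rho(g)\,\rho(g)^{-1}=\tr\rho(g)\cdot\id$ finishes it just as you say.

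There is, however, a genuine gap in your treatment of case (1). You assert that $\cV(\Pi)$ is one-dimensional, but this fails precisely when $\Pi$ is a character: by the formulas in section \ref{CMF}, $\cV$ kills characters, and $\Pi\cong\psi\circ\det$ does occur as a subquotient of $(\Ind_B^G\psi_1\otimes\psi_2\varepsilon^{-1})_{\cont}$ when $\psi_1\psi_2^{-1}=\varepsilon^{\pm 1}$. In that situation $\cV(\Pi)=0$, and the identity from Proposition \ref{explain} degenerates to $0=0$ in $\End_{\gal}^{\cont}(0)$, which tells you nothing about $c_\Pi(T(g))$. This is not a normalization issue; the argument simply has no leverage on that constituent.

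The paper's fix is to compute not on $\Pi$ but on the full principal series $\Psi=(\Ind_B^G\psi_1\otimes\psi_2\varepsilon^{-1})_{\cont}$. One first checks $\End^{\cont}_G(\Psi)=L'$ in every case: if $\psi_1\neq\psi_2\varepsilon^{-1}$ then $\Psi$ is absolutely irreducible (seen on the reduction), and otherwise $\Psi$ is a non-split extension of $\widehat{\Sp}\otimes\psi_1\circ\det$ by $\psi_1\circ\det$, again with scalar endomorphisms. Hence $Z_\BB$, and so $R^{\ps,\zeta\varepsilon}_{\tr\rhobar_\BB}$, acts on $\Psi$ through a single homomorphism $c_\Psi$, and functoriality of the centre forces $c_\Pi=c_\Psi$ for every irreducible subquotient $\Pi$. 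Since $\cV(\Psi)$ is always one-dimensional (equal to $\psi_2$ in the paper's normalization), your computation $\psi_2(g)+\psi_2(g)^{-1}\zeta\varepsilon(g)=\psi_2(g)+\psi_1(g)$ now goes through applied to $\Psi$, and the conclusion transfers to $\Pi$.
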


\begin{proof} 
Let $\Psi$ be the unitary principal series representation in (1). If $\psi_1\neq \psi_2\varepsilon^{-1}$ then $\Psi^{\SL_2(\Qp)}=0$ and by looking at its reduction modulo $p$ one may conclude that $\Psi$ is absolutely irreducible. If $\psi_1= \psi_2\varepsilon$ then $\Psi$ is a non-split extension 
$$0 \rightarrow \psi_1\circ \det \rightarrow \Psi \rightarrow \widehat{\Sp} \otimes \psi_1\circ\det \rightarrow 0,$$
where $\widehat{\Sp}$ is the universal unitary completion of the smooth Steinberg representation. This representation is absolutely irreducible, since its mod $p$ reduction is. In both cases $\End^{\cont}_G(\Psi)=L'$, thus $R^{\ps, \zeta\varepsilon}_{\tr\rhobar_{\BB}}$ acts on all irreducible subquotients of $\Psi$ via the same homomorphism $c_{\Psi}$. Since $\cV(\Psi)= \psi_2$, regarded as representation of $\gal$ via the class field theory, $g + \varepsilon\zeta(g) g^{-1}$ acts on it via the scalar $\psi_2(g)+ \psi_2(g^{-1})\varepsilon\zeta(g) = \psi_2(g) + \psi_1(g)$ for all $g\in \gal$. Proposition \ref{explain} implies that the specialization of $T$ at $c_{\Psi}$ is is the character $\psi_1+\psi_2$. 

If we are not in part (1) then \cite[Corollary 1.2, Theorem 1.9]{CDP} imply that $\cV(\Pi)$ is absolutely irreducible $2$-dimensional and $\det \cV(\Pi)= \zeta \varepsilon$. A calculation with $2\times 2$-matrices implies that  $g+ \zeta\varepsilon(g) g^{-1}$ acts on $\cV(\Pi)$ by a scalar $(\tr \cV(\Pi))(g)$. Proposition \ref{explain} implies that the specialization of $T$ at $c_{\Psi}$ is is the character $\tr \cV(\Pi)$. 
\end{proof}

\begin{cor}\label{reducible_irr} 
Let $L'$ be a finite extension of $L$ and let $x: R^{\ps, \zeta\varepsilon}_{\tr\rhobar_{\BB}}\rightarrow L'$ be an $\OO$-algebra homomorphism. If $T_x= \psi_1+ \psi_2$ for characters $\psi_1, \psi_2:\gal \rightarrow (L')^{\times}$ then one of the following holds: 
\begin{enumerate} 
\item if $\psi_1\psi_2^{-1}=\Eins$ then $\Irr(\mm_x, L')= \{ (\Ind_B^G \Eins\otimes  \varepsilon^{-1})_{\cont}\otimes \psi_1\circ \det\}$;
\item if $\psi_1 \psi_2^{-1}= \varepsilon^{\pm 1}$ then $\Irr(\mm_x, L')= \{  \Eins, \widehat{\Sp}, (\Ind_B^G \varepsilon\otimes  \varepsilon^{-1})_{\cont}\} \otimes \psi\circ \det$;
\item if $\psi_1 \psi_2^{-1}\neq \varepsilon^{\pm 1}, \Eins$ then  
$$\Irr(\mm_x, L')= \{ (\Ind_B^G \psi_1\otimes \psi_2 \varepsilon^{-1})_{\cont}, (\Ind_B^G \psi_2\otimes \psi_1 \varepsilon^{-1})_{\cont}\},$$
\end{enumerate}
where we consider $\psi_1$ and $\psi_2$ as unitary characters of $\Qp^{\times}$ via the class field theory and $\psi$ in (2) is either $\psi_1$ or $\psi_2$.
\end{cor}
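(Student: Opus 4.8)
The plan is to deduce the corollary from Proposition~\ref{action_centre}, which classifies the absolutely irreducible members of $\aBan_{G,\zeta}(L')_{\BB}$ and records how $R^{\ps,\zeta\varepsilon}_{\tr\rhobar_{\BB}}$ acts on them, together with the structure of unitary continuous principal series of $G=\GL_2(\Qp)$. Concretely I would prove: (a) every $\Pi\in\Irr(\mm_x,L')$ is a Jordan--Hölder constituent of $(\Ind_B^G\psi_1\otimes\psi_2\varepsilon^{-1})_{\cont}$ or of $(\Ind_B^G\psi_2\otimes\psi_1\varepsilon^{-1})_{\cont}$; (b) conversely each such constituent lies in $\aBan_{G,\zeta}(L')_{\BB,\mm_x}^{\fl}$; and then (c) read off the three lists from the known structure of these two principal series.

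For (a), take $\Pi\in\Irr(\mm_x,L')$. Since $L'$ has been chosen large enough that all members of $\Irr(\mm_x,L')$ are absolutely irreducible, $\End^{\cont}_G(\Pi)=L'$, so the action of $R^{\ps,\zeta\varepsilon}_{\tr\rhobar_{\BB}}$ on $\Pi$ is given by a homomorphism $c_\Pi$ into the field $L'$; its kernel is a maximal ideal containing $\mm_x$ (because $\Pi$ is killed by a power of $\mm_x$), hence equals $\mm_x$. Thus $T_{c_\Pi}$, the specialisation of $T$ along $c_\Pi$, is --- after identifying $R^{\ps,\zeta\varepsilon}_{\tr\rhobar_{\BB}}/\mm_x$ with a subfield of $L'$ via $x$ --- a sum of two characters. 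This excludes case~(2) of Proposition~\ref{action_centre}, whose conclusion would make $T_{c_\Pi}$ the trace of an absolutely irreducible $2$-dimensional representation of $\gal$, which is impossible since such a trace has reducible semisimplification. So we are in case~(1): $\Pi$ is a subquotient of $(\Ind_B^G\psi_1'\otimes\psi_2'\varepsilon^{-1})_{\cont}$ with $\psi_1'+\psi_2'=T_{c_\Pi}$, and since the multiset $\{\psi_1',\psi_2'\}$ is determined by this sum, (a) follows.

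For (b), let $\Pi'$ be an irreducible constituent of $(\Ind_B^G\psi_i\otimes\psi_j\varepsilon^{-1})_{\cont}$ with $\{i,j\}=\{1,2\}$. It is admissible of finite length, with central character $\psi_1\psi_2\varepsilon^{-1}=\zeta$, using that the fixed determinant forces $\psi_1\psi_2=\zeta\varepsilon$. A $G$-invariant $\OO$-lattice in the ambient principal series reduces modulo $\varpi$, up to semisimplification, to $(\Ind_B^G\bar\psi_1\otimes\bar\psi_2\omega^{-1})_{\sm}$, and $\bar\psi_1\oplus\bar\psi_2\cong\rhobar_{\BB}$ because $\bar\psi_1+\bar\psi_2$ is the reduction of $T_x$, which equals $\tr\rhobar_{\BB}$. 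Hence by the classification of blocks in \S\ref{section:blocks} the Jordan--Hölder factors of this smooth principal series lie in $\BB$, so $\Pi'\in\aBan_{G,\zeta}(L')_{\BB}$. Finally, Proposition~\ref{action_centre}(1) applied to $\Pi'$ itself gives $c_{\Pi'}(T(g))=\psi_1(g)+\psi_2(g)=T_x(g)$ for all $g\in\gal$; as $R^{\ps,\zeta\varepsilon}_{\tr\rhobar_{\BB}}$ is topologically generated by $\{T(g):g\in\gal\}$, this forces $c_{\Pi'}=x$, so $\mm_x$ annihilates $\Pi'$, and the ambient principal series, being an iterated extension of such constituents, is annihilated by a power of $\mm_x$. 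Together (a) and (b) identify $\Irr(\mm_x,L')$ with the set of irreducible constituents of $(\Ind_B^G\psi_1\otimes\psi_2\varepsilon^{-1})_{\cont}$ and $(\Ind_B^G\psi_2\otimes\psi_1\varepsilon^{-1})_{\cont}$.

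For (c) I would invoke the structure of unitary continuous principal series of $\GL_2(\Qp)$ (Berger--Breuil \cite{MR2642406}, Colmez \cite{MR2642409}, \cite{image}): for unitary characters $\delta_1,\delta_2$, the representation $(\Ind_B^G\delta_1\otimes\delta_2)_{\cont}$ is topologically irreducible unless $\delta_1=\delta_2$, in which case it is a non-split extension of $\widehat{\Sp}\otimes\delta_1\circ\det$ by $\delta_1\circ\det$. The inducing characters of $(\Ind_B^G\psi_1\otimes\psi_2\varepsilon^{-1})_{\cont}$ agree precisely when $\psi_1\psi_2^{-1}=\varepsilon^{-1}$, and those of $(\Ind_B^G\psi_2\otimes\psi_1\varepsilon^{-1})_{\cont}$ precisely when $\psi_1\psi_2^{-1}=\varepsilon$. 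Thus: if $\psi_1\psi_2^{-1}=\Eins$ both series are irreducible and equal to $(\Ind_B^G\Eins\otimes\varepsilon^{-1})_{\cont}\otimes\psi_1\circ\det$; if $\psi_1\psi_2^{-1}=\varepsilon^{\pm1}$ one series is reducible with constituents $\Eins$ and $\widehat{\Sp}$ twisted by $\psi\circ\det$, where $\psi=\psi_2$ if $\psi_1\psi_2^{-1}=\varepsilon$ and $\psi=\psi_1$ if $\psi_1\psi_2^{-1}=\varepsilon^{-1}$, while the other is the irreducible $(\Ind_B^G\varepsilon\otimes\varepsilon^{-1})_{\cont}\otimes\psi\circ\det$; and if $\psi_1\psi_2^{-1}\notin\{\Eins,\varepsilon^{\pm1}\}$ both series are irreducible and distinct, the latter because $\cV$ sends them to the distinct characters $\psi_2$ and $\psi_1$ of $\gal$. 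This is the trichotomy in the statement. The point needing the most care --- and the only real obstacle --- is fixing the precise reducibility criterion for unitary principal series in the present normalisation of $\varepsilon$, so that the three cases line up exactly; modulo that, the corollary is just Proposition~\ref{action_centre} together with the two verifications in (b).
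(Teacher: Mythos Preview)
Your proof is correct and follows essentially the same approach as the paper: use Proposition~\ref{action_centre} to force any $\Pi\in\Irr(\mm_x,L')$ into case~(1), then check that the subquotients of the two unitary principal series actually lie in $\Irr(\mm_x,L')$. The paper's argument is terser---it defers your steps (b) and (c) to what was shown in the proof of Proposition~\ref{action_centre} and to standard structure theory, and uses the functor of ordinary parts (rather than $\cV$) to separate the two principal series in case~(3)---and it makes explicit, as you should too, that one first enlarges $L'$ so that every member of $\Irr(\mm_x,L')$ is absolutely irreducible.
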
 

\begin{proof} 
We have explained in the course of the proof of Proposition \ref{action_centre} that the representations listed above are absolutely irreducible and are contained in $\Irr(\mm_x, L')$. Moreover, using the functor of ordinary parts one may show that they are pairwise non-isomorphic. 

We will show that the list is exhaustive. We may enlarge $L'$ so that all $\Pi \in  \Irr(\mm_x, L')$ are absolutely irreducible. Since $c_{\Pi}=x$ we cannot 
be in part (2) of Proposition \ref{action_centre} thus we must be in part (1) and hence $\Pi$ is already in our list.  
\end{proof}

\begin{prop}\label{irreducible_irr} 
Let $L'$ be a finite extension of $L$ and let 
$x: R^{\ps, \zeta\varepsilon}_{\tr\rhobar_{\BB}}\rightarrow L'$ be an $\OO$-algebra homomorphism. 
If $T_x =\tr \rho$, where $\rho: \gal\rightarrow \GL_2(L')$ is absolutely irreducible, 
then $\Irr(\mm_x, L')=\{\Pi\}$ with $\Pi$ absolutely irreducible non-ordinary and $\cV(\Pi)\cong \rho$. 
\end{prop}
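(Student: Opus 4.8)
The plan is to run the same strategy as in the proof of Corollary~\ref{reducible_irr}, exploiting the dichotomy in Proposition~\ref{action_centre}. First I would enlarge $L'$, if necessary, so that every $\Pi \in \Irr(\mm_x, L')$ is absolutely irreducible; this is harmless since $\Irr(\mm_x, L')$ is finite (as $\EB \otimes_{R^{\ps, \zeta\varepsilon}_{\tr\rhobar_{\BB}}} \kappa(\mm_x)$ is finite-dimensional over $\kappa(\mm_x)$ by Corollary~\ref{cor:allfinite}). Fix such a $\Pi$. By construction $c_{\Pi} = x$, so the specialization of the universal pseudorepresentation along $c_{\Pi}$ is $T_x = \tr\rho$, which is the trace of an absolutely irreducible $2$-dimensional representation and hence \emph{not} of the form $\psi_1 + \psi_2$ for characters $\psi_1, \psi_2$. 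Therefore we cannot be in case~(1) of Proposition~\ref{action_centre}, so we must be in case~(2): $\cV(\Pi)$ is a $2$-dimensional absolutely irreducible $L'$-representation of $\gal$ with $\det\cV(\Pi) = \zeta\varepsilon$ and $\tr\cV(\Pi) = T_{c_{\Pi}} = T_x = \tr\rho$. Since an absolutely irreducible $2$-dimensional representation over a field is determined up to isomorphism by its trace (e.g.\ by the Brauer--Nesbitt theorem, or because the associated pseudorepresentation has a unique such lift), we conclude $\cV(\Pi) \cong \rho$. Moreover being in case~(2) forces $\Pi$ to be non-ordinary: if $\Pi$ were a subquotient of a continuous principal series then we would be in case~(1).

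It remains to show that $\Irr(\mm_x, L')$ contains \emph{exactly one} element. For existence, the results of Colmez--Dospinescu--Pa\v{s}k\={u}nas \cite{CDP}, \cite{CDP2} guarantee that the framed deformation of $\rhobar_{\BB}$ with determinant $\zeta\varepsilon$ lies in the image of $\cV$, so in particular $\rho$ itself is $\cV(\Theta^d)\otimes_\OO L$ for some open bounded lattice $\Theta$ in an admissible absolutely irreducible $\Pi \in \aBan_{G,\zeta}(L')_{\BB}$ (this is the non-ordinary part of the bijection recorded in \cite{CDP}); such $\Pi$ has central character $\zeta$ and $\Theta^d \in \dualcat(\OO)_{\BB}$, and $c_{\Pi} = x$ by Proposition~\ref{action_centre}~(2) combined with the injectivity of trace on absolutely irreducible $2$-dimensional representations, so $\Pi \in \Irr(\mm_x, L')$. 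For uniqueness, suppose $\Pi_1, \Pi_2 \in \Irr(\mm_x, L')$. By the above both are non-ordinary with $\cV(\Pi_j) \cong \rho$. The anti-equivalence \eqref{equivalence} (together with \eqref{equivalence2}) shows that irreducibles in $\Ban^{\adm}_{G,\zeta}(L')_{\BB, \mm_x}$ correspond bijectively to irreducible modules over the finite-dimensional $\kappa(\mm_x)$-algebra $\EB \otimes_{R^{\ps, \zeta\varepsilon}_{\tr\rhobar_{\BB}}} \kappa(\mm_x)$; and on the Galois side, by Proposition~\ref{prop:DGalequiv} the functor $\cV$ is an equivalence between $\mathfrak Q(\OO)_{\BB}$ and its essential image, hence $\cV$ is faithful on the relevant quotient and distinguishes non-isomorphic irreducible objects. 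Since $\cV(\Pi_1)\cong\rho\cong\cV(\Pi_2)$, and $\Pi_1, \Pi_2$ are non-ordinary (so not killed by $\cV$, unlike the characters in $\BB$), we get $\Pi_1 \cong \Pi_2$.

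The step I expect to be the main obstacle is the existence half of the uniqueness statement — i.e.\ making sure that the representation $\Pi$ produced from $\rho$ via \cite{CDP}, \cite{CDP2} genuinely lands in the block $\BB$ and in $\Irr(\mm_x, L')$ with the correct central character, rather than merely being \emph{some} Banach representation mapping to $\rho$. This requires matching up the reduction mod $p$ of a lattice in $\Pi$ with the block decomposition, using that $\rhobar_{\BB}$ is the semisimplification attached to $\BB$ and that $\cV$ is compatible with reduction; once one knows $\Theta \otimes_\OO k$ has all Jordan--H\"older factors in $\BB$, membership in $\dualcat(\OO)_{\BB}$ and then in the $\mm_x$-summand follows formally from Proposition~\ref{action_centre}. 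The trace-determines-representation input and the faithfulness of $\cV$ on the quotient category are both standard given the results already established, so they should not cause real trouble.
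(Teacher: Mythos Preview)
Your approach mirrors the paper's: rule out case~(1) of Proposition~\ref{action_centre}, deduce $\cV(\Pi)\cong\rho$ for every irreducible $\Pi$ at $\mm_x$, then argue existence and uniqueness. The paper's proof is shorter because it cites two results from \cite{CDP} directly: existence is \cite[Theorem~1.1]{CDP}, and uniqueness is \cite[Theorem~1.8]{CDP}, which states precisely that two absolutely irreducible non-ordinary Banach representations with isomorphic $\cV$ are themselves isomorphic.

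Your uniqueness argument has a gap. Proposition~\ref{prop:DGalequiv} is a statement about the integral quotient category $\mathfrak Q(\OO)_{\BB}$, not about Banach representations. Knowing $\cV(\Pi_1)\cong\cV(\Pi_2)$ as $L'$-representations only tells you that the $\OO$-lattices $\cV(\Theta_1^d)$ and $\cV(\Theta_2^d)$ become isomorphic after inverting $p$; it does not produce an isomorphism $\cT\Theta_1^d\cong\cT\Theta_2^d$ in $\mathfrak Q(\OO)_{\BB}$, which is what you would need to invoke that equivalence. Lifting the integral equivalence to injectivity of $\cV$ on isomorphism classes of irreducible Banach representations is exactly the content of \cite[Theorem~1.8]{CDP}, so you should cite it rather than attempt to re-derive it from the mod~$p$ statement.

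For existence, your worry is misplaced: \cite[Theorem~1.1]{CDP} produces $\Pi$ directly, and since $T_x$ reduces to $\tr\rhobar_{\BB}$ the reduction of any lattice in $\Pi$ has irreducible constituents in $\BB$ automatically, so $\Pi$ lands in the correct block; then $c_{\Pi}=x$ follows from Proposition~\ref{action_centre}~(2).
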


\begin{proof} It follows from \cite[Theorem 1.1]{CDP} that such $\Pi$ exists. We may enlarge $L'$ so that all $\Pi' \in  \Irr(\mm_x, L')$ are absolutely irreducible. Since $c_{\Pi'}=x$ we cannot be in part (1) of Proposition \ref{action_centre} thus we must be in part (2) and $\tr \cV(\Pi)=\tr \cV(\Pi')$. Since both $\cV(\Pi')$ and $\cV(\Pi)$ are absolutely irreducible, we deduce that $\cV(\Pi)\cong \cV(\Pi')$ and \cite[Theorem 1.8]{CDP} implies that $\Pi\cong \Pi'$.  
\end{proof}

\subsection{The centre} \label{sec_main_centre}
We fix a block $\BB$ as in the previous section and will explore the relation between  $R^{\ps, \zeta\varepsilon}_{\tr\rhobar_{\BB}}$ and $\ZB$. So far we have constructed a finite map
\begin{equation}\label{map_centre}
R^{\ps, \zeta\varepsilon}_{\tr\rhobar_{\BB}}\rightarrow \ZB'\twoheadrightarrow \ZB,
\end{equation}
Theorem \ref{thm:main}, Corollary \ref{surjectiveZB}. 
We have shown in Corollary \ref{normal} that $R^{\ps, \zeta\varepsilon}_{\tr\rhobar_{\BB}}[1/p]$ is normal and we know by \cite[Theorem 2.1]{che_unpublished} that $R^{\ps, \zeta\varepsilon}_{\tr\rhobar_{\BB}}[1/p]$ is equidimensional and the locus corresponding to absolutely irreducible pseudorepresentations is Zariski dense in $\Spec R^{\ps, \zeta\varepsilon}_{\tr\rhobar_{\BB}}[1/p]$. 

\begin{prop}\label{VS} Let $R^{\square, \zeta\varepsilon}_{\rhobar_{\BB}}$ be the 
universal framed deformation ring of $\rhobar_{\BB}$ with fixed determinant $\zeta\varepsilon$, let $S$ be its maximal $\OO$-torsion free quotient and let $V_S$ be a free $S$-module of rank $2$ with $\gal$-action induced by the universal deformation $\gal\rightarrow \GL_2(  R^{\square, \zeta\varepsilon}_{\rhobar_{\BB}})\twoheadrightarrow \GL_2(S)$. There is $N$ in $\dualcat(\OO)$ with a continuous action of $S$, which commutes with the action of $G$, such that we have an isomorphism of $S[\gal]$-modules $\cV(N)\cong V_{S}$.
\end{prop}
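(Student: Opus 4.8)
The plan is to realise $N$ as an inverse limit of finite-level pieces, each supplied by the $p$-adic local Langlands correspondence. Write $\mm_S$ for the maximal ideal of $S$. Since $S$ is a complete local noetherian $\OO$-algebra with residue field $k$, each $S/\mm_S^n$ is artinian local with residue field $k$, hence finite over $\OO$, so $V_n:=V_S/\mm_S^n V_S$ is a finite-length continuous $\OO[\gal]$-module deforming $\rhobar_{\BB}$ with determinant $\zeta\varepsilon\bmod\mm_S^n$; in particular the semisimplification of $V_n/\varpi V_n$ is a direct sum of copies of $\rhobar_{\BB}$, so every Jordan--H\"older factor of $V_n/\varpi V_n$ occurs in $\rhobar_{\BB}$. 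Because $V_S=\varprojlim_n V_n$ and $\cV$ commutes with inverse limits by construction, it suffices to produce objects $N_n\in\dualcat(\OO)_{\BB}$, each equipped with a continuous $S/\mm_S^n$-action commuting with the $G$-action and an $(S/\mm_S^n)[\gal]$-linear isomorphism $\cV(N_n)\cong V_n$, compatibly with the transition maps; then $N:=\varprojlim_n N_n$ does the job.

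The construction of $N_n$ rests on the fact that $V_n$ lies in the image of $\cV$. By \cite{CDP}, \cite{CDP2} — together, for the exceptional blocks, with the density and capture arguments assembled in Section \ref{density} — every finite-length deformation of $\rhobar_{\BB}$ with determinant $\zeta\varepsilon$ lies in the image of $\cV$; equivalently, by Proposition \ref{prop:DGalequiv}, $V_n$ lies in the essential image of $\cV\colon\mathfrak Q(\OO)_{\BB}\to\mpro_{\gal}(\OO)$, which, being full in $\mpro_{\gal}(\OO)$, satisfies $\End_{\mpro_{\gal}(\OO)}(V_n)\cong\End_{\mathfrak Q(\OO)}(\overline N_n)$ for any $\overline N_n\in\mathfrak Q(\OO)_{\BB}$ with $\cV(\overline N_n)\cong V_n$. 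I would then let $N_n:=\md_n\wtimes_{\EB'}\PB'$, where $\md_n:=\Hom_{\mathfrak Q(\OO)}(\cT\PB',\overline N_n)$ is the $\EB'$-module corresponding to $\overline N_n$ under Proposition \ref{prop:CEeuqiv}(2), finitely generated since $\overline N_n$ is of finite length. Then $\cT N_n\cong\md_n\wtimes_{\EB'}\cT\PB'\cong\overline N_n$, and, $N_n$ being a quotient of a power of $\PB'$, one has $N_n^{\SL_2(\Qp)}=(N_n)_{\SL_2(\Qp)}=0$ by Lemmas \ref{lem:PSL2} and \ref{lem:Jinvfinite}; hence $\cV(N_n)=\cV(\overline N_n)\cong V_n$ and, by \cite[Lemma 10.26]{image}, $\End_{\dualcat(\OO)}(N_n)\cong\End_{\mathfrak Q(\OO)}(\overline N_n)$. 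Transporting the $\gal$-equivariant $S/\mm_S^n$-module structure on $V_n$ through these isomorphisms gives a continuous $S/\mm_S^n$-action on $N_n$ commuting with $G$, and the surjections $V_{n+1}\twoheadrightarrow V_n$ transport to $G$- and $S$-equivariant surjections $N_{n+1}\twoheadrightarrow N_n$ by right exactness of $(\cdot)\wtimes_{\EB'}\PB'$. Then $N:=\varprojlim_n N_n$ lies in $\dualcat(\OO)_{\BB}$, carries the $S=\varprojlim_n S/\mm_S^n$-action commuting with $G$, and satisfies $\cV(N)\cong\varprojlim_n V_n=V_S$ as $S[\gal]$-modules. (Equivalently one may first assemble, in the same way, a single object $\widetilde N$ over $R^{\square,\zeta\varepsilon}_{\rhobar_{\BB}}$ with $\cV(\widetilde N)$ the universal framed deformation, and take $N:=\widetilde N\wtimes_{R^{\square,\zeta\varepsilon}_{\rhobar_{\BB}}}S$, using that $\cV$ commutes with $(\cdot)\wtimes_{R^{\square,\zeta\varepsilon}_{\rhobar_{\BB}}}S$ as in the proof of \cite[Lemma 5.53]{image}.)

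I expect the main obstacle to be the input used at the start of the second paragraph: that for the exceptional blocks, where $p\in\{2,3\}$ and $\rhobar_{\BB}=\chi\oplus\chi\omega$ is reducible, every finite-length deformation of $\rhobar_{\BB}$ with determinant $\zeta\varepsilon$ lies in the image of $\cV$ — equivalently, that the essential image of $\cV$ is the entire block of $\mpro_{\gal}(\OO)$ attached to $\rhobar_{\BB}$. This is not a formal consequence of the earlier sections; it depends on \cite{CDP}, \cite{CDP2}, which build on the Berger--Breuil description of universal unitary completions of locally algebraic principal series and on the density and capture arguments of Section \ref{density} (as already exploited in the proof of Theorem \ref{thm:main}). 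The remaining steps are bookkeeping with the block decomposition \eqref{equa:blockdecomp} and the equivalences of Propositions \ref{prop:DGalequiv} and \ref{prop:CEeuqiv}.
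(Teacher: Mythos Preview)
Your approach differs from the paper's and has a genuine circularity problem. The paper argues as follows: by \cite[Theorem 10.1]{CDP2}, for each $x\in\mSpec S[1/p]$ the specialization $V_S\otimes_S\kappa(x)$ lies in the image of $\cV$; since $S[1/p]$ is reduced (Propositions \ref{normal_framed}, \ref{go_char}) and Jacobson, such points are Zariski-dense, and then Colmez's \cite[Theorem II.3.3]{MR2642409} directly produces $N$ over all of $S$. The input from \cite{CDP}, \cite{CDP2} is used only in characteristic zero, at closed points of the generic fibre.

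Your argument instead needs that each artinian quotient $V_n=V_S/\mm_S^nV_S$ lies in the essential image of $\cV$. This is not what \cite{CDP}, \cite{CDP2} provide: those papers concern Banach space representations over $L$, not torsion deformations. For the exceptional blocks the statement ``every finite-length deformation of $\rhobar_{\BB}$ with determinant $\zeta\varepsilon$ lies in the image of $\cV$'' is equivalent to knowing that the surjection of Theorem \ref{thm:main} is an isomorphism onto $(\EB')^{\op}$ after killing $\OO$-torsion, i.e.\ to Theorem \ref{main2}. But the proof of Theorem \ref{main2} \emph{uses} Proposition \ref{VS} (via Corollary \ref{faithful}) to show injectivity of \eqref{inj_tf}. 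So your key input is downstream of the proposition you are trying to prove. The density and capture arguments of Section \ref{density} only establish the surjection $R^{\ps,\zeta\varepsilon}_{\tr\rhobar_{\BB}}\br{\gal}/J\twoheadrightarrow(\EB')^{\op}$; they say nothing about its kernel, and it is precisely the vanishing of that kernel (modulo torsion) that would be needed to push $V_n$ into the essential image.

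A secondary technical point: your appeal to Lemma \ref{lem:Jinvfinite} for $N_n^{\SL_2(\Qp)}=0$ is misplaced, since $N_n$ is a quotient of a projective, not a subobject, and $(-)^{\SL_2(\Qp)}$ is only left exact; one would need a separate argument via the Jordan--H\"older factors of $N_n$. But this is moot given the circularity above.
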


\begin{proof}
If $x\in \mSpec S[1/p]$ then the specialization of $V_S$ at $x$ lies in the image of $\cV$ by \cite[Theorem 10.1]{CDP2}. Since $S[1/p]$ is reduced (Propositions \ref{normal_framed}, \ref{go_char}) and Jacobson, such points will be dense, and the existence of such $N$ follows from \cite[Theorem II.3.3]{MR2642409}.
\end{proof}

The subscript $\tf$ will indicate the maximal $\OO$-torsion free quotient.

\begin{thm}\label{main2} 
The surjection $R^{\ps, \zeta\varepsilon}_{\tr\rhobar_{\BB}}\br{\gal}/ J \twoheadrightarrow \End_{\EB'}^{\cont}(\cV(\PB'))$ in Theorem \ref{thm:main} identifies $\End_{\EB'}^{\cont}(\cV(\PB'))$ with $(R^{\ps, \zeta\varepsilon}_{\tr\rhobar_{\BB}}\br{\gal}/ J)_{\tf}$. In particular,  \eqref{map_centre} induces an isomorphism 
\begin{equation}\label{invert_p} 
R^{\ps, \zeta\varepsilon}_{\tr\rhobar_{\BB}}[1/p]\overset{\cong}{\longrightarrow} \ZB'[1/p].
\end{equation}
Moreover, if $p\neq 2$ then $\ZB'= (R^{\ps, \zeta\varepsilon}_{\tr\rhobar_{\BB}})_{\tf}$ and if $p=2$ then the cokernel of \eqref{map_centre} is killed by $2$.
\end{thm}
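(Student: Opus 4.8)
The plan is to show that the surjection $\pi\colon R^{\ps, \zeta\varepsilon}_{\tr\rhobar_{\BB}}\br{\gal}/J\twoheadrightarrow\End_{\EB'}^{\cont}(\cV(\PB'))$ of Theorem \ref{thm:main} has kernel exactly the $\OO$-torsion submodule, and then to pass to centres. First I would note that $\PB'$ is $\OO$-torsion free (being projective in $\CC(\OO)$), so that applying the exact functor $\cV$ to $0\to\PB'\xrightarrow{\varpi}\PB'$ shows $\cV(\PB')$ is $\OO$-torsion free; an endomorphism of $\cV(\PB')$ killed by a power of $\varpi$ kills every element, so $\End_{\EB'}^{\cont}(\cV(\PB'))$ is $\OO$-torsion free as well. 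Hence $\pi$ kills the $\OO$-torsion of $R^{\ps, \zeta\varepsilon}_{\tr\rhobar_{\BB}}\br{\gal}/J$ and factors through a surjection $(R^{\ps, \zeta\varepsilon}_{\tr\rhobar_{\BB}}\br{\gal}/J)_{\tf}\twoheadrightarrow\End_{\EB'}^{\cont}(\cV(\PB'))$, whose target is $(\EB')^{\op}$, or $M_2((\EB')^{\op})$ in the supersingular case, by Proposition \ref{prop:surjgalE}.

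The crucial point is that this factored map is injective. For this I would use Proposition \ref{VS}: pick $N$ in $\CC(\OO)$ with a continuous $S$-action commuting with $G$ and $\cV(N)\cong V_S$ as $S[\gal]$-modules, where $S=(R^{\square,\zeta\varepsilon}_{\rhobar_{\BB}})_{\tf}$. One first checks $N$ may be taken in $\CC(\OO)_{\BB}$: writing $N=\prod_{\BB''}N_{\BB''}$ according to the block decomposition, the summand $\bigoplus_{\BB''\neq\BB}\cV(N_{\BB''})$ is an $\OO$-torsion free $S[\gal]$-direct summand of $V_S$ whose base change to a Zariski-dense subset of $\Spec S[1/p]$ vanishes, since there $V_S$ is absolutely irreducible two-dimensional with residual representation $\rhobar_{\BB}$, and is therefore zero. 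With $N\in\CC(\OO)_{\BB}$, Proposition \ref{prop:CEeuqiv} gives $\cT N\cong\mathcal M\wtimes_{\EB'}\cT\PB'$ for a pseudo-compact right $\EB'$-module $\mathcal M$, and the compatibility of $\cV$ with $\wtimes_{\EB'}$ used in the proof of Theorem \ref{thm:main} yields a $\gal$-equivariant isomorphism $V_S\cong\cV(N)\cong\mathcal M\wtimes_{\EB'}\cV(\PB')$; in particular any element of $\OO\br{\gal}$ acting as zero on $\cV(\PB')$ acts as zero on $V_S$. Now let $\bar r$ lie in the kernel of $(R^{\ps, \zeta\varepsilon}_{\tr\rhobar_{\BB}}\br{\gal}/J)_{\tf}\to\End_{\EB'}^{\cont}(\cV(\PB'))$; lift it to $r\in R^{\ps, \zeta\varepsilon}_{\tr\rhobar_{\BB}}\br{\gal}/J$ and, using surjectivity of $\beta$ from Proposition \ref{prop:galsurj}, further to $\tilde r\in\OO\br{\gal}$. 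Then $\alpha(\tilde r)=\pi(r)=0$, so $\tilde r$ acts as zero on $\cV(\PB')$, hence on $V_S$. But the $\OO\br{\gal}$-action on $V_S$ factors through $R^{\ps, \zeta\varepsilon}_{\tr\rhobar_{\BB}}\br{\gal}/J\to\End_S(V_S)$ (the Cayley--Hamilton identity for the universal framed deformation $\gal\to\GL_2(S)$ carries $J$ to $0$), and since $\End_S(V_S)$ is $\OO$-torsion free it further factors through $(R^{\ps, \zeta\varepsilon}_{\tr\rhobar_{\BB}}\br{\gal}/J)_{\tf}$, on which it is injective by the faithfulness statement of Proposition \ref{Ztf}. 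Since $r$ maps to $\bar r$ there, we conclude $\bar r=0$, which proves the first assertion.

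To deduce the consequences I would take centres. The centre of a matrix ring $M_r(A)$ is $Z(A)$ and $Z(A)=Z(A^{\op})$, so $Z(\End_{\EB'}^{\cont}(\cV(\PB')))=Z(\EB')=\ZB'$, and the isomorphism just established identifies $Z\big((R^{\ps, \zeta\varepsilon}_{\tr\rhobar_{\BB}}\br{\gal}/J)_{\tf}\big)$ with $\ZB'$ compatibly with the natural map from $R^{\ps, \zeta\varepsilon}_{\tr\rhobar_{\BB}}$. I would then invoke the computation of the centre of this Cayley--Hamilton algebra carried out in the appendix: using that $R^{\ps, \zeta\varepsilon}_{\tr\rhobar_{\BB}}[1/p]$ is normal (Corollary \ref{normal}) one obtains $Z\big((R^{\ps, \zeta\varepsilon}_{\tr\rhobar_{\BB}}\br{\gal}/J)_{\tf}\big)[1/p]=R^{\ps, \zeta\varepsilon}_{\tr\rhobar_{\BB}}[1/p]$, giving \eqref{invert_p}; the same analysis gives $Z\big((R^{\ps, \zeta\varepsilon}_{\tr\rhobar_{\BB}}\br{\gal}/J)_{\tf}\big)=(R^{\ps, \zeta\varepsilon}_{\tr\rhobar_{\BB}})_{\tf}$ when $p\neq 2$ and shows the cokernel of $(R^{\ps, \zeta\varepsilon}_{\tr\rhobar_{\BB}})_{\tf}\to Z\big((R^{\ps, \zeta\varepsilon}_{\tr\rhobar_{\BB}}\br{\gal}/J)_{\tf}\big)$ is killed by $2$ when $p=2$. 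Since $R^{\ps, \zeta\varepsilon}_{\tr\rhobar_{\BB}}$ surjects onto its maximal $\OO$-torsion free quotient, these descend to the asserted statements for \eqref{map_centre}.

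The step I expect to be the main obstacle is the middle one: producing $N$ in the correct block $\BB$ and correctly identifying the a priori distinct $\OO\br{\gal}$-actions — on $\cV(\PB')$, on $\cV(N)$, and on $V_S$ — so that the faithfulness input from the appendix (Proposition \ref{Ztf}) can be applied. That faithfulness statement, the centre computation for Cayley--Hamilton algebras, and the normality of $R^{\ps, \zeta\varepsilon}_{\tr\rhobar_{\BB}}[1/p]$ are the substantive ingredients, but they are supplied elsewhere; once granted, the argument above is essentially formal.
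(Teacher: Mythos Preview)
Your approach is essentially the same as the paper's: factor through the torsion-free quotient, use Proposition \ref{VS} to realize $V_S$ in the essential image of $\cV$, invoke faithfulness of $(R^{\ps,\zeta\varepsilon}_{\tr\rhobar_{\BB}}\br{\gal}/J)_{\tf}$ on $V_S$ to get injectivity, and then read off the centre statements. Two minor corrections: the faithfulness input you need is Corollary \ref{faithful} (with fixed determinant), not Proposition \ref{Ztf}; and the hypotheses required for Proposition \ref{Ztf} at the end are reducedness of $R^{\square,\zeta\varepsilon}_{\rhobar_{\BB}}[1/p]$ together with density of the absolutely irreducible locus (supplied by Propositions \ref{normal_framed} and \ref{go_char}), rather than normality of $R^{\ps,\zeta\varepsilon}_{\tr\rhobar_{\BB}}[1/p]$. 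Your explicit verification that $N$ may be taken in $\CC(\OO)_{\BB}$ is a point the paper's proof leaves implicit.
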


\begin{proof} 
As already explained in the proof of Proposition \ref{HMP}, 
projective objects in $\dualcat(\OO)$ are also projective in the category 
of compact $\OO\br{K'}$-modules, where $K'$ is an open pro-$p$ subgroup of $\SL_2(\Qp)$ intersecting $Z$ trivially, and thus are $\OO$-torsion free. 
Hence, $\PB'$ is $\OO$-torsion free. Since $\EB'$ and $\ZB'$ act faithfully on $\PB'$ we deduce that both rings are $\OO$-torsion free. Since $\End_{\EB'}^{\cont}(\cV(\PB'))$ is either $(\EB')^{\op}$ or $M_2(\EB')$ by Proposition \ref{prop:surjgalE}, we deduce that the map in Theorem \ref{thm:main} factors through 
\begin{equation}\label{inj_tf}
(R^{\ps, \zeta\varepsilon}_{\tr\rhobar_{\BB}}\br{\gal}/ J)_{\tf}\twoheadrightarrow \End_{\EB'}^{\cont}(\cV(\PB')).
\end{equation}
If $a$ lies in the kernel of this map, then it will kill $\cV(\PB')$ and hence $\md\wtimes_{\EB'} \cV(\PB')$ for all compact right $\EB'$-modules. Thus $a$ will kill all objects in the essential image of $\cV$. Thus it will also kill the representation $V_S$ defined in Proposition \ref{VS}.

It follows from Propositions \ref{normal_framed}, \ref{go_char} that 
the ring $R^{\square, \zeta\varepsilon}_{\rhobar_{\BB}}[1/p]$ is normal 
and the absolutely irreducible locus is dense in $\Spec R^{\square, \zeta\varepsilon}_{\rhobar_{\BB}}[1/p]$. Corollary \ref{faithful} implies that $(R^{\ps, \zeta\varepsilon}_{\tr\rhobar_{\BB}}\br{\gal}/ J)_{\tf}$
acts faithfully on $V_S$, hence $a=0$ and \eqref{inj_tf} is injective. 

The assertions about the centre follow from Proposition \ref{Ztf}.
\end{proof}

We immediately obtain. 

\begin{cor}\label{tf_red} 
$\ZB'$ is a complete local noetherian $\OO$-algebra with residue field 
$k$. It is $\OO$-torsion free and $\ZB'[1/p]$ is normal. 
\end{cor}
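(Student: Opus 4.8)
The plan is to deduce the corollary from Theorems \ref{thm:main} and \ref{main2} together with Corollary \ref{normal}. First I would note that, by Theorem \ref{thm:main}, the homomorphism \eqref{map_centre} exhibits $\ZB'$ as a finitely generated module over $R^{\ps,\zeta\varepsilon}_{\tr\rhobar_{\BB}}$, which is noetherian by \cite[Proposition F]{che_durham}; hence $\ZB'$ is noetherian. Since $R^{\ps,\zeta\varepsilon}_{\tr\rhobar_{\BB}}$ is a complete local noetherian $\OO$-algebra with residue field $k$, the finite module $\ZB'$ is complete for its maximal-adic topology, and $\ZB'$, being a commutative ring that is module-finite over a complete local noetherian ring, is a finite product of complete local rings. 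The $\OO$-torsion freeness of $\ZB'$ was already recorded in the proof of Theorem \ref{main2}: $\PB'$ is $\OO$-torsion free and $\ZB'$ acts faithfully on it. Finally, inverting $p$ in \eqref{map_centre} and invoking \eqref{invert_p} identifies $\ZB'[1/p]$ with $R^{\ps,\zeta\varepsilon}_{\tr\rhobar_{\BB}}[1/p]$, which is normal by Corollary \ref{normal}.

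It remains to see that the finite product of complete local rings $\ZB'$ consists of a single factor with residue field $k$. For the first point it suffices to show that $\EB'$ has no non-trivial idempotent, since $\ZB'=Z(\EB')$; equivalently, by Proposition \ref{prop:CEeuqiv}, that the category $\mathfrak Q(\OO)_{\BB}$ is indecomposable. Once $\ZB'$ is known to be local, the residue field is $k$ because $\ZB'$ surjects onto $\ZB$ by Corollary \ref{surjectiveZB}, the ring $\ZB$ has residue field $k$ by Lemma \ref{ZB_local}, and a surjection of a field onto $k$ is an isomorphism, so $\ZB'/\mathfrak m_{\ZB'}\cong k$.

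So the heart of the matter is the indecomposability of $\mathfrak Q(\OO)_{\BB}$, which I would establish by inspecting the classification of blocks. When $\mathfrak Q(\OO)_{\BB}$ has a single irreducible object — the supersingular blocks and those of types (iii), (vi) — there is nothing to prove. When $\PB'=\PB$, in particular for the principal series blocks of type (ii), we have $\ZB'=\ZB$ and $\EB'=\EB$, which is indecomposable because $\CC(\OO)_{\BB}$ is a block. In the remaining cases, types (iv) and (v), the category $\mathfrak Q(\OO)_{\BB}$ has exactly two irreducible objects, and under the equivalence of Proposition \ref{prop:DGalequiv} the functor $\cV$ carries them to two distinct characters $\chi_1,\chi_2$ of $\gal$ with $\chi_1\chi_2^{-1}\in\{\omega,\omega^{-1}\}$; since $H^1(\gal,\omega^{\pm 1})\ne 0$ by the local Euler characteristic formula and the corresponding extensions of characters lie in the essential image of $\cV$, these two irreducible objects are linked, so $\mathfrak Q(\OO)_{\BB}$ is indecomposable. (Alternatively, for $p\ne 2$ one may simply quote Theorem \ref{main2}, which gives $\ZB'=(R^{\ps,\zeta\varepsilon}_{\tr\rhobar_{\BB}})_{\tf}$, a quotient of $R^{\ps,\zeta\varepsilon}_{\tr\rhobar_{\BB}}$ and hence at once complete, local, noetherian and $\OO$-torsion free with residue field $k$.) I expect this last step to be the only genuinely non-formal one: a Serre quotient of an indecomposable abelian category need not remain indecomposable, so the input that $\cV$ realizes $\mathfrak Q(\OO)_{\BB}$ as a category of $\gal$-representations with non-trivial extensions between the simple objects is essential.
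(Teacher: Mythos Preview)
Your proof is correct and follows the paper's approach; the paper itself just writes ``We immediately obtain'' and leaves the reader to unpack Theorem~\ref{main2}. You spell out exactly the intended steps: noetherianity from Theorem~\ref{thm:main}, $\OO$-torsion freeness from the proof of Theorem~\ref{main2}, and normality of $\ZB'[1/p]$ from \eqref{invert_p} together with Corollary~\ref{normal}.

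For locality and the residue field, the cleanest argument is the one you give parenthetically: when $p\neq 2$, Theorem~\ref{main2} identifies $\ZB'$ with $(R^{\ps,\zeta\varepsilon}_{\tr\rhobar_{\BB}})_{\tf}$, a quotient of a complete local noetherian $\OO$-algebra with residue field $k$, so there is nothing to do. When $p=2$, the only blocks are of types (i), (ii), (vi); in types (i) and (vi) the quotient category $\mathfrak Q(\OO)_{\BB}$ has a single irreducible object so $\EB'$ is already local, and in type (ii) one has $\PB'=\PB$ and hence $\ZB'=\ZB$, which is local by Lemma~\ref{ZB_local}. This case split is really all that is behind the paper's ``immediately''.

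Your alternative route for types (iv) and (v) via $H^1(\gal,\omega^{\pm 1})\neq 0$ is more work than needed and has a small gap: you assert that the non-split Galois extensions of $\chi_1$ by $\chi_2$ lie in the essential image of $\cV$, but Proposition~\ref{prop:DGalequiv} only gives an equivalence onto the essential image, not that the essential image is closed under extensions in $\mpro_{\gal}(\OO)$. Since types (iv) and (v) have $p\neq 2$, this detour is unnecessary; just invoke Theorem~\ref{main2} directly as you already note.
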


\begin{cor}\label{Zequal} 
$\ZB= \ZB'$. 
\end{cor}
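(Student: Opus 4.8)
The plan is to leverage the structural results already established: we have the surjection $\ZB' \twoheadrightarrow \ZB$ from Corollary \ref{surjectiveZB}, and the finiteness results from Theorem \ref{thm:main} and Corollary \ref{cor:allfinite}. The idea is to show this surjection becomes an isomorphism after inverting $p$, and then to use $\OO$-torsion freeness to descend back to an isomorphism of rings. First I would recall that $\MB = \ker(\PB \to (\PB)_{\SL_2(\Qp)})$ satisfies $\EB \cong \End_{\CC(\OO)}(\MB)$ by Proposition \ref{prop:MBcentre}, and that by Corollary \ref{cor:MBfg} there is a surjection $(\PB')^{\oplus n} \twoheadrightarrow \MB$. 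Hence $\md := \Hom_{\CC(\OO)}(\PB', \MB)$ is a finitely generated $\EB'$-module with $\End_{\EB'}(\md) \cong \EB$ and $Z(\End_{\EB'}(\md)) \cong \ZB$.

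Next I would analyze what happens after inverting $p$. By Theorem \ref{main2}, $R^{\ps,\zeta\varepsilon}_{\tr\rhobar_{\BB}}[1/p] \cong \ZB'[1/p]$, and by \eqref{invert_p} this is normal. The key point is that $\md[1/p] = \md \otimes_{\OO} L$ should be a projective (indeed locally free) module over $\EB'[1/p]$, or at least over its centre $\ZB'[1/p]$, of constant rank; this is where I would expect the main work to lie. One approach: use the equivalence of categories $\mathfrak Q(\OO)_{\BB} \cong \PC(\EB')$ of Proposition \ref{prop:CEeuqiv} together with the fact that $\cT\MB$ sits in the exact sequence $0 \to \cT\MB \to \cT\PB \to \cT(\PB/\MB) \to 0$; since $\SL_2(\Qp)$ acts trivially on $\PB/\MB$, we have $\cT\PB \cong \cT\MB$, so actually $\md \cong \Hom_{\mathfrak Q(\OO)}(\cT\PB', \cT\PB)$. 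Comparing cosocles via Proposition \ref{prop:DGalequiv} and the explicit block structure (the cosocle of $\PB$ involves each irreducible in $\BB$ exactly once, but only the non-character ones survive in the quotient category), one sees that $\md$ has the same cosocle as some explicit direct sum of copies of $\PB'$, and the number of copies equals the $\ZB'$-rank locally. The upshot is that $\md[1/p]$ is locally free of rank one over $\ZB'[1/p]$ on the irreducible-pseudorepresentation locus (where $\EB' \otimes \kappa(\mm)$ is a matrix algebra or a field), so $\End_{\EB'[1/p]}(\md[1/p])$ has centre $\ZB'[1/p]$, giving $\ZB[1/p] = \ZB'[1/p]$.

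Then I would descend: both $\ZB$ and $\ZB'$ are $\OO$-torsion free complete local noetherian $\OO$-algebras (the latter by Corollary \ref{tf_red}; the former follows because $\ZB$ acts faithfully on $\MB$ which is $\OO$-torsion free, being a submodule of the $\OO$-torsion free $\PB$). The surjection $\ZB' \twoheadrightarrow \ZB$ of Corollary \ref{surjectiveZB} becomes an isomorphism after inverting $p$; since $\ZB'$ is $\OO$-torsion free, the kernel of $\ZB' \to \ZB$ is an $\OO$-torsion free ideal that is killed by a power of $p$, hence is zero. Therefore $\ZB' = \ZB$.

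The main obstacle I anticipate is establishing that $\md[1/p]$ is locally free of the correct rank over the centre $\ZB'[1/p]$ — equivalently, controlling the cosocle/generation of $\MB$ as a $\PB'$-module precisely enough, rather than just up to finite multiplicity. This requires combining the explicit description of the blocks of type (iv), (v), (vi), the equivalence $\cV: \mathfrak Q(\OO)_{\BB} \iso \mathrm{image}$, and possibly the results on Banach space representations in \S\ref{main_banach} (e.g. Propositions \ref{irreducible_irr}, \ref{action_centre}) to pin down ranks fibrewise over $\mSpec \ZB'[1/p]$. If a direct rank computation is awkward, an alternative is to show $\md[1/p]$ is a faithful $\EB'[1/p]$-module that is moreover a generator, so that $\End_{\EB'[1/p]}(\md[1/p])$ is Morita-equivalent data with the same centre; faithfulness should follow from the density arguments of \S\ref{density} since $V_S$ of Proposition \ref{VS} lies in the image of $\cV$ and relates to $\MB$.
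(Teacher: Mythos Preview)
Your overall skeleton is right and matches the paper: reduce to showing the surjection $\ZB'\twoheadrightarrow\ZB$ (Corollary~\ref{surjectiveZB}) is an isomorphism after inverting $p$, then descend using that $\ZB'$ is $\OO$-torsion free (Corollary~\ref{tf_red}) and that $\ZB$ is $\OO$-torsion free (it acts faithfully on the torsion-free $\PB$). That part is fine.

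The gap is in your $[1/p]$ step. You propose to show $\md[1/p]$ is locally free (of rank one over $\ZB'[1/p]$, or a faithful generator over $\EB'[1/p]$) and then read off the centre via Morita-type arguments. You correctly flag this as the main obstacle, and indeed it is one: for blocks of type (iv), (v), (vi) you have to understand $\cT P_{\chi^\vee}$ for characters $\chi\in\BB$, and there is no reason a priori for the resulting $\EB'$-module $\Hom_{\mathfrak Q(\OO)}(\cT\PB',\cT\PB)$ to be projective, let alone locally free of rank one over the centre. Nothing in the paper establishes this, and your sketch does not provide a mechanism to prove it.

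The paper avoids this entirely. Once you know $\ZB'[1/p]$ is \emph{reduced} (Corollary~\ref{tf_red}), a surjection $\ZB'[1/p]\twoheadrightarrow\ZB[1/p]$ is an isomorphism as soon as $\mSpec\ZB[1/p]$ contains a subset $\Sigma$ that is Zariski dense in $\Spec\ZB'[1/p]$: the kernel then lies in every prime of $\ZB'[1/p]$, hence in the nilradical, hence is zero. For $\Sigma$ one takes the absolutely irreducible locus in $\mSpec R^{\ps,\zeta\varepsilon}_{\tr\rhobar_\BB}[1/p]\cong\mSpec\ZB'[1/p]$, which is dense by \cite{che_unpublished}. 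Each such point lies in $\mSpec\ZB[1/p]$ because, by \cite{CDP} (cf.\ Proposition~\ref{irreducible_irr}), there is a nonzero absolutely irreducible $\Pi\in\Ban^{\adm}_{G,\zeta}(L)_\BB$ on which $\ZB$ acts through that maximal ideal. This is a two-line argument once reducedness and density are in place; no structure theory of $\md$ is needed.
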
 

\begin{proof} 
Since $\ZB$ acts faithfully on $\PB$ it is $\OO$-torsion free. Thus it is enough to show that the surjection $\ZB'\twoheadrightarrow \ZB$, see Corollary \ref{surjectiveZB}, induces an isomorphism after inverting $p$. Since $\ZB'[1/p]$ is reduced by Corollary \ref{tf_red}, it is enough to show that $\mSpec \ZB[1/p]$ contains a subset $\Sigma$ of $\mSpec \ZB'[1/p]$, which is dense in $\Spec \ZB'[1/p]$. We may take $\Sigma$ to be the absolutely irreducible locus in $\mSpec R^{\ps, \zeta \varepsilon}_{\tr \rhobar_{\BB}}[1/p]$, as it is dense in $\Spec R^{\ps, \zeta \varepsilon}_{\tr \rhobar_{\BB}}[1/p]$ by \cite[Theorem 2.1]{che_unpublished} and lies in $\mSpec \ZB[1/p]$ by the main result of \cite{CDP}.
\end{proof} 

\begin{cor}\label{irr_irr} 
Let $L'$ be a finite extension of $L$ and let $x: R^{\ps, \zeta\varepsilon}_{\tr\rhobar_{\BB}} \rightarrow L'$ be an $\OO$-algebra homomorphism. If the specialization of the universal pseudodeformation $T:\gal\rightarrow R^{\ps, \zeta\varepsilon}_{\tr\rhobar_{\BB}}$ 
at $x$ is not of the form $\psi+ \psi\varepsilon$ then $\Ban^{\adm}_{G, \zeta}(L')^{\fl}_{\BB, \mm_x}$ is equivalent to the category of modules of finite  length over the completion of $(R^{\ps, \zeta\varepsilon}_{\tr\rhobar_{\BB}}\br{\gal}/ J)\otimes_{\OO} L'$ at $\mm_x$. 

Moreover, if $T_x =\tr \rho$, where $\rho: \gal\rightarrow \GL_2(L')$ is absolutely irreducible, then $\Ban^{\adm}_{G, \zeta}(L')^{\fl}_{\BB, \mm_x}$ is equivalent to the category of modules of finite  length over the deformation ring $R^{\zeta\varepsilon}_{\rho}$, which parameterizes the deformations of $\rho$ with determinant $\zeta \varepsilon$ to local artinian $L'$-algebras. 

In particular, if $\Pi'\in \Ban^{\adm}_{G, \zeta}(L')$ is killed by $\mm_x$ then $\Pi'$ is isomorphic to a direct sum of finitely many copies of $\Pi$ in Proposition \ref{irreducible_irr}.
\end{cor}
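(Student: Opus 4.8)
The strategy is to transport the statement through the (anti-)equivalences already at our disposal. By the block decomposition \eqref{sum0} and the anti-equivalence \eqref{equivalence}, refined by the splitting \eqref{equivalence2}, the functor $\md$ identifies $\Ban^{\adm}_{G, \zeta}(L')^{\fl}_{\BB, \mm_x}$ with the category of finite length right $\EB[1/p]\otimes_L L'$-modules annihilated by a power of $\mm_x$; since $\EB$ is finite over $R^{\ps, \zeta\varepsilon}_{\tr\rhobar_{\BB}}$ by Corollary \ref{cor:allfinite}, this is precisely the category of finite length modules over the $\mm_x$-adic completion of $\EB\otimes_L L'$. The same finiteness ensures that throughout the argument "finite length module" behaves well and is preserved under the Morita equivalences used below.

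The heart of the proof is to replace $\EB$ by $\EB'$ after completing at $\mm_x$, and this is where the hypothesis that $T_x$ is not of the form $\psi+\psi\varepsilon$ enters. By Corollary \ref{reducible_irr} and Proposition \ref{irreducible_irr} no object of $\Ban^{\adm}_{G, \zeta}(L')^{\fl}_{\BB, \mm_x}$ admits a character of $G$ or a twist of $\widehat{\Sp}$ as a subquotient, and, using the computation of $T$ at characters carried out in the proof of Proposition \ref{action_centre}, no $\SL_2(\Qp)$-trivial object survives at $\mm_x$. Using $\ZB=\ZB'$ (Corollary \ref{Zequal}), the isomorphism $\EB\cong\End_{\CC(\OO)}(\MB)$ with $\MB=\ker(\PB\to(\PB)_{\SL_2(\Qp)})$ (Proposition \ref{prop:MBcentre}), the surjection $(\PB')^{\oplus n}\twoheadrightarrow\MB$ (Corollary \ref{cor:MBfg}), the vanishing in Lemma \ref{lem:Extnull}, and the equivalence of the quotient category with its image under $\cV$ (Proposition \ref{prop:DGalequiv}), one checks, exactly as in the non-exceptional case treated in \cite{image} and as foreshadowed in the discussion following Corollary \ref{new_fibre}, that the $\mm_x$-adic completions of $\EB\otimes_L L'$ and of $\EB'\otimes_L L'$ have equivalent categories of finite length modules. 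Finally, Theorem \ref{main2} together with Proposition \ref{prop:surjgalE} identifies $(R^{\ps, \zeta\varepsilon}_{\tr\rhobar_{\BB}}\br{\gal}/J)_{\tf}$ with $(\EB')^{\op}$, or with $M_2((\EB')^{\op})$ when $\BB$ is supersingular; since inverting $p$ kills the difference between $R^{\ps, \zeta\varepsilon}_{\tr\rhobar_{\BB}}\br{\gal}/J$ and its maximal $\OO$-torsion free quotient, the completion of $(R^{\ps, \zeta\varepsilon}_{\tr\rhobar_{\BB}}\br{\gal}/J)\otimes_L L'$ at $\mm_x$ is Morita equivalent to the completion of $\EB'\otimes_L L'$ at $\mm_x$. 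Concatenating these equivalences gives the first assertion.

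For the "moreover", suppose $T_x=\tr\rho$ with $\rho\colon\gal\to\GL_2(L')$ absolutely irreducible, so that $\mm_x$ lies in the absolutely irreducible locus. Then the completion of $R^{\ps, \zeta\varepsilon}_{\tr\rhobar_{\BB}}\otimes_L L'$ at $\mm_x$ is the completed fixed-determinant deformation ring of $\rho$ (by \cite{che_durham}), and by the theory of Cayley--Hamilton algebras (Bella\"{i}che--Chenevier \cite{bel_che}, Chenevier \cite{che_durham}, and the faithfulness results of the appendix) the universal Cayley--Hamilton representation becomes an honest $2$-dimensional representation over this ring and exhibits the completion of $(R^{\ps, \zeta\varepsilon}_{\tr\rhobar_{\BB}}\br{\gal}/J)\otimes_L L'$ at $\mm_x$ as $M_2$ of it; Morita equivalence then turns the first assertion into the equivalence with finite length $R^{\zeta\varepsilon}_{\rho}$-modules. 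For the last assertion, if $\Pi'\in\Ban^{\adm}_{G, \zeta}(L')$ is killed by $\mm_x$ then $\md(\Pi')$ is a finitely generated module over the finite-dimensional $L'$-algebra $\EB\otimes_{R^{\ps, \zeta\varepsilon}_{\tr\rhobar_{\BB}}}\kappa(\mm_x)$ (Corollary \ref{cor:allfinite}), hence of finite length, so $\Pi'\in\Ban^{\adm}_{G, \zeta}(L')^{\fl}_{\BB, \mm_x}$; under the equivalence just established it corresponds to a finite length $R^{\zeta\varepsilon}_{\rho}$-module annihilated by the maximal ideal, i.e. to a finite-dimensional $L'$-vector space, and therefore $\Pi'$ is isomorphic to a finite direct sum of copies of the unique irreducible $\Pi$ of Proposition \ref{irreducible_irr}.

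The step I expect to be the main obstacle is the middle one: making precise that, at a point $\mm_x$ with $T_x\neq\psi+\psi\varepsilon$, discarding the projective covers of the characters in $\BB$ (that is, passing from $\PB$ to $\PB'$) does not change the relevant completed and $p$-inverted module category. This is exactly where the hypothesis on $T_x$, the classification of the irreducible objects of $\Ban^{\adm}_{G, \zeta}(L')^{\fl}_{\BB, \mm_x}$, and hence the analysis of the exceptional blocks (iv)--(vi), are genuinely used; the remaining steps merely assemble results already in hand.
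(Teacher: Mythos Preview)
Your overall strategy matches the paper's: reduce to $\EB'$ at $\mm_x$, identify its completion with that of $(R^{\ps,\zeta\varepsilon}_{\tr\rhobar_{\BB}}\br{\gal}/J)[1/p]$ via Theorem \ref{main2}, and in the absolutely irreducible case invoke the Azumaya property to get $M_2$ of the deformation ring and hence Morita equivalence with $R^{\zeta\varepsilon}_{\rho}$. The ``moreover'' and ``in particular'' paragraphs are essentially the paper's argument (the paper cites \cite[\S 4.1, 4.2]{che_durham} for the Azumaya statement and \cite[Lemma 2.3.3, Proposition 2.3.5]{kisin_moduli} for $\widehat{R}_{\mm_x}\cong R^{\zeta\varepsilon}_{\rho}$).

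The difference is in the step you yourself flag as the obstacle. You attempt to pass from $\EB$ to $\EB'$ by assembling Proposition \ref{prop:MBcentre}, Corollary \ref{cor:MBfg}, Lemma \ref{lem:Extnull}, Corollary \ref{Zequal} and then writing ``one checks, exactly as in the non-exceptional case\ldots''. That sentence is the gap: none of the cited statements gives, on its own or together, that the $\mm_x$-adic completions of $\EB\otimes_L L'$ and $\EB'\otimes_L L'$ have equivalent finite-length module categories; you would still need to argue that the idempotents corresponding to the projective envelopes of the characters become trivial after completing at $\mm_x$. The paper sidesteps this entirely: once Corollaries \ref{reducible_irr} and \ref{irreducible_irr} show that $\Irr(\mm_x,L')$ contains no characters, it applies \cite[Theorem 4.36]{image} \emph{directly with the projective $\PB'$}, obtaining at once that $\Ban^{\adm}_{G,\zeta}(L')^{\fl}_{\BB,\mm_x}$ is anti-equivalent to finite-length modules over $\EB'\otimes_R\widehat{R}_{\mm_x}$. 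So your route is not wrong in spirit, but the clean way through is to invoke \cite[Theorem 4.36]{image} with $\PB'$ rather than first landing in $\EB$ and then trying to shrink.
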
 

\begin{proof} 
After extending scalars we may assume that $L=L'$. If $T_x\neq \psi+\psi\varepsilon$ for any character $\psi$ then it follows from Corollaries \ref{reducible_irr}, \ref{irreducible_irr} that $\Irr(\mm_x, L')$ does not contain characters. We may apply \cite[Theorem 4.36]{image} to deduce that  $\Ban^{\adm}_{G, \zeta}(L')^{\fl}_{\BB, \mm_x}$ is anti-equivalent to the category of $\EB'\otimes_R \widehat{R}_{\mm_x}$-modules of finite length, where $R=R^{\ps, \zeta\varepsilon}_{\tr\rhobar_{\BB}}$. Theorem \ref{main2} implies that this ring coincides with the completion of $(R^{\ps, \zeta\varepsilon}_{\tr\rhobar_{\BB}}\br{\gal}/ J)[1/p]$ at $\mm_x$. 

Let us assume that $T_x= \tr \rho$ with $\rho$ absolutely irreducible. Then $\Irr(\mm_x, L')=\{\Pi\}$ with $\Pi$ absolutely irreducible by Proposition \ref{irreducible_irr} and $\cV(\Pi)\cong \rho$. It follows from \cite[\S 4.1, 4.2]{che_durham} that $(R\br{\gal}/ J)\otimes_R \widehat{R}_{\mm_x}$ is an Azumaya algebra over $\widehat{R}_{\mm_x}$. Since  $\rho$ is an absolutely irreducible $2$-dimensional module of $(R\br{\gal}/ J)\otimes_R \kappa(x)$, we conclude that $(R\br{\gal}/ J)\otimes_R \kappa(x) = M_2(\kappa(x))$ and thus $(R\br{\gal}/ J)\otimes_R \widehat{R}_{\mm_x}$ is isomorphic to the ring of $2\times 2$-matrices over $\widehat{R}_{\mm_x}$. Since  $M_2(\widehat{R}_{\mm_x})$ is  Morita equivalent to $\widehat{R}_{\mm_x}$, 
which is isomorphic to $R^{\zeta\varepsilon}_{\rho}$ by  \cite[Lemma 2.3.3 and Proposition 2.3.5]{kisin_moduli}, we obtain the first assertion. 
 
In particular, the full subcategory of $\Ban^{\adm}_{G, \zeta}(L')^{\fl}_{\BB, \mm_x}$ consisting of representations killed by $\mm_x$ is equivalent to the category of finite dimensional vector spaces over $L'$, and hence the last assertion follows. 
\end{proof} 
\subsection{Complements} We will prove Theorem \ref{intro_finite} stated in the introduction. 
Let $\BB$ be an arbitrary block, so that we do not assume that $\BB$ contains an absolutely irreducible 
representation. 

If  $\pi_1, \pi_2\in \Irr_{G, \zeta}(k)$ then it follows from \cite[Proposition 5.11]{image} that  there is a finite extension $k'$ of $k$ such that $\pi_1\otimes_k k'$ is a finite direct sum of absolutely irreducible representations; then $\pi_2\otimes_k k'$ is a finite direct sum of irreducible representations, each of them occurring with multiplicity one. Proposition 5.33 of \cite{image} implies that 
\begin{equation}\label{base_change_ext}
\Ext^1_{k[G], \zeta}(\pi_1, \pi_2)\otimes_k k'\cong \Ext^1_{k'[G],\zeta}(\pi_1\otimes_k k', \pi_2\otimes_k k').
\end{equation}

If $\Ext^1_{k[G], \zeta}(\pi_1, \pi_2)\neq 0$ then it follows from \eqref{base_change_ext} that 
there are irreducible summands $\pi_1'$ of $\pi_1\otimes_k k'$ and $\pi_2'$ of $\pi_2 \otimes_k k'$ 
such that $\Ext^1_{k'[G], \zeta}(\pi_1', \pi_2')\neq 0$. Since $\pi_1'$ is absolutely irreducible we conclude by inspecting the list of blocks in Section \ref{section:blocks} that $\pi_2'$ is absolutely irreducible,  thus if $\BB$ is the block containing $\pi_1$ then $\pi\otimes_k k'$ is a finite direct sum of absolutely irreducible representations for all $\pi\in \BB$.

Let $L'$ be a finite extension of $L$ with ring of integers $\OO'$ and residue field $k'$. 
If $\pi'_1, \pi_2'\in \Irr_{G,\zeta}(k')$ are absolutely irreducible then it follows from 
\cite[Proposition 5.11]{image} that there exist unique $\pi_1, \pi_2\in \Irr_{G, \zeta}(k)$ such that 
$\pi_1'$ is a direct summand of $\pi_1\otimes_k k'$ and  $\pi_2'$ is a direct summand 
of $\pi_2\otimes_k k'$. It follows from \eqref{base_change_ext} that if $\pi_1'$ and $\pi_2'$ 
lie in the same block in $\Mod^{\lfin}_{G, \zeta}(\OO')$ then $\pi_1$ and $\pi_2$ lie in the 
same block in $\Mod^{\lfin}_{G, \zeta}(\OO)$.

Thus if $\pi_1\in \BB$ and we let $\BB_1, \ldots, \BB_r$ be the blocks of irreducible subquotients 
of $\pi_1\otimes_k k'$ in $\Mod^{\lfin}_{G, \zeta}(\OO')$ and let $\BB\otimes_k k'$ be the set of isomorphism 
classes of irreducible subquotients of $\pi\otimes_k k'$ for all $\pi \in \BB$ then 
$$\BB\otimes_k k'= \bigcup_{i=1}^r \BB_i.$$
It follows from \cite[Corollary 5.40]{image} that   
$\PB\otimes_{\OO} \OO'\cong \prod_{i=1}^r P_{\BB_i}$ and 
$$\EB\otimes_{\OO} \OO'\cong \End_{\dualcat(\OO')}(\PB\otimes_{\OO} \OO')\cong \prod_{i=1}^r E_{\BB_i}.$$
Since the blocks $\BB_i$ contain only absolutely irreducible representations, it follows from
Corollary \ref{cor:allfinite} that $\EB\otimes_{\OO} \OO'$ is a finite module  over its centre 
$Z(\EB\otimes_{\OO} \OO')$ and 
$$Z(\EB)\otimes_{\OO} \OO'\cong Z(\EB\otimes_{\OO} \OO')\cong \prod_{i=1}^r Z_{\BB_i}$$
is noetherian, see the argument in the proof of \cite[Lemma 4.14]{DPS} for the first isomorphism.  Since $\OO'$ is a finite free $\OO$-module, this implies that 
$Z(\EB)$ is noetherian, and $\EB$ is a finitely generated $\ZB$-module, which finishes the proof 
of Theorem \ref{intro_finite}.

\section{Application to Hecke eigenspaces} 

Let $R$ be a linearly compact  local $R^{\ps, \zeta\varepsilon}_{\tr\rhobar_{\BB}}$-algebra with residue field $k$; we do not assume that $R$ is noetherian. 
If $x:R\rightarrow \Qpbar$ is an $\OO$-algebra homomorphism then we denote by $T_x$ the specialization of the universal pseudorepresentation $T: \gal \rightarrow R^{\ps, \zeta\varepsilon}_{\tr\rhobar_{\BB}}$ along $R^{\ps, \zeta\varepsilon}_{\tr\rhobar_{\BB}}\rightarrow R \overset{x}{\rightarrow} \Qpbar$. 

Let $M$ be an object of $\dualcat(\OO)_{\BB}$, which we assume to be $\OO$-torsion free. Then $\Pi(M):=\Hom_{\OO}^{\cont}(M, L)$ is a unitary $L$-Banach space representation of $G$.

We assume that we are given a continuous action of $R$ on $M$, which commutes with the action of $G$, such that the following hold: 
\begin{itemize}
\item the action of $R$ on $M$ is faithful;
\item the two actions of $R^{\ps, \zeta\varepsilon}_{\tr\rhobar_{\BB}}$ on $M$ induced by the maps 
$$R^{\ps, \zeta\varepsilon}_{\tr\rhobar_{\BB}}\rightarrow R, \quad R^{\ps, \zeta\varepsilon}_{\tr\rhobar_{\BB}}\rightarrow Z_{\BB}$$ 
coincide;
\item $M$ is a finitely generated $R\br{K}$-module. 
\end{itemize} 

\begin{thm}\label{lue_pan} 
Let $x: R\rightarrow \Qpbar$ be an $\OO$-algebra homomorphism and let $\Pi(M)[\mm_x]$ be the subspace of $\Pi(M)$ annihilated by the kernel of $x$. Then under the above assumptions $\Pi(M)[\mm_x]$ is non-zero and is of finite length in $\Ban^{\adm}_{G, \zeta}(L)$. Moreover, 
\begin{itemize}
\item if $T_x$ is the trace of an absolutely irreducible Galois representation defined over $\kappa(x)$ then $$\Pi(M)[\mm_x]\cong \Pi^{\oplus m},$$ for some multiplicity $m>0$, where $\Pi$ is an absolutely irreducible non-ordinary $\kappa(x)$-Banach space representation of $G$ satisfying $\tr \cV(\Pi)= T_x$. 
\item if $T_x$ is the trace of a reducible Galois representation then (after a possible extension of scalars) all the irreducible subquotients of $\Pi(M)[\mm_x]$ occur as subquotients of a direct sum of unitary parabolic induction:
$$ (\Ind_B^G \psi_1 \otimes \psi_2 \varepsilon^{-1})_{\cont} \oplus (\Ind_B^G \psi_2 \otimes \psi_1 \varepsilon^{-1})_{\cont},$$
where $\psi_1, \psi_2:\gal \rightarrow \kappa(x)^{\times}$ are characters such that $T_x= \psi_1 + \psi_2$. 
\end{itemize}
\end{thm}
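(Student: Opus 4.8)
The plan is to combine the equivalence of categories established earlier with the explicit description of the block-decomposition of admissible Banach space representations, and then trace through the Hecke-eigenspace construction. First I would observe that since $M$ is a finitely generated $R\br{K}$-module and $R$ is a linearly compact local $R^{\ps,\zeta\varepsilon}_{\tr\rhobar_{\BB}}$-algebra, the quotient $R/\mm_x$ (or its completion at $\mm_x$) is finite over $\kappa(x)$, hence $M\otimes_R R/\mm_x$ is finitely generated over $\OO\br{K}$ and in particular its cosocle is of finite length; by Corollary \ref{new_fibre} this puts $M\otimes_{R}(R/\mm_x)$ — equivalently the object of $\dualcat(\OO)_{\BB}$ dual to $\Pi(M)[\mm_x]$ — into the setting where $\md(-)$ is finitely generated over $\EB$. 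Dualizing, $\Pi(M)[\mm_x]$ is an admissible Banach space representation of finite length lying in $\Ban^{\adm}_{G,\zeta}(L)_{\BB}$, and by the compatibility of the two $R^{\ps,\zeta\varepsilon}_{\tr\rhobar_{\BB}}$-actions it is killed by a power of the maximal ideal $\mm$ of $R^{\ps,\zeta\varepsilon}_{\tr\rhobar_{\BB}}[1/p]$ lying under $x$; so it lies in $\Ban^{\adm}_{G,\zeta}(L)^{\fl}_{\BB,\mm}$.

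Next I would address non-vanishing: since the action of $R$ on $M$ is faithful, $x$ does not kill $R$, so $\mm_x M \neq M$, and because $M$ is finitely generated over $R\br{K}$ a topological Nakayama argument shows $M/\mm_x M \neq 0$; dualizing gives $\Pi(M)[\mm_x]\neq 0$. (One has to be slightly careful because $\mm_x$ need not be finitely generated, but since $M$ is compact and $R$ is linearly compact local the closed submodule $\mm_x M$ is a proper closed submodule, which suffices.)

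With finite length and non-vanishing in hand, the trichotomy follows from the analysis of the centre. If $T_x = \tr\rho$ with $\rho$ absolutely irreducible, then by Proposition \ref{irreducible_irr} the set $\Irr(\mm_x,\kappa(x))$ consists of a single absolutely irreducible non-ordinary $\Pi$ with $\cV(\Pi)\cong\rho$, and hence every object of $\Ban^{\adm}_{G,\zeta}(\kappa(x))^{\fl}_{\BB,\mm_x}$ of finite length has all Jordan–Hölder factors isomorphic to $\Pi$; since $\Pi(M)[\mm_x]$ is annihilated by $\mm_x$ itself and $\End^{\cont}_G(\Pi)=\kappa(x)$, it must be semisimple, i.e. $\Pi(M)[\mm_x]\cong\Pi^{\oplus m}$ for some $m>0$ (using Corollary \ref{irr_irr}, which identifies the relevant category with finite-dimensional $\kappa(x)$-vector spaces). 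If instead $T_x$ is the trace of a reducible representation $\psi_1\oplus\psi_2$, then Proposition \ref{action_centre}(1) forces every irreducible $\Pi'\in\Irr(\mm_x,\kappa(x))$ to be a subquotient of $(\Ind_B^G\psi_i\otimes\psi_j\varepsilon^{-1})_{\cont}$, and Corollary \ref{reducible_irr} lists exactly which ones occur; in all cases they are subquotients of $(\Ind_B^G\psi_1\otimes\psi_2\varepsilon^{-1})_{\cont}\oplus(\Ind_B^G\psi_2\otimes\psi_1\varepsilon^{-1})_{\cont}$, and since $\Pi(M)[\mm_x]$ has finite length with all JH factors in $\Irr(\mm_x,\kappa(x))$, the claim follows.

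The main obstacle, I expect, is the bookkeeping around base change to $\kappa(x)$ and making the non-noetherian ring $R$ harmless: one must check that $\Pi(M)[\mm_x]$ really lands in the finite-length part of the block $\Ban^{\adm}_{G,\zeta}(L)^{\fl}_{\BB,\mm_x}$ rather than merely being killed by $\mm_x$, which is where Corollary \ref{new_fibre}(4) (applied to $N = $ the dual of $\Pi(M)[\mm_x]$, finitely generated over $\OO\br K$) does the real work, together with the observation that the two $R^{\ps,\zeta\varepsilon}_{\tr\rhobar_{\BB}}$-actions agreeing is what pins down the central character $\mm_x$ in the Bernstein-centre sense. Everything else is a formal consequence of the structure theory already developed in Sections \ref{main_banach} and \ref{sec_main_centre}.
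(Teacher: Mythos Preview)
Your finite-length argument and your handling of the two cases (via Corollaries \ref{irr_irr} and \ref{reducible_irr}) are essentially the paper's. The gap is in the non-vanishing step. You argue that since $\mm_x$ is contained in the maximal ideal of the local ring $R\br{K}$ and $M$ is finitely generated there, topological Nakayama gives $M/\overline{\mm_x M}\neq 0$. That much is fine. But ``dualizing gives $\Pi(M)[\mm_x]\neq 0$'' is not justified: one has $\Pi(M)[\mm_x]=\Hom^{\cont}_{\OO}(M/\overline{\mm_x M},L)$, and this vanishes exactly when $M/\overline{\mm_x M}$ is $\OO$-torsion. Faithfulness of the $R$-action on $M$ does not by itself prevent this --- a faithful module over a ring need not have full support unless it is finitely generated, and you have not shown $M$ (or $\md(M)$) is finitely generated over $R$.

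The paper closes this by first proving the stronger finiteness: $\md(M)$ is finitely generated over $R$ itself. This uses topological Nakayama at the \emph{residue field} $k$ of $R$, reducing to showing that $k\wtimes_R \md(M)\cong \md(k\wtimes_R M)$ is finite-dimensional, equivalently that $k\wtimes_R M$ has finite length in $\dualcat(\OO)$. Since $k\wtimes_R M$ is finitely generated over $k\br{K}$ and, by the assumed compatibility of the two $R^{\ps,\zeta\varepsilon}_{\tr\rhobar_{\BB}}$-actions, is annihilated by the maximal ideal of $R^{\ps,\zeta\varepsilon}_{\tr\rhobar_{\BB}}$, this is precisely Corollary \ref{new_fibre}(4) applied to $N=k\wtimes_R M$. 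Once $\md(M)$ is a finitely generated faithful $R$-module its support is all of $\Spec R$, so $\md(M)_{\mm_x}\neq 0$, and ordinary Nakayama at $R_{\mm_x}$ gives $\md(M)\otimes_R\kappa(x)\neq 0$. Writing $Q$ for the maximal $\OO$-torsion-free Hausdorff quotient of $M/\mm_x M$, one identifies $\md(Q)$ with the image of $\md(M)$ in $\md(M)\otimes_R\kappa(x)$, whence $Q\neq 0$ and $\Pi(M)[\mm_x]=\Pi(Q)\neq 0$. In short, you need a second application of Corollary \ref{new_fibre}, this time to $k\wtimes_R M$ rather than $M/\mm_x M$, to get the finiteness that makes Nakayama at $\mm_x$ legitimate.
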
 

\begin{proof} 
Since $P_{\BB}$ is a projective generator for $\dualcat(\OO)_{\BB}$ the functor 
$N \mapsto \md(N):= \Hom_{\dualcat(\OO)}(P_{\BB}, N)$ induces an equivalence of 
categories between $\dualcat(\OO)_{\BB}$ and the category of right pseudo-compact
 $E_{\BB}$-modules. The inverse functor is given by $\md \mapsto \md \wtimes_{E_{\BB}} P_{\BB}$. 
 In particular, the assumption that $R$ acts faithfully on $M$ implies that $R$ acts faithfully on 
 $\md(M)$. 

We claim that $\md(M)$ is a finitely generated $R$-module. Topological Nakayama's lemma 
implies that it is enough to show that $k \wtimes_R \md(M)$ is a finite dimensional $k$-vector space. 
Since $k\wtimes_R \md(M)\cong \md(k \wtimes_R M)$, it is enough to show that $k\wtimes_R M$ is 
of finite length in $\dualcat(\OO)$.  Since by assumption $M$ is a finitely generated $R\br{K}$-module, 
$k\wtimes_R M$ is a finitely generated $k\br{K}$-module. Since by assumption the actions of 
$R^{\ps, \zeta\varepsilon}_{\tr\rhobar_{\BB}}$ on $M$ induced by $Z_{\BB}$ and by $R$ coincide 
we deduce that the maximal ideal of $R^{\ps, \zeta\varepsilon}_{\tr\rhobar_{\BB}}$ annihilates $k\wtimes_R M$. Corollary \ref{new_fibre} (4) applied to $N=k\wtimes_R M$ implies the claim.

Since $\md(M)$ is a finitely generated and faithful $R$-module, its localization $\md(M)_{\mm_x}$ is a finitely generated faithful $R_{\mm_x}$-module. If $\md(M)\otimes_R \kappa(x)=0$ then $\md(M)_{\mm_x}=0$ by Nakayama's lemma and, since $R_{\mm_x}$ acts faithfully, $R_{\mm_x}=0$, and hence $\kappa(x)=0$, giving a contradiction. In particular, $\md(M)\otimes_{R}\kappa(x)$ is a non-zero, finite dimensional $\kappa(x)$-vector space. Since $R$ is a compact $\OO$-module and $\kappa(x)$ is a subfield of $\Qpbar$, $\kappa(x)$ is a finite extension of $L$ and the image of $R$ is contained in the ring of integers of $\kappa(x)$.  

Let $Q$ be the maximal $\OO$-torsion free Hausdorff quotient of $M/\mm_x M$. It follows from \cite[Proposition 1.3]{st_iw} that $\Pi(Q)$ is a closed subspace of $\Pi(M)$, which then implies that $\Pi(Q)=\Pi(M)[\mm_x]$. It follows from the equivalence of categories explained above that $\md(Q)$ is isomorphic to the image of $\md(M)$ in $\md(M)\otimes_{R}\kappa(x)$. In particular, $Q$ and thus $\Pi(Q)$ are non-zero. 

The last two assertions follow from the anti-equivalence \eqref{equivalence}, Corollary \ref{irr_irr}, Corollary \ref{reducible_irr}.
\end{proof} 

\begin{remar} 
If $M$ is finitely generated as $\OO\br{K}$-module then the argument in the proof of Theorem \ref{lue_pan} shows that $\md(M)$ is finitely generated $R^{\ps, \zeta\varepsilon}_{\tr\rhobar_{\BB}}$-module and, since $R$ acts faithfully on $\md(M)$, $R$ is a finitely generated $R^{\ps, \zeta\varepsilon}_{\tr\rhobar_{\BB}}$-module, and hence is noetherian.
\end{remar}

The result allows to remove the restrictions imposed on the Galois representation 
$\rhobar_{\mm, p}$ in  \cite[Corollary 6.3.6]{luepan}, by taking $M$ to be the Pontryagin dual of representation denoted by $\tilde{H}^1(K^p, E/\OO)_{\mm, \zeta'}$ in \cite[Theorem 6.3.5]{luepan}, and taking $R$ to be the closure of the subring generated by the Hecke operators in $\End^{\cont}_{\OO}(M)$. Since \cite[Corollary 6.3.6]{luepan} is the only place, where the restriction on $p$ is used, the proof 
of \cite[Theorem 6.4.7]{luepan} goes through without a change to give the following result: 
\begin{thm}[Lue Pan +$\varepsilon$]\label{application} Let $\rho: \Gal(\overline{\mathbb{Q}}/\mathbb{Q})\rightarrow \GL_2(L)$ be 
pro-modular and absolutely irreducible. If $\rho$ is unramified outside finitely many places and $\rho|_{\gal}$ is Hodge--Tate with weights $0,0$ then 
$\rho$ is associated to a weight $1$ modular form. 
\end{thm}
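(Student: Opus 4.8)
The plan is to deduce Theorem~\ref{application} from Lue Pan's Theorem~6.4.7 in \cite{luepan}, whose proof is unconditional except for the input \cite[Corollary 6.3.6]{luepan}; as Pan records, that corollary is the only place where the hypotheses on the residual representation at $p$ (vacuous for $p\geq 5$, extra conditions when $p\in\{2,3\}$) enter. So it suffices to establish the statement of \cite[Corollary 6.3.6]{luepan} with no restriction on $p$, and this is exactly what Theorem~\ref{lue_pan} will supply once the relevant completed cohomology module is fed into it.

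First I would recall the setup of \cite[\S 6.3]{luepan}: from a pro-modular, absolutely irreducible $\rho$ one obtains a maximal ideal $\mm$ of a big $p$-adic Hecke algebra $\mathbb{T}$ acting on the completed cohomology $\widetilde{H}^1(K^p, E/\OO)_{\mm,\zeta'}$ of a tower of modular curves of tame level $K^p$ and fixed central character $\zeta'$, with residual Galois representation $\rhobar_\mm$; write $\rhobar := \rhobar_\mm|_{\gal}$. After a finite extension of $L$ (so that $\rhobar$ acquires absolutely irreducible summands), set $M$ to be the Pontryagin dual of $\widetilde{H}^1(K^p, E/\OO)_{\mm,\zeta'}$. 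Emerton's finiteness theorems for completed cohomology show that $M$ is $\OO$-torsion free and finitely generated over $\OO\br{K}$, hence $M\in\dualcat(\OO)$, and localizing at $\mm$ together with the block decomposition \eqref{equa:blockdecomp} places $M$ in $\dualcat(\OO)_{\BB}$ for the block $\BB=\BB_{\rhobar}$. Let $R$ be the closure of the image of $\mathbb{T}$ in $\End^{\cont}_{\OO}(M)$; it is a linearly compact local $\OO$-algebra with residue field $k$, and it is an $R^{\ps,\zeta\varepsilon}_{\tr\rhobar_{\BB}}$-algebra since pro-modularity identifies the universal pseudodeformation of $\tr\rhobar$ with the pseudorepresentation carried by $\mathbb{T}$.

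Next I would check the three hypotheses of Theorem~\ref{lue_pan}. Faithfulness of the $R$-action on $M$ is immediate from the definition of $R$; finite generation of $M$ over $R\br{K}$ follows a fortiori from finite generation over $\OO\br{K}$. The substantive point is that the two actions of $R^{\ps,\zeta\varepsilon}_{\tr\rhobar_{\BB}}$ on $M$ — one through $R$, the other through the Bernstein centre $\ZB$ via \eqref{map_centre} — coincide. Here I would invoke Proposition~\ref{explain}: it shows that $T(g)$ acts on $M$ through $\ZB$ as $\rho_{\cV(M)}(g)+\rho_{\cV(M)}(g^{-1})\zeta\varepsilon(g)$. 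On the Hecke side, local-global compatibility at $p$ for completed cohomology (Emerton's local-global compatibility and its refinements in \cite{CDP}, \cite{CDP2}, which identify $\cV(M)$ with the Galois representation carried by $\mathbb{T}$) shows that the $\mathbb{T}$-image of $T(g)$ has the same value; since the crystabelline/locally algebraic points are Zariski dense and both functions $\gal\to R$ are continuous, they agree. This is precisely the step that obstructed the earlier arguments for $p\in\{2,3\}$, where $\EB$ is uncomputed; it goes through here because Proposition~\ref{explain} is stated for \emph{every} $M\in\dualcat(\OO)_{\BB}$, independently of the structure of $\EB$.

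Granting the hypotheses, Theorem~\ref{lue_pan} yields, for each $\OO$-algebra homomorphism $x\colon R\to\Qpbar$, that $\Pi(M)[\mm_x]$ is non-zero and of finite length in $\aBan_{G,\zeta}(L)$, with $\Pi(M)[\mm_x]\cong\Pi_x^{\oplus m}$ and $\cV(\Pi_x)\cong\rho_x$ when $T_x=\tr\rho_x$ with $\rho_x$ absolutely irreducible, and only unitary principal-series subquotients when $T_x$ is reducible; combined with Corollaries~\ref{irr_irr} and~\ref{reducible_irr} this is exactly the structural statement of \cite[Corollary 6.3.6]{luepan}, now without any hypothesis on $p$. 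Feeding this into Pan's argument — the analysis of Hodge--Tate weights and $\SL_2(\Qp)$-invariants of the Banach space representation cut out by $\mm_x$, and the production of a weight-one form — his proof of \cite[Theorem 6.4.7]{luepan} then applies verbatim and gives Theorem~\ref{application}. I expect the identification in the third paragraph of the Hecke action with the action of $\ZB$ through $\cV$ to be the main obstacle; everything downstream is a direct citation of \cite{luepan}.
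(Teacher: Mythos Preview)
Your proposal is correct and follows the same route as the paper: apply Theorem~\ref{lue_pan} with $M$ the Pontryagin dual of $\tilde{H}^1(K^p,E/\OO)_{\mm,\zeta'}$ and $R$ the closure of the Hecke algebra in $\End^{\cont}_{\OO}(M)$, thereby removing the restriction in \cite[Corollary 6.3.6]{luepan}, after which Pan's proof of \cite[Theorem 6.4.7]{luepan} runs unchanged. The paper is terser---it does not spell out the verification of the three hypotheses of Theorem~\ref{lue_pan}, treating them as part of Pan's existing setup (which is independent of the block type); your extra paragraph on the compatibility of the two $R^{\ps,\zeta\varepsilon}_{\tr\rhobar_{\BB}}$-actions is a reasonable sketch of what underlies that citation, though note that Proposition~\ref{explain} literally describes the action on $\cV(M)$, not on $M$, so the passage back to $M$ needs either faithfulness of $\cV$ on the relevant subcategory or the density argument you indicate.
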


The condition pro-modular means that the Hecke eigenvalues associated to $\rho$ appear in 
completed cohomology, see \cite[Definition 6.1.2]{luepan} for the precise definition. The original 
theorem in Lue Pan's paper had to additionally assume that if $p$ is $2$ or $3$ then 
$(\rhobar|_{\gal})^{\mathrm{ss}}$ is not isomorphic to $\chi \oplus \chi \omega$
 for any character $\chi: \gal\rightarrow k^{\times}$ .

\appendix

\section{Normality of $R^{\ps}[1/p]$}\label{appendix}
Let $\GG$ be a pro-finite group satisfying Mazur's finiteness condition at $p$: the group of continuous group homomorphisms $\Hom^{\cont}_{\mathrm{grp}}(\GG', \Fp)$ is finite for every open subgroup $\GG'$ of $\GG$. Let $\rhobar: \GG \rightarrow \GL_d(k)$ be a continuous semi-simple representation, such that all the irreducible summands of $\rhobar$ are absolutely irreducible. Let $\psi: \GG\rightarrow \OO^{\times}$ be a character lifting $\det \rhobar$. Let $\bar{D}: k[\GG]\rightarrow k$ be the pseudorepresentation associated to $\rhobar$ in \cite{che_durham}, so that $\bar{D}(1+t g)= \det(1 + t \rhobar(g))$, for all $g\in \GG$. We may consider the framed deformation ring $R^{\square}_{\rhobar}$, its quotient $R^{\square, \psi}_{\rhobar}$ parameterizing framed deformations of $\rhobar$ with the determinant equal to $\psi$, the universal deformation ring 
$R^{\ps}$ of $\bar{D}$, and its quotient $R^{\ps, \psi}$ parameterizing deformations of $\bar{D}$ with determinant $\psi$. This last ring is constructed as follows if $D^u:\GG\rightarrow R^{\ps}$ is the universal deformation of $\bar{D}$ then for each $g\in \GG$, $D^u(1 +t g)= a_0(g)+\ldots +a_d(g)t^d$, with $a_i(g)\in R^{\ps}$ and $R^{\ps, \psi}$ is the quotient of $R^{\ps}$ by the ideal generated $\psi(g) a_d(g)-1$ for all $g\in \GG$. The finiteness condition on $\GG$ ensures that all these rings are noetherian. The characteristic polynomial of the 
universal framed deformations of $\rhobar$ induces maps $R^{\ps}\rightarrow R^{\square}_{\rhobar}$, $R^{\ps, \psi}\rightarrow R^{\square, \psi}_{\rhobar}$.

\begin{thm}\label{main_appendix} If $R^{\square}_{\rhobar}[1/p]$ is normal then both $R^{\ps}[1/p]$ and the associated rigid space $(\Spf R^{\ps})^{\rig}$ are normal. 
\end{thm}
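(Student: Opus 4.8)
The plan is to reduce the statement to the assertion that the noetherian ring $R^{\ps}[1/p]$ is normal, and then to exhibit $R^{\ps}[1/p]$ as the ring of invariants of $R^{\square}_{\rhobar}[1/p]$ under the conjugation action of $\GL_d$. First I would dispose of the rigid-analytic assertion. Mazur's finiteness condition ensures that $R^{\square}_{\rhobar}$ and $R^{\ps}$ are noetherian, and, being quotients of a power series ring over $\OO$, they are excellent; consequently $R^{\ps}[1/p]$ is an excellent Jacobson ring, and at each of its maximal ideals $\mm_x$ (equivalently, at each classical point $x$ of $(\Spf R^{\ps})^{\rig}$) the localization $(R^{\ps}[1/p])_{\mm_x}$ and the rigid local ring $\mathcal O_{(\Spf R^{\ps})^{\rig},x}$ have the same completion. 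Since both $\Spec R^{\ps}[1/p]$ and $(\Spf R^{\ps})^{\rig}$ are covered by excellent Jacobson pieces, their non-normal loci are closed and, if nonempty, meet a classical point; so normality of $R^{\ps}[1/p]$ and of $(\Spf R^{\ps})^{\rig}$ are each equivalent to normality of all these completed local rings, hence to one another. It therefore suffices to prove that $R^{\ps}[1/p]$ is normal.

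Next I would bring in the group action. The ring $R^{\square}_{\rhobar}$ carries the universal framed deformation $\rho^{\square}\colon\GG\to\GL_d(R^{\square}_{\rhobar})$, and $\GL_d$ acts on $\Spec R^{\square}_{\rhobar}$ over $\OO$ by conjugating $\rho^{\square}$. Taking the coefficients of the characteristic polynomials of $\rho^{\square}(g)$, $g\in\GG$, gives the homomorphism $R^{\ps}\to R^{\square}_{\rhobar}$ of the statement, whose image consists of conjugation-invariant elements. The key input is that after inverting $p$ this map is an isomorphism onto the invariant subring:
\[
R^{\ps}[1/p]\;\xrightarrow{\ \sim\ }\;\bigl(R^{\square}_{\rhobar}[1/p]\bigr)^{\GL_d}.
\]
I would prove this by reducing, via the representation scheme $\mathrm{Rep}^{d}_{\GG}$ before completing at $\rhobar$, to the first fundamental theorem for conjugation invariants of tuples of $d\times d$ matrices: in characteristic zero (Procesi, \cite{Pro87}) such invariants are generated by traces of words in the matrices, and each such trace is the trace of $\rho^{\square}$ at a group element, hence lies in the image of $R^{\ps}$. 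For this reduction one uses that $\GL_d$ is linearly reductive over $L$, so that the functor $B\mapsto B^{\GL_d}$ is exact on rational modules and commutes with localization, with completion, and with passage to a $\GL_d$-stable quotient; alternatively one may simply cite that $\Spec R^{\square}_{\rhobar}[1/p]\to\Spec R^{\ps}[1/p]$ is a good quotient for the $\GL_d$-action (Chenevier, \cite{che_durham}).

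Granting the identification, the conclusion is formal. By hypothesis $R^{\square}_{\rhobar}[1/p]$ is normal, hence a finite product of normal domains; the connected group $\GL_d$ fixes each connected component, so it is enough to check that $B^{\GL_d}$ is a normal domain when $B$ is. If $y\in\Frac(B^{\GL_d})$ is integral over $B^{\GL_d}$, then $y$ is integral over $B$ and lies in $\Frac(B)$, so $y\in B$; and $y$, being a ratio of invariants, is itself $\GL_d$-invariant, so $y\in B^{\GL_d}$. Thus $B^{\GL_d}$ is integrally closed in its fraction field, i.e.\ normal (it is noetherian by Hilbert--Nagata, or already by \cite{che_durham}). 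Hence $R^{\ps}[1/p]\cong\bigl(R^{\square}_{\rhobar}[1/p]\bigr)^{\GL_d}$ is normal, and by the first paragraph so is $(\Spf R^{\ps})^{\rig}$. The fixed-determinant version is obtained in the same way, imposing the $\GL_d$-stable determinant condition, i.e.\ working with $R^{\square,\psi}_{\rhobar}$.

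The step I expect to be the main obstacle is the identification $R^{\ps}[1/p]\cong\bigl(R^{\square}_{\rhobar}[1/p]\bigr)^{\GL_d}$: once one knows that the characteristic-polynomial map recovers \emph{all} conjugation-invariant functions after inverting $p$, everything else is standard commutative algebra. The substantive ingredients there are Procesi's invariant theory for matrices and the good behaviour of invariants of a linearly reductive group under completion, localization, and formation of quotients.
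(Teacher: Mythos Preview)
Your overall strategy---exhibit $R^{\ps}[1/p]$ as the $\GL_d$-invariants of a normal ring and then invoke that invariant subrings of normal rings are normal---is exactly the paper's. The gap lies in your identification $R^{\ps}[1/p]\cong\bigl(R^{\square}_{\rhobar}[1/p]\bigr)^{\GL_d}$. The algebraic group $\GL_d$ does \emph{not} act on $\Spf R^{\square}_{\rhobar}$ by conjugation unless $\rhobar$ is scalar: if $g\in\GL_d(A)$ and $\rho_A$ is a framed deformation of $\rhobar$, then $g\rho_A g^{-1}$ reduces to $\bar g\,\rhobar\,\bar g^{-1}$, which equals $\rhobar$ only when $\bar g$ centralizes $\rhobar$. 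Your proposed remedy---pass to a global representation scheme and use that $\GL_d$-invariants commute with completion---only applies when one completes along a $\GL_d$-stable ideal; the maximal ideal corresponding to the framed point $\rhobar$ is not $\GL_d$-stable (its orbit is the full conjugacy class), so completing there is not $\GL_d$-equivariant and does not produce $R^{\square}_{\rhobar}$ with a $\GL_d$-action. For the same reason, the statement ``$\Spec R^{\square}_{\rhobar}[1/p]\to\Spec R^{\ps}[1/p]$ is a good quotient for the $\GL_d$-action'' is not well-posed and is not what Chenevier proves.

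The paper instead works with the universal Cayley--Hamilton algebra: the finite-type $R^{\ps}$-algebra $A^{\gen}$ representing Cayley--Hamilton maps $E:=R^{\ps}\br{\GG}/\CH(D^u)\to M_d(B)$. On $A^{\gen}$ the group $\GL_d$ genuinely acts, and one has $(A^{\gen}[1/p])^{\GL_d}=R^{\ps}[1/p]$; this is where Procesi and Chenevier are actually invoked. The hypothesis on $R^{\square}_{\rhobar}[1/p]$ enters through an extra step you are missing: at each maximal ideal of $A^{\gen}[1/p]$ the completion is the framed deformation ring $R^{\square}_{\rho}$ of a characteristic-zero representation $\rho$, and one must show (this is Proposition~\ref{framed1}) that normality of $R^{\square}_{\rhobar}[1/p]$ forces normality of every such $R^{\square}_{\rho}$, by choosing a lattice in $\rho$ reducing to $\rhobar$ and identifying $R^{\square}_{\rho}$ with a completed localization of $R^{\square}_{\rhobar}[1/p]$. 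Excellence (Lemma~\ref{commalg1}) then transfers normality from these completions back to $A^{\gen}[1/p]$, after which the invariants argument finishes the proof. Your treatment of the rigid-analytic assertion and of normality of invariant subrings is fine; it is the middle identification that needs to be replaced by the $A^{\gen}$-argument together with Proposition~\ref{framed1}.
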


We also prove a version of the theorem with a fixed determinant. We apply this theorem to $\GG=\gal$
 to prove that the rings $R^{\ps, \psi}$, $R^{\ps}$ and associated rigid analytic spaces are normal for 
 all $2$-dimensional $\rhobar$. There is essentially one case that one needs to handle, namely $\rhobar= \Eins \oplus \omega$, where $\omega$ is the cyclotomic character modulo $p$, as in the other cases all the rings are regular.  The most tricky cases are when $p=2$ and $p=3$. The case $p=2$ is treated in \cite{CDP2}. We deal with the case $p=3$ using the work of B\"{o}ckle \cite{boeckle}.

The argument of \cite{CDP2} has been extended by Iyengar in \cite{Iy}, when $\rhobar$ is the trivial $d$-dimensional representation of a Galois group of a $p$-adic field $F$, under the assumption that $F$ contains a primitive $4$-th root of unity if $p=2$. Thus our theorem applies in that setting.\footnote{
This has been further generalized in \cite{BIP} for all $p$-adic fields $F$ and all $\rhobar$.}

We will split the proof into several steps. We start with commutative algebra lemmas and recall that all excellent rings are $G$-rings, \cite[\href{https://stacks.math.columbia.edu/tag/07QS}{Tag 07QS}]{stacks-project}.

\begin{lem}\label{commalg1} 
Let $A$ be a $G$-ring and let $\pp\in \Spec A$. Then $A_{\pp}$ satisfies Serre's condition $(R_i)$ (resp.~$(S_i)$) if and only if the completion $A_{\pp}$ at $\pp$ does. 
\end{lem}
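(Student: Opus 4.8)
The plan is to reduce the assertion to the standard ascent and descent statements for Serre's conditions along flat local homomorphisms with regular fibres. Write $R=A_\pp$ and let $\widehat R$ be its $\pp$-adic completion. By definition, $A$ being a $G$-ring says precisely that $R\to\widehat R$ is a regular morphism, i.e. faithfully flat with geometrically regular fibres; in particular every fibre ring $\widehat R\otimes_R\kappa(\pp')$, for $\pp'\in\Spec R$, is regular noetherian, and so is each of its localizations. Since both $(R_i)$ and $(S_i)$ are tested on all localizations at primes, it suffices to compare $R_{\pp'}$ and $\widehat R_\qq$ whenever $\qq\in\Spec\widehat R$ lies over $\pp'\in\Spec R$. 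For such a pair the induced map $R_{\pp'}\to\widehat R_\qq$ is flat and local, and its fibre $F:=\widehat R_\qq/\pp'\widehat R_\qq$ is a localization of $\widehat R\otimes_R\kappa(\pp')$, hence a regular local ring; thus $\dim\widehat R_\qq=\dim R_{\pp'}+\dim F$ and $\operatorname{depth}\widehat R_\qq=\operatorname{depth}R_{\pp'}+\operatorname{depth}F$, with $\operatorname{depth}F=\dim F$ because $F$ is Cohen--Macaulay.

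For the ascent direction (``$R$ has the property $\Rightarrow$ $\widehat R$ has it'') I would argue prime by prime. If $R$ is $(S_i)$, then $\operatorname{depth}\widehat R_\qq=\operatorname{depth}R_{\pp'}+\dim F\ge\min(i,\dim R_{\pp'})+\dim F\ge\min(i,\dim\widehat R_\qq)$, so $\widehat R$ is $(S_i)$. If $R$ is $(R_i)$ and $\dim\widehat R_\qq\le i$, then $\dim R_{\pp'}\le i$, so $R_{\pp'}$ is regular; lifting a regular system of parameters of $R_{\pp'}$ together with one of the regular local ring $F$ yields, by flatness and the dimension formula, a regular system of parameters of $\widehat R_\qq$, so $\widehat R_\qq$ is regular and $\widehat R$ is $(R_i)$.

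For the descent direction, given $\pp'\in\Spec R$ I would use faithful flatness to pick $\qq\in\Spec\widehat R$ minimal over $\pp'\widehat R$; then $F$ is Artinian, so $\dim F=\operatorname{depth}F=0$, and the two formulas give $\dim\widehat R_\qq=\dim R_{\pp'}$ and $\operatorname{depth}\widehat R_\qq=\operatorname{depth}R_{\pp'}$. If $\widehat R$ is $(S_i)$ this immediately yields $\operatorname{depth}R_{\pp'}\ge\min(i,\dim R_{\pp'})$. If $\widehat R$ is $(R_i)$ and $\dim R_{\pp'}\le i$, then $\dim\widehat R_\qq\le i$, so $\widehat R_\qq$ is regular, and since $R_{\pp'}\to\widehat R_\qq$ is a faithfully flat local map this forces $R_{\pp'}$ to be regular.

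I do not expect a genuine obstacle here: the whole argument rests on the dimension and depth formulas for flat local homomorphisms and on the regularity criterion for such maps, all of which are standard (see \cite{stacks-project}). The one step where the hypothesis is truly needed is the $(R_i)$-ascent, where one uses regularity — not merely condition $(R_i)$ — of the formal fibres, and this is exactly what being a $G$-ring provides; for descent the only point of care is to choose $\qq$ minimal over $\pp'\widehat R$ so that the fibre $F$ contributes nothing to dimension and depth.
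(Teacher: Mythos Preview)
Your proof is correct and follows exactly the same route as the paper: the paper simply observes that the fibre rings $\kappa(\qq)\otimes_{A_\pp}\widehat{A_\pp}$ are regular (by the definition of $G$-ring) and then invokes \cite[Theorem 23.9]{matsumura}, whereas you have spelled out the content of that theorem via the dimension and depth formulas for flat local homomorphisms. The arguments are identical in substance.
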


\begin{proof} 
Let $B=A_{\pp}$ and let $\hat{B}$ be the completion of $A_{\pp}$ at $\pp$. Since $A$ is a $G$-ring the fibre rings $\kappa(\qq)\otimes_B \hat{B}$ are regular for all $\qq\in \Spec B$. The assertion follows from \cite[Theorem 23.9]{matsumura}. 
\end{proof}

\begin{lem}\label{commalg2} 
Let $A$ be a complete local noetherian $\OO$-algebra with residue field $k$, $B=A\br{x_1, \ldots, x_r}$, let $\qq \in \Spec B[1/p]$ and $\pp$ the image of $\qq$ in $\Spec A[1/p]$. Then $A_{\pp}$ satisfies Serre's condition $(R_i)$ (resp. $(S_i)$) if and only if  $B_{\qq}$ does. In particular, $A[1/p]$ is normal if and only if $B[1/p]$ is normal.
\end{lem}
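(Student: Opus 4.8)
The plan is to reduce the statement to a single assertion about a flat local homomorphism of noetherian local rings and then invoke the standard ascent and descent theory of Serre's conditions. First I would record that $B=A\br{x_1,\ldots,x_r}$ is again complete local noetherian, that $A\to B$ is flat, and that $\Spec B\to\Spec A$ is surjective (each fibre is nonzero, being a localization of a power series ring over a domain); all three properties are preserved by inverting $p$, so $A[1/p]\to B[1/p]$ is faithfully flat. Since $(R_i)$, $(S_i)$, and normality (through Serre's criterion $(R_1)+(S_2)$) are properties detected on the local rings at primes, and since every prime of $A[1/p]$ is the image of a prime of $B[1/p]$, it is enough to prove: for $\qq\in\Spec B[1/p]$ lying over $\pp\in\Spec A[1/p]$, the ring $A[1/p]_\pp$ satisfies $(R_i)$ (resp.\ $(S_i)$) if and only if $B[1/p]_\qq$ does. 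The induced map $A[1/p]_\pp\to B[1/p]_\qq$ is a flat local homomorphism of noetherian local rings, hence faithfully flat; descent of Serre's conditions therefore gives the ``only if'' part at once, while for the ``if'' part I would apply the ascent theorem, which requires the closed fibre of $A[1/p]_\pp\to B[1/p]_\qq$ to be regular.

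The core of the argument is thus to identify and analyse this closed fibre. Put $R:=A/(\pp\cap A)$ and $K:=\kappa(\pp)$. Since $p$ is a unit in $A[1/p]$ we have $p\neq 0$ in $R$, so $R$ is a complete local noetherian domain and $K=\Frac R$. A direct computation gives
\[
B[1/p]\otimes_{A[1/p]}\kappa(\pp)\;=\;A\br{x_1,\ldots,x_r}\otimes_A K\;=\;R\br{x_1,\ldots,x_r}\otimes_R K\;=\;S^{-1}R\br{x_1,\ldots,x_r},
\]
where $S=R\smallsetminus\{0\}$, and the closed fibre of $A[1/p]_\pp\to B[1/p]_\qq$ is a localization of $S^{-1}R\br{x_1,\ldots,x_r}$. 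I would then verify that $S^{-1}R\br{x_1,\ldots,x_r}$ is a noetherian local ring with maximal ideal $(x_1,\ldots,x_r)$ and residue field $K$ — the point being that, $R$ being a domain, the only prime of $R\br{x_1,\ldots,x_r}$ that contains $(x_1,\ldots,x_r)$ and meets $S$ trivially is $(x_1,\ldots,x_r)$ itself — that it is $(x_1,\ldots,x_r)$-adically separated, and that its $(x_1,\ldots,x_r)$-adic completion is
\[
\varprojlim_n S^{-1}\bigl(R\br{x_1,\ldots,x_r}/(x_1,\ldots,x_r)^n\bigr)\;=\;\varprojlim_n K\br{x_1,\ldots,x_r}/(x_1,\ldots,x_r)^n\;=\;K\br{x_1,\ldots,x_r}.
\]
Since $K$ is a field, $K\br{x_1,\ldots,x_r}$ is regular, and a noetherian local ring is regular if and only if its completion is; hence $S^{-1}R\br{x_1,\ldots,x_r}$ is regular, and so is every localization of it — in particular the closed fibre of $A[1/p]_\pp\to B[1/p]_\qq$.

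With the closed fibre regular, the rest is routine: descent along the faithfully flat map $A[1/p]_\pp\to B[1/p]_\qq$ yields one implication, and the ascent theorem for $(R_i)$ and $(S_i)$ along a flat local homomorphism with regular closed fibre yields the other (cf.\ \cite[Theorem 23.9]{matsumura}); the assertion about normality then follows from Serre's criterion together with the reduction above. The step I expect to be the main obstacle is exactly the regularity of the closed fibre: $R$ can itself be singular, so $R\br{x_1,\ldots,x_r}$ need not be regular, and the substance of the matter is that inverting all nonzero elements of $R$ moves us onto the generic fibre of $\Spec R\br{x_1,\ldots,x_r}\to\Spec R$, the local rings of which have power-series-over-a-field completions and are therefore regular. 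Once this is in place, only standard commutative algebra remains.
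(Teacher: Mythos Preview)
Your strategy is the same as the paper's --- reduce to showing that the fibres of the flat local map $A_\pp\to B_\qq$ are regular and then invoke \cite[Theorem~23.9]{matsumura} --- but the execution of the fibre computation has two genuine gaps.

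First, $S^{-1}R\br{x_1,\ldots,x_r}$ is \emph{not} a local ring. Your justification only shows that $(x_1,\ldots,x_r)$ is maximal (no prime above it meets $S$ trivially), not that it is the unique maximal ideal. For a concrete counterexample take $R=\Zp$, $r=1$: the ring $\Zp\br{x}\otimes_{\Zp}\Qp$ has distinct maximal ideals $(x)$ and $(x-p)$, indeed infinitely many. Consequently your completion argument establishes regularity only at the single prime $(x_1,\ldots,x_r)$, whereas the closed fibre of $A_\pp\to B_\qq$ is the localization of $S^{-1}R\br{x_1,\ldots,x_r}$ at the image of $\qq$, which may be any prime whatsoever.

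Second, even granting regularity of the closed fibre, \cite[Theorem~23.9]{matsumura} requires the fibre rings $\kappa(\pp')\otimes_{A_\pp}B_\qq$ to be regular for \emph{every} $\pp'\in\Spec A_\pp$ in order to ascend $(R_i)$ and $(S_i)$; there is no general ascent theorem for these conditions that uses only the closed fibre.

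The paper resolves both issues at once by proving that for every $\pp'\in\Spec A_\pp$ the whole ring $\kappa(\pp')\otimes_A B$ is regular. The device is Cohen's structure theorem: choose a formally smooth $\OO$-subalgebra $C\subset A/\pp'$ over which $A/\pp'$ is finite; then $(A/\pp')\br{x_1,\ldots,x_r}\cong (A/\pp')\otimes_C C\br{x_1,\ldots,x_r}$, whence
\[
\kappa(\pp')\otimes_A B\;\cong\;\kappa(\pp')\otimes_{Q(C)}\bigl(Q(C)\otimes_C C\br{x_1,\ldots,x_r}\bigr).
\]
Now $C\br{x_1,\ldots,x_r}$ is a power series ring over $\OO$, hence regular; its localization $Q(C)\otimes_C C\br{x_1,\ldots,x_r}$ stays regular; and base change along the separable (characteristic-zero) field extension $Q(C)\subset\kappa(\pp')$ preserves regularity. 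This is precisely where your worry that ``$R$ can itself be singular'' is addressed: one replaces $R=A/\pp'$ by the regular subring $C$ \emph{before} forming power series.
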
 

\begin{proof} 
The proof is a variation on \cite[Appendix A]{6auth}. We may assume that $A$ and hence $B$ are $\OO$-torsion free. Let $\pp'\in \Spec A_{\pp} \subset \Spec A$.  We claim that the ring $\kappa(\pp')\otimes_A B$ is regular. By Cohen's structure theorem there is a subring $C\subset A/\pp'$, such that $C$ is formally smooth over $\OO$ and $A/\pp'$ is finite over $C$. Then 
$$\kappa(\pp')\otimes_A B\cong \kappa(\pp') \otimes_{A/\pp'} B/\pp'B.$$ 
Since $A/\pp'$ is finite over $C$ we have
$$B/\pp' B= (A/\pp')\br{x_1,\ldots, x_r}\cong A/\pp' \otimes_C C\br{x_1, \ldots, x_r}.$$  
Thus
$$\kappa(\pp')\otimes_A B\cong \kappa(\pp)\otimes_{Q(C)} Q(C)\otimes_C C\br{x_1, \ldots, x_r},$$
where $Q(C)$ is the quotient field of $C$. Since $C$ is formally smooth over $\OO$, the ring $C\br{x_1, \ldots, x_r}$ is isomorphic to a ring of  formal power series over $\OO$, and thus is regular. Tensoring with $Q(C)$ over $C$ is just localization with respect to the multiplicative set $C\setminus \{0\}$, thus $Q(C)\otimes_C C\br{x_1, \ldots, x_r}$ is regular. Since $Q(C)$ is of characteristic zero the extension $\kappa(\pp)/ Q(C)$ is separable and it follows from \cite[Lemma A.3]{6auth} that $\kappa(\pp)\otimes_{Q(C)} Q(C)\otimes_C C\br{x_1, \ldots, x_r}$ is regular. We deduce that $\kappa(\pp')\otimes_{A_{\pp}} B_{\qq}$ is regular, since it is a localization of $\kappa(\pp')\otimes_A B$ at $\qq$.

It follows from \cite[Theorem 23.9]{matsumura} that $A_{\pp}$ satisfies $(R_i)$ (resp.~$(S_i)$) if and only if $B_{\qq}$ does. To conclude that $A[1/p]$ is normal if and only if $B[1/p]$ is, we only have to show that the map $\Spec B[1/p]\rightarrow \Spec A[1/p]$ is surjective, but this is clear as $(\pp, x_1, \ldots, x_r)$ maps to $\pp$. 
\end{proof}

\begin{lem}\label{commalg3} 
Let $A\rightarrow B$ be a finite \'{e}tale map of local rings. Then $A$ satisfies Serre's condition $(R_i)$ (resp.~$(S_i)$) if and only if $B$ does. 
\end{lem}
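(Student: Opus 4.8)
The plan is to reduce the statement to \cite[Theorem 23.9]{matsumura}, exactly as in the proofs of Lemmas \ref{commalg1} and \ref{commalg2}, the extra ingredient being that an \'etale morphism has regular fibres. First I would check that $A\to B$ is a faithfully flat local homomorphism: since $A\to B$ is finite and $B$ is local, $\mm_B\cap A$ is a maximal ideal of $A$, hence equals $\mm_A$, so the map is local; being \'etale it is flat, and a flat local homomorphism of local rings is automatically faithfully flat. In particular $\Spec B\to\Spec A$ is surjective, and for any $\qq\in\Spec B$ lying over $\pp=\qq\cap A$ the fibre $B_{\qq}/\pp B_{\qq}$ of $A_{\pp}\to B_{\qq}$ is zero-dimensional, so that $\operatorname{ht}\pp=\operatorname{ht}\qq$.

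Next I would analyse the fibres. For $\pp\in\Spec A$ the ring $B\otimes_A\kappa(\pp)$ is a finite \'etale $\kappa(\pp)$-algebra, hence a finite product of finite separable field extensions of $\kappa(\pp)$; localising at $\qq$ picks out one of these fields, so the fibre of the flat local homomorphism $A_{\pp}\to B_{\qq}$ is a field, and in particular a regular local ring. Thus it satisfies Serre's conditions $(R_i)$ and $(S_i)$ for every $i$, and \cite[Theorem 23.9]{matsumura} yields that $A_{\pp}$ satisfies $(R_i)$ (resp.\ $(S_i)$) if and only if $B_{\qq}$ does.

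It then remains to pass from these local statements to the global one. If $A$ satisfies $(R_i)$ (resp.\ $(S_i)$) and $\qq\in\Spec B$ has height $\le i$, then $\pp=\qq\cap A$ also has height $\le i$, so $A_{\pp}$, hence $B_{\qq}$, has the property; conversely, given $\pp$ of height $\le i$ one picks $\qq$ above it using surjectivity of $\Spec B\to\Spec A$ and argues the same way. This gives the asserted equivalence. I do not expect a genuine obstacle here: the only thing one really needs to notice is that \'etale $\kappa(\pp)$-algebras are regular, being reduced of dimension zero, which is precisely the fibre hypothesis required by \cite[Theorem 23.9]{matsumura}.
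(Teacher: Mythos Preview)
Your proof is correct and follows the same route as the paper: both check that the fibre rings $\kappa(\pp)\otimes_A B$ are finite \'etale over $\kappa(\pp)$, hence products of fields and therefore regular, and then invoke \cite[Theorem 23.9]{matsumura}. The paper applies the theorem directly in its global form to the faithfully flat map $A\to B$, so your careful localisation at $\qq$ and subsequent manual globalisation are not strictly needed, but they do no harm.
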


\begin{proof} 
If $\pp\in \Spec A$ then the fibre ring $\kappa(\pp)\otimes_A B$ is a finite 
\'{e}tale $\kappa(\pp)$-algebra and hence a product of fields, and thus is regular. The assertion follows from \cite[Theorem 23.9]{matsumura}. 
\end{proof}

\begin{prop}\label{framed1} 
Let $L'$ be a finite extension of $L$ and let $\rho: \GG \rightarrow \GL_n(L')$ 
be a continuous representation with mod $p$ semi-simplification isomorphic to $\rhobar$. If $R^{\square}_{\rhobar}[1/p]$ is normal then the ring $R^{\square}_{\rho}$, representing  the framed deformations of $\rho$ to artinian $L'$-algebras,  is also normal. 
\end{prop}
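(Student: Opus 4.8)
The plan is to realise $R^{\square}_{\rho}$ as the completion of a local ring of the generic fibre $R^{\square}_{\rhobar}[1/p]$, and then to conclude using the commutative algebra established above. Enlarging $L'$ (and $L$) replaces all the deformation rings in sight by finite \'etale base changes, under which the conjunction of $(R_1)$ and $(S_2)$ --- i.e.\ normality --- is preserved in both directions (Lemma \ref{commalg3}, applied to localisations); so I may assume that $L'$ has residue field $k$ and, after a further such extension, that $\rho$ admits a $\GG$-stable filtration with absolutely irreducible graded pieces $\tilde\sigma_1,\dots,\tilde\sigma_r$, lifting the Jordan--H\"older constituents of $\rhobar$ (which are absolutely irreducible by hypothesis).

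The key step is to choose a $\GG$-stable $\OO_{L'}$-lattice $\Lambda\subseteq\rho$ whose reduction is isomorphic to $\rhobar$ itself, and not merely to some representation with semisimplification $\rhobar$. For this I would pick a $\GG$-stable lattice in each $\tilde\sigma_i$ and build $\Lambda$ from the filtration, rescaling the lattices so that those lower in the filtration become sufficiently large relative to those higher up. Because each $\tilde\sigma_i$ is absolutely irreducible, its lattices reduce to the corresponding constituent of $\rhobar$; and because the groups $H^1(\GG,-)$ computing the relevant extension classes are finitely generated over $\OO_{L'}$, by Mazur's finiteness condition on $\GG$, a large enough rescaling forces every extension class to vanish modulo $\varpi_{L'}$. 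The resulting $\Lambda$ then has $\Lambda/\varpi_{L'}\Lambda\cong\rhobar$.

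Now fix an $\OO_{L'}$-basis of $\Lambda$ inducing the standard basis of $\rhobar$. The universal property of $R^{\square}_{\rhobar}$ produces a local $\OO$-algebra homomorphism $R^{\square}_{\rhobar}\to\OO_{L'}$, hence a point $x\in\Spec R^{\square}_{\rhobar}[1/p]$ with residue field $L'$, and the chosen basis is simultaneously a basis of the $L'$-vector space $\rho$. By the standard comparison between the generic fibre of a (framed) deformation ring and deformation theory over the residue field --- the framed variant of \cite[\S 2.3]{kisin_moduli} --- the functor of framed deformations of $\rho$ to local artinian $L'$-algebras is pro-represented by $\widehat{(R^{\square}_{\rhobar}[1/p])_{\mm_x}}$, so $R^{\square}_{\rho}\cong\widehat{(R^{\square}_{\rhobar}[1/p])_{\mm_x}}$. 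Since $R^{\square}_{\rhobar}$ is a complete local Noetherian $\OO$-algebra it is excellent, hence so is its localisation $R^{\square}_{\rhobar}[1/p]$; in particular the latter is a $G$-ring. If it is normal then so is $(R^{\square}_{\rhobar}[1/p])_{\mm_x}$, and Lemma \ref{commalg1} then shows that its completion $R^{\square}_{\rho}$ again satisfies $(R_1)$ and $(S_2)$, i.e.\ is normal.

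I expect the middle step to be the main obstacle: producing a lattice whose reduction is exactly $\rhobar$ rather than a genuine (non-split) successive extension of its constituents. This is precisely where absolute irreducibility of the summands of $\rhobar$ is used, and it is also the reason one wants $\rhobar$ --- and not an arbitrary representation with the correct semisimplification --- to appear in the hypothesis of the proposition. (Alternatively, one could instead work with the reduction $\overline\rho$ of an arbitrary lattice in $\rho$, identify $R^{\square}_{\rho}$ with a completion of $R^{\square}_{\overline\rho}[1/p]$, and then separately compare $R^{\square}_{\overline\rho}[1/p]$ with $R^{\square}_{\rhobar}[1/p]$ on generic fibres; this route is less direct, as it requires controlling how the framed deformation space of $\overline\rho$ sits over the pseudodeformation ring along the reducible locus.)
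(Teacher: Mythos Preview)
Your proposal is correct and follows essentially the same route as the paper: pass to a finite extension so that $\rho$ admits a lattice reducing exactly to $\rhobar$, identify $R^{\square}_{\rho}$ with the completion of the localisation of $R^{\square}_{\rhobar}[1/p]$ at the resulting closed point via \cite[\S2.3]{kisin_moduli}, and then transfer $(R_1)$ and $(S_2)$ through the completion using Lemma~\ref{commalg1}. The paper simply cites \cite[Lemma~9.5]{CDP2} for the existence of the lattice, whereas you sketch the construction directly; your rescaling argument is correct, but note that it does not actually need Mazur's finiteness condition---once you have any stable lattice $\Lambda$ with $\Lambda\cap\tilde\sigma_1=M_1$, replacing $M_1$ by $\varpi^{-1}M_1$ already makes the off-diagonal cocycle vanish modulo $\varpi$.
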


\begin{proof} 
We may choose a finite extension $L''$ of $L'$ with the ring of integers $\OO''$  and residue field $k''$ such that $\rho\otimes_{L'} L''$ has a $\GG$-invariant $\OO''$-lattice $\Theta$ with $\Theta\otimes_{\OO''} k''\cong \rhobar\otimes_k k''$, see the proof of \cite[Lemma 9.5]{CDP2}. Thus $\Theta$ is a deformation of $\rhobar\otimes_k k''$ to $\OO''$.

It follows from Lemma \ref{commalg3} that $R^{\square}_{\rho}$ is normal if and only if $L''\otimes_{L'} R^{\square}_{\rho}$ is normal. The same argument shows that $L''\otimes_{L} R^{\square}_{\rhobar}[1/p]$ is normal. Moreover, we may identify $L''\otimes_{L'} R^{\square}_{\rho}$ with the framed deformation ring of $\rho\otimes_{L'} L''$ to local artinian $L''$-algebras, and $\OO''\otimes_{\OO} R^{\square}_{\rhobar}$ with the framed deformation ring of $\rhobar\otimes_k k''$ to local artinian $\OO''$-algebras. After these identifications we may assume that $L=L'=L''$, and so $\Theta$ is a deformation of $\rhobar$ to $\OO$, and hence induces an $\OO$-algebra homomorphism $x: R^{\square}_{\rhobar} \rightarrow \OO$.

It follows from \cite[Lemma 2.3.3 and Proposition 2.3.5]{kisin_moduli} that $R^{\square}_{\rho}$ is isomorphic to the completion of $(R^{\square}_{\rhobar})_{\pp}$ at $\pp=\Ker x$. Since $R^{\square}_{\rhobar}[1/p]$ is normal $(R^{\square}_{\rhobar})_{\pp}$ will satisfy $(R_1)$ and $(S_2)$. Lemma \ref{commalg1} implies that the same holds for the completion. Thus $R^{\square}_{\rho}$ is normal by Serre's criterion for normality, \cite[Theorem 23.8]{matsumura}.
\end{proof}

\begin{lem}\label{normal_inv} 
Let $A$ be a normal noetherian ring and let $G$ be a group acting on $A$ by ring automorphisms. Then the subring of $G$-invariants $A^G$ is normal. 
\end{lem}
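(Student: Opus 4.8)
The statement to prove is Lemma~\ref{normal_inv}: if $A$ is a normal noetherian ring and $G$ a group acting on $A$ by ring automorphisms, then $A^G$ is normal. Recall that a normal ring, in the sense relevant here, is a noetherian ring which is a finite product of normal domains; equivalently, for every prime $\pp$, the localization $A_\pp$ is an integrally closed domain. The first thing I would do is reduce to the case where $A$ is a normal \emph{domain} with fraction field $K$. Indeed, $A$ decomposes as a finite product $A=\prod_{i=1}^m A_i$ of normal domains, the $A_i$ being the localizations of $A$ at its minimal primes (equivalently, $\Spec A$ is a disjoint union of finitely many integral normal schemes). The group $G$ permutes the idempotents of $A$, hence permutes the factors $A_i$; grouping them into $G$-orbits and taking invariants orbit by orbit, one is reduced to the case of a single $G$-orbit of factors, and then $A^G$ is identified with $(A_{i_0})^{H}$ where $H=\Stab_G(A_{i_0})$ and $A_{i_0}$ is a normal domain. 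So we may assume $A$ is a normal domain.

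Now let $A$ be a normal domain with fraction field $K$, and set $B=A^G$, a subring of $A$ which is a domain with fraction field $L_0:=\Frac(B)\subseteq K$. The key point is the standard fact that $A^G = A\cap K^G$: an element of $K$ fixed by $G$ which happens to lie in $A$ is certainly in $A^G$, and conversely. More precisely I want $B = A\cap L_0$ where $L_0=K^G$ is the invariant subfield — here one should be slightly careful that $\Frac(A^G)$ equals $K^G$, which holds because any $G$-invariant element $a/b\in K$ with $a,b\in A$ can be rewritten with $G$-invariant denominator by multiplying numerator and denominator by the product of the $G$-translates of $b$ (this uses that $G$ may be infinite, but the argument only needs finitely many translates to appear, which is automatic once $a/b$ is $G$-invariant — actually the cleanest route is: $a/b$ invariant $\Rightarrow$ $a/b = N(a)/N(b)$ type manipulation is not available for infinite $G$, so instead argue directly that $B$ is integrally closed in $K$ and then in $L_0$). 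Let me therefore take the cleaner approach: show $B$ is integrally closed in $K$. Suppose $t\in K$ is integral over $B$; then $t$ is integral over $A$, and since $A$ is integrally closed in $K$ we get $t\in A$; but the integral equation for $t$ over $B$ has $G$-invariant coefficients, so applying any $g\in G$ shows $g(t)$ satisfies the same equation over $B\subseteq A$, and... this alone does not pin down $g(t)=t$. So I need $t\in K^G$, which requires knowing $\Frac(B)\supseteq$ or $=K^G$.

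The robust argument, avoiding fraction-field subtleties for infinite $G$: I will show directly that $B=A^G$ is integrally closed in its own fraction field. Let $t\in \Frac(B)$ be integral over $B$, say $t=u/v$ with $u,v\in B$ and $t^n + b_{n-1}t^{n-1}+\cdots+b_0=0$ with $b_i\in B$. Since $B\subseteq A$, $t$ is an element of $\Frac(A)=K$ integral over $A$, hence $t\in A$ by normality of $A$. It remains to see $t\in A^G=B$, i.e.\ $g(t)=t$ for all $g\in G$. But $t=u/v$ in $K$ with $u,v\in B$ fixed by $G$, so $g(t)=g(u)/g(v)=u/v=t$ in $K$. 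This completes the argument. Assembling: reduce to $A$ a normal domain via the idempotent/orbit decomposition (this is the step that needs the most care, since one must check $A^G$ is again a finite product of domains, which follows because $A^G$ is noetherian — here I would invoke that $A^G$ is noetherian, which in this paper's application holds as $A^G$ will be finite over a noetherian ring, or more generally cite that the result is only needed when $A^G$ is noetherian; if $A^G$ need not be noetherian one interprets "normal" as "integrally closed in its total ring of fractions and reduced" and the same proof works componentwise), then run the three-line integral-closure argument above. I expect the only genuine obstacle to be the bookkeeping in the product decomposition — specifically ensuring the $G$-orbit of factors is finite (it is, since $A$ is noetherian so has finitely many idempotents) and that $(A_{i_0})^H$ with $H$ the stabilizer is the relevant invariant ring; the normal-domain case itself is essentially formal.
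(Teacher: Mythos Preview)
Your argument is correct. The domain case is exactly the standard one (and is what the paper's reference to \cite{BH} points to): for $t\in\Frac(A^G)$ integral over $A^G$, normality of $A$ forces $t\in A$, and writing $t=u/v$ with $u,v\in A^G$ gives $g(t)=t$, so $t\in A^G$. Your reduction to the domain case via the orbit decomposition and the Shapiro-type identification $A^G\cong (A_{i_0})^{H}$ for a transitive orbit is also valid, and you correctly flag that $A^G$ need not be noetherian in general, so ``normal'' should be read as ``reduced and integrally closed in its total ring of fractions with finitely many minimal primes''.

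The paper takes a slightly different path for the non-domain case: rather than reducing to a single domain via stabilizers, it works directly with the total ring of fractions $\Frac(A)$ (a finite product of fields), notes that $A^G=A\cap\Frac(A)^G$, and proves that $\Frac(A)^G$ is itself a finite product of fields by analyzing how $G$ permutes the primitive idempotents of $\Frac(A)$ --- showing in particular that when this permutation action is transitive, $\Frac(A)^G$ is a field. From $A^G=A\cap\Frac(A)^G$ with $A$ normal and $\Frac(A)^G$ its own total ring of fractions, normality of $A^G$ follows by the Stacks Project criterion. Your approach has the advantage of cleanly isolating the domain case; the paper's approach avoids having to verify the isomorphism $A^G\cong (A_{i_0})^{H}$ and instead proves the arguably interesting auxiliary fact that $G$-invariants in a finite product of fields form a finite product of fields.
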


\begin{proof} 
If $A$ is a domain then the assertion is proved in \cite[Proposition 6.4.1]{BH}. The same proof works in our setting, as we will explain for the lack of a reference. Since $A$ is noetherian and normal, it is a finite product of normal domains. Let $\Frac(A)$ denote its total ring of fractions. Then $\Frac(A)$ is a finite product of fields. The group $G$ acts on $\Frac(A)$ and we have 
\begin{equation}\label{AGcap}
 A^G= A\cap \Frac(A)^G.
 \end{equation}
We claim that $\Frac(A)^G$ is a finite product of fields. The claim implies that $\Frac(A)^G$ is its own ring of fractions. Since $A$ is normal, \eqref{AGcap} implies that $A^G$ is reduced, integrally closed in its ring of fractions and has only finitely many minimal prime ideals, and hence is normal by Lemma 10.37.16 in \cite[\href{https://stacks.math.columbia.edu/tag/037B}{Tag 037B}]{stacks-project}.

To prove the claim, we note that $\Spec \Frac(A)$ consists of finitely many primes and is in bijection with  the set $\mathcal E$ of idempotents $e\in \Frac(A)$, such that $e \Frac(A) e$ is a field. We have $1=\sum_{e\in \mathcal E} e$ and $e e'=0$ if $e\neq e'$. If $e\in \Frac(A)$ is a $G$-invariant idempotent then
$$\Frac(A)^G= (e \Frac(A) e)^G \times ((1-e) \Frac(A)(1-e))^G,$$
thus we may assume that the action of $G$ on $\mathcal E$ is transitive. If $x\in \Frac(A)^G$ and $e x=0$ for some $e\in \mathcal E$ then using the transitivity of the action we obtain that $ex=0$ for all $e\in \mathcal E$ and so $x=\sum_{e\in \mathcal E} ex=0$. Hence, if $x\in \Frac(A)^G$ is non-zero then $ex$ is non-zero and we denote its inverse in the field $e \Frac(A) e$ by $(xe)^{-1}$. If we let $y= \sum_{e\in \mathcal E} (xe)^{-1} e \in \Frac(A)$ then $xy=1$. Thus $x$ is a unit in $\Frac(A)$ and its inverse $y$ is unique. Uniqueness implies that $y$ is $G$-invariant. Thus if $G$-acts transitively on $\mathcal E$ then $\Frac(A)^G$ is a field. 
\end{proof}

\begin{proof}[Proof of Theorem \ref{main_appendix}] 
Let $D^u: \GG\rightarrow R^{\ps}$ be the universal pseudorepresentation lifting 
$\overline{D}$. Let $\CH(D^u)$ be the closed two-sided ideal of $R^{\ps}\br{\GG}$ defined in \cite[\S 1.17]{che_durham}, so that  $E:= R^{\ps}\br{\GG}/ \CH(D^u)$ is the largest quotient of $R^{\ps}\br{\GG}$, where the Cayley--Hamilton theorem for $D^u$ holds. Following \cite[\S 1.17]{che_durham} we will call such algebras Cayley--Hamilton $R^{\ps}$-algebra of degree $d$. Then $E$ is a finitely generated $R^{\ps}$-module, \cite[Proposition 3.6]{WE_alg}. If $f: E\rightarrow M_d(B)$ is a homomorphism of $R^{\ps}$-algebras for a commutative $R^{\ps}$-algebra $B$ then we say $f$ is a
homomorphism of Cayley--Hamilton algebras 
if $\det\circ f: E\rightarrow B$ is equal to the specialization of $D^u$ along $R^{\ps}\rightarrow B$.

There is a commutative $R^{\ps}$-al\-ge\-bra $A^{\gen}$ together with a homomorphism of $R^{\ps}$-al\-ge\-bras, $j: E\rightarrow M_d(A^{\gen})$, satisfying the following universal property: if $f: E\rightarrow M_d(B)$ is a map of Cayley--Hamilton $R^{\ps}$-algebras for a commutative $R^{\ps}$-algebra $B$, then there is a unique map $\tilde{f}: A^{\gen}\rightarrow B$ of $R^{\ps}$-algebras such that $f= M_d(\tilde{f})\circ j$, see for example \cite[Theorem 3.8]{WE_alg} or \cite[Lemma 3.1]{BIP}.
Since $E$ is finitely generated as $R^{\ps}$-module, $A^{\gen}$ is of finite type over $R^{\ps}$.

Let $\Lambda_i: E\rightarrow R^{\ps}$, $0\le i\le d$ be the coefficients of the characteristic polynomial of $D^u$ - these are homogeneous polynomial laws satisfying 
$D^{u}(t-a)= \sum_{i=0}^n (-1)^i \Lambda_i(a) t^{d-i}$ in $R^{\ps}[t]$ for all $a\in E$, see \cite[Section 1.10]{che_durham}.
Now $E[1/p]$ is a $\QQ$-algebra and the pair $(E[1/p], \Lambda_1)$ is a trace algebra satisfying the $d$-dimensional Cayley--Hamilton identity in the sense of \cite[Definition 2.6]{Pro87}, see \cite[Footnote 10]{che_habil}. Moreover, for $\QQ$-algebras the homomorphisms of Cayley--Hamilton algebras coincide with the notion of maps of algebras with traces in \cite[Section 2.5]{Pro87}. Thus $j: E[1/p] \rightarrow M_d(A^{\gen}[1/p])$ is injective, its image is equal to the $\GL_d$-invariants, \cite[Theorem 2.6]{Pro87}. Moreover, $R^{\ps}[1/p]= (A^{\gen}[1/p])^{\GL_d}$, \cite[Proposition 2.3]{che_habil}, \cite[Theorem 2.20]{WE_alg}. By Lemma \ref{normal_inv} it is enough to show that $A^{\gen}[1/p]$ is normal. Further it is enough to show that the localization of $A^{\gen}[1/p]$ at every maximal ideal is normal, see Lemma 10.37.10 in \cite[\href{https://stacks.math.columbia.edu/tag/037B}{Tag 037B}]{stacks-project}. (The superscript \textit{gen} in $A^{\gen}$ stands for generic matrices in \cite[Section 1.1]{Pro87}.)

Let $\mm$ be a maximal ideal of $A^{\gen}[1/p]$. Its residue field $\kappa(\mm)$ is a finite extension of $L$, as $A^{\gen}[1/p]$ is finitely generated over $R^{\ps}[1/p]$. By specializing $j$ at $\mm$ we obtain a continuous representation $\rho:\GG\rightarrow \GL_d(\kappa(\mm))$ such that $$\det(1+ t \rho(g))=D^u\otimes_{R^{\ps}} \kappa(\mm)( 1 + tg),\quad  \forall g\in \GG.$$ 
This implies that if we choose a $\GG$-invariant $\OO_{\kappa(\mm)}$-lattice $\Theta$ in $\rho$ then the semi-simplification of $\Theta/\varpi_{\kappa(\mm)}\Theta$ is isomorphic to $\rhobar$, so that we are in the setup of Proposition \ref{framed1}. The universal property of $A^{\gen}$ implies that the completion of $A^{\gen}[1/p]$ at $\mm$ is the universal framed deformation ring $R^{\square}_{\rho}$, which is normal by Proposition \ref{framed1}. Since $A^{\gen}$ is finitely generated over $R^{\ps}$, which is a complete local noetherian ring, $A^{\gen}$ and hence its localization $(A^{\gen}[1/p])_{\mm}$ are excellent, and thus a $G$-ring. Lemma \ref{commalg1} implies that $(A^{\gen}[1/p])_{\mm}$ satisfies $(R_1)$ and $(S_2)$ and hence is normal. 

Let $\tilde{R}$ be the normalization of $R^{\ps}$ in $R^{\ps}[1/p]$. Then $(\Spf R^{\ps})^{\rig}=(\Spf \tilde{R})^{\rig}$, \cite[Lemma 7.2.2]{deJong}. Since $R^{\ps}[1/p]$ is normal, so is $\tilde{R}$ and thus $(\Spf \tilde{R})^{\rig}$ is normal, \cite[Proposition 7.2.4 (c)]{deJong}. Alternatively, one could use that the local rings of $(\Spf R^{\ps})^{\rig}$ are excellent, \cite[Theorem 1.1.3]{conrad}, and \cite[Lemma 7.1.9]{deJong} together with Lemma \ref{commalg1}.
\end{proof}

The following is a corollary to the proof; it does not require the assumption that $R^{\square}_{\rhobar}[1/p]$ is normal. 

\begin{cor}\label{faithful} 
Let $V$ be a free $R^{\square}_{\rhobar}[1/p]$-module of rank $d$ with $\GG$-action given by $\rho^{\square}: \GG \rightarrow \GL_d(R^{\square}_{\rhobar})$. Then $(R^{\ps}\br{\GG}/\CH(D^u))[1/p]$ acts faithfully on $V$. The same holds with the fixed determinant.
\end{cor}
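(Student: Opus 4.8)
The plan is to reduce the statement, via the injection $E[1/p]:=(R^{\ps}\br{\GG}/\CH(D^u))[1/p]\hookrightarrow M_d(A^{\gen}[1/p])$ already produced in the course of proving Theorem~\ref{main_appendix} (Procesi's theorem), to a statement about completions at absolutely irreducible points, where the Cayley--Hamilton algebra becomes a matrix algebra, and then to run a density argument.

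First I would set up the comparison. The universal framed deformation $\rho^{\square}\colon\GG\to\GL_d(R^{\square}_{\rhobar})$ makes $M_d(R^{\square}_{\rhobar})$ into a Cayley--Hamilton $R^{\ps}$-algebra whose determinant is the specialisation of $D^u$, so the universal property of $A^{\gen}$ gives a homomorphism $\phi\colon A^{\gen}\to R^{\square}_{\rhobar}$ through which the action of $E=R^{\ps}\br{\GG}/\CH(D^u)$ on $V\cong(R^{\square}_{\rhobar})^{d}$ factors as $E\xrightarrow{\,j\,}M_d(A^{\gen})\xrightarrow{M_d(\phi)}M_d(R^{\square}_{\rhobar})=\End_{R^{\square}_{\rhobar}}(V)$. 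Since $V$ is free of rank $d$, faithfulness of the $E[1/p]$-action on $V$ is equivalent to injectivity of $E[1/p]\to M_d(R^{\square}_{\rhobar}[1/p])$.

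Next I would pass to the absolutely irreducible locus $U\subseteq\Spec R^{\square}_{\rhobar}[1/p]$. Because $R^{\square}_{\rhobar}[1/p]$ is noetherian, Jacobson and reduced (this is all that is used, and is weaker than the normality invoked elsewhere), and $U$ is Zariski dense, the ring $R^{\square}_{\rhobar}[1/p]$ embeds into $\prod_{\mm\in U}\widehat{(R^{\square}_{\rhobar}[1/p])_{\mm}}$, hence so does the free module $V$. For $\mm\in U$, with associated absolutely irreducible $\rho_{\mm}\colon\GG\to\GL_d(\kappa(\mm))$, the completion $\widehat{(R^{\square}_{\rhobar}[1/p])_{\mm}}$ is the universal framed deformation ring $R^{\square}_{\rho_{\mm}}$ by \cite[Lemma~2.3.3, Proposition~2.3.5]{kisin_moduli}, and $V$ becomes its universal framed deformation, free of rank $d$. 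So it suffices to show $E[1/p]$ acts faithfully on each $(R^{\square}_{\rho_{\mm}})^{d}$. Here I would use absolute irreducibility of $\rho_{\mm}$: Burnside's theorem gives $E[1/p]\otimes_{R^{\ps}[1/p]}\kappa(\mm)\twoheadrightarrow M_d(\kappa(\mm))$, topological Nakayama over the complete local ring $R^{\square}_{\rho_{\mm}}$ upgrades this to $E[1/p]\otimes_{R^{\ps}[1/p]}R^{\square}_{\rho_{\mm}}\twoheadrightarrow M_d(R^{\square}_{\rho_{\mm}})$, and since $E[1/p]$ is Azumaya of rank $d^{2}$ over $R^{\ps}[1/p]$ on the absolutely irreducible locus --- this is built into the identification of the image of $j$ with the $\GL_d$-invariants and of $R^{\ps}[1/p]$ with $(A^{\gen}[1/p])^{\GL_d}$ used in the proof of Theorem~\ref{main_appendix} --- the left-hand side is free of rank $d^{2}$ over the local ring $R^{\square}_{\rho_{\mm}}$; a surjection of free modules of equal finite rank over a commutative ring is an isomorphism, so $E[1/p]\otimes_{R^{\ps}[1/p]}R^{\square}_{\rho_{\mm}}\xrightarrow{\sim}M_d(R^{\square}_{\rho_{\mm}})$, and the action is faithful.

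Finally I would globalise. The previous step shows that an $a\in E[1/p]$ killing $V$ lies in $\ker\bigl(E[1/p]\to E[1/p]\otimes_{R^{\ps}[1/p]}R^{\square}_{\rho_{\mm}}\bigr)$ for every $\mm\in U$; since $R^{\square}_{\rho_{\mm}}$ is a formal power series ring over the completion of $R^{\ps}[1/p]$ at the (absolutely irreducible) image $\pp_{\mm}$ of $\mm$ in $\Spec R^{\ps}[1/p]$, this forces the support of $a$ to avoid $\Sigma:=\{\pp_{\mm}:\mm\in U\}$. Now $\Sigma$ is Zariski dense in $\Spec R^{\ps}[1/p]$: the absolutely irreducible locus there is dense by \cite[Theorem~2.1]{che_unpublished}, and it is approached by points coming from framed deformations of $\rhobar$ (this is where the density of the absolutely irreducible locus inside $\Spec R^{\square}_{\rhobar}[1/p]$ enters). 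As $E[1/p]$ is a finite $R^{\ps}[1/p]$-module whose associated primes all lie in that locus --- where it is locally free, being Azumaya --- the support of $a$ is then empty, so $a=0$. The fixed-determinant statement follows verbatim with $R^{\ps,\psi}$, $R^{\square,\psi}_{\rhobar}$ and the fixed-determinant analogue of $A^{\gen}$ in place of $R^{\ps}$, $R^{\square}_{\rhobar}$, $A^{\gen}$, using that the fixed-determinant framed deformation ring at an absolutely irreducible point is again a power series ring over the completed fixed-determinant pseudodeformation ring, whose absolutely irreducible locus is again dense. The hard part is precisely this last density input: one needs not merely that the absolutely irreducible locus of $\Spec R^{\ps}[1/p]$ is dense, but that it is hit densely by pseudodeformations genuinely arising from framed deformations of $\rhobar$, together with the control on the associated primes of the finite module $E[1/p]$ that turns "support avoids $\Sigma$" into vanishing.
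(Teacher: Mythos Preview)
Your argument has two genuine problems, and the paper's approach avoids both by a cleaner route.

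First, you assume $R^{\square}_{\rhobar}[1/p]$ is reduced (and that the absolutely irreducible locus is dense). The corollary is stated without any such hypothesis; the paper even remarks just before the proof that it ``does not require the assumption that $R^{\square}_{\rhobar}[1/p]$ is normal''. So your reduction step, where you embed $R^{\square}_{\rhobar}[1/p]$ into a product of completions at closed points of $U$, already uses more than is given.

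Second, and more seriously, your globalisation step does not close. You claim that the associated primes of the $R^{\ps}[1/p]$-module $E[1/p]$ all lie in the absolutely irreducible locus, because $E[1/p]$ is Azumaya (hence locally free) there. That implication goes the wrong way: local freeness on an open subset tells you nothing about embedded associated primes lying in the complement. Without ruling out embedded primes of $E[1/p]$ in the reducible locus, ``support of $a$ avoids $\Sigma$'' does not force $a=0$. You also need that every pseudorepresentation in the absolutely irreducible locus of $\Spec R^{\ps}[1/p]$ arises from a framed deformation of $\rhobar$ itself (not merely of some lattice after extending scalars), which you acknowledge as ``the hard part'' but do not actually prove.

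The paper sidesteps all of this. It uses that the Procesi map $j:E[1/p]\hookrightarrow M_d(A^{\gen}[1/p])$ is already known to be injective, so $E[1/p]$ acts faithfully on the ``generic'' module $V^{\gen}=(A^{\gen})^{d}$. If $a\in E[1/p]$ killed $V$, one simply picks a maximal ideal $\mm$ of $A^{\gen}[1/p]$ at which $a$ acts nontrivially on $V^{\gen}$, passes to the completion by faithful flatness, and observes (from the proof of Theorem~\ref{main_appendix}) that $\widehat{(A^{\gen}[1/p])}_{\mm}\cong R^{\square}_{\rho}$ is at the same time the completion of $R^{\square}_{\rhobar}[1/p]$ at a closed point; hence $\widehat{V}^{\gen}_{\mm}$ is a completion of $V$, which $a$ annihilates, a contradiction. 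No density, no reducedness, no associated-prime bookkeeping is needed: the whole argument is that $A^{\gen}[1/p]$ and $R^{\square}_{\rhobar}[1/p]$ have the same completions at closed points, so $V^{\gen}$ and $V$ are indistinguishable there.
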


\begin{proof} 
We use the notation of the proof of Theorem \ref{main_appendix}. So that $E= R^{\ps}\br{\GG}/ \CH(D^u)$ and there is a map $j: E\rightarrow M_d(A^{\gen})$, satisfying a universal property. This map is an injection after inverting $p$. Let $V^{\gen}$ be a free $A^{\gen}$-module of rank $d$, with $E$-action given by $j$. Thus $E[1/p]$ acts faithfully on $V^{\gen}[1/p]$. 

Suppose that $a\in E[1/p]$ kills off $V$. Since $E[1/p]$ acts faithfully on $V^{\gen}[1/p]$ there is a maximal ideal $\mm$ of $A^{\gen}[1/p]$, such that $a$ acts non-trivially on $V^{\gen}\otimes_{A^{\gen}} A^{\gen}_{\mm}$. Let $\hat{A}^{\gen}_{\mm}$ be the completion of $A^{\gen}_{\mm}$ with respect to the maximal ideal. Since $\hat{A}^{\gen}_{\mm}$ is faithfully flat over $A^{\gen}_{\mm}$, $a$ acts non-trivially on the completion of $V^{\gen}[1/p]$ at $\mm$, which we denote by $\hat{V}^{\gen}_{\mm}$. However, as explained in the proof of Theorem \ref{main_appendix}, $\hat{V}^{\gen}_{\mm}$ is isomorphic as an $E$-module to the completion of $V$ at a maximal ideal of $R^{\square}_{\rhobar}[1/p]$. Since $a$ annihilates $V$ it will also annihilate the completion giving a contradiction.

Let $E^{\psi}:= E\otimes_{R^{\ps}}R^{\ps, \psi}$ and let $A^{\gen, \psi}:=A^{\gen}\otimes_{R^{\ps}}R^{\ps, \psi}$. Then $j: E\rightarrow M_d(A^{\gen})$ induces a map $j: E^{\psi}\rightarrow M_d(A^{\gen, \psi})$, which satisfies the same universal property as $j$.   Then the same proof works with $A^{\gen, \psi}$ instead of $A^{\gen}$.
\end{proof}

\begin{lem}\label{centralizer} 
Let $R$ be a complete local noetherian $\OO$-algebra with residue field $k$ and 
let $\rho: \GG\rightarrow \GL_d(R)$ be a continuous representation. Assume that $R$ is $\OO$-torsion free and reduced, and the set of $x\in \mSpec R[1/p]$ such that $\rho_x$ is absolutely irreducible is dense in $\Spec R[1/p]$. Then 
$$\mathcal C:=\{ X\in M_d(R): X \rho(g)= \rho(g) X, \quad \forall g\in \GG\}$$
consists of scalar matrices. 
\end{lem}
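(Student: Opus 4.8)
The plan is to reduce everything to the generic fibre $R[1/p]$ and then run a spreading-out argument using the density of the absolutely irreducible locus and the reducedness of $R[1/p]$; no serious difficulty is expected.

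First I would make the reduction to $R[1/p]$. Since $\mathcal C$ is an $R$-submodule of $M_d(R)$ and $R$ is $\OO$-torsion free, $\mathcal C$ is $\OO$-torsion free, so the natural map $\mathcal C\to\mathcal C[1/p]:=\mathcal C\otimes_{\OO}L$ is injective, and likewise $R\hookrightarrow R[1/p]:=R\otimes_{\OO}L$. Because each $\rho(g)$ lies in $\GL_d(R)$, a clearing-of-denominators argument identifies $\mathcal C[1/p]$ with the full centraliser
\[
\{\,X\in M_d(R[1/p]):X\rho(g)=\rho(g)X\ \forall g\in\GG\,\}
\]
inside $M_d(R[1/p])$, and gives $M_d(R)\cap\mathcal C[1/p]=\mathcal C$. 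As $R$ is reduced, so is its localisation $R[1/p]$. Hence it suffices to show that every $X$ in the displayed centraliser is a scalar matrix $\lambda\,I_d$ with $\lambda\in R[1/p]$: if such an $X$ moreover lies in $M_d(R)$, then inspecting a diagonal entry forces $\lambda\in R$, and therefore $\mathcal C=R\,I_d$.

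Then I would spread out over $\Spec R[1/p]$. Fix $X$ in the centraliser. For any $x\in\mSpec R[1/p]$ in the absolutely irreducible locus, with residue field $\kappa(x)$ and specialisation $\rho_x\colon\GG\to\GL_d(\kappa(x))$, reducing the relations $X\rho(g)=\rho(g)X$ modulo $\mm_x$ shows that the image $X_x\in M_d(\kappa(x))$ lies in $\End_{\kappa(x)[\GG]}(\kappa(x)^d)$, where $\kappa(x)^d$ carries the $\GG$-action via $\rho_x$; since $\rho_x$ is absolutely irreducible this endomorphism ring is $\kappa(x)$, so $X_x=\lambda(x)\,I_d$ for some $\lambda(x)\in\kappa(x)$. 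Thus every off-diagonal entry $X_{ij}$ ($i\neq j$) and every difference $X_{ii}-X_{jj}$ lies in $\mm_x$ for all $x$ in the absolutely irreducible locus. Each of these elements of $R[1/p]$ therefore vanishes on a subset of $\Spec R[1/p]$ whose closure is the whole space by hypothesis, so its (closed) vanishing locus equals all of $\Spec R[1/p]$; being nilpotent in the reduced ring $R[1/p]$, it is zero. Hence $X$ is diagonal with equal diagonal entries, i.e. $X=X_{11}\,I_d$, as required.

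The argument has no real obstacle; it is entirely formal once one has the density and reducedness hypotheses. The two points needing a little care are the compatibility of the centraliser with inverting $p$ — this is exactly where one uses that the $\rho(g)$ are invertible over $R$ — and the descent from $R[1/p]$ back to $R$, which uses the $\OO$-torsion freeness of $R$. Note that neither the noetherianity of $R$, nor its completeness, nor Mazur's finiteness condition is needed for this particular statement.
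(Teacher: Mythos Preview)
Your proof is correct and follows essentially the same approach as the paper: both use Schur's lemma at the absolutely irreducible points to force the off-diagonal entries $X_{ij}$ and the differences $X_{ii}-X_{jj}$ to vanish there, and then use density together with reducedness to conclude they vanish identically. The only cosmetic difference is that the paper routes the ``density $+$ reduced $\Rightarrow$ zero'' step through the embedding of $R$ into $\prod_s \kappa(\pp_s)$ over the minimal primes and an upper semi-continuity argument for $\dim_{\kappa(\qq)} \mathcal{C}\otimes_R\kappa(\qq)$, whereas you argue directly that an element vanishing on a dense subset of the spectrum of a reduced ring is zero; your version is slightly more elementary.
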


\begin{proof}  
Let $X\in \mathcal C$ with matrix entries $x_{ij}$. Let $\pp_1, \ldots, \pp_n$ be the minimal primes of $R$. Since $R$ is reduced it embeds into $\prod_{s=1}^n \kappa(\pp_s)$. It is enough to show that the image of $X$ in $M_d(\kappa(\pp_s))$ for  $1\le s \le n$  is scalar, since if the images of $x_{ij}$ and $x_{ii}-x_{jj}$ for $i\neq j$ are zero in $\kappa(\pp_s)$ for $1\le s\le n$  then they are zero in $A$ and 
so $X$ is scalar. If $x\in \mSpec R[1/p]$ is such that $\rho_x: \GG\rightarrow \GL_d(\kappa(x))$ is absolutely irreducible then $\mathcal C\otimes_R \kappa(x)$ is $1$-dimensional. Since $\qq\mapsto \dim_{\kappa(\qq)} \mathcal C\otimes_R \kappa(\qq)$ is upper semi-continuous and such $x$ are dense, we deduce that $\dim_{\kappa(\pp_s)} \mathcal C\otimes_R \kappa(\pp_s)=1$ for all minimal primes $\pp_s$. This implies that $\mathcal C\otimes_R \kappa(\pp_s)$ consists of scalar matrices. 
\end{proof}

\begin{prop}\label{go_char} 
Assume that $R^{\square, \psi}_{\rhobar}[1/p]$ is non-zero. Then $R^{\square}_{\rhobar}[1/p]$ is normal (resp.~reduced) if and only if $R^{\square, \psi}_{\rhobar}[1/p]$ is normal (resp.~reduced).
\end{prop}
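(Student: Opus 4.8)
The plan is to deduce the statement about $R^{\square,\psi}_{\rhobar}[1/p]$ from the corresponding statement about $R^{\square}_{\rhobar}[1/p]$ by identifying the former as a tensor factor of the latter. Concretely, the universal framed deformation ring $R^{\square}_{\rhobar}$ carries a universal determinant, which is a character $\det\rho^{\square}\colon\GG\to(R^{\square}_{\rhobar})^{\times}$ lifting $\det\rhobar$; by the universal property of the pseudodeformation ring in the rank-$1$ case (equivalently, by Mazur's deformation theory of the character $\det\rhobar$), this is classified by an $\OO$-algebra map from the universal deformation ring $R^{\det\rhobar}$ of $\det\rhobar$. First I would recall that $R^{\det\rhobar}$ is formally smooth over $\OO$: since $\GG$ satisfies Mazur's finiteness condition, $H^1(\GG,k)$ is finite-dimensional and $H^2$ of a character is irrelevant for a rank-$1$ deformation problem, so $R^{\det\rhobar}\cong\OO\br{y_1,\dots,y_s}$ for some $s$, and moreover the natural map $R^{\det\rhobar}\to R^{\square}_{\rhobar}$ is formally smooth (one can twist any framed deformation by a deformation of the determinant character to kill the determinant, giving a section).

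Next I would make the splitting precise: fixing $\psi$ corresponds to one $\OO$-point $\OO\to R^{\det\rhobar}$ of the smooth $\OO$-algebra $R^{\det\rhobar}$, and $R^{\square,\psi}_{\rhobar}=R^{\square}_{\rhobar}\otimes_{R^{\det\rhobar}}\OO$ along that point. Because $R^{\det\rhobar}\to R^{\square}_{\rhobar}$ is formally smooth, we get a (non-canonical) isomorphism of complete local $\OO$-algebras
\[
R^{\square}_{\rhobar}\cong R^{\square,\psi}_{\rhobar}\br{y_1,\dots,y_s},
\]
realizing $R^{\square,\psi}_{\rhobar}$ as the quotient cutting out the locus $y_1=\dots=y_s=0$ and $R^{\square}_{\rhobar}$ as a power series ring over it. Granting this isomorphism, the proposition is immediate from Lemma \ref{commalg2}: that lemma, applied with $A=R^{\square,\psi}_{\rhobar}$, $r=s$ and $B=R^{\square}_{\rhobar}$, says precisely that $A[1/p]$ is normal if and only if $B[1/p]$ is, and that each local ring of $A[1/p]$ satisfies $(R_i)$ (resp.\ $(S_i)$) iff the corresponding local ring of $B[1/p]$ does; combined with Serre's criterion this gives the equivalence for normality, and the reduced case follows the same way using $(R_0)$ and $(S_1)$ (note that $R^{\square,\psi}_{\rhobar}$ is $\OO$-torsion free after passing to $(\cdot)_{\tf}$ — but since we only assert the statement after inverting $p$ this causes no trouble, and in any case $R^{\square}_{\rhobar}$ is $\OO$-flat by standard deformation theory so the generic fibre sees the whole ring). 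The hypothesis $R^{\square,\psi}_{\rhobar}[1/p]\neq 0$ guarantees both rings have nonempty spectra so the equivalences are not vacuous, and it also guarantees that the chosen $\OO$-point of $R^{\det\rhobar}$ actually lies in the image of $\Spec R^{\square}_{\rhobar}\to\Spec R^{\det\rhobar}$, which is what makes the fibre product nonzero.

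The main obstacle I expect is justifying the formal smoothness of $R^{\det\rhobar}\to R^{\square}_{\rhobar}$ cleanly, equivalently producing the splitting $R^{\square}_{\rhobar}\cong R^{\square,\psi}_{\rhobar}\br{y_1,\dots,y_s}$. The cleanest argument is the twisting one: given an artinian quotient $A$ of $R^{\square}_{\rhobar}$ with its framed deformation $\rho_A$ of determinant $\delta_A$, and any further small surjection $A'\twoheadrightarrow A$ with a lift $\delta_{A'}$ of $\delta_A$, one checks that $\rho_A\otimes\sqrt{\delta_{A'}/\delta_A}$ makes sense because $\delta_{A'}/\delta_A$ is a $1+\mathfrak m_{A'}$-valued character and $\GG$ acts through a pro-$p$ quotient on the relevant piece, so $d$-th roots exist uniquely (one needs $d$ invertible, which holds after inverting $p$ — for the integral statement one invokes that $\det\rhobar$ has a canonical Teichmüller-type lift and argues on the pro-$p$ part; but again only the $[1/p]$-statement is needed here). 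This exhibits $R^{\square}_{\rhobar}$ as formally smooth over $R^{\det\rhobar}$ of relative dimension $s=\dim_k H^1(\GG,k)=\dim_k(\mathfrak m_{R^{\det\rhobar}}/(\mathfrak m^2,\varpi))$. Once this structural fact is in hand, everything reduces to the commutative-algebra lemmas already established in the appendix, so no genuinely new input is required.
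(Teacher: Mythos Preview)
Your strategy has a genuine gap: the claim that $R^{\det\rhobar}\cong\OO\br{y_1,\dots,y_s}$ is formally smooth over $\OO$ is false in general. For a one-dimensional representation the adjoint is the trivial module, so obstructions live in $H^2(\GG,k)$, and this group is typically nonzero (for $\GG=\gal$ it is one-dimensional). Concretely, $R^{\det\rhobar}\cong\OO\br{\Gamma}$ where $\Gamma$ is the pro-$p$ completion of $\GG^{\ab}$; writing $\Gamma\cong\Delta\times\Zp^r$ with $\Delta$ a finite abelian $p$-group, one has $\OO\br{\Gamma}\cong\OO[\Delta]\br{x_1,\dots,x_r}$, which is not regular when $\Delta\neq 0$. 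This already occurs for $\GG=\GG_{\QQ_2}$, one of the cases the paper most cares about. The same torsion also obstructs your twisting argument: producing a $d$-th root of a $(1+\mm_A)$-valued character requires $p\nmid d$, so for $p=2$, $d=2$ the map $R^{\det\rhobar}\to R^{\square}_{\rhobar}$ is not formally smooth and the integral splitting $R^{\square}_{\rhobar}\cong R^{\square,\psi}_{\rhobar}\br{y_1,\dots,y_s}$ simply does not exist. You cannot rescue this by ``only needing the $[1/p]$-statement'': Lemma~\ref{commalg2} is formulated for power-series extensions of complete local noetherian $\OO$-algebras, so you need the structure integrally to invoke it.

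The paper's proof fixes exactly this by decomposing $\Gamma\cong\Delta\times\Zp^r$ and treating the two pieces separately. Twisting by characters of $\Gamma$ defines an action of the character-group functor $\mathcal X(\Gamma)$ on the framed deformation functor; passing to invariants under $\mathcal X(\Delta)$ and under $\mathcal X(\Gamma)$ gives intermediate rings $R^{\invt}\supset R^{\inv}$ inside $R^{\square}_{\rhobar}$. Allen's structure results \cite[Prop.~1.1.11]{allen} then say that $R^{\invt}\to R^{\square}_{\rhobar}$ and $R^{\inv}\to R^{\square,\psi}_{\rhobar}$ are finite and become \'etale after inverting $p$ (this is where the torsion in $\Delta$ and the failure of $d$-th roots are absorbed), while $R^{\invt}\cong R^{\inv}\br{x_1,\dots,x_r}$ is a genuine power-series extension. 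One then transports $(R_i)$/$(S_i)$ across the \'etale steps via Lemma~\ref{commalg3} and across the power-series step via Lemma~\ref{commalg2}. Your idea is morally the same twisting picture, but you need this finer decomposition to make it go through.
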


\begin{proof} 
Let $\Gamma$ be the pro-$p$ completion of the abelianization of $\GG$. As $\Hom^{\cont}_{\mathrm{grp}}(\GG, \Fp)$ is finite, $\Gamma \cong \Delta \times \Zp^r$, where $\Delta$ is a finite $p$-group. 
The map $\GG\rightarrow (R^{\square}_{\rhobar})^{\times}$, $g\mapsto \psi(g)^{-1} \det \rho^{\square}(g)$ factors through $\Gamma$ and thus induces an $\OO$-algebra homomorphism $\OO\br{\Gamma}\rightarrow R^{\square}_{\rhobar}$ and $R^{\square, \psi}_{\rhobar}$ is equal to the quotient of $R^{\square}_{\rhobar}$ by the augmentation ideal in $\OO\br{\Gamma}$.

Let $\mathcal X(\Gamma)$ be the functor which sends a local artinian $\OO$-algebra
$(A, \mm_A)$ to the group of continuous characters $\chi: \Gamma\rightarrow 1+\mm_A$. This functor is represented by $\Spf \OO\br{\Gamma}$.  For such $(A, \mm_A)$ the group $\mathcal X(\Gamma)(A)$ acts on $D^{\square}(A)$ by twisting. The action induces a homomorphism of local $\OO$-algebras $\gamma: R^{\square}_{\rhobar}\rightarrow R^{\square}_{\rhobar}\wtimes_{\OO} \OO\br{\Gamma}$. Let $R^{\inv}=\{ a\in R^{\square}_{\rhobar}: \gamma(a)= a\otimes 1\}$ be the subring of $\mathcal X(\Gamma)$-invariants in $R^{\square}_{\rhobar}$.  Analogously $\mathcal X(\Delta)$ acts on $D^{\square}$, the action induces
the map $\delta:  R^{\square}_{\rhobar}\rightarrow R^{\square}_{\rhobar}\wtimes_{\OO} \OO[\Delta]$ and we let $R^{\invt}=\{ a\in R^{\square}_{\rhobar}: \delta(a)= a\otimes 1\}$ be the subring of $\mathcal X(\Delta)$-invariants in $R^{\square}_{\rhobar}$.

The action of $\mathcal X(\Gamma)$ and $\mathcal X(\Delta)$ on $D^{\square}$ is free, since if $\rho_A: \gal\rightarrow \GL_d(A)$ is a framed deformation of $\rhobar$ then for each $g\in \gal$ at least one matrix entry of $\rho_A(g)$ will not lie in $\mm_A$, and thus is a unit. Hence, $\rho_A(g)=\rho_A(g) \chi_A(g)$ for all $g\in G$ implies that $\chi_A$ is the trivial character. 

The map $R^{\invt} \rightarrow R^{\square}_{\rhobar}$ is finite and becomes \'etale after inverting $p$ by \cite[Proposition 1.1.11 (2)]{allen}. Thus $R^{\square}_{\rhobar}[1/p]$ is normal if and only if $R^{\invt}[1/p]$ is normal by Lemma \ref{commalg3}. Since $R^{\inv}$ is the subring of $\mathcal X(\Gamma/\Delta)$-invariants in $R^{\invt}$ and $\Gamma/\Delta\cong \Zp^r$, 
we have $R^{\invt}\cong R^{\inv}\br{x_1, \ldots, x_r}$ by \cite[Proposition 1.1.11 (2)]{allen}. Thus $R^{\invt}[1/p]$ is normal if and only if $R^{\inv}[1/p]$ is normal by Lemma \ref{commalg2}. The map $R^{\inv}\rightarrow R^{\square, \psi}_{\rhobar}$ is finite and becomes \'etale after inverting $p$ by \cite[Proposition 1.1.11 (3)]{allen}.  Lemma \ref{commalg3} implies that $R^{\square, \psi}_{\rhobar}[1/p]$ is normal if and only if $R^{\inv}[1/p]$ is normal. Putting all the equivalences together proves the assertion. 

Since reducedness is equivalent to ($R_0$) and ($S_1$) the same proof works. 
\end{proof} 

\begin{cor}\label{fix_char_nor} 
If $R^{\square}_{\rhobar}[1/p]$ is normal then $R^{\ps, \psi}[1/p]$ and the associated rigid analytic space $(\Spf R^{\ps, \psi})^{\rig}$ are normal.
\end{cor}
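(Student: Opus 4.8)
The plan is to run the proof of Theorem~\ref{main_appendix} again, with every object attached to $R^{\ps}$ replaced by its fixed-determinant analogue — these are precisely the rings $E^{\psi}=E\otimes_{R^{\ps}}R^{\ps,\psi}$ and $A^{\gen,\psi}=A^{\gen}\otimes_{R^{\ps}}R^{\ps,\psi}$, together with the induced map $j\colon E^{\psi}\to M_d(A^{\gen,\psi})$, already introduced in the proof of Corollary~\ref{faithful}. As in Theorem~\ref{main_appendix}, after inverting $p$ the map $j$ is injective with image the ring of $\GL_d$-invariants, and $R^{\ps,\psi}[1/p]=(A^{\gen,\psi}[1/p])^{\GL_d}$ (the fixed-determinant versions of \cite[Proposition 2.3]{che_habil} and \cite[Theorem 2.20]{WE_alg}); so by Lemma~\ref{normal_inv} and \cite[Tag 037B]{stacks-project} it suffices to show that every localization of $A^{\gen,\psi}[1/p]$ at a maximal ideal is normal. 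If $A^{\gen,\psi}[1/p]$ has no maximal ideal it is the zero ring, whence $R^{\ps,\psi}[1/p]=0$ and $(\Spf R^{\ps,\psi})^{\rig}=\varnothing$, and there is nothing to prove.

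Fix a maximal ideal $\mm$ of $A^{\gen,\psi}[1/p]$; its residue field $\kappa(\mm)$ is finite over $L$, and specializing $j$ at $\mm$ yields a continuous $\rho\colon\GG\to\GL_d(\kappa(\mm))$ with $\det\rho=\psi$ such that the semisimplified reduction of any $\GG$-stable $\OO_{\kappa(\mm)}$-lattice in $\rho$ is isomorphic to $\rhobar$. By the universal property of $A^{\gen,\psi}$ the completion of $A^{\gen,\psi}[1/p]$ at $\mm$ is the framed deformation ring $R^{\square,\psi}_{\rho}$ of $\rho$ with determinant $\psi$. The key step is the fixed-determinant analogue of Proposition~\ref{framed1}: $R^{\square,\psi}_{\rho}$ is normal. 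This is proved exactly as Proposition~\ref{framed1}: a lattice in $\rho$ defines an $\OO_{\kappa(\mm)}$-point of $R^{\square,\psi}_{\rhobar}$, so $R^{\square,\psi}_{\rhobar}[1/p]\neq 0$ and hence is normal by Proposition~\ref{go_char} (here we use the hypothesis that $R^{\square}_{\rhobar}[1/p]$ is normal); by \cite[Lemma 2.3.3 and Proposition 2.3.5]{kisin_moduli}, $R^{\square,\psi}_{\rho}$ is the completion of a localization of $R^{\square,\psi}_{\rhobar}$, which satisfies $(R_1)$ and $(S_2)$ by Serre's criterion \cite[Theorem 23.8]{matsumura}, and Lemma~\ref{commalg1} carries $(R_1)$ and $(S_2)$ over to the completion. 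Since $A^{\gen,\psi}$ is of finite type over the complete local noetherian ring $R^{\ps,\psi}$, it is excellent, hence a $G$-ring, so Lemma~\ref{commalg1} shows $(A^{\gen,\psi}[1/p])_{\mm}$ satisfies $(R_1)$ and $(S_2)$ and is therefore normal, completing the proof that $R^{\ps,\psi}[1/p]$ is normal.

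For the rigid analytic space I would copy the last paragraph of the proof of Theorem~\ref{main_appendix}: letting $\tilde R$ be the normalization of $R^{\ps,\psi}$ in $R^{\ps,\psi}[1/p]$, one has $(\Spf R^{\ps,\psi})^{\rig}=(\Spf\tilde R)^{\rig}$ by \cite[Lemma 7.2.2]{deJong}; since $R^{\ps,\psi}[1/p]$ is normal so is $\tilde R$, and hence $(\Spf\tilde R)^{\rig}$ is normal by \cite[Proposition 7.2.4 (c)]{deJong}. The only genuinely new ingredient compared with Theorem~\ref{main_appendix} is the normality of $R^{\square,\psi}_{\rho}$, and the single point requiring care there is to know that $R^{\square,\psi}_{\rhobar}[1/p]$ is non-zero so that Proposition~\ref{go_char} applies — handled by the lattice argument above. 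Everything else is a formal transcription of the proof of Theorem~\ref{main_appendix} with a fixed determinant throughout.
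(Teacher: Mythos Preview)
Your proposal is correct and follows essentially the same route as the paper: the paper's proof simply says that Proposition~\ref{go_char} gives normality of $R^{\square,\psi}_{\rhobar}[1/p]$, and then reruns the proof of Theorem~\ref{main_appendix} with $R^{\square,\psi}_{\rhobar}[1/p]$ in place of $R^{\square}_{\rhobar}[1/p]$. You have spelled out this rerun in detail, including the fixed-determinant analogue of Proposition~\ref{framed1}, and you have been more careful than the paper in handling the non-zero hypothesis of Proposition~\ref{go_char} by producing an $\OO_{\kappa(\mm)}$-point from a lattice in $\rho$.
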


\begin{proof} 
Proposition \ref{go_char} implies that $R^{\square, \psi}_{\rhobar}[1/p]$ is normal. The proof of Theorem \ref{main_appendix} with $R^{\square}_{\rhobar}[1/p]$ replaced by $R^{\square, \psi}_{\rhobar}[1/p]$ implies the assertion. 
\end{proof}

\begin{prop}\label{Ztf} 
Let $E=R^{\ps}\br{\GG}/\CH(D^u)$, $E_{\tf}$ the maximal $\OO$-torsion free quotient of $E$, let $Z(E_{\tf})$ be the centre of $E_{\tf}$ and let $R^{\ps}_{\tf}$ be the maximal $\OO$-torsion free quotient of $R^{\ps}$. Then $R^{\ps}_{\tf}$ is a subring of $Z(E_{\tf})$. 

If $R^{\square}_{\rhobar}[1/p]$ is reduced and the set $x\in \mSpec R^{\square}_{\rhobar}[1/p]$, such that $\rho^{\square}_x$ is absolutely irreducible, is dense in $\Spec R^{\square}_{\rhobar}[1/p]$ then $d \cdot Z(E_{\tf}) \subset R^{\ps}_{\tf}$. In particular, if $p\nmid d$ then $R^{\ps}_{\tf}=Z(E_{\tf})$. Moreover, the same holds for the rings with the fixed determinant.
\end{prop}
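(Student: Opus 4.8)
The plan is to prove the two inclusions $R^{\ps}_{\tf}\subseteq Z(E_{\tf})$ and, under the extra hypotheses, $d\cdot Z(E_{\tf})\subseteq R^{\ps}_{\tf}$ separately: the first is formal, and the second rests on the identification $Z(E[1/p])=R^{\ps}[1/p]$ together with the trace law of a Cayley--Hamilton algebra.

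For the first inclusion I would use that, by \cite{che_durham}, the top coefficient $\Lambda_1$ of the characteristic polynomial of $D^u$ is a homogeneous polynomial law of degree $1$, hence an $R^{\ps}$-linear map $\tr\colon E\to R^{\ps}$ with $\tr(r\cdot 1_E)=dr$ for $r\in R^{\ps}$; being additive, $\tr$ carries $\OO$-torsion to $\OO$-torsion and so descends to $\tr_{\tf}\colon E_{\tf}\to R^{\ps}_{\tf}$. The structure map $R^{\ps}\to E$ has central image (because $R^{\ps}$ is central in $R^{\ps}\br{\GG}$ and $\CH(D^u)$ is two-sided), carries $\OO$-torsion into $\OO$-torsion, and hence induces $R^{\ps}_{\tf}\to E_{\tf}$ with central image; I would check injectivity after inverting $p$, where it becomes the composite of the inclusion $R^{\ps}[1/p]=(A^{\gen}[1/p])^{\GL_d}\subseteq A^{\gen}[1/p]$ with $a\mapsto aI$ under the embedding $j$, using that $E_{\tf}$ is $\OO$-torsion free to transfer injectivity back to $E_{\tf}$. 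This gives $R^{\ps}_{\tf}\subseteq Z(E_{\tf})$.

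For the second inclusion the key step, and the one I expect to be the main obstacle, is to show $Z(E[1/p])=R^{\ps}[1/p]$; this is where the hypotheses on $R^{\square}_{\rhobar}[1/p]$ enter. I would argue as in the proof of Theorem \ref{main_appendix}: $j$ embeds $E[1/p]$ into $M_d(A^{\gen}[1/p])$ with image equal to the $\GL_d$-invariants, and since $A^{\gen}[1/p]$ is noetherian it embeds into the product of the completions $\widehat{A}^{\gen}_{\mm}\cong R^{\square}_{\rho_{\mm}}$ over its maximal ideals, so $j$ embeds $E[1/p]$ into $\prod_{\mm}M_d(R^{\square}_{\rho_{\mm}})$. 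Using the reduced analogue of Proposition \ref{framed1} (reducedness being $(R_0)$ and $(S_1)$, preserved under completion of localisations by Lemma \ref{commalg1}) and the propagation of the dense absolutely irreducible locus to each $R^{\square}_{\rho_{\mm}}$, the argument of Lemma \ref{centralizer} shows the centraliser of $\GG$ in $M_d(R^{\square}_{\rho_{\mm}})$ consists of scalars. An element $z\in Z(E[1/p])$ commutes with the images of all $g\in\GG$, so its image in each factor is scalar; hence $j(z)=aI$ with $a\in A^{\gen}[1/p]$, and since $j(z)$ is $\GL_d$-invariant, $a\in(A^{\gen}[1/p])^{\GL_d}=R^{\ps}[1/p]$, giving $z\in R^{\ps}[1/p]$. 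The technical part here is precisely the bookkeeping that these localisations and completions remain reduced and retain a dense absolutely irreducible locus, so that Lemma \ref{centralizer}'s argument applies to each factor.

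Granting this, I would conclude as follows: for $w\in Z(E_{\tf})$, $\OO$-torsion-freeness gives $E_{\tf}\subseteq E[1/p]$ and $w\in Z(E[1/p])=R^{\ps}[1/p]$, so $w$ is scalar in $E[1/p]$ and $\tr(w)=dw$ there; read inside $E_{\tf}$ this says $dw=\tr_{\tf}(w)\in R^{\ps}_{\tf}$, i.e.\ $d\cdot Z(E_{\tf})\subseteq R^{\ps}_{\tf}$. When $p\nmid d$, $d$ is a unit in $\OO\subseteq R^{\ps}_{\tf}$, so this forces $Z(E_{\tf})\subseteq R^{\ps}_{\tf}$, hence equality with the first inclusion. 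For the fixed-determinant version I would run the same argument with $R^{\ps,\psi}$, $E^{\psi}=E\otimes_{R^{\ps}}R^{\ps,\psi}$, $A^{\gen,\psi}=A^{\gen}\otimes_{R^{\ps}}R^{\ps,\psi}$ and the map $j\colon E^{\psi}\to M_d(A^{\gen,\psi})$ from the proof of Corollary \ref{faithful}, using Proposition \ref{go_char} and its reduced counterpart to transport the hypotheses from $R^{\square}_{\rhobar}$ to $R^{\square,\psi}_{\rhobar}$; the rest of the argument is unchanged.
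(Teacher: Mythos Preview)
Your proof is correct in outline, and the core strategy matches the paper's: show that any $z\in Z(E_{\tf})$ becomes a scalar matrix under a suitable embedding into a matrix algebra, then read off $dz$ as its trace, which lands in $R^{\ps}_{\tf}$. The difference lies in how you reach the ``scalar'' conclusion.

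The paper takes a much shorter route. Rather than passing through $A^{\gen}[1/p]$ and then into a product of completions $\prod_\mm R^{\square}_{\rho_\mm}$, it invokes Corollary~\ref{faithful} to embed $E_{\tf}$ \emph{directly} into $M_d(R^{\square}_{\rhobar}[1/p])$. Then Lemma~\ref{centralizer} is applied once, to (the torsion-free quotient of) $R^{\square}_{\rhobar}$ itself, where the hypotheses ``reduced'' and ``dense absolutely irreducible locus'' are given outright. This sidesteps all the bookkeeping you anticipate: no need to propagate reducedness or density to each $R^{\square}_{\rho_\mm}$, no need to adapt Lemma~\ref{centralizer} to rings with residue field $\kappa(\mm)$ rather than $k$, and no need for the $\GL_d$-invariance step to pull back into $R^{\ps}[1/p]$.

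Your detour can be made to work, but one step is not quite right as stated: the claim that ``since $A^{\gen}[1/p]$ is noetherian it embeds into the product of the completions $\widehat{A}^{\gen}_\mm$'' needs $A^{\gen}[1/p]$ to be reduced; noetherianity alone does not suffice, as the kernel of the map to the product of localisations is the nilradical (the ring being Jacobson). You can recover reducedness from your own argument---once each $\widehat{A}^{\gen}_\mm\cong R^{\square}_{\rho_\mm}$ is shown reduced, Lemma~\ref{commalg1} transfers this to the localisations $(A^{\gen}[1/p])_\mm$ and hence to $A^{\gen}[1/p]$---but this circularity is another reason to prefer the paper's direct use of Corollary~\ref{faithful}.
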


\begin{proof} 
As in the proof of Theorem \ref{main_appendix} there is an injection $$j: E[1/p] \hookrightarrow M_d(A^{\gen}[1/p]).$$ Moreover, $\tr \circ j$ induces a surjection $E[1/p]\twoheadrightarrow R^{\ps}[1/p]$. Thus $R^{\ps}_{\tf}$ is a subring of $Z(E_{\tf})$.  Corollary \ref{faithful} gives us an injection $E_{\tf} \hookrightarrow M_d(R^{\square}_{\rhobar}[1/p])$. If $a\in E_{\tf}$ then the characteristic polynomial of $j(a)$ has coefficients in $R^{\ps}_{\tf}$. Moreover, $Z(E_{\tf})$ is contained in the centralizer of $\rho^{\square}(\GG)$ in $M_d(R^{\square}_{\rhobar}[1/p])$. According to Lemma \ref{centralizer} the centralizer is equal to scalar matrices.  Since $j(z)$ is a scalar matrix, we deduce that $d z \in R^{\ps}_{\tf}$. 

It follows from Proposition \ref{go_char} that $R^{\square, \psi}_{\rhobar}[1/p]$ is reduced. Since twisting by characters does not change the property of being absolutely irreducible, the proof of Proposition \ref{go_char} shows that absolutely irreducible locus is dense in $\Spec R^{\square, \psi}_{\rhobar}[1/p]$. Then the same proof goes through.
\end{proof}
 
\begin{prop}\label{nailp3}\footnote{The statement is proved in \cite[Corollary 4.22]{BIP} without computing the equations for the deformation ring.}
If $p=3$, $\GG=\GG_{\QQ_3}$ and $\rhobar= \Eins\oplus \omega$ then $R^{\square}_{\rhobar}[1/p]$ is normal.
\end{prop}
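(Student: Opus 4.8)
The strategy is to deduce the result from B\"ockle's explicit computation of $R^{\square}_{\rhobar}$ in \cite{boeckle} together with Serre's criterion for normality. The starting point is the cohomology of $\ad{\rhobar}$: since $p=3$ and $\rhobar=\Eins\oplus\omega$, local Tate duality and the Euler characteristic formula give $\dim_k H^0(\GG_{\QQ_3},\ad{\rhobar})=2$, $\dim_k H^2(\GG_{\QQ_3},\ad{\rhobar})=\dim_k\Hom_{\GG_{\QQ_3}}(\rhobar,\rhobar\otimes\omega)=1$, and $\dim_k H^1(\GG_{\QQ_3},\ad{\rhobar})=7$, so $\dim_k Z^1(\GG_{\QQ_3},\ad{\rhobar})=9$. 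Thus $R^{\square}_{\rhobar}$ is a quotient of $\OO\br{x_1,\dots,x_9}$ by an ideal generated by at most one element, and \cite{boeckle} pins down this element explicitly as a power series $f$. Since $\OO\br{x_1,\dots,x_9}$ is a regular domain, $R^{\square}_{\rhobar}\cong\OO\br{x_1,\dots,x_9}/(f)$ is either regular (if $f=0$, in which case normality is immediate) or a hypersurface; in both cases it is Cohen--Macaulay, so $R^{\square}_{\rhobar}[1/p]$ satisfies Serre's condition $(S_2)$.

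It then remains, by \cite[Theorem 23.8]{matsumura}, to verify $(R_1)$: that the non-regular locus of $\Spec R^{\square}_{\rhobar}[1/p]$ has codimension at least $2$. The plan is to describe this locus in deformation-theoretic terms. For a closed point $x$ with associated $\rho_x\colon\GG_{\QQ_3}\to\GL_2(\kappa(x))$, the completed local ring of $\Spec R^{\square}_{\rhobar}[1/p]$ at $x$ is the framed deformation ring of $\rho_x$ (see the proof of Proposition \ref{framed1}), which, being again a hypersurface over $\kappa(x)$ with the minimal number of relations dictated by obstruction theory, is regular if and only if $H^2(\GG_{\QQ_3},\ad{\rho_x})=0$, equivalently $\Hom_{\GG_{\QQ_3}}(\rho_x,\rho_x\otimes\varepsilon)=0$ by local Tate duality. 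I would then stratify the locus where this fails: it contains no absolutely irreducible $\rho_x$ (Schur's lemma would give $\rho_x\cong\rho_x\otimes\varepsilon$, impossible on determinants); its semisimple reducible part is $\{\psi_1\oplus\psi_2:\psi_1\psi_2^{-1}=\varepsilon\}$, of dimension at most $4$ once framings are accounted for; and its remaining part consists of non-split reducible $\rho_x$ which are twists of an extension of $\varepsilon$ by $\Eins$, of dimension at most $5$ (two parameters for the twisting character, none for the extension class since $\dim H^1(\GG_{\QQ_3},\varepsilon^{-1})=1$, and three for the conjugation orbit). As $\Spec R^{\square}_{\rhobar}[1/p]$ is $8$-dimensional, the non-regular locus then has codimension at least $2$, and Serre's criterion yields normality. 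An equivalent route, and this is literally how \cite{boeckle}'s equation enters, is to bound directly the dimension of the vanishing locus of the Jacobian ideal $(f,\partial f/\partial x_1,\dots,\partial f/\partial x_9)$ inside $\Spec\OO\br{x_1,\dots,x_9}[1/p]$.

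The main obstacle will be the explicit analysis of this singular/Jacobian locus: B\"ockle's $f$ has coefficients governed by congruences modulo powers of $3$, and some care is needed to locate the common zeros of $f$ and all its partials; equivalently, one must make the stratification above fully rigorous, in particular the dimension count of conjugation orbits and extension classes on the non-split reducible stratum. The remaining ingredients — the $(S_2)$ property from the hypersurface presentation, the reduction to $(R_1)$ via Serre's criterion, and the identification of regular points with those at which $H^2(\GG_{\QQ_3},\ad{\rho_x})$ vanishes — are then formal, given the structural input of \cite{boeckle} and the preceding results of this appendix.
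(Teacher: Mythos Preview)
Your high-level strategy matches the paper's: reduce to Serre's criterion, get $(S_2)$ from B\"ockle's structural result, and verify $(R_1)$ by identifying the non-regular points as those $x$ with $\Hom_{\GG_{\QQ_3}}(\rho_x,\rho_x\otimes\varepsilon)\neq 0$. However, your opening cohomology computation contains a genuine error: since $p=3$ the mod-$p$ cyclotomic character satisfies $\omega^2=\Eins$, so $\rhobar\otimes\omega\cong\omega\oplus\Eins\cong\rhobar$ and hence
\[
\dim_k H^2(\GG_{\QQ_3},\ad{\rhobar})=\dim_k\Hom_{\GG_{\QQ_3}}(\rhobar,\rhobar\otimes\omega)=\dim_k\End_{\GG_{\QQ_3}}(\rhobar)=2,
\]
not $1$. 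The Euler characteristic then gives $h^1=8$ and $\dim_k Z^1=10$, so $R^{\square}_{\rhobar}$ is presented with \emph{two} relations in ten variables, not one in nine. This means your ``hypersurface'' claim is wrong; what B\"ockle actually supplies is that $R^{\square}_{\rhobar}$ is a complete intersection of the expected dimension. Fortunately that is still Cohen--Macaulay, so $(S_2)$ survives, and the expected dimension of $R^{\square}_{\rhobar}[1/p]$ is still $8$, so your $(R_1)$ dimension count is unaffected. You should also note that the equivalence ``regular $\Leftrightarrow H^2(\ad{\rho_x})=0$'' at a closed point $x$ is not automatic from obstruction theory alone: it uses that the completed local ring at $x$ is again complete intersection of the expected dimension, which one deduces from the equidimensionality (Cohen--Macaulayness) of $R^{\square}_{\rhobar}[1/p]$ together with the Euler characteristic identity $\dim Z^1(\ad{\rho_x})-h^2(\ad{\rho_x})=8$.

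For $(R_1)$ the paper takes a more hands-on route than your parameter count. Rather than estimating dimensions of strata, it passes to B\"ockle's subfunctor (so that $R^{\square}_{\rhobar}\cong R\br{x,y}$ with $R$ given by explicit matrix equations), reads off the reducible locus as $V(c,c')\cup V(b,b')$, checks that each component is an integral domain, and then simply exhibits a smooth reducible point on each (e.g.\ the one corresponding to $\varepsilon^3\oplus\Eins$, which is not of the form $\delta\varepsilon$-by-$\delta$). This gives codimension $\ge 2$ with essentially no moduli-theoretic bookkeeping. Your approach is cleaner conceptually and yields the stronger bound codimension $\ge 3$, but, as you acknowledge, turning the point-count into a scheme-theoretic dimension bound requires constructing an actual family over a base of the asserted dimension and mapping it to $\Spec R^{\square}_{\rhobar}[1/p]$ via the universal property; the paper's argument sidesteps this entirely.
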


\begin{proof} 
We will first relate the framed deformation ring $R^{\square}_{\rhobar}$ to the ring studied in \cite{boeckle}. Let $\mu_3$ be the group of $3$-rd roots of unity in $\Qtbar$, let $E=\QQ_3(\mu_3)$ and let $E(3)$ be the compositum of all extensions $E \subset E' \subset \Qtbar$ such that $[E':E]$ is a power of $3$. Then the Galois group $\Gal(E(3)/E)$ is the maximal pro-$3$ quotient of $\Gal(\Qtbar/E)$, and thus the map $\rho^{\square}: \GG_{\QQ_3} \rightarrow \GL_2(R^{\square}_{\rhobar})$ factors through the surjection $\GG_{\QQ_3}\twoheadrightarrow \Gal(E(3)/\QQ_3)$. Since $\Gal(E/ \QQ_3)$ has order $2$, Schur--Zassenhaus implies that the surjection $\Gal(E(3)/\QQ_3)\twoheadrightarrow \Gal(E/ \QQ_3)$ has a splitting, which gives us an isomorphism 
$$\Gal(E(3)/\QQ_3)\cong \Gal(E(3)/E) \rtimes G,$$
where $G=\{1, \sigma\}$ is a subgroup of $\Gal(E(3)/\QQ_3)$. One may define a closed 
subfunctor, denoted by $\mathrm{EH}_1$ in \cite{boeckle}, of the framed deformation functor 
$D^{\square}$, such that $\mathrm{EH}_1(A)$ consists of pairs $(V_A, \beta_A)$, where 
$V_A$ is a deformation of $\omega \oplus 1$ to $A$, and $\beta_A=(v_1, v_2)$ is an 
$A$-basis of $V_A$ lifting a fixed basis $\beta_k$ of $\omega \oplus 1$, such that 
$\sigma$ acts by $-1$ on $v_1$ by  and by $1$ on $v_2$. It follows from 
the Iwahori decomposition for the group $1+M_2(\mm_A)$ that a framed deformation 
$(V_A, \beta_A)\in D^{\square}(A)$ can be conjugated to a framed deformation in 
$\mathrm{EH}_1(A)$ by a unique element of the form 
$\bigl ( \begin{smallmatrix} 1 & b \\ 0 & 1 \end{smallmatrix} \bigr )
\bigl ( \begin{smallmatrix} 1 & 0\\ c & 1 \end{smallmatrix} \bigr )$, with $b, c \in \mm_A$. 
Hence, if $\mathrm{EH}_1$ is represented by $R$ then 
$$R^{\square}_{\rhobar}\cong R\br{x,y}.$$ 
Now $R$ is complete intersection by \cite[Theorem 1.1]{boeckle}, thus so is $R^{\square}_{\rhobar}$, 
and to show normality of $R^{\square}_{\rhobar}[1/p]$ it is enough to show that the singular locus in 
$R[1/p]$ has codimension at least $2$. 

Let $\rho: \Gal(E(3)/\QQ_3)\rightarrow \GL_2(R)$ be the representation obtained for the action of the Galois group on $V_R$ with respect to the basis $\beta_{R}$, so that $\rho(\sigma)= \bigl ( \begin{smallmatrix} -1 & 0 \\ 0 & 1\end{smallmatrix} \bigr )$. It follows from the argument of \cite[Lemma 4.1]{CDP2} that $x\in \mSpec R[1/p]$ is singular if and only if there is an exact sequence $0\rightarrow \delta\rightarrow \rho_x \rightarrow \delta \varepsilon \rightarrow 0$ for some character $\delta: \Gal(E(3)/\QQ_3)\rightarrow \kappa(x)^{\times}$. Thus the singular locus is contained in the reducible locus and it is enough to show that it has positive codimension inside the reducible locus: we know that $R$ is a domain by \cite[Theorem 1.1]{boeckle}, and there are absolutely irreducible lifts of $\rhobar$, so that the reducible locus has codimension $1$ inside $\Spec R$. 

We will now describe the ring $R$ as computed in \cite{boeckle} and will compute the reducible locus. Lemma 3.2 in \cite{boeckle} says that the representation  
$\rho: \Gal(E(3)/\QQ_3)\rightarrow \GL_2(R)$ factors through a quotient $\Gal(E(3)/\QQ_3)\twoheadrightarrow P \rtimes G$, where $P$ is a pro-$p$ group 
with generators $x_1, x_2, x_3, x_4$ and one relation 
$$r= x_1^3 [x_1, x_2][x_3, x_4][x_4, x_3^{-1}] [x_2, x_1^{-1}] x_1^3,$$
where $[g, h]=g hg^{-1} h^{-1}$. The action of $\sigma\in G$ on the generators is given by  
$$\sigma(x_1)= x_1^{-1}, \quad \sigma(x_2)= x_2, \quad \sigma(x_3)=x_3^{-1}, \quad \sigma(x_4)=x_4.$$
Let $S=\OO\br{a, a', b, b', c, c', d, d'}$ and let $A_i\in \GL_2(S)$ be the matrices 
$$A_1=\biggl(\begin{smallmatrix} \sqrt{1+bc} & b \\ c & \sqrt{1+bc}\end{smallmatrix}\biggr), 
\quad A_2= \sqrt{1+a}\biggl(\begin{smallmatrix} \sqrt{1+d} & 0 \\ 0 & \sqrt{1+d}^{-1}\end{smallmatrix}\biggr),$$
$$A_3=\biggl(\begin{smallmatrix} \sqrt{1+b'c'} & b' \\ c' & \sqrt{1+b'c'}\end{smallmatrix}\biggr), 
\quad A_4= \sqrt{1+a'}\biggl(\begin{smallmatrix} \sqrt{1+d'} & 0 \\ 0 & \sqrt{1+d'}^{-1}\end{smallmatrix}\biggr),$$
and let $$B= A_1^3 [A_1, A_2][A_3, A_4][A_4, A_3^{-1}] [A_2, A_1^{-1}] A_1^3.$$
Then \cite[Theorem 4.1 (c)]{boeckle} asserts that $R= S/I$, where $I$ is the ideal of $S$ generated by the matrix entries of $B - \bigl( \begin{smallmatrix}1 & 0 \\ 0 & 1\end{smallmatrix}\bigr)$ and $\rho: P\rtimes G\rightarrow \GL_2(R)$ is obtained by mapping $\sigma\mapsto \bigl ( \begin{smallmatrix} -1 & 0 \\ 0 & 1\end{smallmatrix} \bigr )$ and $x_i\mapsto A_i$ for $1\le i\le 4$. 
 
It follows from this description that the locus in $\Spec R$ parameterizing reducible representations,  where $\sigma$ acts on the rank $1$ subrepresentation by $-1$ (resp.~$1$)  is equal to $V( c, c')$  (resp. $V(b, b'))$. 
 
The images of $A_1$ and $A_3$ in $\GL_2(R/( c, c'))$ are unipotent upper-triangular matrices. It is easy to compute the commutator of a unipotent upper-triangular matrix with a diagonal matrix. One obtains that the image of $B$ in $\GL_2(S/( c, c'))$ is the matrix $( \begin{smallmatrix} 1 & 6b- 2bd -2 b'd' \\ 0 & 1\end{smallmatrix} \bigr )$. Thus $$R/( c, c')= S/(c, c', 3b -bd -b'd')$$ is an integral domain, as $S/(c, c')\cong \OO\br{a, a', b, b', d, d'}$ is factorial and $3b -bd -b'd'$ is an irreducible element in $S/(c, c')$. 
 
Let $X^{\mathrm{sing}}$ be the singular locus in $\Spec R[1/p]$. The point $x\in \Spec R/(c,c')$ corresponding to the representation $\bigl( \begin{smallmatrix} \varepsilon^3 & 0 \\ 0 & 1 \end{smallmatrix}\bigr)$ will not lie in $X^{\mathrm{sing}}$, since this representation is not an extension of $\delta \varepsilon$ by $\delta$. Thus $X^{\mathrm{sing}} \cap \Spec R/(c,c')[1/p]$ is at least of codimension $1$. In the same way we obtain that $X^{\mathrm{sing}} \cap \Spec R/(b,b')[1/p]$ is at least of codimension $1$ in $\Spec R/(b,b')[1/p]$. Thus $X^{\mathrm{sing}}$ is at least of codimension $1$ in the reducible locus in $\Spec R[1/p]$. Thus $X^{\mathrm{sing}}$ is at least of codimension $2$ in $\Spec R[1/p]$. 
\end{proof}

\begin{prop}\label{normal_framed} 
If $\GG=\gal$ then $R^{\square}_{\rhobar}[1/p]$ is normal and the absolutely irreducible locus is dense in $\Spec  R^{\square}_{\rhobar}[1/p]$ for all semi-simple $2$-dimensional $\rhobar$. 
\end{prop}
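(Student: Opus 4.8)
The plan is to reduce to a short list of representations $\rhobar$ and then appeal, case by case, to the explicit structure of the local framed deformation rings.

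First I would make two harmless reductions. Enlarging the residue field $k$ (and $\OO$) replaces $R^{\square}_{\rhobar}$ by $R^{\square}_{\rhobar}\wtimes_{\OO}\OO'$, which is finite flat over $R^{\square}_{\rhobar}$ and finite \'etale after inverting $p$; by Lemma \ref{commalg3} this affects neither normality nor, since $\Spec(R^{\square}_{\rhobar}\wtimes_{\OO}\OO')[1/p]\to\Spec R^{\square}_{\rhobar}[1/p]$ is open and surjective, the density of the absolutely irreducible locus. So I may assume $\rhobar$ is a sum of absolutely irreducible representations over $k$, that is, $\rhobar$ is absolutely irreducible or $\rhobar=\chi_1\oplus\chi_2$. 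Secondly, twisting $\rhobar$ by a continuous character of $\gal$ is harmless for both assertions: for $p$ odd it amounts to a base change of $R^{\square}_{\rhobar}$ along the deformation ring of a character, which is a formal power series ring over $\OO$, so Lemmas \ref{commalg2} and \ref{commalg3} apply; for $p=2$ the result is cited wholesale below. Thus in the reducible case I reduce to $\rhobar=\Eins\oplus\chi$ for a single character $\chi$.

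Next I would dispose of the ``generic'' cases. Whenever $H^2(\gal,\ad\rhobar)=0$, equivalently $\Hom_{\gal}(\rhobar,\rhobar\otimes\omega)=0$, the obstruction theory for framed deformations shows that $R^{\square}_{\rhobar}$ is formally smooth over $\OO$, hence regular, hence $R^{\square}_{\rhobar}[1/p]$ is regular and in particular normal; this covers $\rhobar$ absolutely irreducible with $\rhobar\not\cong\rhobar\otimes\omega$ as well as $\rhobar=\chi_1\oplus\chi_2$ with $\chi_1\chi_2^{-1}\neq\Eins,\omega^{\pm1}$. In these cases $\Spec R^{\square}_{\rhobar}[1/p]$ is irreducible, and its absolutely irreducible locus is a nonempty open subset (empty complement when $\rhobar$ is absolutely irreducible; otherwise the complementary geometrically reducible locus has strictly smaller dimension, by the standard computation with block‑triangular deformations), hence dense. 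It remains to treat the exceptional cases. If $p=2$, then the normality of $R^{\square}_{\rhobar}[1/p]$ and the density statement hold for every semi‑simple $\rhobar$ by \cite{CDP2}. If $p\geq3$, the only $\rhobar$ left after the reductions is $\rhobar=\Eins\oplus\omega$: for $p\geq5$ this follows from B\"ockle's explicit presentation of $R^{\square}_{\rhobar}$ in \cite{boeckle} (which exhibits it as a normal complete intersection with a thin reducible locus), and for $p=3$ it is exactly Proposition \ref{nailp3}, whose proof also makes the reducible--irreducible dichotomy explicit.

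I expect essentially all the difficulty to sit in Proposition \ref{nailp3}, i.e.\ the case $p=3$, $\rhobar=\Eins\oplus\omega$. There one first rewrites the framed deformation functor in terms of B\"ockle's functor $\mathrm{EH}_1$, using the Iwahori factorisation of $1+M_2(\mm_A)$ to get $R^{\square}_{\rhobar}\cong R\br{x,y}$ with $R$ representing $\mathrm{EH}_1$; one uses that $R$ is a complete intersection domain (\cite[Theorem 1.1]{boeckle}); one identifies the singular locus of $\Spec R[1/p]$ with a closed subset of the reducible locus via an extension‑theoretic criterion in the spirit of \cite[Lemma 4.1]{CDP2}; and finally one carries out the explicit commutator calculation in B\"ockle's generic‑matrix presentation $R=S/I$ to show that the reducible locus, hence the singular locus, has codimension at least $2$ in $\Spec R[1/p]$, whence normality by Serre's criterion (the complete intersection property furnishing $(S_2)$). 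Organising the remaining bookkeeping --- twists, scalar extensions, and the matching with the list of blocks in \S\ref{section:blocks} --- is routine by comparison.
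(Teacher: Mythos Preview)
Your reduction strategy via the vanishing of $H^2(\gal,\ad\rhobar)$ has a genuine gap. That vanishing is equivalent to $\Hom_{\gal}(\rhobar,\rhobar\otimes\omega)=0$, and hence gives formal smoothness of the \emph{integral} ring $R^{\square}_{\rhobar}$. But the proposition only asks for normality of the generic fibre, and the two conditions diverge exactly in the absolutely irreducible case when $\rhobar\cong\rhobar\otimes\omega$. For $p=2$ this holds for \emph{every} absolutely irreducible $\rhobar$ (since $\omega=\Eins$), and for $p=3$ there exist absolutely irreducible $\rhobar$ induced from $\GG_{\QQ_3(\zeta_3)}$ with $\rhobar\cong\rhobar\otimes\omega$. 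Your proof does not cover these: the blanket appeal to \cite{CDP2} for ``every semi-simple $\rhobar$'' at $p=2$ is not justified (the paper cites \cite[Proposition 4.3]{CDP2} only for $\rhobar=\Eins\oplus\Eins$), and for $p=3$ your assertion that ``the only $\rhobar$ left is $\Eins\oplus\omega$'' is false.

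The paper avoids this by working directly with the singular locus of the generic fibre. A closed point $x\in\Spec R^{\square}_{\rhobar}[1/p]$ is singular iff the completion $R^{\square}_{\rho^{\square}_x}$ is not regular, iff $H^2(\gal,\ad\rho^{\square}_x)\neq 0$, iff $\Hom_{\gal}(\rho^{\square}_x,\rho^{\square}_x(1))\neq 0$ (this is the content of \cite[Lemma 4.1]{CDP2}). Since $\varepsilon$ has infinite order, a nonzero such map cannot be an isomorphism, so has $1$-dimensional image; this forces $\rho^{\square}_x$ to sit in an exact sequence $0\to\delta\to\rho^{\square}_x\to\delta\varepsilon\to 0$, whence $\rhobar^{\mathrm{ss}}=\bar\delta\oplus\bar\delta\omega$. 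Thus for \emph{any} absolutely irreducible $\rhobar$, and for $\chi_1\oplus\chi_2$ with $\chi_1\chi_2^{-1}\neq\omega^{\pm1}$, the generic fibre is already regular, regardless of whether $H^2(\gal,\ad\rhobar)$ vanishes. Only the single case $\rhobar=\bar\chi\oplus\bar\chi\omega$ needs explicit work, and that is where \cite[Proposition 4.3]{CDP2} ($p=2$), Proposition \ref{nailp3} ($p=3$), and \cite[Proposition B2, Theorem B.3]{image} ($p\ge 5$) enter.

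Two smaller points. First, twisting by a character $\chi$ induces an \emph{isomorphism} $R^{\square}_{\rhobar}\cong R^{\square}_{\rhobar\otimes\bar\chi}$, not a base change as you write; the conclusion is correct but the justification is off. Second, for the density of the absolutely irreducible locus the paper does not argue componentwise from dimension counts: having established normality, it observes that $R^{\ps}[1/p]\hookrightarrow R^{\square}_{\rhobar}[1/p]$ (proved in the course of Proposition \ref{Ztf}) and then invokes Chenevier's theorem \cite[Theorem 2.1]{che_unpublished} that the absolutely irreducible locus is dense in $\Spec R^{\ps}[1/p]$, which rules out a component of $R^{\square}_{\rhobar}[1/p]$ with no absolutely irreducible point. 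Your direct argument is plausible in the smooth cases but would again need separate treatment in the singular case $\Eins\oplus\omega$.
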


\begin{proof} 
Since $R^{\square}_{\rhobar}[1/p]$ is excellent the singular locus is closed in 
$R^{\square}_{\rhobar}[1/p]$. If non-empty then it will contain a maximal ideal $x$, such that 
$$\Hom_{\gal}(\rho^{\square}_x, \rho^{\square}_x(1))\neq 0,$$ 
see \cite[Lemma 4.1]{CDP2}. Thus $\rhobar$ is of the form $\bar{\chi} \oplus \bar{\chi} \omega$. After twisting by a character we may assume that $\rhobar= \Eins \oplus \omega$. If $p=2$ or $p=3$ then $R^{\square}_{\rhobar}[1/p]$ is normal by \cite[Proposition 4.3]{CDP2}, and Proposition \ref{nailp3}, respectively. If $p\ge 5$ it follows from the proof of \cite[Proposition B2, Theorem B.3]{image}, based on the work of B\"ockle \cite{boeckle2}, that $R^{\square}_{\rhobar}$ is formally smooth over $\OO\br{x,y, z,w}/(xy-zw)$. (The only change is that because in our setting $\rhobar$ is split, the generator $x_{p-2}$ maps to the matrix $\bigl( \begin{smallmatrix} 1 & x \\ 0 & 1\end{smallmatrix}\bigr)$, instead of $\bigl( \begin{smallmatrix} 1 & 1 \\ 0 & 1\end{smallmatrix}\bigr)$. This adds an extra variable, but does not change the relation coming from \cite[Equation (261)]{image}.) Thus $R^{\square}_{\rhobar}[1/p]$ is normal.

Hence, $R^{\square}_{\rhobar}[1/p]$ is a product of normal domains, and if absolutely irreducible locus was not dense there would be a component without absolutely irreducible points.
(Let $I$ be the ideal of $R^{\square}_{\rhobar}$ generated by the matrix entries of 
$(\rho^{\square}(gh)-\rho^{\square}(hg))^2$ for all $g, h\in \gal$. Then a specialization of $\rho^{\square}$ at  $x\in \mSpec R^{\square}_{\rhobar}[1/p]$  is absolutely irreducible over $\kappa(x)$ if and only if $x\not\in V(I)$.
Thus if an irreducible component of $R^{\square}_{\rhobar}[1/p]$ contains an absolutely irreducible point then 
such points are dense in the component.) In the course of the proof of Proposition \ref{Ztf} we have shown that $R^{\ps}[1/p]$ is a subring of $R^{\square}_{\rhobar}[1/p]$. Thus there would exist an irreducible component of $R^{\ps}[1/p]$ without absolutely irreducible points. This would contradict \cite[Theorem 2.1]{che_unpublished}.
\end{proof}

\begin{cor}\label{normal} 
If $\GG=\gal$ then $R^{\ps, \psi}[1/p]$, $R^{\ps}[1/p]$ and the corresponding rigid analytic spaces are normal for all semi-simple $2$-dimensional $\rhobar$. 
\end{cor}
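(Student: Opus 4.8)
The plan is to deduce the statement directly from the two main theorems of the appendix together with the framed deformation input, so there is essentially nothing new to prove here. First I would note that $\gal$ satisfies Mazur's finiteness condition at $p$, so that all the deformation rings in question are noetherian and the results \ref{main_appendix}, \ref{fix_char_nor}, and \ref{normal_framed} are applicable with $\GG=\gal$ and $d=2$.

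The key step is to invoke Proposition \ref{normal_framed}: for every semi-simple $2$-dimensional representation $\rhobar$ of $\gal$, the ring $R^{\square}_{\rhobar}[1/p]$ is normal. Given this, I would apply Theorem \ref{main_appendix} to conclude that $R^{\ps}[1/p]$ and the associated rigid analytic space $(\Spf R^{\ps})^{\rig}$ are normal, and Corollary \ref{fix_char_nor} to conclude the same for $R^{\ps, \psi}[1/p]$ and $(\Spf R^{\ps, \psi})^{\rig}$. Since these are precisely the four objects in the statement, the corollary follows at once.

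The only content is therefore already packaged in Proposition \ref{normal_framed}, where the genuinely hard work lies: after twisting one reduces to $\rhobar = \Eins\oplus\omega$, and the delicate primes are $p=2$, handled via \cite{CDP2}, and $p=3$, handled via Proposition \ref{nailp3} using B\"ockle's explicit presentation of the framed deformation ring, with $p\ge 5$ following from the near-regularity established in \cite{image}. So the main obstacle is entirely upstream of this corollary; at the level of the corollary itself the argument is a formal concatenation of the cited results.
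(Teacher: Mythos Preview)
Your proposal is correct and matches the paper's own proof essentially verbatim: the paper simply cites Proposition \ref{normal_framed} for the normality of $R^{\square}_{\rhobar}[1/p]$ and then applies Theorem \ref{main_appendix} and Corollary \ref{fix_char_nor}. Your additional commentary on where the real difficulty lies (upstream in Proposition \ref{normal_framed}) is accurate but not needed for the proof of this corollary.
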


\begin{proof} 
The assertion follows from Proposition \ref{normal_framed} and  Theorem \ref{main_appendix} and Corollary \ref{fix_char_nor}.
\end{proof}

\bibliographystyle{plain}
\bibliography{Ref}

\end{document}